\documentclass[11pt, titlepage, reqno]{amsart}
\usepackage[T1]{fontenc}
\usepackage{lmodern}
\usepackage{amsmath,amssymb,mathtools}
\usepackage[margin=1in]{geometry}
\usepackage{microtype}
\usepackage{booktabs}
\usepackage{longtable}
\usepackage{xcolor}
\usepackage[hidelinks]{hyperref} %
\usepackage[capitalize,nameinlink,noabbrev]{cleveref}

\hypersetup{
  bookmarksdepth=2,
  bookmarksopen=true,
  bookmarksopenlevel=2
}

\hypersetup{%
	unicode=true,%
	colorlinks=true,%
	linkcolor=blue,%
	urlcolor=black,%
	citecolor=pink,%
	bookmarksopen=true,%
	bookmarksnumbered,%
	pdfpagemode=UseOutlines,%
	pdfstartview=Fit,%
	pdfpagelayout=OneColumn
  }

\newtheorem{theorem}{Theorem}[section]
\newtheorem{proposition}[theorem]{Proposition}
\newtheorem{lemma}[theorem]{Lemma}
\newtheorem{corollary}[theorem]{Corollary}
\theoremstyle{definition}
\newtheorem{definition}[theorem]{Definition}
\theoremstyle{remark}
\newtheorem{remark}{Remark}[section]
\numberwithin{equation}{section}

\allowdisplaybreaks[4]

\newcommand{\dd}{\,\mathrm{d}}
\newcommand{\R}{\mathbb R}
\newcommand{\U}{\mathcal U}
\newcommand{\N}{\mathcal N}
\newcommand{\norm}[1]{\lVert#1\rVert}
\newcommand{\av}[1]{\overline{#1}}
\DeclareMathOperator{\diver}{div}
\DeclareMathOperator{\spec}{spec}

\title[Radial equilibria and concentration thresholds]{Unstable radial equilibria as sharp concentration thresholds for the J\"ager--Luckhaus model on the unit disk}

\author[X.~Mao]{Xuan Mao}
\address[X.~Mao]{School of Mathematics, Hohai University, Nanjing 211100, Jiangsu, China}
\address[X.~Mao]{Laboratory of Mathematical Modeling and Intelligent Computing for Water Systems, Hohai University, Nanjing 211100, China}
\email[X.~Mao]{maox@hhu.edu.cn}
\urladdr[X.~Mao]{https://orcid.org/0009-0001-4960-8901}
\thanks{Xuan~Mao is funded by
  ``the Fundamental Research Funds for the Central Universities'' (No.~B250201215), Basic Research Program of Jiangsu~(No.~BK20251482) and National Natural Science Foundation of China (No.~12601368).}

\author[W.~Wang]{Wanwan Wang}
\address[W.~Wang]{School of Mathematics and Information Science, Nanchang Hangkong University, Nanchang 330063, P. R. China}
\email[W.~Wang]{wwwang@nchu.edu.cn}
\thanks{Wanwan~Wang is partially supported by the National Natural Science Foundation of China (Nos. 12301252, 12461043 and 12461041)
  and Natural Science Foundation of Jiangxi Province (No. 20242BAB20003)
  and the Doctoral Research Foundation of Nanchang Hangkong University (No. EA202207184).}

\date{\today}
\subjclass[2020]{35B33, 35B35, 35B40, 35J60, 35K65, 92C17}
\keywords{Chemotaxis, radial equilibria, concentration order, critical mass,
shooting method, spectral stability}

\begin{document}
\begin{abstract}

We identify unstable radial equilibria as sharp concentration thresholds for the J\"ager--Luckhaus chemotaxis system on the unit disk with no-flux boundary conditions.
Concentration is compared through cumulative mass.
At fixed total mass, every radial initial profile more concentrated than an unstable equilibrium produces finite-time blow-up, whereas every nonstationary profile less concentrated than that equilibrium generates a global, uniformly bounded solution converging to the least concentrated radial equilibrium.
We also classify the radial equilibria and determine their nonlinear stability.
Let $\Lambda = j_{1,1}^{2}$, where $j_{1,1}$ is the first positive zero of the Bessel function $J_1$.
For $M\ne \Lambda\pi$,
the least concentrated equilibrium is the unique locally asymptotically stable equilibrium under mass-preserving radial perturbations.
It is spatially homogeneous for $M<\Lambda\pi$ and strictly increasing for $M>\Lambda\pi$.
Every other radial equilibrium, including the homogeneous state at $M=\Lambda\pi$, is nonlinearly unstable.
These results clarify how total population and initial spatial aggregation jointly determine collapse or bounded redistribution, and how stable peripheral enrichment can occur within the radial model.
\end{abstract}
\maketitle

\section{Introduction}

Paul Painlev\'e wrote, in translation, \cite[p.~2]{Painleve1900}:
\begin{quote}
\itshape
``It appeared that, between two truths of the real domain, the easiest
and shortest path quite often passes through the complex domain.''
\end{quote}
For nonlinear evolution equations, stationary solutions can similarly
guide the study of the dynamics.
We study the classification and stability of radial equilibria for the
parabolic--elliptic J\"ager--Luckhaus chemotaxis model \cite{Jaeger1992} on the unit disk.
At a fixed total mass, we determine which equilibria act as concentration
thresholds between finite-time blow-up and bounded relaxation for initial
data comparable with them in cumulative mass, and which equilibrium serves
as the attracting state.

Let $T_{\max}\in(0,\infty]$ denote the maximal existence time of the
classical solution. The system reads
\begin{equation}\label{eq:system}
\begin{cases}
u_t=\Delta u-\diver(u\nabla v),&(x,t)\in\Omega\times(0,T_{\max}),\\
0=\Delta v-\mu+u,&(x,t)\in\Omega\times(0,T_{\max}),\\
\partial_\nu u=\partial_\nu v=0,&(x,t)\in\partial\Omega\times(0,T_{\max}),\\
u(\cdot,0)=u_0,&x\in\Omega,
\end{cases}
\qquad
\mu\coloneq\frac{M}{\pi},\quad M\coloneq\int_\Omega u_0\dd x>0,
\end{equation}
in $\Omega\coloneq B_1(0)\subset\R^2$.
We normalize $v$ by imposing $\int_\Omega v(\cdot,t)\dd x=0$ for every $t\in[0,T_{\max})$.
Unless otherwise stated, $u_0\in C(\overline\Omega)$ is nonnegative and
radially symmetric.
The maximal classical solution preserves mass and satisfies the usual
extensibility criterion:
\begin{equation*}
\text{if }T_{\max}<\infty,\quad\text{then}\quad
\limsup_{t\uparrow T_{\max}}\norm{u(t)}_\infty=\infty.
\end{equation*}
See, e.g., \cite[Lemma~2.1]{Winkler2019}.
Set
\begin{equation*}
\Lambda\coloneq j_{1,1}^2 > 8,
\end{equation*}
where $J_\nu$ is the Bessel function of the first kind of order $\nu$,
given by the power series
\[
J_\nu(z)=\sum_{m=0}^\infty
\frac{(-1)^m}{m!\,(m+\nu)!}
\left(\frac z2\right)^{2m+\nu}\qquad\text{for }\nu\in\{0,1\},
\]
and $j_{\nu,k}$ denotes its $k$th positive zero.
The constant equilibrium of mass $M$ is $(u,v)=(\mu,0)$.

\subsection*{Radial equilibria: classification and concentration order}

Stationary chemotaxis systems have been
studied using bifurcation, variational, and concentration methods,
see, for example, \cite{Schaaf1985,Lin1988}.
For the classical model with linear sensitivity and signal degradation,
the flux relation $u=Ce^v$ reduces the normalized stationary problem to
\[
-\Delta v+v=\lambda e^v\quad\text{in }D,\qquad
\partial_\nu v=0\quad\text{on }\partial D,
\]
where $D\subset\R^2$ is a smooth bounded domain and $\lambda>0$.
The cell mass is $M=\lambda\int_D e^v\dd x$.
Senba--Suzuki \cite[Theorem~2]{Senba2000} and
Wang--Wei \cite[Theorem~1.1]{Wang2002} obtained nonconstant equilibria,
allowing nonradial profiles, under the conditions
\[
4\pi<M<(1+\lambda_1(D))|D|,\qquad
M\notin\{4\pi m:m=1,2,\ldots\},
\]
where $\lambda_1(D)$ is the first positive Neumann eigenvalue of
$-\Delta$ on $D$.
For prescribed mass on the disk, Wang--Wang--Yang
\cite[Theorem~1.1 and Remark~1.1]{Wang2019} proved that the constant
is the only radial equilibrium for $0<M\le8\pi$.
Further constructions produce point-concentrating and layered
nonconstant families, including solutions with unbounded mass
\cite{Pino2006,Pistoia2015,Pino2016,Bonheure2021}.

For the J\"ager--Luckhaus system \eqref{eq:system}, the flux relation yields the
normalized mean-field problem
\[
u=\frac{Me^v}{\int_\Omega e^v\dd x},\qquad
-\Delta v=M\left(\frac{e^v}{\int_\Omega e^v\dd x}-\frac1\pi\right)
\quad\text{in }\Omega,
\]
with $\partial_\nu v=0$ and $\int_\Omega v\dd x=0$.
Horstmann--Lucia \cite{Horstmann2011} proved that the constant is the only radial
equilibrium on a disk for $0<M\le8\pi$.
Mao--Liu--Li \cite{Mao2026a} subsequently gave an alternative
proof using the cumulative mass equation.
The classification below determines the supercritical mass ranges for
existence of nonconstant radial equilibria, the number and uniqueness
of the monotone equilibria at each prescribed mass, and the exact onset
of non-monotone equilibria. We also determine their concentration order
and the endpoint behavior of the decreasing family.

\begin{theorem}[Number and monotonicity of nonconstant radial equilibria]\label{thm:equilibria}
For every mass $M>0$, the unique constant equilibrium of \eqref{eq:system}
is $(\mu,0)$, where $\mu=M/\pi$.
The nonconstant nonnegative classical radial equilibria are classified
by their mass as follows:
\begin{enumerate}
\item If $0<M\le8\pi$, there are no nonconstant radial equilibria.
\item If $8\pi<M<\Lambda\pi$, there is exactly one nonconstant radial
equilibrium, which is strictly decreasing and is denoted by
$(u_M^{\mathrm d},v_M^{\mathrm d})$.
\item If $M=\Lambda\pi$, there are no nonconstant radial equilibria.
\item If $M>\Lambda\pi$, there is exactly one nonconstant monotone radial
equilibrium, which is strictly increasing and is denoted by
$(u_M^{\mathrm i},v_M^{\mathrm i})$.
\item Non-monotone radial equilibria exist if and only if
$M\ge M_{\mathrm{nm}}$, where
$M_{\mathrm{nm}}>\Lambda\pi$ is characterized in
\cref{thm:non-monotone-threshold}.
\end{enumerate}
Here and below, monotonicity refers to the radial density on $0<r<1$,
and constant profiles are regarded as monotone.
The superscripts $\mathrm d$ and $\mathrm i$ indicate decreasing and
increasing equilibria, respectively; the subscript $M$ denotes mass.
In particular, no non-monotone radial equilibria exist for $M\le\Lambda\pi$.
\end{theorem}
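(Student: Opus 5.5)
We would work with the cumulative mass. For a radial equilibrium, the zero-flux relation gives $u'=uv'$, and the $v$-equation gives $rv_r=\mu r^2/2-m(r)$, where $m(r)=\int_0^r u\rho\dd\rho$. Put $s=r^2$ and $w(s)=m(\sqrt s)$. Then $w'=u/2$, and the stationary problem becomes
\begin{equation*}
2s\,w''=w'\Bigl(\frac{\mu s}{2}-w\Bigr),\qquad w(0)=0,\quad w(1)=\frac{\mu}{2},\quad w'>0 .
\end{equation*}
With $z\coloneq w-\mu s/2$ this reads $2sz''=-(z'+\mu/2)z$ and $z(0)=z(1)=0$. Since $u'$ has the sign of $w''$, and $w''$ has the sign of $-z$, the density is strictly decreasing iff $z>0$ on $(0,1)$, strictly increasing iff $z<0$ on $(0,1)$, and non-monotone iff $z$ changes sign in $(0,1)$. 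Constant equilibria correspond to $z\equiv0$; uniqueness of the constant $(\mu,0)$ is immediate from mass and the normalization $\int v=0$.

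Next we would remove the fixed domain by scaling. If $w$ solves the ODE with parameter $\mu$, then $W(s)=w(\lambda s)$ solves it with parameter $\lambda\mu$. Hence we fix the shooting datum $w'(0)=1/2$, i.e.\ $u(0)=1$, and let $c>0$ play the role of $\mu$. We solve the singular initial value problem for $z(\cdot;c)$ with $z(0)=0$ and $z'(0)=\tfrac12-\tfrac c2$; local existence and uniqueness follow by a standard fixed point argument at the regular singular point $s=0$.

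If $s_k(c)$ denotes the $k$th positive zero of $z(\cdot;c)$, then rescaling by $\lambda=s_k(c)$ produces an equilibrium on the unit disk with $\mu=cs_k(c)$. Its mass is therefore
\[
M=\mathcal M_k(c)\coloneq \pi c\,s_k(c).
\]
The equilibrium is monotone iff $k=1$: it is decreasing for $c<1$, since then $z'(0)>0$, and increasing for $c>1$. Every nonconstant radial equilibrium arises this way exactly once. The theorem therefore reduces to determining the ranges of $c\mapsto\mathcal M_1(c)$ on $(0,1)$ and on $(1,\infty)$, showing these maps are strictly monotone, and identifying $\inf_c\mathcal M_2(c)$ as $M_{\mathrm{nm}}$. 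Here $M_{\mathrm{nm}}$ is the threshold of \cref{thm:non-monotone-threshold}, which we assume characterizes exactly this infimum. The endpoints come from two asymptotic regimes.

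\emph{Limit $c\to1$.} The solution is close to the linearization $2sz''=-(\mu/2)z$, whose regular solution is $\sqrt s\,J_1(\sqrt{\mu s})$. Its first zero occurs at $\mu s=j_{1,1}^2$, so $\mathcal M_1(c)\to\Lambda\pi$ from both sides. Continuity of the zero follows from transversality of the zero of $J_1$.

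\emph{Limit $c\to0$.} The equation becomes $2sw''=-w'w$, whose solution is $w=4s/(8+s)$. This Liouville bubble has total mass $8\pi$, and $z$ stays positive up to a zero $s_1(c)\to\infty$ with $cs_1(c)\to8$, giving $\mathcal M_1\to8\pi$.

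\emph{Limit $c\to\infty$.} Here $z<0$ and $s_1(c)$ remains positive, so $\mathcal M_1(c)\to\infty$. For $\mathcal M_2$, one checks $s_2(c)<\infty$ only on a range of $c$, and that $\mathcal M_2(c)>\Lambda\pi$ there. The latter follows by Sturm comparison with the Bessel equation: the coefficient satisfies $z'+\mu/2\le\mu/2+\sup z'$, and a second sign change in $(0,1)$ forces the effective frequency above $j_{1,1}$, indeed above $j_{1,2}$ in a suitable sense. The same comparison also shows there are no sign changes at all for $M\le\Lambda\pi$ when $z\le0$, and none for $M\le8\pi$ when $z\ge0$. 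Together these give items (1), (3) and (5).

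The main obstacle is the strict monotonicity of $c\mapsto cs_1(c)$ on each branch; this is the uniqueness claim in (2) and (4). We would attack it by differentiating in $c$. Let $\phi=\partial_cz$ and let $\psi$ be the scaling generator $\psi = 2s z'+\dots$ coming from the invariance $s\mapsto\lambda s$. A degenerate critical point of $\mathcal M_1$ means some combination $\eta=\phi+\alpha\psi$ solves the linearized equation
\[
2s\eta''=-(z'+\mu/2)\eta-z\eta'
\]
with $\eta(0)=0$ and $\eta(s_1)=0$. We would then derive a contradiction by a Sturm/Picone argument using the explicit positive solution $z'$ of the differentiated equation, or an explicit Wronskian identity on $(0,s_1)$.

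If this direct argument fails, the fallback is a Pohozaev-type identity in the $s$-variable relating $\mathcal M_1'(c)$ to a signed integral of $z^2$. This would give $\mathcal M_1'<0$ on $(0,1)$ and $\mathcal M_1'>0$ on $(1,\infty)$, and hence exactly one equilibrium for each $M\in(8\pi,\Lambda\pi)$ and each $M>\Lambda\pi$.
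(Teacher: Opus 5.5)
Your architecture is the paper's, written in different coordinates. Your $c$ is $e^{-\beta}$, where $\beta=\log(u(0)/\mu)$ is the paper's shooting height, and your $z$ is $H/(2\pi)$ with $H=F-M\xi$. Both use a one-parameter shooting family, masses $\pi c\,s_k(c)$, Bessel and Liouville endpoint limits, and a parameter derivative solving the linearized equation. There are two genuine gaps, in the steps that carry the theorem.

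The first gap is the a priori mass restriction, which your Sturm comparison cannot deliver. Since $z'+\mu/2=w'=u/2$, the bound $z'+\mu/2\le\mu/2+\sup z'$ compares with the Bessel equation in which $\mu$ is replaced by $\max u$. So it only bounds $\max u$ from below, never $\mu$. Because $u$ has mean $\mu$, no pointwise comparison of the coefficient with $\mu/2$ holds in either direction. Moreover, a frequency ``above $j_{1,2}$'' is false for $\mu$ itself, since the paper shows $M_{\mathrm{nm}}<\pi j_{1,2}^2$.

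This matters. For $c<1$ the second family starts out decreasing, so $\mathcal M_2(c)>\mathcal M_1(c)$ only gives $\mathcal M_2>8\pi$. You therefore cannot exclude non-monotone equilibria with $M\le\Lambda\pi$, which is needed for uniqueness in (2), for (3), and for $M_{\mathrm{nm}}>\Lambda\pi$. Nor can you exclude nonconstant equilibria at $M=\Lambda\pi$.

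The paper obtains these restrictions from integral identities instead:
\begin{itemize}
\item Pairing $4\xi H''+\mu H+HH'/\pi=0$ with $\psi_0(\xi)=J_1(j_{1,1}\sqrt\xi)/\sqrt\xi$ gives $(M-\Lambda\pi)\int_0^1\psi_0H\dd\xi=\frac12\int_0^1\psi_0'H^2\dd\xi<0$.
\item Testing with $g/\xi$, where $g=H_-$, and using the sharp inequality $4\int_0^1|q'|^2\dd\xi\ge\Lambda\int_0^1q^2\xi^{-1}\dd\xi$ gives $(M-\Lambda\pi)\int_0^1g^2\xi^{-1}\dd\xi\ge\frac13\int_0^1g^3\xi^{-2}\dd\xi$. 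Hence $H\ge0$ when $M<\Lambda\pi$.
\end{itemize}
Likewise, $8\pi$ is a Liouville constant and cannot come out of a Bessel comparison. The paper imports the case $M\le8\pi$ from Horstmann--Lucia. It would also follow from the exact range of $\mathcal M_1$ on $(0,1)$, but only once monotonicity and the bubble limit are in hand.

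The second gap is the strict monotonicity of $c\mapsto cs_1(c)$. You correctly reduce it to showing that the regular solution $\eta$ of $2s\eta''+z\eta'+(z'+c/2)\eta=0$ with $\eta(0)=0$ does not vanish at $s_1(c)$. However, the proposed tool fails on the decreasing branch:
\begin{itemize}
\item $z'$ does not solve this equation, since differentiating the $z$-equation in $s$ adds a term $2\eta'$. It also changes sign on $(0,s_1)$.
\item More fundamentally, a Picone argument with one positive (super)solution on $(0,s_1)$ can only show that the whole Dirichlet spectrum lies below zero.
\end{itemize}
That is exactly the paper's proof on the increasing branch, via $\zeta=U^{1/3}Q$ with $L_F\zeta=-\frac{U^{1/3}Q^2}{3\pi\xi}\bigl(1+\frac Q{6\pi}\bigr)<0$. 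On the decreasing branch the linearization has a positive eigenvalue, so no such function exists. Nondegeneracy then requires counting zeros of the regular solution.

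The paper does this for $\mathcal Lh\coloneq h''-(1/s+Z')h'+e^Zh$ in the original shooting variables. It uses $T=s^2(e^Z-1)$, $\Psi=e^{Z/2}(-sZ')$ and $D=-sZ'$, which satisfy $\mathcal LT=-2D<0$, $\mathcal L\Psi>0$, and $\mathcal LD=(e^Z-1)D<0$ where $Z<0$. Green identities then show that $\partial_\beta\int_0^s te^{Z_\beta(t)}\dd t$ has exactly one zero, located beyond the zero of $Z$, and is negative at the first critical radius.

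Your Pohozaev fallback is unspecified, and its signs contradict your own limits. Since $\mathcal M_1\to8\pi$ as $c\to0$ and $\mathcal M_1\to\Lambda\pi$ as $c\to1$, you need $\mathcal M_1'>0$ on $(0,1)$; in fact this holds on both branches.

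There are also smaller errors:
\begin{itemize}
\item $z(\cdot;c)$ has infinitely many simple zeros for every $c\ne1$, so ``$s_2(c)<\infty$ only on a range of $c$'' is wrong. Even the existence of $s_1(c)$ needs an argument.
\item As $c\to\infty$, in fact $s_1(c)\to0$. You need a quantitative bound such as $cs_1(c)\ge4\log c$, not mere positivity of $s_1$.
\item Attaining $\min\mathcal M_2$ requires coercivity as $c\to0$.
\item The bubble limit requires a tail estimate for the mass outside the core.
\end{itemize}
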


\begin{remark}
\label{rem:decreasing-endpoints}
The decreasing equilibria link the two critical masses through their
endpoint limits:
\[
u_M^{\mathrm d}\to\Lambda\quad\text{as }M\uparrow\Lambda\pi,\qquad
u_M^{\mathrm d}(x)\dd x\rightharpoonup8\pi\delta_0
\quad\text{as }M\downarrow8\pi.
\]
The convergence to the homogeneous state is smooth, and the concentration
limit is understood weakly as a limit of measures; see \cref{thm:branch}.
\end{remark}

To compare the concentration of radial equilibria, we follow the
cumulative comparison approach of J\"ager--Luckhaus \cite{Jaeger1992}
and introduce the mass function
\[
F_f(\xi)\coloneq\int_{B_{\sqrt\xi}}f(x)\dd x
=2\pi\int_0^{\sqrt\xi}rf(r)\dd r,\qquad\text{for }\xi\in[0,1],
\]
for any nonnegative radial density $f\in L^1(\Omega)$.
The following cumulative concentration comparison has been used in \cite[Definition~5.1]{Kim2012} and
\cite[Definition~1.1]{Winkler2019}.

\begin{definition}[Concentration order]
  Let $f$ and $g$ be nonnegative radial densities in $L^1(\Omega)$.
    We say that $f$ is \emph{more concentrated} than $g$ in the cumulative sense, and write $f\succeq g$, if
$F_f(\xi)\ge F_g(\xi)$ for every $\xi\in[0,1]$, and write
$f\preceq g$ for the reverse inequality.
The additional condition $f\ne g$ excludes identical profiles.
\end{definition}

\begin{theorem}[Concentration order of radial equilibria]
\label{thm:least-concentrated}
Let $M>\Lambda\pi$, and let $(u_M^{\mathrm i},v_M^{\mathrm i})$ be the unique increasing equilibrium
of \cref{thm:equilibria}. Every nonnegative classical radial
equilibrium $(u,v)$ of the same mass satisfies
\begin{equation*}
F_u(\xi)\ge F_{u_M^{\mathrm i}}(\xi)\qquad\text{for }\xi\in[0,1].
\end{equation*}
If $u\not\equiv u_M^{\mathrm i}$, the inequality is strict for every $0<\xi<1$.
Thus $(u_M^{\mathrm i},v_M^{\mathrm i})$ is the unique least element in
the concentration order. All other radial equilibria of mass $M$ are
pairwise incomparable: for any two distinct such equilibria
$(u_1,v_1)$ and $(u_2,v_2)$, both different from
$(u_M^{\mathrm i},v_M^{\mathrm i})$, the difference $F_{u_1}-F_{u_2}$
takes both positive and negative values on $(0,1)$.
\end{theorem}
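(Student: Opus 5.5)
The plan is to recast everything in the cumulative mass variable and reduce the comparison to a zero-counting argument for a one-parameter shooting family. For a radial equilibrium $(u,v)$ set $\xi=r^2$ and $w(\xi)\coloneq\int_0^{\sqrt\xi}ru(r)\dd r=F_u(\xi)/(2\pi)$. Then $u(\sqrt\xi)=2w'(\xi)$. Integrating the elliptic equation over $B_{\sqrt\xi}$ gives $rv_r=\mu\xi/2-w$. The zero-flux relation $u_r=uv_r$ then becomes the cumulative mass equation
\[
4\xi w''=w'\,(\mu\xi-2w),\qquad w(0)=0,\qquad w(1)=\frac{M}{2\pi}.
\]
The singular point $\xi=0$ is regular in the sense that for each $a>0$ there is a unique local solution $w(\cdot;a)$ with $w(\xi;a)=a\xi+O(\xi^2)$, where $a=u(0)/2$. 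Hence radial equilibria of mass $M$ correspond bijectively to the roots $a$ of $w(1;a)=M/(2\pi)$, and we may use the shooting parametrization underlying \cref{thm:equilibria}.

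Next I will compare two distinct equilibria $w_1=w(\cdot;a_1)$ and $w_2=w(\cdot;a_2)$ with $a_1<a_2$ and the same endpoint value. Their difference $z=w_2-w_1$ satisfies the linear equation
\[
4\xi z''-(\mu\xi-2w_2)z'+2w_1'z=0,
\]
with $z(0)=z(1)=0$ and $z>0$ near $0$. The statement then becomes a count of the interior zeros of $z$.
\begin{itemize}
\item The least-element claim says that $z=w-w_M^{\mathrm i}$ has no zero in $(0,1)$, with $w_M^{\mathrm i}$ playing the role of $w_1$. This requires that $w_M^{\mathrm i}$ have the smallest shooting parameter $a$ among all equilibria of mass $M$.
\item The incomparability claim says that the difference of any two other equilibria has at least one interior zero.
\end{itemize}
I would organize both through the linearized function $\phi=\partial_a w(\cdot;a)$. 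It solves the same type of equation with $\phi(\xi)\sim\xi$ at $0$. A Sturm comparison between $z$ and $\phi$, interpolated over $a\in[a_1,a_2]$, shows that the number of interior zeros of $w(\cdot;a)-w(\cdot;a_1)$ can change only when a zero enters through $\xi=1$. By strict positivity near $\xi=0$, zeros cannot enter there.

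For the least element, I would use the scaling invariance $\xi\mapsto\lambda\xi$, $w\mapsto w(\lambda\xi)$, $\mu\mapsto\lambda\mu$ to relate the shooting map to the monotone branch. The increasing equilibrium is characterized by $w''>0$, which by the equation means $\mu\xi>2w$ throughout $(0,1)$. I expect this to give $\phi>0$ on $(0,1]$ along the interval $a\in(0,a^{\mathrm i}]$, so that $w(1;\cdot)$ is increasing there. Hence no root exists below $a^{\mathrm i}$, so $a^{\mathrm i}$ is minimal. In addition, $z$ cannot vanish in the interior for any larger root: at a first interior zero $\xi_0$ with $z'(\xi_0)<0$, the continuation argument forces $z(1)<0$, a contradiction. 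Strictness on $(0,1)$ follows from this nonvanishing, since a double zero would give $z\equiv0$ by ODE uniqueness.

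For incomparability, suppose $w_1\le w_2$ on $[0,1]$ with $a_1<a_2$, both different from $w_M^{\mathrm i}$. Applying the Sturm argument to the pair $(w_M^{\mathrm i},w_1)$ and to the pair $(w_M^{\mathrm i},w_2)$, each difference is positive inside. I would then count the zeros of $\phi$ at the root parameters: the decreasing and non-monotone roots lie past a zero of $w(1;\cdot)-M/(2\pi)$ where $\phi(1)$ has changed sign. So an ordered pair beyond $a^{\mathrm i}$ would contradict the alternation of sign of $\phi(1)$ between consecutive roots.

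The main obstacle is this zero-counting step. The zeroth-order coefficient $2w_1'>0$ has the wrong sign for a direct maximum principle, so one genuinely needs the Sturm/oscillation structure tied to the shooting map and the precise knowledge from \cref{thm:equilibria} of how roots of $w(1;a)=M/(2\pi)$ are arranged. I would first try to prove that consecutive roots have $\partial_aw(1;a)$ of alternating sign and that the number of interior zeros of $\phi$ increases by one across each root; both the least-element and incomparability statements then follow.
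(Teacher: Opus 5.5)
\textbf{There is a genuine gap in the decisive step.} Your reformulation is sound as far as it goes. The cumulative equation $4\xi w''=w'(\mu\xi-2w)$ and the linear equation for $z=w_2-w_1$ are correct. So is the observation that interior zeros of $z_a\coloneq w(\cdot;a)-w^{\mathrm i}$ can enter or leave $(0,1)$ only through $\xi=1$, hence only at roots, since $z_a(1)=w(1;a)-\mu/2$. Minimality of $a^{\mathrm i}$ among roots is also true: for $\beta<\beta^{\mathrm i}<0$ one has $R_{\beta,k}\ge R_{\beta,1}>\sqrt\mu$ by \cref{prop:increasing-mass}.

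The problem is what happens at roots. Between roots, $z_a$ does acquire interior zeros. Using $w(1;a)-\mu/2=-\sqrt\mu\,Z_\beta'(\sqrt\mu)$ with $a=\mu e^\beta/2$, take $\Lambda<\mu<j_{1,2}^2$ and small $\beta>0$. Then $R_{\beta,1}<\sqrt\mu<R_{\beta,2}$, so $w(1;a)<\mu/2$ just past the constant root, and $z_a$ has an interior zero. When $a$ reaches a later root $a_k$, whether that zero exits or stays is decided by the crossing direction of $w(1;\cdot)$ together with the sign of $z_{a_k}'(1)=\tfrac12\bigl(u_k(1)-u^{\mathrm i}(1)\bigr)$. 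If $u_k(1)>u^{\mathrm i}(1)$, the zero persists and $u_k$ crosses $u^{\mathrm i}$. That sign is essentially the conclusion you want; the paper obtains $u(1)<u^{\mathrm i}(1)$ only afterwards, in its geometric remark.

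Your sentence ``the continuation argument forces $z(1)<0$'' gives no mechanism for this sign. The auxiliary facts you propose are unproven global properties of the fixed-$\mu$ shooting curve: alternating signs of $\partial_aw(1;\cdot)$ at consecutive roots, and the zero count of $\phi$ rising by one across each root. They fail as stated at degenerate roots. For example, at $M=M_{\mathrm{nm}}$ a minimizer of $M_{\beta,2}$ gives a root where $w(1;\cdot)-\mu/2$ does not change sign, and the higher families are not injective. Even granting these facts, you do not show how they yield the ordering. The incomparability paragraph has the same defect.

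\textbf{What the paper uses instead} is local linear information at the reference equilibrium plus the sign of the quadratic term, with no zero counting along the shooting curve.

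For minimality, set $D=F_u-F_{u^{\mathrm i}}$, which solves $L_ID+\frac1\pi DD'=0$. Test with $m\,g$, where $g=D_-$ and $m=(4\xi U)^{-1}$, to get $\mathfrak a(g,g)=\frac1{3\pi}\int_0^1m'g^3\dd\xi$. The left side is strictly positive for $g\ne0$: the strict positive supersolution $\zeta=U^{1/3}Q$ gives $\mathfrak a(h,h)\ge\int_0^1cmh^2\dd\xi$. The right side is nonpositive because $m'<0$. Hence $D\ge0$, and ODE uniqueness gives strictness. This is how the wrong-sign zeroth-order term you flagged is overcome: by the spectral gap of $L_{F^{\mathrm i}}$, not by a maximum principle or oscillation.

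For incomparability, let $F$ be decreasing, non-monotone, or constant, with $M>\Lambda\pi$. Its principal adjoint eigenfunction satisfies $\psi_F>0$, $\psi_F'<0$, with $\sigma_F>0$. Any other equilibrium $S\ge F$ would give a constant moment $A=\int_0^1\psi_F(S-F)\dd\xi>0$ with $0=A'\ge\sigma_FA+c_FA^2>0$, which is impossible. So the lower member of any ordered pair of equilibria is strictly increasing, hence equals $u^{\mathrm i}$.
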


\begin{remark}
For $8\pi<M<\Lambda\pi$, the strict radial decrease of $u_M^{\mathrm d}$
gives
\begin{equation*}
F_{u_M^{\mathrm d}}(\xi) > F_\mu(\xi)=M\xi\qquad\text{for }\xi\in(0,1).
\end{equation*}
Thus the decreasing equilibrium is more concentrated than the constant
equilibrium of the same mass.
Together with \cref{thm:equilibria,thm:least-concentrated}, this identifies
the least concentrated radial equilibrium at every mass $M>0$.
For the subsequent dynamical statements, set
\begin{equation}\label{eq:least-reference-equilibrium}
\begin{gathered}
(\underline u_M,\underline v_M)\coloneq
\begin{cases}
(\mu,0),&0<M\le\Lambda\pi,\\
(u_M^{\mathrm i},v_M^{\mathrm i}),&M>\Lambda\pi,
\end{cases}\\
\underline F_M(\xi)\coloneq F_{\underline u_M}(\xi)\qquad \text{for }\xi\in[0,1].
\end{gathered}
\end{equation}
\end{remark}

We next use these stationary profiles as references for blow-up criteria.

\subsection*{Finite-time blow-up: mass thresholds and concentration criteria}

Due to the pioneering work of Childress--Percus~\cite{Childress1981}, J\"ager--Luckhaus~\cite{Jaeger1992} and Nagai \cite{Nagai1995},
it is well known that
the competition between diffusion and chemotactic attraction in two
dimensions led to a threshold population for chemotactic collapse:
all radial solutions with $M<8\pi$ remain globally bounded,
 while finite-time blow-up data exist for every $M>8\pi$.
Mao--Liu--Li \cite{Mao2026a} established global boundedness and
exponential stabilization also at the critical mass $M=8\pi$.
Thus $8\pi$ separates universal radial boundedness from the possibility
of finite-time collapse.

For supercritical masses, several criteria quantify the additional
concentration needed for blow-up.
Nagai \cite{Nagai1995, Nagai2001} showed, for each
$M>8\pi$, blow-up when the initial second moment about any prescribed interior point is sufficiently small.
In our radial setting, it reads
\begin{equation*}
\mathcal I(u_0)\coloneq \int_\Omega |x|^2u_0(x)\dd x \leq \frac{M-8\pi}{2}.
\end{equation*}
Cie\'slak--Lauren\c{c}ot
\cite{Cieslak2009}
developed a nonlinear virial criterion to detect
blow-up when $M>4\pi(p+1)$ and the generalized moment
\begin{equation*}
\mathcal J_p(u_0)\coloneq \frac1p\int_0^1
\left(\int_r^1\rho u_0(\rho)\dd\rho\right)^p r\dd r,
\qquad p\in(1,\infty),
\end{equation*}
is sufficiently small.
Winkler\cite{Winkler2019} established a spatially localized alternative: for $8\pi<m_0<M$ there is
$r_0=r_0(m_0,M)\in(0,1)$ such that the solution blows up in finite time whenever
\begin{equation*}
F_{u_0}(r_0^2)>m_0.
\end{equation*}
Mao--Liu--Li \cite{Mao2025} used concentrating subsolutions to obtain
a cumulative mass criterion for finite-time blow-up at every $M>8\pi$.

The present paper adds an eigenfunction-weighted moment criterion
relative to the unstable radial equilibria in \cref{thm:equilibria}.

\begin{theorem}[Eigenfunction-weighted moment criterion for blow-up]
\label{thm:eigenfunction-blowup}
Let $(u_*,v_*)$ be a radial equilibrium of mass $M$ belonging to one
of the following classes:
\begin{enumerate}
\item the decreasing equilibrium $(u_M^{\mathrm d},v_M^{\mathrm d})$
when $8\pi<M<\Lambda\pi$;
\item the constant equilibrium $(\mu,0)$ when $M\ge\Lambda\pi$;
\item any non-monotone radial equilibrium, necessarily with
$M\ge M_{\mathrm{nm}}>\Lambda\pi$.
\end{enumerate}
Set $F_*\coloneq F_{u_*}$ and $H_*(\xi)\coloneq F_*(\xi)-M\xi$
for $\xi\in[0,1]$.
Let $\psi_*$ be the positive principal eigenfunction of the adjoint
cumulative linearization, satisfying
\begin{equation*}
4\xi\psi_*''+\left(8-\frac{H_*}{\pi}\right)\psi_*'
+\mu\psi_*=\sigma_*\psi_*\quad\text{for }\xi\in(0,1),\qquad \psi_*(1)=0,
\end{equation*}
with regularity at $\xi=0$.
Then $\psi_*>0$ on $[0,1)$ and $\psi_*'<0$ on $[0,1]$.
For a nonconstant reference equilibrium, $\sigma_*>0$.
For the constant reference equilibrium $u_*\equiv\mu$, with
$M\ge\Lambda\pi$, one may take
\[
\psi_*(\xi)\coloneq\frac{J_1(j_{1,1}\sqrt\xi)}{\sqrt\xi}
\quad\text{for }\xi\in(0,1],\qquad
\psi_*(0)=\frac{j_{1,1}}2,\qquad
\sigma_*=\mu-\Lambda\ge0.
\]
Set
\[
c_*\coloneq\left(2\pi\int_0^1\frac{\psi_*^2}{-\psi_*'}\dd\xi\right)^{-1}>0.
\]
If a nonnegative radial datum $u_0\in C(\overline\Omega)$ of mass $M$
satisfies
\begin{equation}\label{eq:eigenfunction-blowup-criterion}
\mathcal A_*(u_0)\coloneq\int_0^1\psi_*(\xi)
\bigl(F_{u_0}(\xi)-F_*(\xi)\bigr)\dd\xi>0,
\end{equation}
then the corresponding solution of \eqref{eq:system} blows up in finite
time, with
\begin{equation}\label{eq:eigenfunction-blowup-time}
T_{\max}\le
\begin{cases}
\displaystyle\frac1{\sigma_*}
\log\left(1+\frac{\sigma_*}{c_*\mathcal A_*(u_0)}\right),
&\sigma_*>0,\\[2ex]
\displaystyle\frac1{c_*\mathcal A_*(u_0)},&\sigma_*=0.
\end{cases}
\end{equation}
The case $\sigma_*=0$ occurs precisely when $M=\Lambda\pi$ and
$u_*\equiv\Lambda$.
\end{theorem}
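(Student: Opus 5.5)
We work throughout with the cumulative formulation of J\"ager--Luckhaus. For the radial solution, set $F(\xi,t)\coloneq F_{u(\cdot,t)}(\xi)$. Since $F_\xi=\pi u(\sqrt\xi,t)$ and $\int_\Omega v=0$, the system \eqref{eq:system} becomes
\[
F_t=4\xi F_{\xi\xi}+\frac{F-M\xi}{\pi}\,F_\xi,\qquad F(0,t)=0,\quad F(1,t)=M,
\]
and $F_*$ solves the stationary version. (I will recheck the constants against the paper's derivation of the cumulative mass equation.) Put $W\coloneq F-F_*$, so that $W(0,t)=W(1,t)=0$ and $|W|\le M$. Writing $H=F-M\xi=H_*+W$, we have
\[
HF_\xi-H_*F_*'=H_*W_\xi+WF_*'+WW_\xi,
\]
and hence
\[
W_t=4\xi W_{\xi\xi}+\frac{H_*}{\pi}W_\xi+u_*W+\frac1\pi WW_\xi,
\]
where $u_*$ is read as $u_*(\sqrt\xi)$ and we used $F_*'=\pi u_*$, $H_*'=\pi u_*-M$.

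The core step tests this equation against $\psi_*$, with $\mathcal A(t)\coloneq\int_0^1\psi_*W\dd\xi$. We integrate by parts twice in the diffusion term and once in the drift term. The boundary terms vanish because $W=0$ at both ends, $\psi_*(1)=0$, and the factor $\xi$ kills the term at $0$. The drift term gives
\[
-\int_0^1 W\Bigl(\frac{H_*}{\pi}\psi_*'+(u_*-\mu)\psi_*\Bigr)\dd\xi,
\]
and the $u_*W$ term cancels the $u_*$ part. The linear terms therefore collapse to
\[
\int_0^1 W\Bigl(4\xi\psi_*''+\bigl(8-\tfrac{H_*}{\pi}\bigr)\psi_*'+\mu\psi_*\Bigr)\dd\xi=\sigma_*\mathcal A .
\]
The quadratic term equals $\frac1{2\pi}\int_0^1(-\psi_*')W^2\dd\xi\ge0$. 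By Cauchy--Schwarz,
\[
\mathcal A^2\le\int_0^1\frac{\psi_*^2}{-\psi_*'}\dd\xi\int_0^1(-\psi_*')W^2\dd\xi,
\]
so the quadratic term is at least $c_*\mathcal A^2$. This yields the Riccati inequality
\[
\mathcal A'\ge\sigma_*\mathcal A+c_*\mathcal A^2 .
\]
With $\mathcal A(0)=\mathcal A_*(u_0)>0$, comparison with the explicit solution shows that $\mathcal A$ would become infinite before the time on the right of \eqref{eq:eigenfunction-blowup-time}. But $|\mathcal A|\le M\int_0^1\psi_*\dd\xi$ stays bounded, so $T_{\max}$ obeys that bound. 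Justifying the differentiation of $\mathcal A$ on $[0,T_{\max})$ is routine from parabolic regularity of $u$ together with $\xi\psi_*'$ bounded near $0$.

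For the spectral facts, the constant case is explicit. With $\psi=J_1(j_{1,1}\sqrt\xi)/\sqrt\xi$, the substitution $r=\sqrt\xi$ turns $4\xi\psi''+8\psi'$ into the radial operator whose eigenvalue is $-\Lambda$. Hence $\sigma_*=\mu-\Lambda$, which vanishes exactly when $M=\Lambda\pi$. For the general case, I would obtain the principal eigenpair from the singular Sturm--Liouville problem: the weight $\xi^{2}e^{-\int H_*/(4\pi\xi)}$ makes the operator self-adjoint, and $\xi=0$ is a regular singular point with one regular solution. Positivity of $\psi_*$ on $[0,1)$ is standard for the principal eigenfunction, and the Hopf-type argument at $\xi=1$ gives $\psi_*'(1)<0$. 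Together with $\psi_*\sim C(1-\xi)$, this makes $c_*$ finite and positive.

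For $\psi_*'<0$, regularity at $0$ gives $8\psi_*'(0)=(\sigma_*-\mu)\psi_*(0)$, since $H_*(0)=0$. At any interior critical point, $4\xi\psi_*''=(\sigma_*-\mu)\psi_*$. So if $\sigma_*<\mu$, we get $\psi_*'(0)<0$ and every critical point is a strict local maximum. A function that starts out decreasing cannot reach a local maximum without first passing through a local minimum, which is impossible here, so $\psi_*'<0$ throughout.

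I expect the main obstacle to be the two eigenvalue inequalities $0<\sigma_*<\mu$ for the nonconstant references. The upper bound should follow from a Rayleigh-quotient comparison, or by integrating the equation against $\xi^0$ and exploiting $\psi_*(1)=0$. The lower bound $\sigma_*>0$ is the linear instability of the decreasing and non-monotone equilibria. I would take it from the paper's spectral classification, which uses that $F_*'$-type perturbations along the shooting family produce a sign change. Concretely, I would first try the test function $\xi\mapsto H_*(\xi)$ (or $\xi F_*'$) in the Rayleigh quotient, since it solves the linearized equation up to a boundary defect.
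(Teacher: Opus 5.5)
Your Riccati part is essentially the paper's argument: the equation for $W$, the adjoint pairing giving $\sigma_*\mathcal A$, the quadratic term $\frac1{2\pi}\int_0^1(-\psi_*')W^2\dd\xi$, Cauchy--Schwarz, and comparison against the bound $|\mathcal A|\le M\int_0^1\psi_*\dd\xi$. Your critical-point argument for $\psi_*'<0$, using $\sigma_*<\mu$, validly replaces the paper's identity $4\xi a(\xi)\psi_*'(\xi)=(\sigma_*-\mu)\int_0^\xi a(s)\psi_*(s)\dd s$. Here the symmetrizing weight is $a(\xi)=\xi e^{v_*(\sqrt\xi)}$, not $\xi^2e^{\cdots}$; the latter is the divergence-form coefficient up to a factor $4$.

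The genuine gap is $\sigma_*>0$ for nonconstant references, which you leave open. This is not cosmetic, for three reasons. It is asserted in the statement. It underlies the characterization of when $\sigma_*=0$. Without it, $\mathcal A_*(u_0)>0$ alone is not enough: for $\sigma_*<0$, your Riccati inequality forces blow-up only when $\mathcal A_*(u_0)>-\sigma_*/c_*$.

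Your fallback routes do not close the gap.

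\emph{Spectral classification.} At best this covers the decreasing branch. A positive eigenvalue of $L_{F_*}\phi=4\xi\phi''+\frac{H_*}\pi\phi'+u_*(\sqrt\xi)\phi$ transfers to $L_{F_*}^*$ via $L_{F_*}(a\chi)=a\,L_{F_*}^*\chi$. For non-monotone equilibria nothing of this kind is available.

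\emph{The function $\xi F_*'$.} It satisfies $L_{F_*}(\xi F_*')=\mu\,\xi F_*'$ but does not vanish at $\xi=1$. Green's identity with $\psi_*$ then gives $(\mu-\sigma_*)\int_0^1\psi_*\,\xi F_*'\dd\xi=-4\pi u_*(1)\psi_*'(1)>0$, which is only the upper bound again.

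\emph{$H_*$ as a trial function.} Inserting $H_*$, or its adjoint counterpart $H_*/a$, into the Rayleigh quotient gives for instance the numerator
\[
\frac1{3\pi}\int_0^1\frac{H_*^3}{a\xi}\Bigl(1-\frac{H_*}{4\pi}\Bigr)\dd\xi .
\]
This has no sign. $H_*$ changes sign for non-monotone equilibria. Along the decreasing branch, $H_*\to8\pi(1-\xi)$ as $M\downarrow8\pi$, so $H_*>4\pi$ on a large set.

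The missing idea is to pair the actual eigenfunction $\psi_*$ with $H_*$ through Green's identity, instead of using $H_*$ as a trial function. The stationary equation gives $L_{F_*}H_*=\frac1\pi H_*H_*'$; this is your $W$-equation at the stationary solution $\U\equiv M\xi$. Take a connected component $(\alpha,\eta)$ of $\{H_*>0\}$, which exists in classes (1) and (3). On it,
\[
\sigma_*\int_\alpha^\eta\psi_*H_*\dd\xi
=-\frac1{2\pi}\int_\alpha^\eta\psi_*'H_*^2\dd\xi
+4\alpha\psi_*(\alpha)H_*'(\alpha)-4\eta\psi_*(\eta)H_*'(\eta).
\]
With $\psi_*'<0$ already in hand, the first term on the right is strictly positive. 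The endpoint terms are nonnegative, because $H_*'(\alpha)\ge0\ge H_*'(\eta)$ and $\psi_*(1)=0$. The integral on the left is positive, so $\sigma_*>0$. Restricting to a single positivity component is exactly what handles the sign-changing $H_*$ of non-monotone equilibria.
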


\begin{remark}
The eigenfunction-weighted moment method used by Souplet--Winkler
\cite[Proposition~3.4(i)]{Souplet2019} and subsequently by
Wang--Winkler \cite[Theorem~1.1(ii)]{Wang2026} is extended in
\cref{thm:eigenfunction-blowup} to every radially unstable equilibrium
of \eqref{eq:system} on the unit disk.
For each reference equilibrium, we choose the moment by pairing the
cumulative-mass deviation from that equilibrium with the positive
principal eigenfunction of its adjoint cumulative linearization.
\end{remark}

\begin{remark}
Condition \eqref{eq:eigenfunction-blowup-criterion} allows
$F_{u_0}-F_*$ to change sign, so it also covers cumulative profiles
that cross the reference equilibrium.
For a concrete family of blow-up data, fix any nonnegative radial
$\varphi\in C_c(\R^2)$ with
$\operatorname{supp}\varphi\subset B_1(0)$ and
$\int_{\R^2}\varphi\dd x=1$, and set
\[
u_{0,\varepsilon}(x)\coloneq\frac{M}{\varepsilon^2}
\varphi\!\left(\frac{x}{\varepsilon}\right)\qquad
\text{for }x\in\Omega\text{ and }\varepsilon\in(0,1).
\]
These are nonnegative continuous radial data of mass $M$, supported in
$\overline{B_\varepsilon(0)}$.
For every $0<\xi<1$, $F_{u_{0,\varepsilon}}(\xi)=M$ once
$\varepsilon^2\le\xi$. Dominated convergence therefore gives
\[
\lim_{\varepsilon\downarrow0}\mathcal A_*(u_{0,\varepsilon})
=\int_0^1\psi_*(\xi)\bigl(M-F_*(\xi)\bigr)\dd\xi>0,
\]
since $F_*(\xi)<M$ for $0\le\xi<1$.
Thus \cref{thm:eigenfunction-blowup} gives finite-time blow-up for all
sufficiently small $\varepsilon>0$.
\end{remark}

\subsection*{Long-time dynamics: convergence and stability}

A complementary line of research concerns attraction to, and instability
of, spatially homogeneous equilibria.
For Keller--Segel systems with signal degradation, Winkler
\cite{Winkler2010} developed self-mapping arguments for local exponential
stability of constant equilibria, which Cao \cite{Cao2015} extended to
critical Lebesgue spaces.
Moving beyond small background densities, Jiang \cite{Jiang2020} obtained local exponential
stability in these spaces using spectral methods.
For the stability--instability dichotomy of constant equilibria in the
Cauchy problem on $\R^n$, see \cite{Cygan2021,Ahn2026}.

For the J\"ager--Luckhaus coupling, the linearized density equation at
$(\mu,0)$, on the space of radial fixed-mass perturbations, is
\[
h_t=\Delta h+\mu h,\qquad
\partial_\nu h=0,\qquad \int_\Omega h\dd x=0.
\]
Its growth rates are $\mu-j_{1,k}^2$, $k\ge1$, so the radial spectrum
singles out the mass $\Lambda\pi$.
Recently, Wang--Winkler \cite{Wang2026} identified the sharp
secondary threshold for stability of the constant equilibrium observed by Winkler~\cite{Winkler2019}: for $M<\Lambda\pi$ the constant equilibrium is
locally exponentially stable under radial fixed-mass perturbations in
$L^\infty$, whereas for $M>\Lambda\pi$ every nonconstant radial datum
$u_0\succeq\mu$ produces finite-time blow-up.
The equality case $M=\Lambda\pi$ is included in \cref{thm:eigenfunction-blowup},
which shows that it is the minimal mass for the
loss of radial stability of the constant state.

These two lines of research distinguish the mass permitting collapse
from the mass at which homogeneity loses radial stability.
In particular, when $8\pi<M<\Lambda\pi$, sufficiently concentrated
data blow up even though all sufficiently small radial fixed-mass
perturbations of the constant state decay.
The equilibrium classification above identifies the stationary profiles
that separate ordered relaxation data from ordered blow-up data.
We now describe their roles in concentration thresholds, convergence,
and radial stability.

For ordered initial data, \cref{thm:eigenfunction-blowup}, together
with cumulative comparison, yields the following concentration
threshold property, beyond spectral stability.

\begin{corollary}[Equilibria as concentration thresholds]
\label{thm:concentration-thresholds}
Let $(u_*,v_*)$ be a radial equilibrium of mass $M$ in one of the
three classes of \cref{thm:eigenfunction-blowup}.
Then $(u_*,v_*)$ is a concentration threshold in the following sense:
for radial data $u_0$ of mass $M$, $u_0\succeq u_*$ with $u_0\ne u_*$
gives finite-time blow-up, whereas $u_0\preceq u_*$ generates a global
uniformly bounded solution.
Equality $u_0=u_*$ gives the stationary solution itself.
\end{corollary}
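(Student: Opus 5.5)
Two halves. For the blow-up half we appeal directly to \cref{thm:eigenfunction-blowup}. For the bounded half we argue by comparison in the cumulative mass variable.

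\emph{Blow-up half.} Assume $u_0\succeq u_*$ and $u_0\ne u_*$. Then $F_{u_0}-F_*\ge0$ on $[0,1]$. Both functions are continuous, since $u_0$ is continuous and radial. Since $F_{u_0}\not\equiv F_*$ (otherwise differentiating in $\xi$ would give $u_0=u_*$), the difference is positive on some open subinterval of $(0,1)$. By \cref{thm:eigenfunction-blowup}, $\psi_*>0$ on $[0,1)$. Hence
\[
\mathcal A_*(u_0)=\int_0^1\psi_*(F_{u_0}-F_*)\dd\xi>0,
\]
and \eqref{eq:eigenfunction-blowup-criterion} yields finite-time blow-up together with the bound \eqref{eq:eigenfunction-blowup-time}. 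No further work is needed here.

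\emph{Bounded half.} Write the cumulative mass $F(\xi,t)\coloneq F_{u(t)}(\xi)$. In the J\"ager--Luckhaus setting on the disk it satisfies the degenerate parabolic equation
\[
F_t=4\xi F_{\xi\xi}+\frac{1}{\pi}\,(F-M\xi)\,F_\xi,\qquad F(0,t)=0,\quad F(1,t)=M .
\]
The stationary profile $F_*$ solves this equation. The stated adjoint operator, with first-order coefficient $8-H_*/\pi$, is consistent with this form after the change of unknown. This equation obeys a comparison principle, as in J\"ager--Luckhaus and \cite{Winkler2019}. It is quasilinear but first order in the nonlinearity, and $F_\xi=\pi u\ge0$ is bounded on any interval of existence. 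Therefore $F(\cdot,0)\le F_*$ implies $F(\cdot,t)\le F_*$ for all $t<T_{\max}$.

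Next we convert this cumulative upper bound into an $L^\infty$ bound on $u$. Since $F(\xi,t)\le F_*(\xi)\le C\xi$ near $\xi=0$, with $C=\pi\|u_*\|_\infty$, the mass in small balls is controlled uniformly in time. In particular, for any $\eta>0$ we have $\sup_t F(r^2,t)\le\eta$ for $r$ small, so no mass accumulates at the origin. Away from the origin, radial solutions are uniformly bounded by standard radial estimates: $v_r=-F/(2\pi r)+\mu r/2$ is bounded there.

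Near the origin we run a local $L^p$ energy argument. The bound $F\le C\xi$ gives $|\nabla v|\le C'r$ near $0$ uniformly in $t$, so the drift is Lipschitz-bounded uniformly. A Moser-type iteration, or the localized criterion of \cite{Winkler2019} used in contrapositive form, then yields $\sup_t\|u(t)\|_\infty<\infty$. The extensibility criterion gives $T_{\max}=\infty$, and the solution is uniformly bounded.

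The main obstacle is making the comparison principle rigorous at the degenerate endpoint $\xi=0$ when the initial ordering is non-strict. We would handle it by the standard weak-comparison argument for $F_t=4\xi F_{\xi\xi}+\pi^{-1}(F-M\xi)F_\xi$. Take the difference $w=F-F_*$ and test with $w_+$ (or use a maximum-principle argument on $[\delta,1]$ with $\delta\downarrow0$). Both $F$ and $F_*$ vanish at $0$ with bounded derivative, so the boundary terms disappear, and Gronwall's inequality gives $w_+\equiv0$.
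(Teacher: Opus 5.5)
Your proposal is correct and follows the paper's own proof. The blow-up half is exactly the positivity of $\mathcal A_*(u_0)$ coming from $\psi_*>0$. The bounded half is cumulative comparison $0\le\U\le F_*\le C\xi$ followed by the step from a cumulative bound to an $L^\infty$ bound: the paper simply cites this step as \cref{lem:boundedness}, while you sketch it through the uniform drift bound $|v_r|\le C'r$. That bound holds on all of $(0,1]$, so your split into a region near the origin and a region away from it is unnecessary. One caution: applying Winkler's localized blow-up criterion ``in contrapositive form'' cannot give boundedness, so rely on the bounded-drift semigroup or Moser argument instead.
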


\begin{remark}
This dichotomy of concentration thresholds, especially for $8\pi<M<\Lambda\pi$,
quantitatively clarifies the need for additional initial concentration in earlier
blow-up criteria.
Indeed, \cref{rem:decreasing-endpoints} gives
$u_M^{\mathrm d}(x)\dd x\rightharpoonup8\pi\delta_0$ as
$M\downarrow8\pi$, so the threshold for ordered blow-up itself concentrates
at the origin as the mass approaches $8\pi$ from above.
\end{remark}

\begin{theorem}[Convergence for less concentrated data]
\label{thm:ordered-convergence}
Let $u$ solve \eqref{eq:system} with mass $M$. In each of the following
cases, the solution is global and uniformly bounded and satisfies
\begin{equation}\label{eq:increasing-limit}
\norm{u(t)-\underline u_M}_\infty
+\norm{v(t)-\underline v_M}_{W^{1,\infty}}\to0
\qquad\text{as }t\to\infty.
\end{equation}
\begin{enumerate}
\item $0<M\le8\pi$, with arbitrary nonnegative radial datum.
\item $8\pi<M<\Lambda\pi$ and
$u_0\preceq u_M^{\mathrm d}$, with $u_0\ne u_M^{\mathrm d}$.
\item $M=\Lambda\pi$ and $u_0\preceq\mu$.
\item $M>\Lambda\pi$ and $u_0\preceq u_*$, with $u_0\ne u_*$, where
$u_*=\mu$ or $u_*$ is any non-monotone radial equilibrium of mass $M$.
\end{enumerate}
\end{theorem}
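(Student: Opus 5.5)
We will work throughout with the cumulative mass $F(\xi,t)\coloneq F_{u(t)}(\xi)$. In the radial setting it solves the scalar degenerate parabolic equation
\[
F_t=4\xi F_{\xi\xi}+\frac1\pi\,(F-M\xi)F_\xi,\qquad F(0,t)=0,\quad F(1,t)=M .
\]
This equation has a comparison principle, which is the J\"ager--Luckhaus cumulative comparison already used for \cref{thm:concentration-thresholds}. Stationary solutions are exactly the cumulative profiles $F_*$ of radial equilibria. The plan has three steps: confine the solution below a stationary barrier, obtain a uniform bound, and then identify the limit through a Lyapunov functional combined with the classification of \cref{thm:equilibria} and the ordering of \cref{thm:least-concentrated}.

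\emph{Step 1 (ordering and boundedness).} Case (i) is covered by the cited results for $M<8\pi$ and by \cite{Mao2026a} for $M=8\pi$; only the convergence needs proof there. In cases (ii)--(iv) the hypothesis $u_0\preceq u_*$ and comparison give $F(\cdot,t)\le F_*$ for all $t$. \Cref{thm:concentration-thresholds} already gives that the solution is global and uniformly bounded. We will also use the strong maximum principle: if $u_0\ne u_*$, then $F(\cdot,t)<F_*$ on $(0,1)$ for every $t>0$. In case (iii), with $u_*=\mu$ and $u_0=\mu$, the statement is trivial.

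\emph{Step 2 (relative compactness and $\omega$-limit set).} Uniform $L^\infty$ bounds and parabolic regularity make the orbit relatively compact in $C(\overline\Omega)$. The free energy
\[
\mathcal E(u)=\int_\Omega u\log u-\tfrac12\int_\Omega uv
\]
is nonincreasing, with dissipation $\int_\Omega u|\nabla(\log u-v)|^2$. Since $\mathcal E$ is bounded below along bounded orbits, LaSalle's principle shows that the $\omega$-limit set consists of radial equilibria of mass $M$. Every such limit $w$ satisfies $F_w\le F_*$.

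\emph{Step 3 (identifying the limit).} By \cref{thm:least-concentrated}, any equilibrium $w\ne u_M^{\mathrm i}$ is incomparable with every other non-least equilibrium. Hence in case (iv) the only equilibria with $F_w\le F_*$ are $u_*$ itself and $\underline u_M$. For $u_*=\mu$ with $M>\Lambda\pi$ the same conclusion holds, because $F_{u_M^{\mathrm i}}<M\xi$ and any other equilibrium lies strictly above $F_{u_M^{\mathrm i}}$; we must still rule out $\mu$ itself, and also any decreasing or non-monotone equilibrium lying below $\mu$. In case (ii), the set $\{w:F_w\le F_{u_M^{\mathrm d}}\}$ consists of $\mu$ and $u_M^{\mathrm d}$ by \cref{thm:equilibria}. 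In case (iii) the only equilibrium is $\mu=\underline u_M$, so we are done. The remaining task is to exclude $\omega$-limits equal to $u_*$ when $u_0\ne u_*$.

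Here we will use the eigenfunction moment of \cref{thm:eigenfunction-blowup} with the opposite sign. Set $\mathcal A(t)\coloneq\int_0^1\psi_*(F-F_*)\dd\xi$; by Step 1, $\mathcal A(t)<0$ for $t>0$. Repeating the computation behind \cref{thm:eigenfunction-blowup} (the quadratic term has a sign) should give $\mathcal A'\le\sigma_*\mathcal A-c\,\mathcal A^2$ or a comparable one-sided inequality. Since $\sigma_*\ge0$, this makes $\mathcal A$ nonincreasing and strictly negative, so $\mathcal A(t)\le\mathcal A(t_0)<0$ and $F(t)$ cannot converge to $F_*$. The $\omega$-limit set is connected and is contained in the discrete two-point set $\{u_*,\underline u_M\}$. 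Excluding $u_*$ therefore leaves $\{\underline u_M\}$, which gives convergence of $u$ in $L^\infty$; convergence of $v$ in $W^{1,\infty}$ then follows from elliptic estimates.

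The main obstacle is getting the sign of the quadratic remainder in $\mathcal A'$ to work when $F\le F_*$. The remainder should be $\frac1{2\pi}\int\psi_*'\,(F-F_*)^2$-type, which is nonpositive because $\psi_*'<0$, and this gives $\mathcal A'\le\sigma_*\mathcal A\le0$ when $\mathcal A<0$. Should it fail, the fallback is a stationary-barrier argument: use the strict separation $F(t_0)<F_*$ to insert a stationary or subsolution barrier that stays uniformly away from $F_*$.

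For case (i) with $8\pi<M$ excluded, the only equilibrium is $\mu$ (by \cref{thm:equilibria}), so Step 2 concludes directly.
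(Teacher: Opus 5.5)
\textbf{Gap: the step that excludes $u_*$ as the limit rests on a sign error.} Your outline of ordering, boundedness, LaSalle, the two-point candidate set and connectedness is sound. The problem is the step that carries the proof. With $P\coloneq F-F_*$, the moment identity behind \cref{thm:eigenfunction-blowup} is
\[
\mathcal A'(t)=\sigma_*\mathcal A(t)-\frac1{2\pi}\int_0^1\psi_*'P^2\dd\xi .
\]
Since $\psi_*'<0$, the quadratic remainder is \emph{nonnegative}, whatever the sign of $P$. This is exactly what yields $\mathcal A'\ge\sigma_*\mathcal A+c_*\mathcal A^2$ and blow-up when $\mathcal A>0$. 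The same computation therefore cannot also give $\mathcal A'\le\sigma_*\mathcal A-c\mathcal A^2$.

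For $\mathcal A<0$ the two terms compete, and $\mathcal A$ is in general not monotone. In case (ii), take $F_{u_0}\le M\xi$ with $u_0\ne\mu$. Then $\mathcal A(0)<\int_0^1\psi_*(M\xi-F_*)\dd\xi$, while convergence $u(t)\to\mu$ (\cref{prop:below-constant}) forces $\mathcal A(t)$ to tend to that larger value. So the claim that $\mathcal A$ is nonincreasing is false, and the exclusion of $u_*$ is unproved as written.

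\textbf{How to repair it.} A local argument suffices, because the $\omega$-limit set is a singleton. Write the nonlinearity as $\frac1\pi PP'+\frac{F_*'}{\pi}P=u(\sqrt\xi,t)P$, that is,
\[
P_t=L_{F_*}P+\bigl(u(\sqrt\xi,t)-u_*(\sqrt\xi)\bigr)P .
\]
Then $\mathcal A'=\sigma_*\mathcal A+\int_0^1\psi_*(u-u_*)P\dd\xi$. Because $P\le0$, the last integral is at most $\norm{u(t)-u_*}_\infty|\mathcal A|$. If $u(t)\to u_*$ uniformly, then eventually $\mathcal A'\le\frac{\sigma_*}{2}\mathcal A<0$, with $\mathcal A(t_1)<0$ by your strict separation. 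This contradicts $\mathcal A(t)\to0$. The argument needs $\sigma_*>0$, which holds in (ii) and (iv) by \cref{lem:unstable} and $\mu>\Lambda$.

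\textbf{Comparison with the paper.} This repair is the mechanism the paper uses for $u_*=\mu$ in \cref{prop:below-constant}, via the deficit moment $\int\psi_0(M\xi-\U)\dd\xi$. For (ii) and the non-monotone case the paper instead constructs a strict stationary \emph{supersolution} $G=F_*-\varepsilon\phi$, where $\phi$ is the positive eigenfunction of $L_{F_*}$. It satisfies
\[
\mathcal N(G)=-\varepsilon\sigma_*\phi+\frac{\varepsilon^2}{\pi}\phi\phi'<0 .
\]
The paper places $\U(\cdot,t_0)$ below $G$ using \eqref{eq:gap} and \eqref{eq:phi-bound}, and then follows the monotone flow from $G$. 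Your fallback points in that direction, but it supplies neither the barrier nor its sign computation. Also, a subsolution would be the wrong kind of barrier here; you need an upper barrier. With the local repair, your proof becomes a single moment argument across (ii)--(iv), in place of the paper's two mechanisms.

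\textbf{Minor loose end.} The decreasing or non-monotone equilibria below $\mu$ that you leave to be ruled out are already excluded by \cref{prop:finite}.
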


Every threshold equilibrium in \cref{thm:concentration-thresholds}
is nonlinearly unstable under radial fixed-mass perturbations.
For $M\ne\Lambda\pi$, the attracting equilibrium has the following
stability properties.

\begin{corollary}[Stability of the least concentrated equilibrium]
\label{cor:stable-equilibria}
For each $M>0$ with $M\ne\Lambda\pi$, the equilibrium
$(\underline u_M,\underline v_M)$ in
\eqref{eq:least-reference-equilibrium} is locally asymptotically stable
under radial fixed-mass perturbations in $C(\overline\Omega)$ and is the
only radially stable equilibrium of that mass.
It is globally asymptotically stable when $M\le8\pi$.
At $M=\Lambda\pi$, the constant is the unique radial equilibrium and is
nonlinearly unstable.
\end{corollary}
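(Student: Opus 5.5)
The proof assembles \cref{thm:ordered-convergence}, \cref{thm:concentration-thresholds} and \cref{thm:equilibria}, together with a local comparison argument in the cumulative variable. We work with $w(\xi,t)\coloneq F_{u(t)}(\xi)$. For radial solutions of \eqref{eq:system} it satisfies the scalar parabolic equation
\[
w_t=4\xi w_{\xi\xi}+\frac{1}{\pi}\bigl(w-M\xi\bigr)w_\xi,\qquad w(0,t)=0,\quad w(1,t)=M.
\]
This equation obeys a comparison principle, and it transforms the order $\preceq$ into the pointwise order of sub- and supersolutions.

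\emph{Step 1: instability of all the other equilibria.} Let $(u_*,v_*)$ be any radial equilibrium of mass $M$ other than $(\underline u_M,\underline v_M)$. By \cref{thm:equilibria} it belongs to one of the three classes of \cref{thm:eigenfunction-blowup}. For $M\le\Lambda\pi$ it is $u^{\mathrm d}_M$, or it is the constant when $M=\Lambda\pi$. For $M>\Lambda\pi$ it is the constant or a non-monotone equilibrium.

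Perturb it by $u_{0,\varepsilon}\coloneq u_*+\varepsilon\eta$, where $\eta$ is radial, has mean zero, and is chosen so that $u_{0,\varepsilon}\succeq u_*$, $u_{0,\varepsilon}\ne u_*$ and $u_{0,\varepsilon}\ge0$. Concretely, take $F_\eta(\xi)=\xi(1-\xi)\ge0$, which gives $\eta=(1-2\xi)/\pi$ in the radial variable $\xi=|x|^2$. Nonnegativity of $u_{0,\varepsilon}$ needs $u_*>0$ on $\overline\Omega$, which holds because $u_*=Me^{v_*}/\int e^{v_*}$.

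Then $\|u_{0,\varepsilon}-u_*\|_\infty\to0$ as $\varepsilon\downarrow0$. Yet \cref{thm:concentration-thresholds} (equivalently, $\mathcal A_*>0$ in \cref{thm:eigenfunction-blowup}) shows that the solution blows up in finite time. So $u_*$ is nonlinearly unstable, and in particular not radially stable. This settles the uniqueness claim, and also the claim at $M=\Lambda\pi$, where \cref{thm:equilibria} shows the constant is the only radial equilibrium.

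\emph{Step 2: local asymptotic stability of $\underline u_M$ for $M\ne\Lambda\pi$.} For $M\le8\pi$, \cref{thm:ordered-convergence}(1) gives convergence for every radial datum. Combined with the stability part below, this yields global asymptotic stability. For $M<\Lambda\pi$, local exponential stability of the constant is the Wang--Winkler result \cite{Wang2026}. Alternatively, it follows from the spectral gap $\mu-j_{1,1}^2<0$ of the linearization $h_t=\Delta h+\mu h$ together with a standard semigroup fixed-point argument.

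For $M>\Lambda\pi$ we use order-sandwiching. Choose two radial equilibria $u_*^{\pm}$ of mass $M$ that bracket $\underline u_M$ strictly in the interior. Below we have nothing; above we use the constant $\mu$. By \cref{thm:least-concentrated}, $F_\mu>\underline F_M$ on $(0,1)$. The plan is then:
\begin{enumerate}
\item Show that small $C(\overline\Omega)$ perturbations $u_0$ of $\underline u_M$ satisfy $F_{u_0}\le W^+$, where $W^+$ is a strict supersolution of the $w$-equation lying between $\underline F_M$ and $F_\mu$.
\item Show that $F_{u_0}\ge W^-$, where $W^-$ is a strict subsolution lying below $\underline F_M$.
\end{enumerate}
We build $W^\pm=\underline F_M\pm\delta\phi$, with $\phi$ the principal eigenfunction of the cumulative linearization at $\underline u_M$ vanishing at $0$ and at $1$. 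This construction requires the principal eigenvalue there to be negative.

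Given the bounds $W^-\le F_{u(t)}\le W^+$, the solution stays in a bounded order interval. Its cumulative mass is then bounded away from the concentrating regime, because $W^+\le F_\mu$ near $0$. This gives global boundedness, as in the proof of \cref{thm:ordered-convergence}. The Lyapunov functional of \eqref{eq:system} and the classification of equilibria (in a small neighbourhood $\underline u_M$ is the only one) then force $u(t)\to\underline u_M$.

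\emph{Main obstacle.} The delicate point is the spectral fact that the principal eigenvalue of the cumulative linearization at $u^{\mathrm i}_M$ is strictly negative, which is needed to construct $W^\pm$. I would try to get it from Sturm comparison. The derivative $\partial_M$ of the increasing branch gives a solution of the linearized ODE without interior zeros. Combined with strict minimality in \cref{thm:least-concentrated}, this excludes a nonnegative eigenvalue, since a nonnegative principal eigenfunction would produce an equilibrium below $\underline F_M$ through a monotone iteration. Such an equilibrium would contradict \cref{thm:least-concentrated}. The same argument, run with the constant at $M<\Lambda\pi$, reproduces the spectral gap used above.
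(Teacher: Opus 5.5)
Your Step~1 is the paper's instability argument; the explicit perturbation with $F_\eta=\xi(1-\xi)$ is a fine concrete choice. For $M<\Lambda\pi$ you rely on \cite{Wang2026}, exactly as the paper does. The genuine gap is the case $M>\Lambda\pi$. Your barriers $\underline F_M\pm\delta\phi$ require the principal eigenvalue of the cumulative linearization $L_F$ at $F=F_{u_M^{\mathrm i}}$ to be strictly negative, and neither route you sketch establishes this.

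The monotone-iteration argument only excludes a \emph{positive} principal eigenvalue. If $L_F\phi=0$, then $\N(\underline F_M-\varepsilon\phi)=\frac{\varepsilon^2}{\pi}\phi\phi'$, which changes sign because $\phi$ vanishes at both endpoints, so no supersolution below $\underline F_M$ arises. Minimality alone cannot exclude a zero eigenvalue. At $M=\Lambda\pi$ the constant is the least concentrated (indeed the only) equilibrium, yet its linearization $4\xi h''+\Lambda h$ has the positive eigenfunction $\sqrt\xi\,J_1(j_{1,1}\sqrt\xi)$ with eigenvalue $0$.

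The branch derivative does not help either. Differentiating the stationary equation in $M$ gives the inhomogeneous problem $L_F(\partial_MF)=\xi F'/\pi>0$, $\partial_MF(1)=1$. Positivity of such a function carries no spectral information: on the constant branch $\partial_M(M\xi)=\xi>0$ for $\mu<\Lambda$ and for $\mu>\Lambda$ alike. What you need is a positive strict \emph{supersolution}. The homogeneous shooting variation $\partial_\beta Q_\beta$ would serve only if you knew it is positive on all of $(0,1]$ in the $\xi$ variable, and that is equivalent to the gap itself.

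The paper supplies the missing ingredient with the explicit function $\zeta=U^{1/3}Q$, where $Q=M\xi-F$ and $U=u_M^{\mathrm i}(\sqrt\xi)$. It satisfies $L_F\zeta=-\frac{U^{1/3}Q^2}{3\pi\xi}\bigl(1+\frac{Q}{6\pi}\bigr)<0$. The ground-state substitution $h=\zeta v$ then gives $\mathfrak a(h,h)\ge\int_0^1cmh^2\dd\xi>0$, hence $\spec(L_F)\subset(-\infty,-\gamma]$ (\cref{lem:increasing-supersolution,prop:increasing-nondegeneracy,lem:increasing-gap}).

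There is a second gap, present even with the spectral gap in hand. Stationary barriers $W^-\le F_{u(t)}\le W^+$ control only the cumulative deviation, i.e.\ $\nabla(v-\underline v_M)$. Your Lyapunov-functional step therefore yields attraction but not Lyapunov stability in $C(\overline\Omega)$, because small cumulative deviation does not bound $\norm{u(t)-\underline u_M}_\infty$ uniformly in $t$. You need an extra step converting cumulative closeness into density closeness, uniformly in the data. The paper does this in \cref{lem:increasing-local-control}, with time-dependent barriers $F\pm a(t)p_1$, $a'=-\gamma_Ma+Ba^2$, which also give the rate. It combines the energy inequality for $\frac12\norm{\nabla z}_2^2$, Gaussian heat-kernel smoothing for the linear equation satisfied by $u-u_M^{\mathrm i}$, and the maximum principle for $0\le t\le1$. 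A compactness-contradiction argument using uniform bounds and positive-time regularity would also work.

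Note finally that attraction needs no barriers at all. The bounds $|F_{u_0}-\underline F_M|\le2\pi\varepsilon\,\xi(1-\xi)$ and $M\xi-\underline F_M\ge c_M\xi(1-\xi)$ place small perturbations below $M\xi$, so \cref{thm:ordered-convergence}(4) applies directly.
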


We first prove the equilibrium classification in
\cref{sec:radial-equilibria}. In \cref{sec:qualitative-dynamics},
we introduce the cumulative mass equation, comparison principle, and
compactness of bounded orbits before establishing the blow-up,
convergence, and stability results. Sharp convergence rates and their
optimality are proved in \cref{sec:rates}.

\section{Radial equilibria: classification and concentration order}
\label{sec:radial-equilibria}

\subsection{Stationary reduction and sharp mass restrictions}
\label{sec:stationary-mass-bounds}

For a nonnegative classical radial equilibrium $u_*$ of positive mass,
the radial flux identity $[r(u_*'-u_*v_*')]'=0$ and regularity at zero
give $u_*'=u_*v_*'$. Hence $u_*=Ce^{v_*}$ with $C>0$, and elliptic
regularity makes both profiles smooth.
Its cumulative profile $F\coloneq F_{u_*}\in C^\infty([0,1])$ satisfies
\begin{equation}\label{eq:stationary}
4\xi F''+\frac{F-M\xi}{\pi}F'=0\quad\text{for }\xi\in(0,1),\qquad
F(0)=0,\quad F(1)=M,\quad F'>0.
\end{equation}
Put $H(\xi)\coloneq F(\xi)-M\xi$ for $\xi\in[0,1]$. Then
\begin{equation}\label{eq:H}
4\xi H''+\mu H+\frac{HH'}{\pi}=0\quad\text{for }\xi\in(0,1),\qquad H(0)=H(1)=0.
\end{equation}
In particular,
\begin{equation}\label{eq:stationary-radial-derivative}
u_*'(r)=-\frac{u_*(r)H(r^2)}{2\pi r},\qquad\text{for }r\in(0,1),
\end{equation}
and smoothness at zero gives $H(\xi)=O(\xi)$ and bounded $H'$.

\begin{lemma}[Cumulative sign and radial monotonicity]
\label{lem:stationary-sign}
For a nonconstant radial equilibrium, every interior zero of $H$ is
simple. If $H\ge0$ on $[0,1]$, then $H>0$ on $(0,1)$ and
$u_*'<0$ on $(0,1)$; if $H\le0$, then $H<0$ and $u_*'>0$ there.
Thus every weakly monotone nonconstant equilibrium is strictly
monotone in the interior, and both endpoint zeros of $H$ are simple.
\end{lemma}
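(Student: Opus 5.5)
The plan is to treat \eqref{eq:H} as a second-order ODE on $(0,1)$ with smooth coefficients and to use ODE uniqueness throughout. Throughout, \eqref{eq:stationary-radial-derivative} links the sign of $H$ to the monotonicity of $u_*$.

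\emph{Interior zeros are simple.} Suppose $H(\xi_0)=H'(\xi_0)=0$ at some $\xi_0\in(0,1)$. Near $\xi_0$ the equation can be written as
\[
H''=-\frac{1}{4\xi}\Bigl(\mu H+\frac{HH'}{\pi}\Bigr),
\]
and its right-hand side is locally Lipschitz in $(H,H')$ for $\xi$ bounded away from $0$. The zero function solves this equation with the same Cauchy data at $\xi_0$, so uniqueness gives $H\equiv0$ on $(0,1)$. Then $F(\xi)=M\xi$ and $u_*\equiv\mu$, which contradicts nonconstancy. Hence every interior zero of $H$ is simple.

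\emph{Strict sign and monotonicity.} Suppose $H\ge0$ on $[0,1]$ and $H(\xi_0)=0$ at some interior $\xi_0$. Then $\xi_0$ is an interior minimum, so $H'(\xi_0)=0$, and this is excluded by the previous step. Therefore $H>0$ on $(0,1)$. Since $u_*=Ce^{v_*}>0$, \eqref{eq:stationary-radial-derivative} gives $u_*'(r)<0$ for $r\in(0,1)$. The case $H\le0$ is symmetric and gives $H<0$ and $u_*'>0$.

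\emph{Reduction for weakly monotone equilibria.} Let $u_*$ be weakly monotone and nonconstant. If $u_*$ is nonincreasing, then by \eqref{eq:stationary-radial-derivative} we have $H(r^2)\ge0$ wherever $u_*'\ne0$. On the open set where $u_*'=0$, we have $H=0$ directly from the same identity. Hence $H\ge0$ on $[0,1]$, and the previous step yields strict monotonicity in the interior. The nondecreasing case is the same with the signs reversed.

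\emph{Endpoint simplicity.} This is the main obstacle, because the equation is singular at $\xi=0$. We need to show $H'(0)\ne0$ and $H'(1)\ne0$.

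At $\xi=1$ the point is regular, so the argument is the same as for interior zeros. If $H(1)=H'(1)=0$, uniqueness for the regular ODE near $\xi=1$ forces $H\equiv0$, a contradiction.

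At $\xi=0$ I would use the Frobenius structure. Since $H$ is smooth with $H(0)=0$, write $H=a\xi+b\xi^2+\dots$. Substituting into \eqref{eq:H}, the coefficient of $\xi^{k-1}$ gives
\[
4k(k-1)a_k=-\mu a_{k-1}-\frac1\pi\sum_{i+j=k}\, j\,a_ia_j ,
\]
so every coefficient is determined recursively by $a=H'(0)$. Equivalently, I would use the integral form
\[
H'(\xi)=H'(\xi_1)+\int_\xi^{\xi_1}\frac{\mu H+HH'/\pi}{4s}\,\mathrm ds
\]
together with $H=O(\xi)$ and a Gronwall estimate on $\sup_{[0,\xi]}|H'|$. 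Either way, the goal is that a smooth solution with $H(0)=H'(0)=0$ vanishes identically near $0$, and hence everywhere by uniqueness away from $0$.

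A cleaner route at the origin goes through $u_*$ itself. We have $H'(0)=F'(0)-M=\pi u_*(0)-M$, so I would check directly that $u_*(0)\ne\mu$. If $u_*(0)=\mu$, then $v_*$ solves the regular radial Cauchy problem
\[
v''+\frac{v'}{r}=\mu-Ce^{v}
\]
with data $v'(0)=0$ and $Ce^{v(0)}=\mu$. The constant $v\equiv v(0)$ solves the same problem, and uniqueness for this radial problem (standard via the integral form $r v'(r)=\int_0^r s(\mu-Ce^{v})\,\mathrm ds$) makes $v_*$ constant, so $u_*\equiv\mu$. This is a contradiction, so $H'(0)\ne0$.
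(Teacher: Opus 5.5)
Your argument is correct. For the interior simplicity, the strict sign of a one-signed $H$, and the reduction of weakly monotone equilibria, it is the paper's argument; the two proofs differ only at the endpoints.

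\textbf{The paper's endpoint argument.} The paper uses no uniqueness there. Once $u_*$ is strictly decreasing (or increasing), the mass normalization $\int_\Omega u_*\dd x=\pi\mu$ forces $u_*(0)>\mu>u_*(1)$ (or the reverse). The identity $H'(\xi)=\pi\bigl(u_*(\sqrt\xi)-\mu\bigr)$ then gives $H'(0)>0>H'(1)$ at once. This yields simplicity together with the endpoint signs that are used later.

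\textbf{Your endpoint argument.} At $\xi=1$ you use regular ODE uniqueness. At $\xi=0$ you combine $H'(0)=\pi(u_*(0)-\mu)$ with uniqueness for the radial Cauchy problem $rv'(r)=\int_0^r s(\mu-Ce^{v})\dd s$. This is the same device the paper later uses to show $\beta=z(0)\neq0$. Your route is longer but proves more: the endpoint zeros of $H$ are simple for \emph{every} nonconstant radial equilibrium, not only the monotone ones. The endpoint signs then follow from the sign of $H$ near each endpoint.

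\textbf{Minor points.} Of your three options at the origin, only the last is complete as written. The Frobenius recursion fixes only the Taylor series of a merely smooth $H$. The Volterra identity should be based at $0$, namely $H'(\xi)=H'(0)-\int_0^\xi\frac{\mu H+HH'/\pi}{4s}\dd s$, so that it can exploit $H'(0)=0$. Finally, the zero set of $u_*'$ need not be open. This is immaterial, since \eqref{eq:stationary-radial-derivative} holds pointwise and $u_*'\le0$ directly gives $H\ge0$ on $(0,1)$.
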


\begin{proof}
If $H(\xi_0)=H'(\xi_0)=0$ at some $\xi_0\in(0,1)$, uniqueness
for \eqref{eq:H} implies $H\equiv0$, which is the constant equilibrium.
A one-signed nonzero $H$ therefore has no interior zero, and
\eqref{eq:stationary-radial-derivative} gives the asserted strict
monotonicity. Conversely, that same equation shows that a weakly
monotone density has a one-signed $H$.
For a decreasing equilibrium, mass normalization gives
$u_*(0)>\mu>u_*(1)$. Therefore
$H'(0)=\pi(u_*(0)-\mu)>0$ and
$H'(1)=\pi(u_*(1)-\mu)<0$: both endpoint zeros of $H$ are simple.
For an increasing equilibrium, all these inequalities are reversed.
\end{proof}

\begin{lemma}[The first radial Neumann eigenvalue]
The first nonzero eigenvalue of $-\Delta_N$ on radial mean-zero functions
on $B_1$ is $\Lambda=j_{1,1}^2>8$.
Here $\Delta_N$ denotes the Neumann Laplacian.
In particular,
\begin{equation}\label{eq:poincare}
\int_\Omega|\nabla f|^2\dd x\ge
\Lambda\int_\Omega f^2\dd x
\end{equation}
for every radial mean-zero $f\in H^1(\Omega)$.
\end{lemma}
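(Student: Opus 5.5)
We reduce the radial mean-zero Neumann problem on the disk to an ODE eigenvalue problem in $r$, identify the eigenvalues through Bessel functions, and then obtain the Poincaré inequality from the spectral decomposition or a variational argument.

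\emph{Eigenvalue problem.} A radial eigenfunction $f(r)$ of $-\Delta_N$ with eigenvalue $\lambda>0$ satisfies
\[
f''+\tfrac1r f'+\lambda f=0\quad\text{on }(0,1),\qquad f'(1)=0,
\]
and is regular at $r=0$. Regularity excludes the $Y_0$ branch, so $f(r)=cJ_0(\sqrt\lambda\,r)$. Since $J_0'=-J_1$, the boundary condition becomes $J_1(\sqrt\lambda)=0$, hence $\lambda=j_{1,k}^2$ for some $k\ge1$.

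The eigenvalue $\lambda=0$ corresponds to constants. These are excluded by the mean-zero condition; conversely, every eigenfunction with $\lambda>0$ is automatically mean-zero, because integrating the equation gives $\lambda\int_\Omega f\dd x=-\int_\Omega\Delta f\dd x=0$ by the Neumann condition.

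The first nonzero radial eigenvalue is therefore $j_{1,1}^2=\Lambda$.

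\emph{Variational characterization.} The Neumann Laplacian restricted to the closed subspace $H^1_{\mathrm{rad}}(\Omega)$ is self-adjoint with compact resolvent. The functions $J_0(j_{1,k}r)$, $k\ge1$, together with the constants, form a complete orthogonal system in $L^2_{\mathrm{rad}}$; this is the standard Fourier--Bessel (Dini) expansion. Hence the Rayleigh quotient satisfies
\[
\inf\Bigl\{\tfrac{\int_\Omega|\nabla f|^2\dd x}{\int_\Omega f^2\dd x}:\ f\in H^1_{\mathrm{rad}},\ \textstyle\int_\Omega f\dd x=0,\ f\neq0\Bigr\}=\Lambda.
\]
This yields \eqref{eq:poincare}.

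An alternative that avoids invoking completeness is the direct method. A minimizer of the Rayleigh quotient exists by Rellich compactness, and its Euler--Lagrange equation carries a Lagrange multiplier for the mean constraint. Testing the equation with $1$ shows that this multiplier vanishes, so the minimizer is a radial Neumann eigenfunction with eigenvalue equal to the infimum. By the classification above, that infimum is $\Lambda$.

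\emph{Numerical bound $\Lambda>8$.} We need $j_{1,1}>2\sqrt2\approx2.828$. Using the alternating series for $J_1$ with decreasing terms, compute at $z=2\sqrt2$, where $(z/2)^2=2$:
\[
\frac{J_1(z)}{z/2}=\sum_{m\ge0}\frac{(-2)^m}{m!\,(m+1)!}=1-1+\tfrac{4}{12}-\tfrac{8}{144}+\tfrac{16}{2880}-\cdots.
\]
The partial sums give $0.3333-0.0556+0.0056-\cdots\approx0.283>0$. For $m\ge1$ the terms decrease in absolute value, since the ratio of consecutive terms is $2/((m+1)(m+2))<1$. This justifies the alternating-series error bound, so the sum is positive.

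It remains to rule out a zero of $J_1$ on $(0,2\sqrt2)$. Write $J_1(z)=\tfrac z2\,g(s)$ with $s=(z/2)^2\in(0,2]$ and
\[
g(s)=\sum_{m\ge0}\frac{(-s)^m}{m!\,(m+1)!}.
\]
For $s\le2$ the tail terms of $g$ decrease in absolute value from $m\ge1$ onward. The alternating-series bound then gives
\[
g(s)\ge 1-\frac s2+\frac{s^2}{12}-\frac{s^3}{144}.
\]
At $s=2$ this lower bound equals $0.2777$. Its derivative is $-\tfrac12+\tfrac s6-\tfrac{s^2}{48}<0$ on $[0,2]$, so the bound is decreasing there and hence stays above $0.2777$ on all of $[0,2]$. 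Therefore $g>0$ on $(0,2]$, $J_1$ has no zero in $(0,2\sqrt2]$, and $\Lambda>8$.

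The main point requiring care is this elementary numerical bound, together with the justification of completeness in the variational step; the direct-method argument above sidesteps completeness.
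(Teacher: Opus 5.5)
Your proof is correct. For the spectral part you spell out what the paper only cites as standard: it refers to Watson for $J_0'=-J_1$ and the zeros of $J_1$, and simply lists the modes $J_0(j_{1,k}r)$. Your reduction to $J_1(\sqrt\lambda)=0$ and your direct-method argument make the Poincar\'e inequality self-contained without invoking completeness of Dini series. In that argument the mean-constraint multiplier vanishes, so the infimum is some $j_{1,k}^2$. You should also state explicitly that the competitor $J_0(j_{1,1}r)$ bounds the infimum from above by $\Lambda$.

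The genuine difference is the bound $\Lambda>8$. You obtain it numerically, bounding the series $g(s)$ below by $1-\frac s2+\frac{s^2}{12}-\frac{s^3}{144}\ge\frac5{18}$ on $[0,2]$; this makes your separate evaluation at $s=2$ redundant. The paper avoids numerics entirely. In the remark after \cref{cor:radial-shape} it inserts $q(\xi)=\xi(1-\xi)$ into the pairing identity \eqref{eq:weighted-linear-pairing} with the weight $\psi_0(\xi)=J_1(j_{1,1}\sqrt\xi)/\sqrt\xi$, obtaining
\[
\Lambda\int_0^1\xi(1-\xi)\psi_0\dd\xi=8\int_0^1\xi\psi_0\dd\xi>8\int_0^1\xi(1-\xi)\psi_0\dd\xi .
\]
This uses only that $\psi_0>0$ on $(0,1)$, i.e.\ that $j_{1,1}$ is the first zero of $J_1$.

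Your version is completely elementary and quantitative: it shows $J_1>0$ on $(0,2\sqrt2\,]$ with an explicit margin. It is, however, a term-by-term computation tied to the constant $8$. The paper's argument is a one-line structural identity that reuses the same cumulative weight $\psi_0$ which later drives the stationary moment identity and the blow-up moments.
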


This is the standard radial Neumann spectrum, with modes
$J_0(j_{1,k}r)$ and positive eigenvalues $j_{1,k}^2$.
The derivative identities $J_0'=-J_1$ and $(zJ_1(z))'=zJ_0(z)$
are given in \cite[\S~3.2, p.~45]{Watson1944}; the positive zeros
and their simplicity are treated in
\cite[Chapter~XV, pp.~477--480]{Watson1944}.

Define the positive cumulative adjoint weight
\begin{equation}\label{eq:psi0}
\psi_0(\xi)\coloneq\frac{J_1(j_{1,1}\sqrt\xi)}{\sqrt\xi}
\quad\text{for }\xi\in(0,1],\qquad
\psi_0(0)=\frac{j_{1,1}}2.
\end{equation}
We use the identities
\begin{equation}\label{eq:psi0-ode}
4\xi\psi_0''+8\psi_0'=-\Lambda\psi_0\quad\text{for }\xi\in(0,1),\qquad
\psi_0(1)=0.
\end{equation}
Moreover, $\psi_0>0$ on $[0,1)$ and $\psi_0'<0$ on $[0,1]$.
The Bessel identities and the interior sign properties are given in
\cite[Lemma~3.1]{Wang2026} for $n=2$ and $R=1$;
the endpoint derivatives are
$\psi_0'(0)=-j_{1,1}^3/16<0$ and
$\psi_0'(1)=\tfrac12j_{1,1}J_0(j_{1,1})<0$.

\begin{lemma}[Sharp weighted Poincar\'e inequality]\label{lem:weighted-poincare}
For every $q\in H^1_0((0,1))$,
\begin{equation}\label{eq:weighted-poincare}
4\int_0^1|q'|^2\dd\xi
\ge\Lambda\int_0^1\frac{q^2}{\xi}\dd\xi.
\end{equation}
The constant $\Lambda=j_{1,1}^2$ is sharp.
\end{lemma}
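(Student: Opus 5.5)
The plan is to reduce \eqref{eq:weighted-poincare} to the radial Neumann Poincar\'e inequality \eqref{eq:poincare} through the cumulative change of variables. Given $q\in H^1_0((0,1))$, at first smooth and compactly supported in $(0,1]$ with $q(1)=0$, I define the radial function $f$ on $\Omega$ by
\[
q(\xi)=\int_{B_{\sqrt\xi}}f\dd x=2\pi\int_0^{\sqrt\xi}rf(r)\dd r,
\qquad\text{that is,}\qquad f(r)=\frac{q'(r^2)}{\pi}.
\]
Then $\int_\Omega f\dd x=q(1)-q(0)=0$, so $f$ is an admissible mean-zero radial test function.

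The core computation is direct. First,
\[
\int_\Omega f^2\dd x=2\pi\int_0^1 r\,\frac{q'(r^2)^2}{\pi^2}\dd r=\frac1\pi\int_0^1|q'|^2\dd\xi .
\]
Second, $f'(r)=2r\,q''(r^2)/\pi$, so $\int_\Omega|\nabla f|^2\dd x=\frac4\pi\int_0^1\xi|q''|^2\dd\xi$. This substitution therefore relates $\xi|q''|^2$ to $|q'|^2$, which is one derivative too many. The inequality in the statement has to come from the ``dual'' direction instead. I would therefore work with $g\coloneq f$ and use \eqref{eq:poincare} in its dual form, $\norm{f}_{L^2}^2\le\Lambda^{-1}\norm{\nabla f}_{L^2}^2$. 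Equivalently, on mean-zero functions $(-\Delta_N)^{-1}\le\Lambda^{-1}$, which gives
\[
\int_\Omega|\nabla w|^2\dd x\le\Lambda^{-1}\int_\Omega f^2\dd x
\qquad\text{for } -\Delta w=f,\ \partial_\nu w=0 .
\]
For radial $w$ one has $-2\pi r w'(r)=\int_{B_r}f=q(r^2)$. Hence
\[
\int_\Omega|\nabla w|^2\dd x=2\pi\int_0^1 r\,\frac{q(r^2)^2}{4\pi^2r^2}\dd r=\frac1{4\pi}\int_0^1\frac{q^2}{\xi}\dd\xi .
\]
Combining this with the formula for $\int_\Omega f^2$ gives
\[
\frac1{4\pi}\int_0^1\frac{q^2}{\xi}\dd\xi\le\frac{1}{\Lambda\pi}\int_0^1|q'|^2\dd\xi,
\]
which is exactly \eqref{eq:weighted-poincare}. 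To justify the dual form, I would use the Neumann eigenfunction expansion in the modes $J_0(j_{1,k}r)$, or equivalently $\int f^2=\int|\Delta w|^2\ge\Lambda\int|\nabla w|^2$, which follows by applying \eqref{eq:poincare} to the components of $\nabla w$ via the spectral decomposition.

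The remaining steps are density and sharpness. For general $q\in H^1_0$, the finiteness of $\int q^2/\xi$ follows from a Hardy inequality near $0$, since $q(\xi)\le\sqrt\xi\,\norm{q'}_{L^2}$. I would extend by approximation, with Fatou's lemma applied on the left side.

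For sharpness, I would take $q=\xi\psi_0(\xi)=\sqrt\xi J_1(j_{1,1}\sqrt\xi)$. This lies in $H^1_0$ because $\psi_0(1)=0$ and $q(0)=0$. Moreover $q'=\tfrac{j_{1,1}}2J_0(j_{1,1}\sqrt\xi)$, using $(zJ_1)'=zJ_0$. This corresponds to $f$ being a multiple of the first radial Neumann eigenfunction $J_0(j_{1,1}r)$, for which all the inequalities above are equalities. Alternatively, one can check directly from \eqref{eq:psi0-ode} that $4q''=-\Lambda q/\xi$, and then integrate by parts against $q$.

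The main obstacle I expect is the correct direction of the duality, namely recognizing that $q$ corresponds to the flux $r w'$ and not to $f$. The only delicate technical point is justifying the boundary terms at $\xi=0$ in the approximation.
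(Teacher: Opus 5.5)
Your proof is correct, but it takes a genuinely different route from the paper's.

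\emph{The paper's route.} It stays in the variable $\xi$ and uses a ground-state (Picone) factorization. With $\Phi=\xi\psi_0=\sqrt\xi J_1(j_{1,1}\sqrt\xi)$, which is positive on $(0,1)$ and satisfies $-4\xi\Phi''=\Lambda\Phi$, it proves the identity $4\int_0^1|q'|^2\dd\xi-\Lambda\int_0^1 q^2\xi^{-1}\dd\xi=4\int_0^1\Phi^2|(q/\Phi)'|^2\dd\xi$ for $q\in C_c^\infty((0,1))$. It then extends by density via \eqref{eq:weighted-endpoint-control} and gets equality at $q=\Phi$.

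\emph{Your route.} You lift $q$ to the density $f(r)=q'(r^2)/\pi$ and use $\int_0^1|q'|^2\dd\xi=\pi\norm{f}_2^2$ and $\int_0^1q^2\xi^{-1}\dd\xi=4\pi\norm{\nabla\mathcal Gf}_2^2$. This makes \eqref{eq:weighted-poincare} exactly the bound $(-\Delta_N)^{-1}\le\Lambda^{-1}$ on radial mean-zero functions. It buys three things:
\begin{enumerate}
\item It explains why the same $\Lambda$ governs both \eqref{eq:poincare} and the cumulative inequality: the weighted integral is the $H^{-1}$ norm of the density perturbation.
\item It reduces the lemma to the preceding eigenvalue lemma.
\item It works for every $q\in H^1_0((0,1))$ at once, since $f\in L^2(\Omega)$ is mean-zero and the flux identity $-2\pi rw'(r)=q(r^2)$ holds for $w=\mathcal Gf$. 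So your approximation step, and the boundary terms at $\xi=0$ you worried about, are not actually needed.
\end{enumerate}
The paper's route, in turn, is self-contained in one variable and gives an explicit nonnegative remainder that makes the extremals transparent. The same mechanism is reused later, with $\zeta=U^{1/3}Q$, for the nondegeneracy of the increasing equilibria.

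Two small corrections:
\begin{itemize}
\item The dual bound needs no eigenfunction expansion. Applying \eqref{eq:poincare} to $w$ itself gives $\norm{\nabla w}_2^2=\int_\Omega fw\dd x\le\norm{f}_2\norm{w}_2\le\Lambda^{-1/2}\norm{f}_2\norm{\nabla w}_2$.
\item Your alternative of applying \eqref{eq:poincare} to the components of $\nabla w$ does not work as stated. The functions $\partial_iw=w'(r)x_i/r$ are not radial, so that inequality does not apply to them; drop this remark.
\end{itemize}
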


\begin{proof}
Set $\Phi(\xi)\coloneq\xi\psi_0(\xi)
=\sqrt\xi J_1(j_{1,1}\sqrt\xi)$ for $\xi\in[0,1]$.
By \eqref{eq:psi0-ode}, it satisfies
\[
\Phi>0\text{ on }(0,1),\qquad
\Phi(0)=\Phi(1)=0,\qquad -4\xi\Phi''=\Lambda\Phi.
\]
For $q\in C_c^\infty((0,1))$, integration by parts gives
\[
4\int_0^1|q'|^2\dd\xi
-\Lambda\int_0^1\frac{q^2}{\xi}\dd\xi
=4\int_0^1\Phi^2
\left|\left(\frac q\Phi\right)'\right|^2\dd\xi\ge0.
\]
For $q\in H^1_0((0,1))$, the estimates
\begin{equation}\label{eq:weighted-endpoint-control}
\begin{gathered}
\frac{|q(\xi)|^2}{\xi}\le\int_0^\xi|q'(s)|^2\dd s
\quad\text{for }\xi\in(0,1],\\
\int_0^\varepsilon\frac{q^2}{\xi}\dd\xi
\le\varepsilon\int_0^\varepsilon|q'|^2\dd\xi
\quad\text{for }\varepsilon\in(0,1]
\end{gathered}
\end{equation}
show continuity of the weighted norm on $H^1_0((0,1))$, so density
extends the inequality to this space.
Finally, $\Phi\in H^1_0((0,1))$, and multiplication of its equation by
$\Phi/\xi$ gives equality in \eqref{eq:weighted-poincare}.
\end{proof}

\begin{lemma}[Weighted stationary estimates]
Let $(u_*,v_*)$ be a nonnegative classical radial equilibrium of
\eqref{eq:system} with mass $M>0$, and put
$H\coloneq F-M\xi$ and $g\coloneq H_-\coloneq\max\{-H,0\}$.
Then
\begin{equation}\label{eq:stationary-moment}
(M-\Lambda\pi)\int_0^1\psi_0H\dd\xi
=\frac12\int_0^1\psi_0'H^2\dd\xi,
\end{equation}
and
\begin{equation}\label{eq:non-monotone-mass-excess}
(M-\Lambda\pi)\int_0^1\frac{g^2}{\xi}\dd\xi
\ge\frac13\int_0^1\frac{g^3}{\xi^2}\dd\xi\ge0.
\end{equation}
\end{lemma}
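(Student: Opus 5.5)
The plan is to test the cumulative stationary equation \eqref{eq:H} against two weights: the Bessel weight $\psi_0$ from \eqref{eq:psi0} for \eqref{eq:stationary-moment}, and $g/\xi$ for \eqref{eq:non-monotone-mass-excess}. Both rely on the smoothness of $H$ on $[0,1]$, on $H(0)=H(1)=0$, and on $H(\xi)=O(\xi)$ near $0$.

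\emph{Identity \eqref{eq:stationary-moment}.} Multiply \eqref{eq:H} by $\psi_0$ and integrate over $(0,1)$. In the diffusion term, integrate by parts twice to move all derivatives onto $4\xi\psi_0$:
\[
\int_0^1 4\xi H''\psi_0\dd\xi
=\Bigl[4\xi\psi_0 H'-(4\xi\psi_0)'H\Bigr]_0^1+\int_0^1 H\,(4\xi\psi_0)''\dd\xi .
\]
Both boundary terms vanish. The first vanishes because of the factor $\xi$ at $0$ and because $\psi_0(1)=0$. The second vanishes because $H(0)=H(1)=0$.

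Since $(4\xi\psi_0)''=4\xi\psi_0''+8\psi_0'=-\Lambda\psi_0$ by \eqref{eq:psi0-ode}, the diffusion term equals $-\Lambda\int_0^1\psi_0H\dd\xi$.

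For the nonlinear term, write $HH'=\tfrac12(H^2)'$ and integrate by parts once. The boundary terms vanish again by $H(0)=H(1)=0$, so
\[
\frac1\pi\int_0^1\psi_0HH'\dd\xi=-\frac1{2\pi}\int_0^1\psi_0'H^2\dd\xi .
\]
Collecting terms gives $(\mu-\Lambda)\int_0^1\psi_0H\dd\xi=\frac1{2\pi}\int_0^1\psi_0'H^2\dd\xi$. Multiplying by $\pi$ yields \eqref{eq:stationary-moment}.

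\emph{Inequality \eqref{eq:non-monotone-mass-excess}.} The function $g=H_-$ is Lipschitz, satisfies $g(0)=g(1)=0$, and has $g=O(\xi)$. Hence $g\in H^1_0((0,1))$, and $g^2/\xi$ and $g^3/\xi^2$ are bounded.

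Multiply \eqref{eq:H} by $-g/\xi$ and integrate. On the set $\{H<0\}$ we have $H=-g$ and $H'=-g'$ almost everywhere. Elsewhere $g=0$, and also $g'=0$ almost everywhere.

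For the diffusion term, integrate by parts once, with no boundary terms since $g(0)=g(1)=0$:
\[
-4\int_0^1H''g\dd\xi=4\int_0^1H'g'\dd\xi=-4\int_0^1|g'|^2\dd\xi .
\]
The linear term gives $\mu\int_0^1 g^2/\xi\dd\xi$.

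The nonlinear term is $-\frac1\pi\int_0^1 g^2g'/\xi\dd\xi$, because $HH'g=g^2g'$ on the support of $g$. Writing $g^2g'=\tfrac13(g^3)'$ and integrating by parts gives
\[
-\frac1{3\pi}\int_0^1\frac{g^3}{\xi^2}\dd\xi .
\]
The boundary term $g^3/\xi$ vanishes at both endpoints, since $g(1)=0$ and $g^3/\xi=O(\xi^2)$ at $0$.

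Putting these together,
\[
\mu\int_0^1\frac{g^2}{\xi}\dd\xi=4\int_0^1|g'|^2\dd\xi+\frac1{3\pi}\int_0^1\frac{g^3}{\xi^2}\dd\xi .
\]
Applying \cref{lem:weighted-poincare} to $q=g$ bounds the first term on the right below by $\Lambda\int_0^1 g^2/\xi\dd\xi$. Multiplying by $\pi$ yields \eqref{eq:non-monotone-mass-excess}; the last inequality there is trivial because $g\ge0$.

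The only delicate point is justifying the integrations by parts against the merely Lipschitz function $g$ and at the singular endpoint $\xi=0$. Both are handled by the facts that $H$ is smooth and $H=O(\xi)$. If needed, one can integrate over $(\varepsilon,1)$ and let $\varepsilon\downarrow0$, using \eqref{eq:weighted-endpoint-control} to control the boundary contributions.
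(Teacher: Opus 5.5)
Your proposal is correct and follows the paper's proof essentially step for step. For the identity you pair \eqref{eq:H} with $\psi_0$, integrate by parts twice and use \eqref{eq:psi0-ode}. For the inequality you test with $g/\xi$ (you use $-g/\xi$, a harmless sign change), apply the negative-part chain rule to obtain the exact balance, and finish with \cref{lem:weighted-poincare}; your endpoint justifications via $H=O(\xi)$ and truncation to $(\varepsilon,1)$ match the paper's.
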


\begin{proof}
We first record two integration identities that will also be used
for time-dependent cumulative profiles. For any smooth $q$ with
$q(0)=q(1)=0$, two integrations by parts yield
\begin{align}
\int_0^1 4\xi\psi_0q''\dd\xi
&=\bigl[4\xi\psi_0q'\bigr]_0^1
-\int_0^1(4\psi_0+4\xi\psi_0')q'\dd\xi\notag\\
&=\bigl[4\xi\psi_0q'
-(4\psi_0+4\xi\psi_0')q\bigr]_0^1
+\int_0^1(8\psi_0'+4\xi\psi_0'')q\dd\xi\notag\\
&=-\Lambda\int_0^1\psi_0q\dd\xi.
\label{eq:weighted-linear-pairing}
\end{align}
Indeed, at $\xi=1$ the boundary terms vanish by $\psi_0(1)=q(1)=0$.
At $\xi=0$, smoothness gives $q(\xi)=O(\xi)$ and bounded $q'$,
while $\psi_0$ and $\psi_0'$ are bounded, so both boundary terms
also tend to zero. The last equality uses \eqref{eq:psi0-ode}.
These integrations may equivalently be carried out on $[\delta,1]$
and then passed to the limit $\delta\downarrow0$.
For the nonlinear term, a further integration by parts gives
\begin{equation*}
\int_0^1\psi_0qq'\dd\xi
=\frac12\bigl[\psi_0q^2\bigr]_0^1
-\frac12\int_0^1\psi_0'q^2\dd\xi
=-\frac12\int_0^1\psi_0'q^2\dd\xi.
\end{equation*}
Multiplying \eqref{eq:H} by $\psi_0$, integrating, and applying
these two identities with $q=H$ yields
\[
0=(\mu-\Lambda)\int_0^1\psi_0H\dd\xi
-\frac1{2\pi}\int_0^1\psi_0'H^2\dd\xi,
\]
which is \eqref{eq:stationary-moment}.

To prove \eqref{eq:non-monotone-mass-excess}, note that smoothness
of $H$ and $H(0)=H(1)=0$ give
$g\in H^1_0((0,1))$ and $g(\xi)=O(\xi)$,
which ensures finiteness of all the weighted integrals below.
Testing \eqref{eq:H} by $g/\xi$ gives the exact balance
\begin{equation}\label{eq:negative-part-balance}
\mu\int_0^1\frac{g^2}{\xi}\dd\xi
=4\int_0^1|g'|^2\dd\xi
+\frac1{3\pi}\int_0^1\frac{g^3}{\xi^2}\dd\xi.
\end{equation}
Indeed, the negative-part chain rule yields, almost everywhere,
$Hg=-g^2$, $H'g'=-|g'|^2$, and $HH'g=g^2g'$, while
\[
\int_0^1\frac{g^2g'}{\xi}\dd\xi
=\frac13\int_0^1\frac{(g^3)'}{\xi}\dd\xi
=\frac13\int_0^1\frac{g^3}{\xi^2}\dd\xi.
\]
These integrations can first be performed on $[\delta,1]$.
The endpoint terms at $\delta$ vanish as $\delta\downarrow0$, because
$H'(\delta)g(\delta)=O(\delta)$ and
$g(\delta)^3/\delta=O(\delta^2)$.
Combining \eqref{eq:negative-part-balance} with
\cref{lem:weighted-poincare} gives
\eqref{eq:non-monotone-mass-excess}.
\end{proof}

\begin{corollary}[Mass threshold and radial shape]
\label{cor:radial-shape}
Let $(u_*,v_*)$ be a nonnegative classical radial equilibrium of
\eqref{eq:system} with mass $M>0$.
For every nonconstant such equilibrium,
\[
\begin{aligned}
M<\Lambda\pi
&\quad\text{if and only if}\quad
u_*'(r)<0\text{ for every }r\in(0,1),\\
M>\Lambda\pi
&\quad\text{if and only if}\quad
u_*'(r_0)>0\text{ for some }r_0\in(0,1).
\end{aligned}
\]
At $M=\Lambda\pi$, the constant $(\Lambda,0)$ is the only equilibrium.
In particular, every non-monotone radial equilibrium has $M>\Lambda\pi$.
Moreover, $(M-\Lambda\pi)H\ge0$ on $[0,1]$ forces
$(u_*,v_*)=(\mu,0)$.
\end{corollary}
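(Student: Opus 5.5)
The plan is to combine the two weighted identities \eqref{eq:stationary-moment} and \eqref{eq:non-monotone-mass-excess} with the sign information from \cref{lem:stationary-sign}. Throughout, $H=F-M\xi$ and $g=H_-$. By \eqref{eq:stationary-radial-derivative}, $u_*'(r)$ has the sign of $-H(r^2)$, so the existence of some $r_0$ with $u_*'(r_0)>0$ is equivalent to $g\not\equiv0$.

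\emph{Step 1: the last assertion.} Suppose $(M-\Lambda\pi)H\ge0$ on $[0,1]$. In \eqref{eq:stationary-moment}, the left side $(M-\Lambda\pi)\int_0^1\psi_0H\dd\xi$ is nonnegative, because $\psi_0>0$ on $[0,1)$. The right side $\tfrac12\int_0^1\psi_0'H^2\dd\xi$ is nonpositive, because $\psi_0'<0$. Hence both sides vanish, so $\int_0^1\psi_0'H^2\dd\xi=0$. Since $\psi_0'<0$ on $[0,1]$, this forces $H\equiv0$, and therefore $u_*\equiv\mu$ and $v_*=0$. The case $M=\Lambda\pi$ is included, since then the hypothesis holds trivially. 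So at $M=\Lambda\pi$ every radial equilibrium is constant, namely $(\Lambda,0)$ since $\mu=\Lambda$.

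\emph{Step 2: decreasing equilibria force $M<\Lambda\pi$.} Let the equilibrium be nonconstant with $u_*'<0$ on $(0,1)$. Then $H\ge0$, and \cref{lem:stationary-sign} sharpens this to $H>0$ on $(0,1)$. If $M\ge\Lambda\pi$, then $(M-\Lambda\pi)H\ge0$, and Step 1 would make the equilibrium constant, a contradiction. Hence $M<\Lambda\pi$.

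\emph{Step 3: an increasing somewhere equilibrium forces $M>\Lambda\pi$.} Suppose $u_*'(r_0)>0$ for some $r_0\in(0,1)$. Then $g\not\equiv0$, so $\int_0^1 g^2/\xi\dd\xi>0$. If $M\le\Lambda\pi$, the left side of \eqref{eq:non-monotone-mass-excess} is at most $0$. The right side is $\tfrac13\int_0^1 g^3/\xi^2\dd\xi$, which is strictly positive because $g\ge0$ is continuous and not identically zero. This is a contradiction, so $M>\Lambda\pi$.

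\emph{Step 4: assembling the equivalences.} For a nonconstant equilibrium there are exactly two options. Either $u_*'\le0$ everywhere on $(0,1)$, in which case $H\ge0$ and \cref{lem:stationary-sign} gives $u_*'<0$ everywhere. Or $u_*'(r_0)>0$ for some $r_0$. By Steps 2 and 3, the first option implies $M<\Lambda\pi$ and the second implies $M>\Lambda\pi$. Since the two options are complementary and the two mass ranges are disjoint, both implications reverse, which gives the two "if and only if" statements.

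A non-monotone equilibrium has some point with $u_*'>0$, so it has $M>\Lambda\pi$. A nonconstant equilibrium at $M=\Lambda\pi$ would have to satisfy one of the two strict mass inequalities, which is impossible; this is consistent with Step 1.

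I expect no real obstacle here. The only delicate point is the strictness used in Steps 1 and 3: Step 1 relies on $\psi_0'<0$ on the closed interval $[0,1]$, and Step 3 relies on the positivity of the cubic integral.
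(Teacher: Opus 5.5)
Your proof is correct and follows essentially the paper's argument. You use \eqref{eq:stationary-moment} together with the signs of $\psi_0$ and $\psi_0'$ to obtain the last assertion and to exclude $M=\Lambda\pi$, and \eqref{eq:non-monotone-mass-excess} to force $H_-\equiv0$ when $M\le\Lambda\pi$. The only difference is that you prove the shape-to-mass implications, which are the contrapositives of the paper's mass-to-shape steps, and you absorb the case $M=\Lambda\pi$ through strict positivity of the cubic integral instead of disposing of it once at the outset.
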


\begin{proof}
For a nonconstant equilibrium, \eqref{eq:stationary-moment} and
$\psi_0'<0$ give
\[
(M-\Lambda\pi)\int_0^1\psi_0H\dd\xi<0.
\]
This excludes $M=\Lambda\pi$ and proves the last assertion.
If $M<\Lambda\pi$, \eqref{eq:non-monotone-mass-excess} forces
$H_-=0$, so \cref{lem:stationary-sign} gives $u_*'<0$ on $(0,1)$.
If $M>\Lambda\pi$, the displayed inequality gives
$\int_0^1\psi_0H\dd\xi<0$. Since $\psi_0>0$ on $(0,1)$,
we have $H(\xi_0)<0$ for some $\xi_0\in(0,1)$, and
\eqref{eq:stationary-radial-derivative} yields $u_*'(\sqrt{\xi_0})>0$.
These alternatives prove both equivalences.
\end{proof}

\begin{remark}
Taking $q(\xi)=\xi(1-\xi)$ in \eqref{eq:weighted-linear-pairing},
so that $q''=-2$, yields
\[
\Lambda\int_0^1\xi(1-\xi)\psi_0\dd\xi
=8\int_0^1\xi\psi_0\dd\xi
>8\int_0^1\xi(1-\xi)\psi_0\dd\xi.
\]
Since $\psi_0>0$ on $(0,1)$, this proves $\Lambda>8$.
\end{remark}

\subsection{The shooting family and its first extrema}

Using the stationary flux identity from
\cref{sec:stationary-mass-bounds}, set
$z(r)\coloneq\log(u_*(r)/\mu)$ for $r\in[0,1]$.
The stationary problem becomes
\begin{equation}\label{eq:z}
z''+\frac1r z'+\mu(e^z-1)=0\quad\text{for }r\in(0,1),\qquad z'(0)=z'(1)=0.
\end{equation}
Integrating this equation gives $\int_\Omega e^z\dd x=\pi$.
Conversely, every solution of \eqref{eq:z} yields the equilibrium
$u_*\coloneq\mu e^z$, $v_*\coloneq z-\av z$, where $\av z\coloneq\pi^{-1}\int_\Omega z\dd x$.

For $\beta\ne0$, consider the parameter-free initial-value problem
\begin{equation}\label{eq:shooting}
Z_\beta''+\frac1sZ_\beta'+e^{Z_\beta}-1=0\quad\text{for }s\in(0,\infty),\qquad
Z_\beta(0)=\beta,\quad Z_\beta'(0)=0.
\end{equation}
The equation at the origin is understood through
\[
sZ_\beta'(s)=-\int_0^s t(e^{Z_\beta(t)}-1)\dd t.
\]
Its solutions are smooth and depend smoothly on $\beta$ on compact
$s$-intervals.

\begin{lemma}[First extremum]\label{lem:first-extremum}
For every $\beta\ne0$, $Z_\beta'$ has a first positive zero $R_\beta$.
If $\beta>0$, then
\[
Z_\beta'<0\text{ on }(0,R_\beta),\qquad
Z_\beta(R_\beta)<0,\qquad Z_\beta''(R_\beta)>0.
\]
If $\beta<0$, all these signs are reversed.
The map $\beta\mapsto R_\beta$ is smooth on each of the intervals
$(0,\infty)$ and $(-\infty,0)$.
\end{lemma}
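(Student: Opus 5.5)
We treat the case $\beta>0$ and obtain $\beta<0$ by the symmetric argument, with all inequalities reversed. The plan has four steps: get the initial sign of $Z_\beta'$, show $Z_\beta'$ must vanish somewhere, read off the signs at the first zero, and deduce smoothness from the implicit function theorem.

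\emph{Initial sign.} From the integral form
\[
sZ_\beta'(s)=-\int_0^s t\bigl(e^{Z_\beta(t)}-1\bigr)\dd t,
\]
we get $Z_\beta'(s)<0$ for small $s>0$, because $e^{\beta}-1>0$. Let $R_\beta\in(0,\infty]$ be the supremum of the $s$ with $Z_\beta'<0$ on $(0,s)$.

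\emph{Existence of a zero.} Suppose $R_\beta=\infty$. Then $Z_\beta$ decreases to a limit $L\in[-\infty,\beta)$. We use the energy
\[
E(s)\coloneq\tfrac12 Z_\beta'^2+G(Z_\beta),\qquad G(z)\coloneq e^z-1-z\ge0,
\]
which satisfies $E'=-Z_\beta'^2/s\le0$. So $G(Z_\beta)\le G(\beta)$, and since $G(z)\to\infty$ as $z\to-\infty$, $Z_\beta$ is bounded below and $L$ is finite. If $L\ne0$, then $e^{Z_\beta}-1$ is bounded away from zero with the sign of $L$ for large $s$. When $L>0$, the identity gives $sZ_\beta'\le -cs^2$, so $Z_\beta'\to-\infty$, which contradicts boundedness of $Z_\beta$. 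When $L<0$, the integral $\int_0^s t(e^{Z_\beta}-1)\dd t$ eventually becomes negative, so $Z_\beta'>0$, which contradicts monotone decrease.

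\emph{The borderline case $L=0$.} This is the main obstacle. Here $Z_\beta\downarrow0$ with $Z_\beta>0$, so $Z_\beta''+Z_\beta'/s=-(e^{Z_\beta}-1)<0$ and $(sZ_\beta')'<0$. Writing $w=\sqrt s\,Z_\beta$, the equation is asymptotically $w''+(1+o(1))w=0$, since $e^{Z}-1=Z(1+o(1))$ and the $1/(4s^2)$ term is negligible. Sturm comparison with $w''+\tfrac12w=0$ then forces $w$, and hence $Z_\beta$, to vanish on every sufficiently long interval. This contradicts $Z_\beta>0$, so $R_\beta<\infty$. I would write out this Sturm oscillation step carefully.

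\emph{Signs at $R_\beta$.} By construction $Z_\beta'<0$ on $(0,R_\beta)$ and $Z_\beta'(R_\beta)=0$, so the equation gives $Z_\beta''(R_\beta)=1-e^{Z_\beta(R_\beta)}$.
\begin{itemize}
\item If $Z_\beta(R_\beta)=0$, then $Z_\beta'(R_\beta)=0$ and uniqueness for the regular ODE at $s=R_\beta>0$ give $Z_\beta\equiv0$, contradicting $\beta\ne0$.
\item If $Z_\beta(R_\beta)>0$, then $Z_\beta>0$ on $[0,R_\beta]$, so the integral identity gives $R_\beta Z_\beta'(R_\beta)<0$, a contradiction.
\end{itemize}
Hence $Z_\beta(R_\beta)<0$, and therefore $Z_\beta''(R_\beta)>0$.

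\emph{Smoothness of $\beta\mapsto R_\beta$.} The map $(s,\beta)\mapsto Z_\beta'(s)$ is smooth near $(R_\beta,\beta)$, because solutions depend smoothly on $\beta$ on compact $s$-intervals. Its $s$-derivative there is $Z_\beta''(R_\beta)\ne0$. The implicit function theorem therefore gives a smooth local branch of zeros $\tilde R(\tilde\beta)$ through $(R_\beta,\beta)$.

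It remains to check that this branch is still the \emph{first} zero for nearby $\tilde\beta$. Near $s=0$, the integral identity shows $Z_{\tilde\beta}'(s)/s\to-(e^{\tilde\beta}-1)/2<0$ uniformly for $\tilde\beta$ near $\beta$. On the compact set $[\varepsilon,R_\beta-\delta]$, $Z_\beta'$ is bounded away from zero, and this persists for nearby $\tilde\beta$ by continuous dependence. Near $s=R_\beta$, the implicit function theorem gives the unique zero. Together these show $R_{\tilde\beta}=\tilde R(\tilde\beta)$, which is smooth.
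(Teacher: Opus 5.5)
Your proof is correct, but your existence step takes a different route from the paper's. The paper first locates a zero of $Z_\beta$ directly. For $\beta>0$, while $Z_\beta>0$ it writes the equation as $Z_\beta''+Z_\beta'/s+c(s)Z_\beta=0$ with $c>1$. It then compares with $J_0$ through the weighted Wronskian $s(Z_\beta'J_0-Z_\beta J_0')$, which forces a sign change of $Z_\beta$ before $j_{0,1}$. For $\beta<0$ it compares instead with $J_0(\sqrt{a_\beta}\,s)$, where $a_\beta=(1-e^\beta)/(-\beta)$. Only afterwards does it show that, once $Z_\beta$ is negative, $(sZ_\beta')'=s(1-e^{Z_\beta})\ge c_0s$ prevents $Z_\beta'$ from staying negative.

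You instead argue by contradiction from $R_\beta=\infty$. You use the energy to get a finite monotone limit $L$, dispose of $L\neq0$ with the integral identity, and treat $L=0$ by the Liouville transform $w=\sqrt{s}\,Z_\beta$ and Sturm comparison at infinity. The sign $Z_\beta(R_\beta)<0$ then comes from ODE uniqueness and the integral identity, rather than from the ordering ``zero of $Z_\beta$ first, then zero of $Z_\beta'$''.

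The paper's route is quantitative: it places the first zero of $Z_\beta$ in $(0,j_{0,1})$, respectively $(0,j_{0,1}/\sqrt{a_\beta}]$. The price is a separately tailored comparison function for each sign of $\beta$. Your route gives no location, but the $L=0$ case uses only $c(s)\to1$ as $Z_\beta\to0$, and the $L\ne0$ cases use only the integral identity. So it really does transfer verbatim to $\beta<0$. That deserves saying explicitly, since $e^z-1$ is not odd and ``reverse all inequalities'' is not automatic. Your argument is also the same oscillation device the paper uses later in \cref{lem:all-extrema}.

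Your implicit-function-theorem step matches the paper's and is in fact more detailed. One small point: before defining $R_\beta$ as a supremum over $(0,\infty)$, state that the energy bound on $Z_\beta$ and $Z_\beta'$ gives global existence, as the paper does.
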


\begin{proof}
The energy
\[
E_\beta(s)\coloneq\frac12(Z_\beta')^2+e^{Z_\beta}-Z_\beta-1,
\qquad\text{for }s\in[0,\infty),
\]
satisfies $E_\beta'=-s^{-1}(Z_\beta')^2\le0$ for $s>0$.
Since $e^z-z-1\to\infty$ at both ends of the real line, this proves
global existence and boundedness of $Z_\beta$ and $Z_\beta'$ on $[0,\infty)$.

For $\beta>0$, $Z_\beta'(s)<0$ as long as $Z_\beta>0$.
On that interval it solves
\[
Z_\beta''+\frac1sZ_\beta'+c(s)Z_\beta=0,\qquad
c(s)\coloneq\frac{e^{Z_\beta}-1}{Z_\beta}>1.
\]
The quotient defining $c$ extends continuously with value $1$ at
$Z_\beta=0$. We give the direct Sturm comparison argument~\cite[Theorem~3.1]{Simon2005}, implying a finite first zero of $Z_\beta$.
Let $j\coloneq j_{0,1}$ be the first
positive zero of $J_0$. The function $J\coloneq J_0$ satisfies
\[
J''+\frac1sJ'+J=0,\qquad J(0)=1,\quad J'(0)=0,
\]
and $J>0$ on $[0,j)$, $J(j)=0$, and $J'(j)<0$, since this zero is
simple. Suppose that $Z_\beta$ has no zero in $(0,j)$. Then
$Z_\beta>0$ there and $Z_\beta(j)\ge0$. Multiplying the equation for
$Z_\beta$ by $sJ$ and that for $J$ by $sZ_\beta$, and subtracting, gives
the weighted Wronskian
\begin{align*}
\mathcal W(s)&\coloneq s(Z_\beta'J-Z_\beta J'),\\
\mathcal W'(s)&=-s(c(s)-1)Z_\beta J<0\quad\text{for }s\in(0,j).
\end{align*}
The integral identity at the origin yields
\[
Z_\beta'(s)=-\frac{e^\beta-1}{2}s+o(s),\qquad
J'(s)=-\frac{s}{2}+O(s^3),
\]
so $\mathcal W(s)=O(s^2)\to0$ as $s\downarrow0$. Integrating from
$\varepsilon$ to $j$ and letting $\varepsilon\downarrow0$ therefore gives
\[
\mathcal W(j)=-\int_0^j s(c(s)-1)Z_\beta(s)J(s)\dd s<0.
\]
On the other hand, $\mathcal W(j)=-jZ_\beta(j)J'(j)\ge0$, a contradiction.
Thus the first positive zero $s_\beta^0$ of $Z_\beta$ exists and satisfies
\[
0<s_\beta^0<j_{0,1}.
\]
Moreover, the integral identity gives $Z_\beta'(s_\beta^0)<0$, so
$Z_\beta$ crosses zero there.
After $Z_\beta$ becomes negative, a derivative that remained negative
would give, for some $s_0$ and $c_0>0$,
\[
(sZ_\beta')'=s(1-e^{Z_\beta})\ge c_0s,\qquad\text{for }s\in[s_0,\infty),
\]
a contradiction. Its first zero is therefore a strict negative minimum.

For $\beta<0$, $Z_\beta'>0$ while $Z_\beta<0$.
Before its first zero, $\beta\le Z_\beta<0$ and
\[
Z_\beta''+\frac1sZ_\beta'+c(s)Z_\beta=0,
\]
where
\[
\begin{aligned}
c(s)&\coloneq\frac{1-e^{Z_\beta(s)}}{-Z_\beta(s)}
=\int_0^1 e^{tZ_\beta(s)}\dd t\\
&\ge\int_0^1 e^{t\beta}\dd t
=a_\beta\coloneq \frac{1-e^\beta}{-\beta}>0.
\end{aligned}
\]
The appropriate comparison function is now
$J_\beta(s)\coloneq J_0(\sqrt{a_\beta}\,s)$, which satisfies
\[
J_\beta''+\frac1sJ_\beta'+a_\beta J_\beta=0,\qquad
J_\beta(0)=1,\quad J_\beta'(0)=0.
\]
Its first positive zero is $b_\beta\coloneq j_{0,1}/\sqrt{a_\beta}$, with
$J_\beta'(b_\beta)<0$. While $Z_\beta<0$ and $J_\beta>0$,
\[
\begin{aligned}
\mathcal W_\beta(s)&\coloneq s(Z_\beta'J_\beta-Z_\beta J_\beta'),\\
\mathcal W_\beta'(s)&=-s(c(s)-a_\beta)Z_\beta J_\beta\ge0.
\end{aligned}
\]
As above, $Z_\beta'(s)=O(s)$ and $J_\beta'(s)=O(s)$ imply
$\mathcal W_\beta(s)\to0$ at the origin. If $Z_\beta$ stayed negative on
$[0,b_\beta]$, integration would give $\mathcal W_\beta(b_\beta)\ge0$, whereas
\[
\mathcal W_\beta(b_\beta)
=-b_\beta Z_\beta(b_\beta)J_\beta'(b_\beta)<0.
\]
Hence $Z_\beta$ has a finite first positive zero satisfying
\[
0<s_\beta^0\le\frac{j_{0,1}}{\sqrt{a_\beta}}.
\]
The integral identity gives $Z_\beta'(s_\beta^0)>0$, so it crosses zero.
If the derivative then remained positive, the identity
$(sZ_\beta')'=s(1-e^{Z_\beta})$ would force it to become negative.
The first extremum is a strict positive maximum.

At either extremum $Z_\beta''=1-e^{Z_\beta}\ne0$.
Smooth parameter dependence and the implicit function theorem give
smoothness of the first extremum radius. The expansion
$Z_\beta'(s)=(1-e^\beta)s/2+O(s^3)$ controls the sign near zero,
so the continued extremum remains the first one.
\end{proof}

Set
\begin{equation}\label{eq:shooting-family}
\begin{gathered}
\mu_\beta\coloneq R_\beta^2,\qquad
u_\beta(r)\coloneq R_\beta^2e^{Z_\beta(R_\beta r)},\\
v_\beta(r)\coloneq Z_\beta(R_\beta r)-\av{Z_\beta(R_\beta|\cdot|)}
\qquad\text{for }r\in[0,1].
\end{gathered}
\end{equation}
Integration of \eqref{eq:shooting} up to $R_\beta$ yields
\begin{equation}\label{eq:shooting-mass}
\int_0^{R_\beta}se^{Z_\beta(s)}\dd s=\frac{R_\beta^2}{2},
\qquad M_\beta\coloneq\int_\Omega u_\beta\dd x=\pi R_\beta^2.
\end{equation}
Hence \eqref{eq:shooting-family} gives strictly decreasing equilibria for
$\beta>0$ and strictly increasing equilibria for $\beta<0$.
Conversely, every monotone nonconstant equilibrium arises this way:
rescale \eqref{eq:z} by $s\coloneq\sqrt\mu\,r$ and take
$\beta\coloneq z(0)$. The mean condition and monotonicity give the sign of $\beta$;
the boundary is its first extremum.

\begin{proposition}[Small amplitude and the increasing family]\label{prop:small}
As $\beta\to0$ from either side,
\[
R_\beta\to j_{1,1},\qquad u_\beta\to\Lambda
\quad\text{in }C^m(\overline\Omega)
\]
for every integer $m\ge0$.
For $\beta<0$, $M_\beta>\Lambda\pi$ and $M_\beta\to\infty$ as
$\beta\to-\infty$. Thus an increasing equilibrium exists at every
mass $M>\Lambda\pi$.
\end{proposition}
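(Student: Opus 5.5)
We prove the small-amplitude limit by linearizing the shooting problem at $\beta=0$, and the statements for $\beta<0$ by combining \cref{cor:radial-shape} with a scaling/energy argument.

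\emph{Small amplitude.} Write $Z_\beta=\beta W_\beta$. Then $W_\beta$ solves
\[
W_\beta''+\frac1sW_\beta'+\frac{e^{\beta W_\beta}-1}{\beta}=0,\qquad W_\beta(0)=1,\quad W_\beta'(0)=0.
\]
Smooth dependence on parameters (the nonlinearity is smooth in $(\beta,W)$, including at $\beta=0$) gives $W_\beta\to J_0$ in $C^m([0,S])$ for every $S$ and $m$. The first positive zero of $J_0'=-J_1$ is $j_{1,1}$, and $J_0''(j_{1,1})=-J_0(j_{1,1})\ne0$, so this zero is simple. By the implicit function theorem, $W_\beta'$ has a zero $\tilde R_\beta\to j_{1,1}$.

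To see that $\tilde R_\beta$ is the \emph{first} zero, we need to exclude zeros of $W_\beta'$ near $s=0$. Since $W_\beta'(s)/s\to -1/2$ uniformly on $[0,\delta]$ (from the integral identity), there are none there. On $[\delta,j_{1,1}-\delta]$, $|J_1|$ is bounded below, so uniform convergence excludes zeros there as well. Hence $R_\beta=\tilde R_\beta\to j_{1,1}$.

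Then $u_\beta(r)=R_\beta^2e^{\beta W_\beta(R_\beta r)}\to j_{1,1}^2=\Lambda$ in $C^m(\overline\Omega)$, because $R_\beta\to j_{1,1}$ and $\beta W_\beta\to0$ in $C^m$ on compact sets. This argument is the same for both signs of $\beta$.

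\emph{Mass bound for $\beta<0$.} For $\beta<0$, $u_\beta$ is a nonconstant, strictly increasing equilibrium of mass $M_\beta=\pi R_\beta^2$. \Cref{cor:radial-shape} then gives $M_\beta>\Lambda\pi$ immediately: strict increase means $u_\beta'>0$ somewhere, which is equivalent to $M>\Lambda\pi$.

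\emph{Unboundedness as $\beta\to-\infty$.} We show $R_\beta\to\infty$. On $(0,R_\beta)$ we have $Z_\beta<0$ initially and, by \cref{lem:first-extremum}, $Z_\beta(R_\beta)>0$. Suppose $R_\beta\le S$ along a sequence $\beta\to-\infty$. Integrating, we get $|sZ_\beta'(s)|=\int_0^s t(1-e^{Z_\beta})\dd t\le s^2/2$, so $|Z_\beta'|\le s/2$ and $Z_\beta(s)\le \beta+S^2/4$. For $\beta$ very negative this is negative on $[0,S]$, contradicting $Z_\beta(R_\beta)>0$. Hence $M_\beta=\pi R_\beta^2\to\infty$. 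This estimate is the only step that needs a quantitative bound, and it is elementary. The main subtlety is instead the first-zero identification above, which is also why we use the integral identity to control $W_\beta'$ near $s=0$.

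\emph{Existence at every mass $M>\Lambda\pi$.} By \cref{lem:first-extremum}, $\beta\mapsto M_\beta$ is continuous on $(-\infty,0)$. Its limit is $\Lambda\pi$ as $\beta\uparrow0$ and $+\infty$ as $\beta\to-\infty$. By the intermediate value theorem every $M>\Lambda\pi$ is attained, and \eqref{eq:shooting-family} provides the corresponding increasing equilibrium.
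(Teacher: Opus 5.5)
Your proposal is correct and follows the paper's proof essentially step by step. The ingredients match: convergence of $Z_\beta/\beta$ to $J_0$, together with the simple zero $j_{1,1}$ of $J_1$ and a uniform near-origin expansion to identify the first extremum; \cref{cor:radial-shape} for $M_\beta>\Lambda\pi$; the bound $Z_\beta(s)\le\beta+s^2/4$, which forces $R_\beta>2\sqrt{-\beta}$; and the intermediate value theorem.

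Two minor precisions. First, the uniform limit of $W_\beta'(s)/s$ is $-J_1(s)/s$ rather than $-1/2$. This does not affect the argument, since $-J_1(s)/s$ is negative on all of $[0,j_{1,1})$. Second, it is the local uniqueness in the implicit function theorem that excludes zeros between $j_{1,1}-\delta$ and $\tilde R_\beta$, and this should be said explicitly.
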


\begin{proof}
Smooth dependence at $\beta=0$ gives
\[
\frac{Z_\beta(s)}{\beta}\to J_0(s)
\quad\text{in }C^m_{\mathrm{loc}}([0,\infty))
\]
as $\beta\to0$, for every integer $m\ge0$.
The first positive zero of $J_0'=-J_1$ is simple.
Near $s=0$, the expansion
$Z_\beta'(s)/\beta=-(e^\beta-1)s/(2\beta)+O(s^3)$ is uniform
for small nonzero $\beta$. Together with convergence away from zero, this
excludes an earlier extremum. Hence $R_\beta\to j_{1,1}$.
Formula \eqref{eq:shooting-family} gives $u_\beta\to\Lambda$ in
$C^m(\overline\Omega)$ for every integer $m\ge0$.

For $\beta<0$, \cref{cor:radial-shape} gives $M_\beta>\Lambda\pi$.
Before the first zero $s_\beta^0$ of $Z_\beta$,
\[
(sZ_\beta')'=s(1-e^{Z_\beta})\le s,
\qquad Z_\beta(s)\le\beta+\frac{s^2}{4}.
\]
Therefore $s_\beta^0\ge2\sqrt{-\beta}$ and
$R_\beta>s_\beta^0\to\infty$.
Continuity of $M_\beta$ and its limits at $0$ and $-\infty$
give every mass in $(\Lambda\pi,\infty)$ by the intermediate value theorem.
\end{proof}

\subsection{Increasing equilibria and concentration minimality}

Fix a strictly increasing equilibrium, with cumulative profile $F$, and set
\[
Q(\xi)\coloneq M\xi-F(\xi)=-H(\xi),\qquad
U(\xi)\coloneq u_*(\sqrt\xi)\qquad\text{for }\xi\in[0,1].
\]
Then $Q>0$ on $(0,1)$. The stationary identities give
\begin{equation}\label{eq:increasing-stationary-identities}
Q(0)=Q(1)=0,\qquad Q'=M-\pi U,\qquad
4\xi Q''+UQ=0,\qquad \frac{U'}U=\frac{Q}{4\pi\xi}
\quad\text{for }\xi\in(0,1).
\end{equation}
All these profiles are smooth at zero, and $Q(\xi)=O(\xi)$.
The cumulative linearization at fixed mass is
\begin{equation}\label{eq:increasing-cumulative-linearization}
L_Fh\coloneq4\xi h''-\frac Q\pi h'+Uh\quad\text{for }\xi\in(0,1),\qquad h(0)=h(1)=0.
\end{equation}

\begin{lemma}[A positive strict supersolution]
\label{lem:increasing-supersolution}
The function $\zeta\coloneq U^{1/3}Q$ is positive on $(0,1)$, vanishes at
both endpoints, and satisfies
\begin{equation}\label{eq:increasing-supersolution}
L_F\zeta=-\frac{U^{1/3}Q^2}{3\pi\xi}
\left(1+\frac{Q}{6\pi}\right)<0\qquad\text{for }\xi\in(0,1).
\end{equation}
\end{lemma}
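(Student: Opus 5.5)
The plan is a direct computation. The exponent $1/3$ should be exactly the one that removes the cross term $QQ'$, and what remains should be visibly negative.

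First, the sign and boundary claims. Since the equilibrium is strictly increasing and nonconstant, $Q=-H>0$ on $(0,1)$ by \cref{lem:stationary-sign}. Moreover $U=u_*(\sqrt\xi)$ is smooth and strictly positive on $[0,1]$, because $u_*=Ce^{v_*}$. Hence $\zeta=U^{1/3}Q>0$ on $(0,1)$, and $\zeta(0)=\zeta(1)=0$ follows from $Q(0)=Q(1)=0$ in \eqref{eq:increasing-stationary-identities}.

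For the differential inequality, I will keep the exponent general and write $\zeta=U^aQ$. I set $w\coloneq U'/U=Q/(4\pi\xi)$, so that $4\xi w=Q/\pi$ and
\[
w'=\frac{Q'}{4\pi\xi}-\frac{Q}{4\pi\xi^2}.
\]
Differentiating gives
\[
\zeta'=U^a(awQ+Q'),\qquad
\zeta''=U^a\bigl(a^2w^2Q+2awQ'+aw'Q+Q''\bigr).
\]
I then multiply $\zeta''$ by $4\xi$ and replace $4\xi Q''$ by $-UQ$, using \eqref{eq:increasing-stationary-identities}. I subtract $(Q/\pi)\zeta'$ and add $U\zeta=U^{a}UQ$. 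The term $-UQ$ cancels $U\zeta$, and the remaining terms collect into
\[
L_F\zeta=U^a\left[(a^2-a)\frac{Q^3}{4\pi^2\xi}+(3a-1)\frac{QQ'}{\pi}-\frac{aQ^2}{\pi\xi}\right].
\]
The term $QQ'$ has no sign, so it has to vanish, and this forces $a=1/3$. Then $a^2-a=-2/9$, and the bracket becomes
\[
-\frac{Q^2}{3\pi\xi}\left(1+\frac{Q}{6\pi}\right),
\]
which is \eqref{eq:increasing-supersolution}. This is strictly negative on $(0,1)$ because $U>0$ and $Q>0$ there.

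The only point needing care is the bookkeeping of the $QQ'$ contributions, which come from three sources: $2awQ'$, $aw'Q$, and the drift term $-(Q/\pi)\zeta'$. Checking that their coefficients sum to $3a-1$ is what justifies the choice of exponent.

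Everything is smooth on $(0,1)$, so no regularity issue arises. If behaviour at the endpoints is needed later, it follows from $Q(\xi)=O(\xi)$: the right-hand side of \eqref{eq:increasing-supersolution} is then $O(\xi)$ near $\xi=0$.
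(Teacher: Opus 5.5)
Your proposal is correct and follows the paper's proof essentially line by line. You compute $L_F(U^aQ)$ for a general exponent using $U'/U=Q/(4\pi\xi)$ and $4\xi Q''=-UQ$, obtain the same bracket $(a^2-a)Q^3/(4\pi^2\xi)+(3a-1)QQ'/\pi-aQ^2/(\pi\xi)$, and choose $a=1/3$ to remove the sign-indefinite $QQ'$ term, handling positivity and the endpoint values exactly as the paper does.
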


\begin{proof}
Since  $U$ is smooth and strictly positive on $[0,1]$, and $Q>0$ on
$(0,1)$ with $Q(0)=Q(1)=0$, the function $\zeta=U^{1/3}Q$ is smooth,
positive in the interior, and zero at both endpoints. It remains to
compute $L_F\zeta$.

All derivatives below are taken with respect to $\xi\in(0,1)$.
For any $p\in\R$, the identity $U'/U=Q/(4\pi\xi)$ gives
\begin{align*}
(U^p)'&=pU^p\frac{U'}U
       =\frac{pQ}{4\pi\xi}U^p,\\
(U^p)''&=\frac{pQ}{4\pi\xi}(U^p)'
       +pU^p\left(\frac{Q}{4\pi\xi}\right)'\\
&=U^p\left[
\frac{pQ'}{4\pi\xi}-\frac{pQ}{4\pi\xi^2}
+\frac{p^2Q^2}{16\pi^2\xi^2}\right].
\end{align*}
The product rule therefore yields
\[
(U^pQ)'=(U^p)'Q+U^pQ',\qquad
(U^pQ)''=(U^p)''Q+2(U^p)'Q'+U^pQ''.
\]
Substituting these expressions into
$L_Fh=4\xi h''-(Q/\pi)h'+Uh$, we obtain
\begin{align*}
L_F(U^pQ)
&=U^p(4\xi Q''+UQ)
  +\left[8\xi(U^p)'-\frac{Q}{\pi}U^p\right]Q'\\
&\quad+\left[4\xi(U^p)''-\frac{Q}{\pi}(U^p)'\right]Q\\
&=U^p\Biggl\{4\xi Q''+UQ
  +\frac{(2p-1)QQ'}{\pi}\\
&\hspace{3.5em}
  +\left[\frac{pQ'}{\pi}-\frac{pQ}{\pi\xi}
  +\frac{p(p-1)Q^2}{4\pi^2\xi}\right]Q\Biggr\}.
\end{align*}
Using $4\xi Q''+UQ=0$ and combining the two $QQ'$ terms gives
\[
L_F(U^pQ)=U^pQ\left[
\frac{(3p-1)Q'}{\pi}-\frac{pQ}{\pi\xi}
+\frac{p(p-1)Q^2}{4\pi^2\xi}\right].
\]
We now choose $p=1/3$, so that $3p-1=0$ and $p(p-1)=-2/9$.
Consequently,
\begin{align*}
L_F\zeta
&=U^{1/3}Q\left[-\frac{Q}{3\pi\xi}
                   -\frac{Q^2}{18\pi^2\xi}\right]\\
&=-\frac{U^{1/3}Q^2}{3\pi\xi}
  \left(1+\frac{Q}{6\pi}\right)<0
  \qquad\text{for }\xi\in(0,1),
\end{align*}
since $U>0$ and $Q>0$ there. This proves
\eqref{eq:increasing-supersolution}.
\end{proof}

\begin{proposition}[Strict spectral bound and radial nondegeneracy]
\label{prop:increasing-nondegeneracy}
Define
\[
m(\xi)\coloneq\frac1{4\xi U(\xi)}\quad\text{for }\xi\in(0,1],\qquad
\mathcal H\coloneq L^2((0,1),m(\xi)\dd\xi),\qquad V\coloneq H^1_0((0,1)).
\]
The Dirichlet realization of $L_F$ associated with the quadratic form on $V$ is
self-adjoint in $\mathcal H$, has compact resolvent, and satisfies
\begin{equation}\label{eq:increasing-negative-spectrum}
\spec(L_F)\subset(-\infty,-\gamma]
\end{equation}
for some $\gamma>0$. In particular, $L_Fh=0$ with $h(0)=h(1)=0$
has only the zero classical solution.
\end{proposition}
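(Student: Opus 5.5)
We put $L_F$ in divergence form. The Dirichlet form on $V$ is then coercive, and a Hardy-type (Allegretto--Piepenbrink) ground-state substitution with the strict supersolution $\zeta=U^{1/3}Q$ of \cref{lem:increasing-supersolution} shows it is negative definite.

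\emph{Divergence form.} The weight $m=1/(4\xi U)$ is the natural one, since $U'/U=Q/(4\pi\xi)$ gives
\[
\Bigl(\frac1U\Bigr)'=-\frac{Q}{4\pi\xi U}.
\]
Hence
\[
m\,L_Fh=\frac{h''}{U}-\frac{Q}{4\pi\xi U}h'+\frac{h}{4\xi}=\Bigl(\frac{h'}{U}\Bigr)'+\frac{h}{4\xi}.
\]
This is symmetric in $\mathcal H$ with quadratic form
\[
a(h,h)\coloneq\int_0^1\frac{|h'|^2}{U}\dd\xi-\int_0^1\frac{h^2}{4\xi}\dd\xi,\qquad \langle -L_Fh,h\rangle_{\mathcal H}=a(h,h).
\]
Since $U$ is bounded above and below by positive constants, \eqref{eq:weighted-endpoint-control} (or \cref{lem:weighted-poincare}) bounds $\int h^2/\xi$ by $\int|h'|^2$. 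So $a$ is continuous on $V$, and adding a multiple of $\norm{h}_{\mathcal H}^2$ makes it coercive. Note that $m\sim 1/(4\xi U)$ is not integrable at $0$, so $\norm{h}_{\mathcal H}^2\le C\int h^2/\xi$ is itself controlled by the $V$-norm. For compactness of $V\hookrightarrow\mathcal H$, the estimate $\int_0^\varepsilon h^2/\xi\le\varepsilon\int|h'|^2$ makes the tail near $0$ uniformly small, and Rellich applies on $[\varepsilon,1]$. The form method (Kato/Lax--Milgram) then gives a self-adjoint operator with compact resolvent. It coincides with the classical $L_F$ on smooth functions vanishing at the endpoints.

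\emph{Negativity.} For $h\in C_c^\infty((0,1))$, write $h=\zeta w$ with $w=h/\zeta$. The standard ground-state identity for $\mathcal Lh=(h'/U)'+h/(4\xi)$ reads
\[
a(h,h)=\int_0^1\frac{\zeta^2|w'|^2}{U}\dd\xi-\int_0^1 w^2\zeta\,\mathcal L\zeta\dd\xi.
\]
It follows by integrating $\bigl(\zeta\zeta' w^2/U\bigr)'$, whose boundary terms vanish by compact support. Here $\mathcal L\zeta=mL_F\zeta<0$ by \eqref{eq:increasing-supersolution}. Hence
\[
a(h,h)\ge\int_0^1 h^2\,\frac{-L_F\zeta}{\zeta}\,m\dd\xi
=\int_0^1 h^2\,\frac{Q}{3\pi\xi}\Bigl(1+\frac Q{6\pi}\Bigr)m\dd\xi .
\]
The weight is positive in the interior, but $Q(1)=0$, so it degenerates at $\xi=1$. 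Only $a>0$ follows directly. To get a uniform $\gamma$, we argue by compactness: the spectrum is discrete and bounded above, so it suffices to show the top eigenvalue $\lambda_1$ is negative. Let $h_1\in V$ be the eigenfunction with $a(h_1,h_1)=-\lambda_1\norm{h_1}^2_{\mathcal H}$. Extend the inequality above to $V$ by density; Fatou suffices on the right side because the weight is nonnegative. This gives $a(h_1,h_1)\ge\int h_1^2(\text{positive weight})>0$ unless $h_1\equiv0$. Thus $\lambda_1<0$, and $\gamma\coloneq-\lambda_1$ works.

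\emph{Consequence.} A classical solution of $L_Fh=0$ with $h(0)=h(1)=0$ is smooth on $(0,1]$. Near $0$ the equation $4\xi h''-(Q/\pi)h'+Uh=0$ has a regular singular point with indicial roots $0$ and $1$. So $h(0)=0$ forces $h=O(\xi)$ with $h'$ bounded, which puts $h$ in $V$ and in the operator domain. Then $h$ would be an eigenvector with eigenvalue $0$, contradicting \eqref{eq:increasing-negative-spectrum}.

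\emph{Main obstacle.} The delicate step is justifying the ground-state substitution up to the endpoints, where $\zeta$ vanishes. At $\xi=0$ we have $\zeta\sim c\xi$ because $Q'(0)=M-\pi U(0)>0$, and at $\xi=1$ the zero is simple by \cref{lem:stationary-sign}. Working first with $C_c^\infty$ functions and using density in $V$, together with Fatou on the nonnegative right-hand side, should avoid all boundary terms.
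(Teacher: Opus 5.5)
Your proposal is correct and follows essentially the same route as the paper. Both arguments use the divergence form $L_Fh=\frac1m(h'/U)'+Uh$ and the form $\int_0^1|h'|^2/U\dd\xi-\frac14\int_0^1h^2/\xi\dd\xi$ on $H^1_0((0,1))$, with the compact embedding supplied by \eqref{eq:weighted-endpoint-control}. Both then apply the ground-state substitution $h=\zeta w$ with $\zeta=U^{1/3}Q$ to obtain $\mathfrak a(h,h)\ge\int_0^1 cmh^2\dd\xi>0$, and use compactness to turn this strict positivity, which degenerates at $\xi=1$, into a uniform gap.

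The differences are cosmetic. The paper takes $\gamma$ from a minimizer of the Rayleigh quotient, where you use the top eigenvalue of the compact-resolvent operator. The paper also excludes classical kernel elements by testing directly with $mh$, where you use a Frobenius argument to place them in the operator domain.
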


\begin{proof}
The last identity in \eqref{eq:increasing-stationary-identities} gives
\begin{equation}\label{eq:increasing-form-operator}
L_Fh=\frac1m\left(\frac{h'}U\right)'+Uh.
\end{equation}
Its negative is represented by the symmetric form
\begin{equation}\label{eq:increasing-form}
\mathfrak a(h,h)\coloneq\int_0^1\frac{|h'|^2}{U}\dd\xi
-\frac14\int_0^1\frac{h^2}{\xi}\dd\xi,\qquad \text{for }h\in V.
\end{equation}
Since $U$ is bounded above and below by positive constants, the
estimates \eqref{eq:weighted-endpoint-control} and the usual compact
embedding away from zero imply that
$V$ embeds continuously and compactly into $\mathcal H$.
Adding a sufficiently large multiple of $\norm{h}_{\mathcal H}^2$
makes the form norm equivalent to the $H^1_0$ norm. Thus the form is
closed and defines a self-adjoint realization with compact resolvent.

For the function $\zeta$ in \cref{lem:increasing-supersolution}, put
\[
c(\xi)\coloneq-\frac{L_F\zeta}{\zeta}
=\frac{Q}{3\pi\xi}\left(1+\frac{Q}{6\pi}\right)\qquad\text{for }\xi\in(0,1).
\]
It extends boundedly to $[0,1]$ and is positive on $(0,1)$.
For $h\in C_c^\infty((0,1))$, set $v\coloneq h/\zeta$. Since $\zeta>0$
in $(0,1)$, we have $v\in C_c^\infty((0,1))$, and $h'=\zeta'v+\zeta v'$
gives
\[
\frac{|h'|^2}{U}
=\frac{\zeta^2}{U}|v'|^2+\frac{(\zeta')^2}{U}v^2
+\frac{\zeta\zeta'}{U}(v^2)'.
\]
The boundary term vanishes because $v$ is compactly supported in $(0,1)$.
Integration by parts in the last term therefore yields
\begin{align*}
\int_0^1\frac{|h'|^2}{U}\dd\xi
&=\int_0^1\frac{\zeta^2}{U}|v'|^2\dd\xi
 +\int_0^1\left[\frac{(\zeta')^2}{U}
 -\left(\frac{\zeta\zeta'}{U}\right)'\right]v^2\dd\xi\\
&=\int_0^1\frac{\zeta^2}{U}|v'|^2\dd\xi
 -\int_0^1\zeta\left(\frac{\zeta'}{U}\right)'v^2\dd\xi.
\end{align*}
On the other hand, \eqref{eq:increasing-form-operator},
$mU=1/(4\xi)$, and $L_F\zeta=-c\zeta$ imply
\[
-\frac1\zeta\left(\frac{\zeta'}{U}\right)'-\frac1{4\xi}
=-\frac m\zeta L_F\zeta=cm.
\]
Substituting $v=h/\zeta$ into the preceding integral identity and
subtracting the potential term in \eqref{eq:increasing-form}, we obtain
\begin{equation*}
\mathfrak a(h,h)=\int_0^1\frac{\zeta^2}{U}
\left|\left(\frac h\zeta\right)'\right|^2\dd\xi
+\int_0^1cmh^2\dd\xi.
\end{equation*}
By density and \eqref{eq:weighted-endpoint-control},
\begin{equation}\label{eq:increasing-positive-form}
\mathfrak a(h,h)\ge\int_0^1cmh^2\dd\xi>0
\qquad\text{for }h\in V\setminus\{0\}.
\end{equation}
This strict positivity gives a spectral gap: a minimizing sequence for
$\mathfrak a(h,h)$ under $\norm{h}_{\mathcal H}=1$ is bounded in $V$, since
\[
\int_0^1\frac{|h'|^2}{U}\dd\xi
=\mathfrak a(h,h)+\int_0^1Umh^2\dd\xi
\le\mathfrak a(h,h)+\max_{[0,1]}U.
\]
Compactness in $\mathcal H$ and weak lower semicontinuity give a
minimizer, whose value $\gamma$ is positive by
\eqref{eq:increasing-positive-form}. The variational characterization
proves \eqref{eq:increasing-negative-spectrum}.
Finally, a classical solution of $L_Fh=0$ belongs to $V$, and multiplying
by $mh$ gives $\mathfrak a(h,h)=0$; hence $h=0$.
\end{proof}

\begin{proposition}[Strict monotonicity of the increasing shooting mass]
\label{prop:increasing-mass}
For the first shooting family \eqref{eq:shooting-family},
\begin{equation*}
\frac{\dd M_\beta}{\dd\beta}<0\qquad\text{for }\beta\in(-\infty,0).
\end{equation*}
Thus $\beta\mapsto M_\beta$ is a smooth bijection from $(-\infty,0)$
onto $(\Lambda\pi,\infty)$, with smooth inverse. There is exactly one
increasing radial equilibrium $(u_M^{\mathrm i},v_M^{\mathrm i})$ at each $M>\Lambda\pi$, and these
equilibria depend smoothly on $M$.
The two parametrizations satisfy
$(u_\beta,v_\beta)=(u_{M_\beta}^{\mathrm i},v_{M_\beta}^{\mathrm i})$
for $\beta<0$.
\end{proposition}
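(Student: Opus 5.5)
We show that $\beta\mapsto M_\beta$ has no critical point on $(-\infty,0)$ by way of the nondegeneracy in \cref{prop:increasing-nondegeneracy}. A sign determination at $\beta\to0^-$, where \cref{prop:small} already gives $M_\beta>\Lambda\pi$ and $M_\beta\to\Lambda\pi$, together with the limit $M_\beta\to\infty$ as $\beta\to-\infty$, then yields strict monotonicity and bijectivity onto $(\Lambda\pi,\infty)$.

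\textbf{Derivative along the family.} Differentiate the shooting family in $\beta$. Set $w\coloneq\partial_\beta Z_\beta$. It solves the linearized equation
\[
w''+\frac1s w'+e^{Z_\beta}w=0,\qquad w(0)=1,\quad w'(0)=0.
\]
Differentiating $Z_\beta'(R_\beta)=0$ gives
\[
w'(R_\beta)+Z_\beta''(R_\beta)\,\frac{\dd R_\beta}{\dd\beta}=0,
\]
and by \cref{lem:first-extremum} we have $Z_\beta''(R_\beta)\ne0$. Hence $\frac{\dd M_\beta}{\dd\beta}=2\pi R_\beta\frac{\dd R_\beta}{\dd\beta}$ vanishes iff $w'(R_\beta)=0$.

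\textbf{Transfer to the cumulative linearization.} Suppose $w'(R_\beta)=0$. On $[0,1]$, put $\omega(r)\coloneq w(R_\beta r)$. Then $\omega$ solves the Neumann problem for the linearization $\Delta\omega+\mu e^{z}\omega=0$ of \eqref{eq:z}, with $\mu$ held fixed at $R_\beta^2$. Define
\[
h(\xi)\coloneq 2\pi\int_0^{\sqrt\xi} r\,u_*(r)\,\omega(r)\dd r .
\]
Integrating the equation for $\omega$ against $r$ gives $r\omega'(r)=-\mu\int_0^r\rho e^{z}\omega\dd\rho$. Therefore $h(1)$ is a multiple of $\omega'(1)=0$, and $h(0)=0$.

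A direct computation, mirroring the derivation of \eqref{eq:stationary}, should show that $h$ is a classical solution of $L_Fh=0$, with $L_F$ as in \eqref{eq:increasing-cumulative-linearization}. The reason is that $h$ is the variation of the cumulative profile $F$ under a perturbation that keeps the mass fixed to first order. Concretely, $h'(\xi)=\pi u_*\omega$ at $r=\sqrt\xi$, and the $z$-equation converts into $4\xi h''-\frac{Q}{\pi}h'+Uh=0$. Here the $Uh$ term comes from $u_*\omega'$ via $r\omega'=-\frac{h}{2\pi}\cdot\frac{\mu}{\,\cdot\,}$ after rescaling.

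I expect this bookkeeping to be the main obstacle. A possible issue is that varying $\beta$ changes $\mu$ as well. At a critical point, however, $\dd\mu/\dd\beta=0$, so the variation of $(\mu,z)$ is a genuine fixed-mass kernel element, and $h$ satisfies exactly $L_Fh=0$. To settle this, I would first check the computation directly on the equation \eqref{eq:stationary} differentiated in $\beta$ at fixed $M$.

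\textbf{Nontriviality and conclusion.} Since $w(0)=1$, $\omega\not\equiv0$, so $h'(\xi)\ne0$ for small $\xi$ and hence $h\not\equiv0$. This contradicts the last assertion of \cref{prop:increasing-nondegeneracy}. Thus $\frac{\dd M_\beta}{\dd\beta}\ne0$ on $(-\infty,0)$, and by continuity it has constant sign there. Since $M_\beta\to\infty$ as $\beta\to-\infty$ and $M_\beta\to\Lambda\pi$ as $\beta\to0^-$, with $M_\beta>\Lambda\pi$, the sign is negative, and the map is a smooth bijection onto $(\Lambda\pi,\infty)$. Its inverse is smooth by the inverse function theorem.

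Every increasing equilibrium of mass $M$ arises from a unique $\beta<0$, by the converse remark after \eqref{eq:shooting-mass}. Uniqueness of $(u_M^{\mathrm i},v_M^{\mathrm i})$ therefore follows from injectivity, and smooth dependence on $M$ follows by composing \eqref{eq:shooting-family} with the inverse map.
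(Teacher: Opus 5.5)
Your proof is correct and follows the paper's route. A critical point of $\beta\mapsto M_\beta$ would produce a nonzero Dirichlet kernel element of $L_F$, contradicting \cref{prop:increasing-nondegeneracy}, and the endpoint limits of \cref{prop:small} then fix the sign. At such a point your $h$ coincides with the paper's $G=\partial_\beta F_\beta$, and your $h'(0)=\pi u_*(0)$ is the paper's $G'(0)=e^{\beta_0}M>0$.

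The bookkeeping you flagged does close exactly. From $u_*'=u_*Q(r^2)/(2\pi r)$ and $r\omega'=-h(r^2)/(2\pi)$ one gets $4\xi h''-\frac{Q}{\pi}h'=2\pi r u_*\omega'=-Uh$. So $L_Fh=0$ holds identically, and criticality enters only through $h(1)=-2\pi R_\beta w'(R_\beta)=0$; your worry about $\mu$ varying with $\beta$ never arises.
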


\begin{proof}
Smoothness follows from \cref{lem:first-extremum} and
\eqref{eq:shooting-family}. Suppose that
$\dot M_{\beta_0}\coloneq \left.\dd M_\beta/\dd\beta\right|_{\beta_0}=0$
at some $\beta_0<0$. Let
\[
F_\beta(\xi)\coloneq2\pi\int_0^{\sqrt\xi}ru_\beta(r)\dd r,
\qquad G(\xi)\coloneq\left.\partial_\beta F_\beta(\xi)\right|_{\beta_0},
\qquad\text{for }\xi\in[0,1].
\]
Primes below denote derivatives with respect to $\xi$. For each $\beta$,
\eqref{eq:stationary} reads
\[
4\xi F_\beta''+\frac{F_\beta-M_\beta\xi}{\pi}F_\beta'=0,
\qquad F_\beta(0)=0,\quad F_\beta(1)=M_\beta.
\]
The profiles depend smoothly on $\beta$, so parameter differentiation
commutes with differentiation in $\xi$. Keeping $\xi$ fixed and applying
the product rule at $\beta=\beta_0$, we obtain
\begin{align*}
0
&=4\xi G''
 +\frac{G-\dot M_{\beta_0}\xi}{\pi}F_{\beta_0}'
 +\frac{F_{\beta_0}-M_{\beta_0}\xi}{\pi}G'\\
&=4\xi G''+\frac{F_{\beta_0}-M_{\beta_0}\xi}{\pi}G'
 +\frac{F_{\beta_0}'}{\pi}G
 -\frac{\dot M_{\beta_0}}{\pi}\,\xi F_{\beta_0}'.
\end{align*}
Since $F_{\beta_0}'/\pi=u_{\beta_0}(\sqrt\xi)$, the first three terms
in the last line are precisely $L_{F_{\beta_0}}G$ by
\eqref{eq:increasing-cumulative-linearization}.
Moreover, differentiating the endpoint identities at the fixed endpoints
$\xi=0$ and $\xi=1$ gives
\[
G(0)=\left.\partial_\beta F_\beta(0)\right|_{\beta_0}=0,
\qquad
G(1)=\left.\partial_\beta F_\beta(1)\right|_{\beta_0}
=\dot M_{\beta_0}.
\]
Thus, using the assumption $\dot M_{\beta_0}=0$, we conclude that
\begin{equation*}
L_{F_{\beta_0}}G=\frac{\dot M_{\beta_0}}{\pi}\,\xi F_{\beta_0}'=0,
\qquad G(0)=0,\quad G(1)=\dot M_{\beta_0}=0.
\end{equation*}
\cref{prop:increasing-nondegeneracy} implies $G=0$.
On the other hand, $F_\beta'(0)=\pi u_\beta(0)=M_\beta e^\beta$, so
\[
G'(0)=e^{\beta_0}(\dot M_{\beta_0}+M_{\beta_0})
=e^{\beta_0}M_{\beta_0}>0,
\]
a contradiction. Hence $\dd M_\beta/\dd\beta$ never vanishes on
$(-\infty,0)$. Its sign is constant, and the endpoint limits in
\cref{prop:small} force that sign to be negative and give
the stated range. The inverse function theorem gives the smooth inverse.
Every increasing equilibrium belongs to this first family, and
\eqref{eq:shooting-family} uniquely determines both the density and the
normalized potential. This proves uniqueness and smooth dependence on mass.
\end{proof}

\begin{proof}[Proof of the strict minimality assertion in \cref{thm:least-concentrated}]
Write $I\coloneq F_{u_M^{\mathrm i}}$, $S\coloneq F_u$, and $D\coloneq S-I$. In this proof all coefficients
refer to $u_M^{\mathrm i}$: set $Q\coloneq M\xi-I$, $U(\xi)\coloneq u_M^{\mathrm i}(\sqrt\xi)$, and
$m\coloneq(4\xi U)^{-1}$, and use $L_I$ and $\mathfrak a$ from
\eqref{eq:increasing-cumulative-linearization} and
\eqref{eq:increasing-form}. Subtracting the stationary equations gives
\begin{equation*}
L_ID+\frac1\pi DD'=0,\qquad D(0)=D(1)=0.
\end{equation*}
Let $g\coloneq D_-=\max\{-D,0\}\in H^1_0((0,1))$. The negative-part chain rule gives
\[
g'D'=-|g'|^2,\qquad gD=-g^2,\qquad gDD'=g^2g'
\quad\text{almost everywhere}.
\]
Multiplication by $mg$, followed by integration by parts using
\eqref{eq:increasing-form-operator}, yields
\begin{equation}
  \label{eq:increasing-order-negative-part}
\mathfrak a(g,g)=\frac1{3\pi}\int_0^1m'g^3\dd\xi.
\end{equation}
Indeed, $\int mgL_ID=\mathfrak a(g,g)$ and
$\int mgDD'=\int mg^2g'=-(1/3)\int m'g^3$.
These calculations can first be made on $[\delta,1]$. At zero,
$D,g=O(\xi)$ and $D'$ is bounded, while $m=O(\xi^{-1})$ and
$m'=O(\xi^{-2})$. Thus the endpoint terms $U^{-1}D'g=O(\xi)$ and
$mg^3=O(\xi^2)$ vanish, and $m'g^3=O(\xi)$ is integrable.
The chain rule handles all interior zeros of $D$.

By \eqref{eq:increasing-stationary-identities},
\begin{equation*}
m'=-\frac m\xi\left(1+\frac{Q}{4\pi}\right)<0
\qquad\text{for }\xi\in(0,1).
\end{equation*}
If $g\not\equiv0$, the left-hand side of
\eqref{eq:increasing-order-negative-part} is strictly positive by
\eqref{eq:increasing-positive-form}, whereas its right-hand side is
strictly negative. Therefore $g=0$ and $D\ge0$.
To prove strictness, rewrite the difference equation as
\[
4\xi D''+\frac{S-M\xi}{\pi}D'+UD=0.
\]
If $D$ vanished at an interior point, its derivative would also vanish
there. Uniqueness for this linear ODE on $(0,1)$ would imply $D=0$,
and hence $u=u_M^{\mathrm i}$. Thus $u\not\equiv u_M^{\mathrm i}$ gives $D>0$ throughout $(0,1)$.
This proves strict minimality and identifies $(u_M^{\mathrm i},v_M^{\mathrm i})$
as the unique least element in the concentration order.
\end{proof}

\begin{remark}[A geometric consequence]
Let $M>\Lambda\pi$, and let $(u,v)$ be any radial equilibrium of mass $M$
distinct from $(u_M^{\mathrm i},v_M^{\mathrm i})$.
By \eqref{eq:stationary-radial-derivative} and \cref{thm:least-concentrated},
\[
\frac{\dd}{\dd r}\log\frac{u(r)}{u_M^{\mathrm i}(r)}
=-\frac{F_u(r^2)-F_{u_M^{\mathrm i}}(r^2)}{2\pi r}<0\qquad\text{for }r\in(0,1).
\]
Thus the density ratio is strictly decreasing. Equality of the masses
implies a unique crossing $r_*\in(0,1)$, with $u(r_*)=u_M^{\mathrm i}(r_*)$ and
\[
u(r)>u_M^{\mathrm i}(r)\quad\text{for }r\in[0,r_*),\qquad
u(r)<u_M^{\mathrm i}(r)\quad\text{for }r\in(r_*,1].
\]
This holds even when $u$ has several radial extrema.
\end{remark}

\begin{proposition}[Equilibria less concentrated than the constant]
\label{prop:finite}
At each mass $M>\Lambda\pi$, the radial equilibria with cumulative profile
$F\le M\xi$ are exactly the constant equilibrium and $(u_M^{\mathrm i},v_M^{\mathrm i})$.
\end{proposition}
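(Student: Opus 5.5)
Both the constant equilibrium and $(u_M^{\mathrm i},v_M^{\mathrm i})$ have cumulative profile $F\le M\xi$: for the constant, $H\equiv0$; for the increasing equilibrium, \cref{lem:stationary-sign} gives $H<0$ on $(0,1)$. It therefore remains to show that no other radial equilibrium of mass $M>\Lambda\pi$ satisfies $H=F-M\xi\le0$ on $[0,1]$.

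Let $(u,v)$ be a radial equilibrium of mass $M$ with $H\le0$. If $H\equiv0$, then $u\equiv\mu$ and we are done. Otherwise $H$ is a nonzero, one-signed solution of \eqref{eq:H}. \cref{lem:stationary-sign} then yields $H<0$ on $(0,1)$ and $u'>0$ there, so $u$ is a strictly increasing nonconstant equilibrium. By the remark following \eqref{eq:shooting-mass}, every monotone nonconstant equilibrium arises from the shooting family \eqref{eq:shooting-family}, with $\beta=z(0)<0$ in the increasing case. \cref{prop:increasing-mass} states that $\beta\mapsto M_\beta$ is a bijection from $(-\infty,0)$ onto $(\Lambda\pi,\infty)$, and that the increasing equilibrium at each mass is unique. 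Hence $(u,v)=(u_M^{\mathrm i},v_M^{\mathrm i})$.

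There is one gap to close carefully: in the converse direction we must check that the boundary $r=1$, after rescaling $s=\sqrt\mu\,r$, is indeed the \emph{first} extremum of $Z_\beta$. This holds because $u'>0$ on $(0,1)$ means $z'>0$ on $(0,1)$, so $Z_\beta'$ has no zero before $s=\sqrt\mu$. Consequently $R_\beta=\sqrt\mu$, and $\mu=\mu_\beta$ matches the mass.

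As a consistency check, the trichotomy is also compatible with \cref{thm:least-concentrated}. Any equilibrium with $F\le M\xi$ satisfies $F\ge F_{u_M^{\mathrm i}}$. The constant satisfies this, since $F_{u_M^{\mathrm i}}<M\xi$. The argument above, however, does not need this comparison. The main obstacle is simply the bookkeeping showing that every increasing equilibrium lies on the $\beta<0$ shooting branch, which the earlier remark already supplies.
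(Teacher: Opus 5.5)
Your proof is correct and follows the paper's argument. \cref{lem:stationary-sign} turns a nonzero $H\le0$ into a strictly increasing equilibrium, \cref{prop:increasing-mass} identifies it with $(u_M^{\mathrm i},v_M^{\mathrm i})$, and both listed equilibria satisfy $F\le M\xi$; your check that $r=1$ rescales to the first extremum $R_\beta$ just spells out the shooting-family exhaustion already built into \cref{prop:increasing-mass}.
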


\begin{proof}
If $H=F-M\xi\le0$ is nonzero, \cref{lem:stationary-sign} shows
that the density is strictly increasing. \cref{prop:increasing-mass}
identifies it with $u_M^{\mathrm i}$. Conversely, both stated equilibria have
$F\le M\xi$.
\end{proof}

\subsection{The decreasing family: concentration, uniqueness, and nondegeneracy}

\begin{theorem}[Decreasing equilibria at prescribed mass]
\label{thm:decreasing-uniqueness}
For each $8\pi<M<\Lambda\pi$, there is exactly one strictly decreasing
radial equilibrium $(u_M^{\mathrm d},v_M^{\mathrm d})$ of
\eqref{eq:system}, with $\int_\Omega v_M^{\mathrm d}\dd x=0$.
There is no such equilibrium at any other positive mass.
These equilibria form a smooth branch parametrized by $M$.
Every equilibrium on this branch is nondegenerate under radial
fixed-mass perturbations: its cumulative linearization has trivial
kernel. This linearization also has a positive eigenvalue.
\end{theorem}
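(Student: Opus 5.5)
We work with the decreasing shooting family $\beta>0$ from \eqref{eq:shooting-family}. By the discussion after \eqref{eq:shooting-mass}, every strictly decreasing equilibrium is $(u_\beta,v_\beta)$ for a unique $\beta>0$, namely $\beta=z(0)$ after rescaling. So the theorem reduces to two facts about the mass map $\beta\mapsto M_\beta=\pi R_\beta^2$ on $(0,\infty)$: it is a smooth bijection onto $(8\pi,\Lambda\pi)$, and each member is radially nondegenerate and has a positive eigenvalue.

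\textbf{Range.} For the range we use three inputs.
\begin{itemize}
\item \cref{cor:radial-shape} gives $M_\beta<\Lambda\pi$ for every $\beta>0$.
\item The Horstmann--Lucia result, or \eqref{eq:non-monotone-mass-excess} combined with the cited nonexistence for $M\le 8\pi$, gives $M_\beta>8\pi$.
\item \cref{prop:small} gives $M_\beta\to\Lambda\pi$ as $\beta\downarrow0$.
\end{itemize}
For $\beta\to\infty$ we need $M_\beta\to8\pi$. We plan to rescale $Z_\beta$ near the origin to the Liouville bubble $w''+w'/s+e^w=0$, which has total mass $\int_0^\infty se^w\dd s=4$. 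Put $\varepsilon^2=e^{-\beta}$ and $w(t)=Z_\beta(\varepsilon t)-\beta$. Then $w$ converges locally to $-2\log(1+t^2/8)$, with error $O(\varepsilon^2 t^2)$ from the $-1$ term. Outside the bubble, $Z_\beta$ is approximately $\beta-4\log(s/\varepsilon)$, i.e.\ the fundamental solution plus a linear correction. Tracking $sZ_\beta'=-\int_0^s t(e^{Z_\beta}-1)\dd t\approx -4+s^2/2$ shows the first zero of $Z_\beta'$ satisfies $R_\beta^2\to 8$. This gives $M_\beta\to8\pi$ and, as a byproduct, the weak limit $8\pi\delta_0$. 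Continuity and the intermediate value theorem then give surjectivity onto $(8\pi,\Lambda\pi)$.

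\textbf{Injectivity and nondegeneracy.} We first copy the proof of \cref{prop:increasing-mass}. Suppose $\dot M_{\beta_0}=0$. Then $G=\partial_\beta F_\beta$ solves $L_FG=0$ with $G(0)=G(1)=0$ and $G'(0)=e^{\beta_0}M_{\beta_0}>0$, where
\[
L_Fh\coloneq4\xi h''+\frac{H}{\pi}h'+Uh,\qquad H>0 .
\]
Hence injectivity follows once we show that $L_F$ has trivial Dirichlet kernel on each decreasing equilibrium. That is the nondegeneracy claim itself, so the two statements collapse into one.

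\textbf{Main obstacle: trivial kernel.} Unlike the increasing case, $L_F$ is now expected to have a positive eigenvalue, so no supersolution $\zeta$ argument as in \cref{lem:increasing-supersolution} is available. We plan two routes.
\begin{itemize}
\item \emph{Scaling route.} Use the scaling generator of \eqref{eq:shooting} to produce the explicit solution $Y=\partial_\beta Z_\beta$ of the linearized radial equation $Y''+Y'/s+e^{Z_\beta}Y=0$, which is regular at $0$. Also use the dilation field $sZ_\beta'(s)+2$, which solves the linearization up to the inhomogeneous term $2-2e^{Z_\beta}\cdot$(const). A kernel element of $L_F$ corresponds to $Y$ vanishing appropriately at the boundary $s=R_\beta$. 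Combining the two solutions through a Wronskian identity, we would try to show that such vanishing forces a sign contradiction on the first nodal interval $(0,R_\beta)$, where $Z_\beta$ is monotone.
\item \emph{Curve-counting route.} Show directly that $\dd R_\beta/\dd\beta<0$, for instance by a Pohozaev identity for \eqref{eq:z} adapted to the Neumann disk. This is the standard mechanism in uniqueness proofs for radial Liouville-type problems on balls.
\end{itemize}
Once $\dot M_\beta\ne0$ everywhere, its sign is negative by the endpoint limits, so $M_\beta$ decreases from $\Lambda\pi$ to $8\pi$. The inverse function theorem then gives a smooth branch in $M$. The same nonvanishing computation yields $\ker L_F=\{0\}$: any kernel element is a multiple of $G$, and $G(1)=\dot M\ne0$. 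Uniqueness of $(u_M^{\mathrm d},v_M^{\mathrm d})$ follows from the normalization $\int_\Omega v\dd x=0$.

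\textbf{Positive eigenvalue.} Test the quadratic form of $L_F$ in the weighted space $L^2(m\dd\xi)$ with $m=(4\xi U)^{-1}$, the decreasing analogue of \eqref{eq:increasing-form}, using $h=H>0$. The stationary equation gives $L_FH=-H H'/\pi$ plus adjustments. The equilibrium identity $4\xi H''+\mu H+HH'/\pi=0$ then yields
\[
\int mH\,L_FH\dd\xi=\frac{1}{3\pi}\int (-m')H^3\dd\xi>0,
\]
since $m'>0$ when $U'<0$. This positive Rayleigh quotient gives a positive eigenvalue.
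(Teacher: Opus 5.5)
Your reduction of injectivity of $\beta\mapsto M_\beta$ to a trivial Dirichlet kernel is correct; it is the argument of \cref{prop:increasing-mass}. But uniqueness, smoothness of the branch and nondegeneracy all rest on that kernel statement, and you leave it as two unexecuted plans.

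The Wronskian route cannot close as described. The regular solution $Y=\partial_\beta Z_\beta$ satisfies $(sY')'=-se^{Z}Y$, so in the degenerate case $Y'(R_\beta)=0$ it must change sign on $(0,R_\beta)$. The dilation field $\phi=sZ_\beta'+2$ solves $\phi''+\phi'/s+e^{Z}\phi=2$. A single Wronskian of $Y$ and $\phi$ therefore only yields $R_\beta^2\bigl(1-e^{Z(R_\beta)}\bigr)Y(R_\beta)=2\int_0^{R_\beta}sY\dd s$, which has no sign. For the Pohozaev route you do not say which identity would fix the sign of $\dd R_\beta/\dd\beta$, and with the $-1$ term and Neumann data there is no standard one.

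The missing idea is a nodal argument for $V=\partial_\beta Q_\beta=-sY'$. This function solves $\mathcal L_{\mathrm d}V=0$ with $V\sim\frac{e^\beta}{2}s^2$, and its endpoint value controls both $\dd R_\beta/\dd\beta=V(R_\beta)/\bigl(R_\beta(1-e^{Z_\beta(R_\beta)})\bigr)$ and the kernel. The paper applies the Green identity \eqref{eq:dec-green} against three comparison functions. First, $T=s^2(e^Z-1)$, which is your dilation field in these variables, satisfies $\mathcal L_{\mathrm d}T=-2D$ and excludes zeros of $V$ on $(0,s_0]$. Second, $\Psi=e^{Z/2}D$, with $\mathcal L_{\mathrm d}\Psi>0$, $\Psi(R)=0$ and $\Psi'(R)<0$, forces a zero $b<R$. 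Third, $D=-sZ'$, with $\mathcal L_{\mathrm d}D=(e^Z-1)D<0$ where $Z<0$, forbids a second zero in $(b,R]$. Hence $V(R_\beta)<0$.

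Two further steps are wrong.

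First, the claim that every kernel element is a multiple of $G$, with $G(1)=\dot M\ne0$, fails. $G=\partial_\beta F_\beta$ at fixed $\xi$ satisfies $L_FG=\frac{\dot M}{\pi}\xi F'$, so it is a homogeneous solution only when $\dot M=0$. The regular homogeneous solution is the fixed-radius variation $h_0(\xi)=V(R_\beta\sqrt\xi)$, and $G=2\pi h_0+\frac{\dot M}{M}\xi F'$. Nondegeneracy is the statement $h_0(1)=V(R_\beta)\ne0$, which again needs the nodal argument.

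Second, the positive-eigenvalue test fails. In fact $L_FH=+HH'/\pi$, and $m'$ has no sign: $U'/U=-H/(4\pi\xi)$ gives $-m'=\frac m\xi\bigl(1-\frac H{4\pi}\bigr)$, so $m'<0$ near $\xi=0$. Thus $\langle L_FH,H\rangle_{\mathcal H}=\frac1{3\pi}\int_0^1\frac m\xi\bigl(1-\frac H{4\pi}\bigr)H^3\dd\xi$, and the integrand is negative wherever $H>4\pi$. As $\beta\to\infty$, \cref{prop:bubble} gives $H\to8\pi(1-\xi)$. Away from the core, $m/\xi$ is to leading order a positive multiple of $e^{\beta}e^{-2\xi}$. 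The quantity then has the sign of $\int_0^1e^{-2\xi}(2\xi-1)(1-\xi)^3\dd\xi<0$, so $H$ is not a valid test function for large $\beta$. Use instead $\psi(\xi)=\Psi(R_\beta\sqrt\xi)$, for which $L_F\psi>0$ pointwise. Alternatively, pair $H$ with the principal adjoint eigenfunction as in \cref{lem:unstable}, where $\sigma_F\int_0^1\psi_FH\dd\xi=-\frac1{2\pi}\int_0^1\psi_F'H^2\dd\xi>0$.

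Finally, your bubble sketch omits control of the mass outside the core. Local convergence to the Liouville profile does not by itself give $R_\beta^2\to8$. The paper needs the tail bound \eqref{eq:tail-bound}, obtained from $Z_\beta'(t)\le t/2-a_\beta/t$ with $a_\beta>2$.
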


The mass monotonicity proved below does not assume nondegeneracy.
The latter will follow from the shooting variation.

\subsubsection{The singular endpoint of the decreasing family}

\begin{proposition}[The concentrated endpoint]\label{prop:bubble}
For $\beta>0$,
\[
8\pi<M_\beta<\Lambda\pi,\qquad
R_\beta\to\sqrt8,\quad M_\beta\to8\pi
\quad\text{as }\beta\to\infty.
\]
Moreover, $u_\beta\dd x\rightharpoonup8\pi\delta_0$.
\end{proposition}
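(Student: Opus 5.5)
The plan has three parts: first the bounds $8\pi<M_\beta<\Lambda\pi$ for every $\beta>0$, then the blow-up analysis as $\beta\to\infty$, and finally the weak convergence of $u_\beta\dd x$.

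\emph{Bounds for fixed $\beta>0$.} By \eqref{eq:shooting-family}, $(u_\beta,v_\beta)$ is a strictly decreasing nonconstant equilibrium of mass $M_\beta$. \cref{cor:radial-shape} then gives $M_\beta<\Lambda\pi$ at once. For the lower bound, the results of Horstmann--Lucia and Mao--Liu--Li quoted before \cref{thm:equilibria} exclude nonconstant radial equilibria for $M\le8\pi$. If a self-contained argument is preferred, I would use a Pohozaev identity for \eqref{eq:stationary}. Multiply $4\xi F''+\pi^{-1}(F-M\xi)F'=0$ by a suitable weight and integrate. Since $F'>0$, the quantity $4\xi F''/F'=-(F-M\xi)/\pi$ can be integrated, which yields $M_\beta>8\pi$ from the fact that $H>0$ on $(0,1)$. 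The cited result is the cheapest route, so I would rely on it.

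\emph{Limit $\beta\to\infty$.} I would rescale around the peak. Set $\varepsilon_\beta\coloneq e^{-\beta/2}$ and $W_\beta(t)\coloneq Z_\beta(\varepsilon_\beta t)-\beta$. Then
\[
W''+\tfrac1tW'+e^{W}=\varepsilon_\beta^2, \qquad W(0)=W'(0)=0.
\]
As $\beta\to\infty$, $W_\beta\to W_\infty(t)=-2\log(1+t^2/8)$ in $C^2_{\mathrm{loc}}$, which is the Liouville bubble with $\int_0^\infty te^{W_\infty}\dd t=4$. In the original variable $s$, the identity
\[
sZ_\beta'(s)=-\int_0^s t(e^{Z_\beta}-1)\dd t
\]
shows that on the scale $s\gg\varepsilon_\beta$ we get $sZ_\beta'\approx-4+s^2/2$, up to the tail of the mass. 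The outer profile should therefore be $Z_\beta\approx\beta-4\log(s/\varepsilon_\beta)+\dots$, that is, $Z_\beta(s)\approx-4\log s+O(1)$ plus the $s^2/4$ correction coming from the $-1$ term. Its derivative vanishes where $-4/s+s/2=0$, i.e.\ at $s=\sqrt8$, which gives $R_\beta\to\sqrt8$. Then \eqref{eq:shooting-mass} gives $M_\beta=\pi R_\beta^2\to8\pi$.

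\emph{Main obstacle.} The hard step is making the outer matching rigorous. I have to show that $e^{Z_\beta}\to0$ uniformly on $[\delta,3]$ for each fixed $\delta>0$, and that $\int_0^{\delta}te^{Z_\beta}\dd t\to4$. For this I would use the monotone energy bound from \cref{lem:first-extremum} together with the comparison $sZ_\beta'\le-\int_0^s te^{Z_\beta}\,dt+s^2/2$. Once the inner region has captured mass close to $4$, this comparison forces
\[
Z_\beta(s)\le C-(4-o(1))\log(s/\varepsilon_\beta)\to-\infty
\]
for $s\ge\delta$. With that in hand, on $[\delta,3]$ the equation reduces to $(sZ')'=s+o(1)$ with $sZ'(\delta)\to-4+\delta^2/2$. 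Hence $sZ_\beta'\to-4+s^2/2$ uniformly, and its unique zero converges to $\sqrt8$. I also need that there is no earlier zero of $Z_\beta'$ in $(0,\delta)$, which holds because $Z_\beta'<0$ there.

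\emph{Weak convergence.} From $u_\beta(r)=R_\beta^2e^{Z_\beta(R_\beta r)}$ we get that $\int_{B_\rho}u_\beta\dd x$ equals $2\pi$ times the inner mass $\int_0^{R_\beta\rho}te^{Z_\beta}\dd t$, which tends to $8\pi$ for each fixed $\rho>0$. At the same time $u_\beta\to0$ uniformly on $\{|x|\ge\rho\}$. Testing against functions in $C(\overline\Omega)$ then gives $u_\beta\dd x\rightharpoonup8\pi\delta_0$.
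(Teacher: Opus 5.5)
Your outline follows the paper's proof: the same a priori bounds (\cref{cor:radial-shape} and Horstmann--Lucia), the same rescaling at scale $e^{-\beta/2}$, the same identity $sZ_\beta'=s^2/2-Q_\beta(s)$ with $Q_\beta(s)\coloneq\int_0^s te^{Z_\beta}\dd t$ increasing, and the same test-function argument for the weak limit. However, you do not close the step you yourself call the main obstacle, and the inequality you write for it is false. Since $Z_\beta(s)=\beta+W_\beta(s/\varepsilon_\beta)$, integrating $Z_\beta'(t)\le t/2-a_\beta/t$ from $\rho_\beta\coloneq S\varepsilon_\beta$ gives
\[
Z_\beta(s)\le\beta+W_\beta(S)+\frac{s^2}{4}-a_\beta\log\frac{s}{S\varepsilon_\beta},
\qquad a_\beta\coloneq Q_\beta(\rho_\beta)\to q_S\coloneq\frac{4S^2}{8+S^2}.
\]
Your bound $C-(4-o(1))\log(s/\varepsilon_\beta)$ has lost the additive $\beta$. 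It contradicts the bubble itself, for which $Z_\beta\approx-\beta-4\log s$ outside the core. The conclusion $Z_\beta\to-\infty$ on $[\delta,3]$ still holds, but only because $a_\beta>2$ for $S>\sqrt8$, which leaves the term $-(a_\beta/2-1)\beta$.

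The more serious problem is that pointwise decay on $[\delta,3]$ does not give what your endgame uses, namely $sZ_\beta'(\delta)=\delta^2/2-Q_\beta(\delta)\to\delta^2/2-4$. This needs the annulus $[S\varepsilon_\beta,\delta]$ to carry asymptotically no mass. It is exactly this fact that forces the limit $\sqrt8$ rather than $\sqrt{2q}$ for some $q<4$. Integrating the corrected bound, the factors $e^{\beta}$ and $\varepsilon_\beta^2$ cancel, and you get
\[
0\le Q_\beta(\delta)-a_\beta\le\frac{e^{\delta^2/4}S^2e^{W_\beta(S)}}{a_\beta-2}
\longrightarrow\frac{e^{\delta^2/4}}{q_S-2}\cdot\frac{64S^2}{(8+S^2)^2}.
\]
So the annulus mass is $O(S^{-2})$, not $o(1)$ at fixed $S$. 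Your uncorrected bound would make it $O(e^{-\beta})$ and hence $\lim_\beta Q_\beta(\delta)=q_S$ for every $S$, which is absurd. You must therefore let $\beta\to\infty$ with $S$ fixed and only afterwards let $S\to\infty$; this is the paper's tail estimate \eqref{eq:tail-bound} with its order of limits.

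Once that is in place, the rest of your argument works. The uniform convergence of $sZ_\beta'$ to $s^2/2-4$ on $[\delta,3]$, whose zero at $\sqrt8$ is simple, together with $Z_\beta'<0$ on $(0,\delta]$, identifies the limit of $R_\beta$. The sign on $(0,\delta]$ follows from $R_\beta>\sqrt8$ and \cref{lem:first-extremum}. This is a valid alternative to the paper's closing identity $Q_\beta(R_\beta)=R_\beta^2/2$, and your weak-convergence step goes through as written. The energy bound is not needed. The $C^2_{\mathrm{loc}}$ convergence to the bubble should still be justified at the singular point, for example through the integral form and Gronwall as in the paper.
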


\begin{proof}
For $\beta>0$, the profile $u_\beta$ is a strictly decreasing,
nonconstant radial equilibrium. Hence \cref{cor:radial-shape} gives
$M_\beta<\Lambda\pi$, while low-mass stationary uniqueness
\cite[Theorem~2.2]{Horstmann2011} excludes $M_\beta\le8\pi$.
Since $M_\beta=\pi R_\beta^2$ by \eqref{eq:shooting-mass}, we obtain
\begin{equation}\label{eq:R-bounds}
\sqrt8<R_\beta<j_{1,1}.
\end{equation}
We next determine how the mass is distributed as $\beta\to\infty$.
The value $Z_\beta(0)=\beta$ suggests the core scale $e^{-\beta/2}$.
Accordingly, define
\[
Y_\beta(y)\coloneq Z_\beta(e^{-\beta/2}y)-\beta\qquad\text{for }y\in[0,\infty).
\]
Writing $s\coloneq e^{-\beta/2}y$, the chain rule gives
\[
Y_\beta'(y)=e^{-\beta/2}Z_\beta'(s),\qquad
Y_\beta''(y)=e^{-\beta}Z_\beta''(s),\qquad
e^{Z_\beta(s)}=e^\beta e^{Y_\beta(y)}.
\]
Multiplying \eqref{eq:shooting} by $e^{-\beta}$ therefore yields
\[
Y_\beta''+\frac1yY_\beta'+e^{Y_\beta}-e^{-\beta}=0\quad\text{for }y\in(0,\infty),\qquad
Y_\beta(0)=Y_\beta'(0)=0.
\]
To handle the apparent singularity at $y=0$, integrating
$(yY_\beta')'=y(e^{-\beta}-e^{Y_\beta})$ twice yields
\begin{align*}
Y_\beta'(y)
&=\frac{e^{-\beta}y}{2}
 -\frac1y\int_0^y te^{Y_\beta(t)}\dd t,\\
Y_\beta(y)
&=\frac{e^{-\beta}y^2}{4}
 -\int_0^y t\log\frac yt\,e^{Y_\beta(t)}\dd t.
\end{align*}
For each fixed $A>0$, the second identity and
$\int_0^y t\log(y/t)\dd t=y^2/4$ give the uniform bounds
\[
-\frac{e^{A^2/4}y^2}{4}\le Y_\beta(y)
\le\frac{e^{-\beta}y^2}{4}\le\frac{A^2}{4}
\qquad\text{for }y\in[0,A].
\]
The exponential is uniformly Lipschitz on the resulting bounded range,
with Lipschitz constant $L_A\coloneq e^{A^2/4}$.
The limiting initial-value problem has the explicit global solution
\[
Y_*(y)\coloneq-2\log\left(1+\frac{y^2}{8}\right),\qquad
Y_*'(y)=-\frac{4y}{8+y^2},\qquad
e^{Y_*(y)}=\frac{64}{(8+y^2)^2}\qquad\text{for }y\in[0,\infty).
\]
Direct substitution verifies
$Y_*''+y^{-1}Y_*'+e^{Y_*}=0$ and $Y_*(0)=Y_*'(0)=0$.
Set $D_\beta\coloneq Y_\beta-Y_*$ on $[0,A]$. Subtracting the
integral identities for $Y_\beta$ and $Y_*$, and using
$t\log(y/t)\le y/e\le A/e$ for $0<t\le y\le A$, gives
\begin{align*}
|D_\beta(y)|
&\le\frac{e^{-\beta}A^2}{4}
 +L_A\int_0^y t\log\frac yt\,|D_\beta(t)|\dd t\\
&\le\frac{e^{-\beta}A^2}{4}
 +\frac{AL_A}{e}\int_0^y|D_\beta(t)|\dd t.
\end{align*}
Gronwall's inequality \cite[Lemma~2.7]{Teschl2012} therefore yields
\[
\|D_\beta\|_{C([0,A])}
\le\frac{A^2}{4}\exp\left(\frac{L_AA^2}{e}\right)e^{-\beta}\to0.
\]
This proves uniform convergence on $[0,A]$, including at the origin.
Together with the derivative identities below, it gives
\begin{equation}\label{eq:Liouville}
Y_\beta(y)\to
Y_*(y)=-2\log\left(1+\frac{y^2}{8}\right)
\quad\text{in }C^2_{\mathrm{loc}}([0,\infty)).
\end{equation}
In particular, convergence of the second derivatives also holds at zero:
the first integral identity gives
\[
\frac{Y_\beta'(y)}y
=\frac{e^{-\beta}}2-\int_0^1te^{Y_\beta(yt)}\dd t,
\]
whose right-hand side extends continuously to $y=0$; combining this
with the differential equation gives convergence of $Y_\beta''$ there.
The mass of the limiting core is
\begin{equation*}
\begin{aligned}
\int_0^S ye^{Y_*(y)}\dd y
&=\int_0^S\frac{64y}{(8+y^2)^2}\dd y\\
&=4-\frac{32}{8+S^2}
=\frac{4S^2}{8+S^2}\eqcolon q_S,
\qquad q_S\to4.
\end{aligned}
\end{equation*}

Local convergence on the core scale alone does not determine the total
mass, so we also estimate the mass outside the core. Define
\[
Q_\beta(s)\coloneq\int_0^s te^{Z_\beta(t)}\dd t,
\qquad\text{for }s\in[0,j_{1,1}].
\]
This is well defined on the whole interval because $Z_\beta$ is global
by \cref{lem:first-extremum}. Integrating
$(sZ_\beta')'=s(1-e^{Z_\beta})$ from zero to $s$ gives
\begin{equation}\label{eq:Q}
sZ_\beta'(s)=\frac{s^2}{2}-Q_\beta(s).
\end{equation}
Fix $S>\sqrt8$, put $\rho_\beta\coloneq e^{-\beta/2}S$, and write
$a_\beta\coloneq Q_\beta(\rho_\beta)$. The change of variables
$t=e^{-\beta/2}y$ and \eqref{eq:Liouville} imply
\begin{align*}
a_\beta
&=\int_0^S ye^{Y_\beta(y)}\dd y\longrightarrow q_S>2,\\
\rho_\beta^2e^{Z_\beta(\rho_\beta)}
&=S^2e^{Y_\beta(S)}\longrightarrow S^2e^{Y_*(S)}.
\end{align*}
Thus $0<\rho_\beta<j_{1,1}$ and $a_\beta>2$ for all sufficiently
large $\beta$, with $S$ fixed.
Since $Q_\beta'=se^{Z_\beta}>0$ for $s>0$, \eqref{eq:Q} yields
\[
Z_\beta'(t)=\frac t2-\frac{Q_\beta(t)}t
\le\frac t2-\frac{a_\beta}t
\qquad\text{for }t\in[\rho_\beta,j_{1,1}].
\]
Integrating from $\rho_\beta$ to $s$ gives
\[
Z_\beta(s)\le Z_\beta(\rho_\beta)+\frac{s^2-\rho_\beta^2}{4}
-a_\beta\log\frac{s}{\rho_\beta}.
\]
Exponentiation, followed by integration, therefore gives
\begin{align*}
Q_\beta(s)-a_\beta
&=\int_{\rho_\beta}^s te^{Z_\beta(t)}\dd t\\
&\le e^{j_{1,1}^2/4}e^{Z_\beta(\rho_\beta)}
 \int_{\rho_\beta}^s t\left(\frac{\rho_\beta}{t}\right)^{a_\beta}\dd t\\
&=e^{j_{1,1}^2/4}\rho_\beta^2e^{Z_\beta(\rho_\beta)}
 \int_1^{s/\rho_\beta}\tau^{1-a_\beta}\dd\tau\\
&=\frac{e^{j_{1,1}^2/4}\rho_\beta^2e^{Z_\beta(\rho_\beta)}}{a_\beta-2}
 \left[1-\left(\frac{\rho_\beta}{s}\right)^{a_\beta-2}\right].
\end{align*}
In particular, uniformly for $\rho_\beta\le s\le j_{1,1}$,
\begin{equation}\label{eq:tail-bound}
0\le Q_\beta(s)-Q_\beta(\rho_\beta)
\le
\frac{e^{j_{1,1}^2/4}\,\rho_\beta^2e^{Z_\beta(\rho_\beta)}}
{Q_\beta(\rho_\beta)-2}.
\end{equation}

We now make the order of the two limits explicit. Fix
$0<\delta<j_{1,1}$. For each fixed $S>\sqrt8$, we have
$\rho_\beta<\delta$ for all sufficiently large $\beta$, so
\eqref{eq:tail-bound} implies
\[
\sup_{s\in[\delta,j_{1,1}]}|Q_\beta(s)-4|
\le |a_\beta-4|
+\frac{e^{j_{1,1}^2/4}\rho_\beta^2e^{Z_\beta(\rho_\beta)}}{a_\beta-2}.
\]
First letting $\beta\to\infty$ with $S$ fixed gives
\[
\limsup_{\beta\to\infty}
\sup_{s\in[\delta,j_{1,1}]}|Q_\beta(s)-4|
\le 4-q_S+
\frac{e^{j_{1,1}^2/4}S^2e^{Y_*(S)}}{q_S-2}.
\]
Next let $S\to\infty$. The right-hand side tends to zero because
$q_S\to4$ and
\[
S^2e^{Y_*(S)}=\frac{64S^2}{(8+S^2)^2}=O(S^{-2}).
\]
We have proved $Q_\beta\to4$ uniformly on every fixed
$[\delta,j_{1,1}]$ with $\delta>0$.

By \eqref{eq:R-bounds}, the moving endpoint $R_\beta$ stays in the
fixed interval $[\sqrt8,j_{1,1}]$, where this convergence is uniform.
Moreover, $Z_\beta'(R_\beta)=0$, so \eqref{eq:Q} gives
\[
\frac{R_\beta^2}{2}=Q_\beta(R_\beta)\to4,
\qquad R_\beta\to\sqrt8,\qquad
M_\beta=\pi R_\beta^2\to8\pi.
\]

Finally, for each fixed $0<r\le1$, \eqref{eq:shooting-family} and the
change of variables $s=R_\beta t$ give
\begin{align*}
\int_{B_r}u_\beta\dd x
&=2\pi\int_0^r tR_\beta^2e^{Z_\beta(R_\beta t)}\dd t\\
&=2\pi Q_\beta(R_\beta r)\to8\pi.
\end{align*}
Here uniform convergence applies because
$\sqrt8\,r<R_\beta r<j_{1,1}$. Thus, for every fixed $0<r<1$,
\[
\int_{\Omega\setminus B_r}u_\beta\dd x
=M_\beta-\int_{B_r}u_\beta\dd x\longrightarrow0.
\]
To verify weak convergence, take $\varphi\in C(\overline\Omega)$.
Splitting the integral over $B_r$ and its complement, we obtain
\begin{align*}
\left|\int_\Omega\varphi u_\beta\dd x-8\pi\varphi(0)\right|
&\le M_\beta\sup_{x\in\overline B_r}|\varphi(x)-\varphi(0)|\\
&\quad+2\norm{\varphi}_\infty
 \int_{\Omega\setminus B_r}u_\beta\dd x
 +|\varphi(0)|\,|M_\beta-8\pi|.
\end{align*}
Letting first $\beta\to\infty$ and then $r\downarrow0$, continuity
of $\varphi$ at zero makes the right-hand side tend to zero.
This proves $u_\beta\dd x\rightharpoonup8\pi\delta_0$ on
$\overline\Omega$.
\end{proof}

\subsubsection{Mass monotonicity in the original shooting variables}

We use the original shooting radius $s$ in \eqref{eq:shooting} and the
cumulative integral $Q_\beta$ already defined in the proof of
\cref{prop:bubble}. Fix $\beta>0$ and write
$Z\coloneq Z_\beta$, $Q\coloneq Q_\beta$, and $R\coloneq R_\beta$.
By \cref{lem:first-extremum}, $Z'<0$ on $(0,R)$ and $Z(R)<0$.
In particular, $Z$ has exactly one zero $s_0\in(0,R)$.
In the following shooting-variable argument, primes denote differentiation
with respect to $s$. The functions below are
defined for $s\in[0,R]$, with their differential equations understood
on $(0,R)$.

Differentiate $Q_\beta$ at fixed $s$ and set
\[
V(s)\coloneq\partial_\beta Q_\beta(s)=-s\partial_\beta Z_\beta'(s),
\]
where the second equality follows from \eqref{eq:Q}.
Since $Q_\beta'=se^{Z_\beta}$, we have
\[
V'=se^Z\partial_\beta Z_\beta,\qquad
V''=\left(\frac1s+Z'\right)V'-e^ZV.
\]
Thus, for the operator
\begin{equation*}
\mathcal L_{\mathrm d}h
\coloneq h''-\left(\frac1s+Z'\right)h'+e^Zh,
\end{equation*}
the shooting variation satisfies
\begin{equation}\label{eq:dec-variation}
\mathcal L_{\mathrm d}V=0,\qquad
V(s)=\frac{e^\beta}{2}s^2+O(s^4)\quad\text{as }s\downarrow0.
\end{equation}
The self-adjoint form
\begin{equation*}
\mathcal L_{\mathrm d}h
=se^Z\left(\frac{h'}{se^Z}\right)'+e^Zh
\end{equation*}
gives the Green identity
\begin{equation}\label{eq:dec-green}
\int_a^b\frac{f\mathcal L_{\mathrm d}g-g\mathcal L_{\mathrm d}f}{se^Z}\dd s
=\left[\frac{fg'-gf'}{se^Z}\right]_a^b,\qquad\text{for }0<a<b\le R.
\end{equation}
Below, every function used at zero is $O(s^2)$ with derivative $O(s)$.
Hence the boundary term at zero vanishes, and the integrals are
convergent; applications with $a=0$ follow by taking $a\downarrow0$.

\begin{lemma}[Three comparison functions]\label{lem:dec-comparison}
Define
\[
D\coloneq-sZ',\qquad T\coloneq s^2(e^Z-1),\qquad \Psi\coloneq e^{Z/2}D.
\]
Then $D,\Psi>0$ on $(0,R)$, while $T>0$ on $(0,s_0)$ and
$T(s_0)=0$. Moreover,
\begin{equation}\label{eq:dec-comparisons}
\begin{aligned}
\mathcal L_{\mathrm d}T&=-2D<0,\\
\mathcal L_{\mathrm d}\Psi
&=\frac{\Psi}{2s^2}\int_0^s tD(t)\dd t>0,\\
\mathcal L_{\mathrm d}D&=(e^Z-1)D,
\end{aligned}
\end{equation}
and
\begin{equation*}
\Psi(0)=\Psi(R)=0,\qquad
\Psi'(R)=Re^{Z(R)/2}(e^{Z(R)}-1)<0.
\end{equation*}
\end{lemma}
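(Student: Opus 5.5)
The plan is to verify everything by direct differentiation. Throughout we use only the shooting equation in the form $(sZ')'=s(1-e^{Z})$, together with the facts from \cref{lem:first-extremum}: $Z'<0$ on $(0,R)$, $Z'(R)=0$, $Z(R)<0$, and $Z$ has the single zero $s_0\in(0,R)$.

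\emph{Signs and boundary values.} The sign statements come first and are immediate.
\begin{itemize}
\item $D=-sZ'>0$ on $(0,R)$, and hence $\Psi=e^{Z/2}D>0$ there.
\item $T=s^2(e^Z-1)$ has the sign of $Z$, so $T>0$ on $(0,s_0)$ and $T(s_0)=0$.
\item The key relation is $D'=-(sZ')'=s(e^Z-1)$, so $T=sD'$.
\item $\Psi(0)=0$ because $D=O(s^2)$, and $\Psi(R)=0$ because $Z'(R)=0$.
\item Since $D(R)=0$, we get $\Psi'(R)=e^{Z(R)/2}D'(R)=Re^{Z(R)/2}(e^{Z(R)}-1)<0$, using $Z(R)<0$.
\end{itemize}

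\emph{The identity for $D$.} Differentiating $D'=s(e^Z-1)$ gives $D''=(e^Z-1)+se^ZZ'$. Also
\[
\left(\tfrac1s+Z'\right)D'=(e^Z-1)+sZ'(e^Z-1).
\]
Hence $\mathcal L_{\mathrm d}D=sZ'+e^ZD=(e^Z-1)D$.

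\emph{The identity for $T$.} Here I would avoid a brute-force expansion. Instead, write $T=sD'$ and use the commutator
\[
\mathcal L_{\mathrm d}(sh)=s\,\mathcal L_{\mathrm d}h+2h'-\left(\tfrac1s+Z'\right)h
\]
with $h=D'$. To find $\mathcal L_{\mathrm d}D'$, differentiate the identity $\mathcal L_{\mathrm d}D=(e^Z-1)D$. Then eliminate $Z''$ via $Z''=-Z'/s-(e^Z-1)$, and eliminate $D''$, $D'$ via the formulas above. After cancellation everything should collapse to $-2D$. This is the step where an algebra slip is most likely, so I would double-check it against the small-$s$ expansions: $D\sim\frac{e^\beta-1}{2}s^2$ and $T\sim(e^\beta-1)s^2$.

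\emph{The identity for $\Psi$.} This is the main obstacle, because the nonlocal term $\int_0^s tD\,\dd t$ has to emerge. For $\Psi=e^{Z/2}D$ the product rule gives
\[
e^{-Z/2}\mathcal L_{\mathrm d}\Psi=\mathcal L_{\mathrm d}D+Z'D'+\left[\tfrac12Z''+\tfrac14(Z')^2-\tfrac12\left(\tfrac1s+Z'\right)Z'\right]D.
\]
Substituting $\mathcal L_{\mathrm d}D=(e^Z-1)D$, $D'=s(e^Z-1)$, $Z'=-D/s$, and the equation for $Z''$, the bracket reduces to the coefficient $\frac{D}{4s^2}+\frac{e^Z-1}{2}$ times $D$, plus local terms. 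I expect every local term to cancel except a combination $c(s)$, with $\mathcal L_{\mathrm d}\Psi=c(s)\Psi$.

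To identify $c(s)$ with $\frac1{2s^2}\int_0^s tD\,\dd t$, I would show that $s^2c(s)$ vanishes at $0$ and has derivative $sD/2$. This uses $D'=s(e^Z-1)$ once more, and it converts the pointwise expression into the integral.

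Positivity of $\mathcal L_{\mathrm d}\Psi$ then follows from $D>0$. Finally, $\mathcal L_{\mathrm d}T=-2D<0$ on $(0,R)$ for the same reason.
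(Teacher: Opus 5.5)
Your proposal is correct and follows the paper's proof. The paper also uses $D'=s(e^Z-1)$, $T=sD'$ and direct differentiation for the $D$ and $T$ identities; your commutator route checks out, since $\mathcal L_{\mathrm d}D'=-s^{-1}(s^{-1}+Z')D'$ gives $\mathcal L_{\mathrm d}T=2[D''-(s^{-1}+Z')D']=2sZ'=-2D$. For $\Psi$ the paper computes the ratio $c$ and checks exactly your criterion that $s^2c$ vanishes at $0$ with derivative $sD/2$. One correction to your sketch: because $\mathcal L_{\mathrm d}D+Z'D'=(e^Z-1)D-(e^Z-1)D=0$, the ratio is exactly $c=\frac{D}{s^2}-\frac{D^2}{4s^2}-\frac{e^Z-1}{2}=-\frac{e^Z-1}{2}-\frac{Z'}{s}-\frac{(Z')^2}{4}$, not the coefficient $\frac{D}{4s^2}+\frac{e^Z-1}{2}$ you wrote, and with this $c$ the derivative check goes through.
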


\begin{proof}
Equation~\eqref{eq:shooting} gives
\[
D'=s(e^Z-1),\qquad T=sD'.
\]
These identities and direct differentiation give the first and third
identities in \eqref{eq:dec-comparisons}. For the middle one, the
product rule yields
\[
\frac{\mathcal L_{\mathrm d}(e^{Z/2}D)}{e^{Z/2}D}
=-\frac{e^Z-1}{2}-\frac{Z'}s-\frac{(Z')^2}{4}.
\]
On the other hand, \eqref{eq:shooting} implies
\[
\left[-s^2(e^Z-1)-2sZ'-\frac{s^2}{2}(Z')^2\right]'
=-s^2Z'=sD.
\]
The expression in brackets vanishes at zero. Integrating and dividing
by $2s^2$ proves the middle identity and its strict positivity.
The sign and endpoint assertions follow from $Z'<0$ on $(0,R)$,
$Z'(R)=0$, and $Z(R)<0$.
\end{proof}

\begin{proposition}[The shooting variation and mass monotonicity]
\label{prop:dec-monotonicity}
The variation $V=\partial_\beta Q_\beta$ has exactly one zero $b$ in
$(0,R)$, and
\begin{equation*}
s_0<b<R,\qquad V(R)<0.
\end{equation*}
Consequently,
\begin{equation*}
\frac{\dd R_\beta}{\dd\beta}<0,\qquad
\frac{\dd M_\beta}{\dd\beta}<0\qquad\text{for }\beta\in(0,\infty).
\end{equation*}
The map $\beta\mapsto M_\beta$ is a smooth, strictly decreasing
bijection from $(0,\infty)$ onto $(8\pi,\Lambda\pi)$, with smooth inverse.
\end{proposition}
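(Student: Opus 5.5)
The plan is a Sturm-type analysis of $V$ on $(0,R)$. In the Green identity \eqref{eq:dec-green} I will pair $V$ with the three comparison functions $T$, $\Psi$ and $D$ of \cref{lem:dec-comparison}, one for each claim: $T$ on $(0,s_0]$, $\Psi$ on $(0,R)$, and $D$ on $(s_0,R)$. The sign information on $R_\beta$ and $M_\beta$ will then come from differentiating the defining relation $Z_\beta'(R_\beta)=0$. Throughout, the weight $se^Z$ is positive. By \eqref{eq:dec-variation}, $V>0$ near $0$, and all boundary terms at $s=0$ vanish.

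\emph{No zero on $(0,s_0]$.} Suppose $b_1\le s_0$ is the first zero of $V$. Then $V'(b_1)\le0$, $V>0$ on $(0,b_1)$ and $T(b_1)\ge0$. Apply \eqref{eq:dec-green} with $f=V$, $g=T$ on $(0,b_1)$, using $\mathcal L_{\mathrm d}V=0$ and $\mathcal L_{\mathrm d}T=-2D$. The left side equals $-2\int_0^{b_1}DV/(se^Z)<0$. The right side equals $-T(b_1)V'(b_1)/(b_1e^{Z(b_1)})\ge0$. This is a contradiction, so $V>0$ on $(0,s_0]$.

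\emph{A zero in $(s_0,R)$.} Suppose instead that $V>0$ on $(0,R)$, so $V(R)\ge0$. Apply \eqref{eq:dec-green} with $f=V$, $g=\Psi$ on $(0,R)$. The left side is $\int_0^R V\,\mathcal L_{\mathrm d}\Psi/(se^Z)>0$ by the middle identity of \eqref{eq:dec-comparisons}. Since $\Psi(R)=0$ and $\Psi'(R)<0$, the right side is $V(R)\Psi'(R)/(Re^{Z(R)})\le0$. This is again a contradiction, so a first zero $b$ exists, and by the previous step $b\in(s_0,R)$.

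\emph{Uniqueness of the zero and $V(R)<0$.} If $V$ had a second zero $b_2\in(b,R]$, or if $V(R)=0$ (take $b_2=R$), then $V<0$ on $(b,b_2)$. Apply \eqref{eq:dec-green} with $f=V$, $g=D$ on $(b,b_2)$. Since $Z<0$ there, $\mathcal L_{\mathrm d}D=(e^Z-1)D$ makes the left side $\int_b^{b_2}V(e^Z-1)D/(se^Z)>0$. On the right side, the endpoint term at $b_2$ is $-D(b_2)V'(b_2)/w\le0$, because $V'(b_2)\ge0$ and $D\ge0$. The endpoint term at $b$ contributes $D(b)V'(b)/w\le0$, because $V'(b)\le0$. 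So the right side is $\le0$, a contradiction. Hence $b$ is the only zero and $V(R)<0$.

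\emph{Consequences for $R_\beta$ and $M_\beta$.} Differentiate $Z_\beta'(R_\beta)=0$ in $\beta$ to get
\[
\partial_\beta Z_\beta'(R_\beta)+Z_\beta''(R_\beta)\,\frac{\dd R_\beta}{\dd\beta}=0.
\]
Since $V(R)=-R\,\partial_\beta Z_\beta'(R)$, this gives
\[
\frac{\dd R_\beta}{\dd\beta}=\frac{V(R)}{R\,Z''(R)}<0,
\]
using $Z''(R)>0$ from \cref{lem:first-extremum}. Then $M_\beta=\pi R_\beta^2$ is strictly decreasing. Its limits are $\Lambda\pi$ as $\beta\downarrow0$ by \cref{prop:small} and $8\pi$ as $\beta\to\infty$ by \cref{prop:bubble}. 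Continuity then gives a bijection onto $(8\pi,\Lambda\pi)$, and the inverse function theorem gives a smooth inverse.

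The main obstacle is the uniqueness step: $D$ vanishes at $R$, so the endpoint terms there must be handled with care. The sign of $\mathcal L_{\mathrm d}D$ is favorable only after $s_0$, which is why the first two steps must first place $b$ beyond $s_0$.
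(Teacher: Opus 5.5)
Your proposal is correct and follows the paper's proof essentially step for step. It uses the same pairings of $V$ in the Green identity \eqref{eq:dec-green}: with $T$ on $(0,b]$, with $\Psi$ on $(0,R)$, and with $D$ between consecutive zeros beyond $s_0$. It also reaches the same formula $\dd R_\beta/\dd\beta=V(R)/\bigl(R(1-e^{Z(R)})\bigr)$, obtained by differentiating $Z_\beta'(R_\beta)=0$ rather than $Q_\beta(R_\beta)=R_\beta^2/2$. The one point you leave implicit is that every positive zero of $V$ is simple by ODE uniqueness (otherwise $V\equiv0$); this is what guarantees $V<0$ just after $b$, and hence $V(R)<0$ once further zeros are excluded.
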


\begin{proof}
By \eqref{eq:dec-variation}, $V>0$ near zero, and every positive zero
is simple by ODE uniqueness.

If the first positive zero $b$ satisfied $b\le s_0$, then
$V>0$ on $(0,b)$, $V'(b)<0$, and $T(b)\ge0$.
Using $f=V$ and $g=T$ in \eqref{eq:dec-green} would give
\[
0>-2\int_0^b\frac{VD}{se^Z}\dd s
=-\frac{T(b)V'(b)}{be^{Z(b)}}\ge0.
\]
Thus $V$ has no zero in $(0,s_0]$.

If $V$ had no zero in $(0,R)$, then $V>0$ there and $V(R)\ge0$.
Using $f=V$ and $g=\Psi$ instead would give
\[
0<\int_0^R\frac{V\mathcal L_{\mathrm d}\Psi}{se^Z}\dd s
=\frac{V(R)\Psi'(R)}{Re^{Z(R)}}\le0.
\]
Hence its first zero $b$ exists and satisfies $s_0<b<R$.

To exclude another zero, let $d\in(b,R]$ be the first one after $b$
and put $B\coloneq-V$. Then $B>0$ on $(b,d)$, $B'(b)>0$, and $B'(d)<0$.
Since $Z<0$ on $(b,R)$, we have
$\mathcal L_{\mathrm d}D=(e^Z-1)D<0$ there. Thus
\eqref{eq:dec-green} with $f=B$ and $g=D$ gives
\[
0>\int_b^d\frac{B\mathcal L_{\mathrm d}D}{se^Z}\dd s
=-\frac{D(d)B'(d)}{de^{Z(d)}}
+\frac{D(b)B'(b)}{be^{Z(b)}}>0.
\]
The contradiction also excludes $d=R$, so $V(R)<0$.

Finally, differentiate $Q_\beta(R_\beta)=R_\beta^2/2$.
Since $Q_\beta'(R_\beta)=R_\beta e^{Z_\beta(R_\beta)}$, this yields
\[
V(R_\beta)
+R_\beta e^{Z_\beta(R_\beta)}\frac{\dd R_\beta}{\dd\beta}
=R_\beta\frac{\dd R_\beta}{\dd\beta}.
\]
Therefore
\[
\frac{\dd R_\beta}{\dd\beta}
=\frac{V(R_\beta)}
{R_\beta(1-e^{Z_\beta(R_\beta)})}<0,\qquad
\frac{\dd M_\beta}{\dd\beta}
=\frac{2\pi V(R_\beta)}{1-e^{Z_\beta(R_\beta)}}<0.
\]
Smoothness follows from \cref{lem:first-extremum}. The endpoint limits
in \cref{prop:small,prop:bubble} give the stated range and bijectivity,
and the inverse function theorem gives the smooth inverse.
\end{proof}

\subsubsection{Nondegeneracy and the sign of the principal eigenvalue}

\begin{proposition}[Radial nondegeneracy and a positive eigenvalue]
\label{prop:dec-spectral}
At a decreasing equilibrium with parameter $\beta>0$, write
$M\coloneq M_\beta$, $F\coloneq F_{u_\beta}$, and
$U(\xi)\coloneq u_\beta(\sqrt\xi)$ for $\xi\in[0,1]$, and put
$m(\xi)\coloneq(4\xi U(\xi))^{-1}$ for $\xi\in(0,1]$.
The radial fixed-mass cumulative linearization is
\[
\mathcal Ah
\coloneq4\xi h''+\frac{F-M\xi}{\pi}h'+Uh,\qquad\text{for }\xi\in(0,1).
\]
Its regular Dirichlet problem
\begin{equation}\label{eq:dec-kernel}
\begin{gathered}
\mathcal Ah=0\quad\text{for }\xi\in(0,1),\qquad h(0)=h(1)=0,\\
\text{with }h\in C^1([0,1])\cap C^2((0,1])
\end{gathered}
\end{equation}
has only the zero solution. Its self-adjoint Dirichlet realization in
$\mathcal H\coloneq L^2((0,1),m\dd\xi)$ has compact resolvent, does not have zero
in its spectrum, and has a positive eigenvalue.
\end{proposition}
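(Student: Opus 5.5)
The plan is to transfer the shooting-variable information of \cref{lem:dec-comparison} and \cref{prop:dec-monotonicity} to the cumulative variable $\xi$. First I would check that the same structure as in the increasing case holds. The identity $U'/U=-H/(4\pi\xi)$ with $H=F-M\xi$ follows from \eqref{eq:stationary-radial-derivative}, and it gives
\[
\mathcal Ah=\frac1m\left(\frac{h'}U\right)'+Uh,\qquad -\mathfrak a(h,h)=\int_0^1 mh\,\mathcal Ah\dd\xi=-\int_0^1\frac{|h'|^2}U\dd\xi+\frac14\int_0^1\frac{h^2}\xi\dd\xi .
\]
Since $U$ is bounded above and below by positive constants, the argument of \cref{prop:increasing-nondegeneracy} applies verbatim. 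Using \eqref{eq:weighted-endpoint-control} and compactness away from zero, the form is closed on $V=H^1_0((0,1))$, and its realization is self-adjoint in $\mathcal H$ with compact resolvent. So the spectrum consists of eigenvalues bounded above, and the largest eigenvalue is $\sup\{-\mathfrak a(h,h):\norm h_{\mathcal H}=1\}$.

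Next I relate $\mathcal A$ to $\mathcal L_{\mathrm d}$ through the change of variables $s=R\sqrt\xi$, where $R=R_\beta$. By \eqref{eq:shooting-family} and \eqref{eq:shooting-mass}, $F(\xi)=2\pi Q_\beta(R\sqrt\xi)$. Moreover, the stationary equation \eqref{eq:stationary} for $F$ is the $\xi$-form of the relation $sQ''-Q'+\dots$ obtained from \eqref{eq:Q}. Linearizing both at fixed $\xi$ and fixed $s$, respectively, I expect the identity
\[
(\mathcal Ah)(\xi)=R^2\,(\mathcal L_{\mathrm d}\tilde h)(R\sqrt\xi),\qquad \tilde h(s)\coloneq h\bigl(s^2/R^2\bigr).
\]
This follows from $4\xi\partial_\xi^2=s^2\partial_s^2\cdot s^{-2}\cdot\dots$ and $U(\xi)=R^2e^{Z(s)}$; the point is only that the conversion factor is positive. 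This bookkeeping is the step I expect to need the most care, mainly in matching the first-order coefficient $(F-M\xi)/\pi$ with $-(1/s+Z')$ after the chain rule.

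For \emph{nondegeneracy}, consider a solution $h$ of \eqref{eq:dec-kernel}. At $\xi=0$ the equation has indicial roots $0$ and $1$, and the second solution carries a logarithm. So the regular solutions with $h(0)=0$ form a one-dimensional space, and $\tilde h$ is $O(s^2)$. By \eqref{eq:dec-variation}, $V=\partial_\beta Q_\beta$ is such a solution, so $h=c\,V(R\sqrt\xi)$. Then $h(1)=cV(R)$, and \cref{prop:dec-monotonicity} gives $V(R)<0$, hence $c=0$. Any eigenfunction for the eigenvalue $0$ is a classical regular solution of \eqref{eq:dec-kernel}, by elliptic ODE regularity away from zero and the Frobenius analysis at zero. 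Therefore $0\notin\spec$.

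For the \emph{positive eigenvalue}, I take $\varphi(\xi)\coloneq\Psi(R\sqrt\xi)$ with $\Psi=e^{Z/2}D$ from \cref{lem:dec-comparison}. This function is positive on $(0,1)$, vanishes at both endpoints, is $O(\xi)$ near zero, and belongs to $V$. By \eqref{eq:dec-comparisons} and the conversion identity, $\mathcal A\varphi>0$ on $(0,1)$. Hence
\[
-\mathfrak a(\varphi,\varphi)=\int_0^1m\varphi\,\mathcal A\varphi\dd\xi>0,
\]
where the integration by parts is justified on $[\delta,1]$ exactly as in the proof of \cref{thm:least-concentrated}. The variational characterization then gives a largest eigenvalue that is strictly positive.
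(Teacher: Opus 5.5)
Your proposal is correct and follows the paper's proof essentially step for step. You use the divergence form $\mathcal A h=m^{-1}(h'/U)'+Uh$ with its closed form and compact resolvent, the conversion identity $\mathcal A[\tilde h(R\sqrt\xi)]=R^2(\mathcal L_{\mathrm d}\tilde h)(R\sqrt\xi)$ (which does hold by the chain rule, using $F-M\xi=-2\pi sZ'$ and $U=R^2e^{Z}$), the shooting variation $V$ with $V(R)<0$ to exclude a kernel, and $\Psi(R\sqrt\xi)$ as a positive test function with $\mathcal A\psi>0$; the only difference is cosmetic: you obtain one-dimensionality of regular solutions vanishing at $0$, and regularity of weak kernel elements, from the indicial roots $0,1$, whereas the paper uses the Volterra identity $h'/U=h'(0)/U(0)-\frac14\int_0^\xi h(\eta)/\eta\,\mathrm d\eta$, which avoids appealing to Frobenius theory.
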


\begin{proof}
The stationary identity $U'/U=-(F-M\xi)/(4\pi\xi)$ gives
\[
\mathcal Ah=\frac1m\left(\frac{h'}U\right)'+Uh.
\]
Consequently, every regular solution of $\mathcal Ah=0$ with $h(0)=0$
satisfies
\[
\frac{h'(\xi)}{U(\xi)}
=\frac{h'(0)}{U(0)}-\frac14\int_0^\xi\frac{h(\eta)}{\eta}\dd\eta.
\]
This Volterra identity determines $h$ uniquely from $h'(0)$.

To identify a nonzero regular solution, use the original shooting
radius $s\coloneq R_\beta\sqrt\xi$. Equations~\eqref{eq:shooting-family}
and \eqref{eq:Q} give
\[
F(\xi)-M\xi=-2\pi sZ_\beta'(s),\qquad
U(\xi)=R_\beta^2e^{Z_\beta(s)}.
\]
The chain rule therefore yields
\begin{equation}\label{eq:dec-operator-correspondence}
\mathcal A\bigl[\widetilde h(R_\beta\sqrt\xi)\bigr]
=R_\beta^2(\mathcal L_{\mathrm d}\widetilde h)(R_\beta\sqrt\xi).
\end{equation}
In particular, $h_0(\xi)\coloneq V(R_\beta\sqrt\xi)$ is a regular solution of
$\mathcal Ah_0=0$, and
\[
h_0'(0)=\frac{R_\beta^2e^\beta}{2}>0,\qquad
h_0(1)=V(R_\beta)<0
\]
by \cref{prop:dec-monotonicity}.
Every regular solution with $h(0)=0$ is a scalar multiple of $h_0$,
so imposing $h(1)=0$ forces $h=0$.

The form of $-\mathcal A$ on $H^1_0((0,1))$ is
\[
\mathfrak a_{\mathrm d}(h,h)
\coloneq\int_0^1\frac{|h'|^2}{U}\dd\xi
-\frac14\int_0^1\frac{h^2}{\xi}\dd\xi.
\]
Since $U$ is smooth and bounded above and below by positive constants,
the weighted estimates \eqref{eq:weighted-endpoint-control} and the
compactness argument in the proof of
\cref{prop:increasing-nondegeneracy} apply here as well.
The negative term is bounded in absolute value by
$\max_{[0,1]}U\,\norm h_{\mathcal H}^2$.
Thus the form is closed and bounded below, and its self-adjoint
realization has compact resolvent.

To exclude a zero eigenvalue, let $h\in H^1_0((0,1))$ be a weak kernel
element. The first estimate in \eqref{eq:weighted-endpoint-control}
gives $h/\xi\in L^1((0,1))$. In the distributional equation
\[
\left(\frac{h'}U\right)'=-\frac{h}{4\xi},
\]
the right-hand side is integrable at zero. Thus $h'/U$, and hence
$h'$, extends continuously to zero. ODE regularity away from zero
then gives the regularity in \eqref{eq:dec-kernel}.
The regular kernel assertion already proved forces $h=0$.
Compactness of the resolvent now excludes zero from the spectrum.

Finally, set $\psi(\xi)\coloneq\Psi(R_\beta\sqrt\xi)$, using
\cref{lem:dec-comparison}. This is a smooth Dirichlet test function,
positive on $(0,1)$. By \eqref{eq:dec-operator-correspondence},
\[
\mathcal A\psi
=R_\beta^2(\mathcal L_{\mathrm d}\Psi)(R_\beta\sqrt\xi)>0
\qquad\text{for }\xi\in(0,1).
\]
Hence $\langle\mathcal A\psi,\psi\rangle_{\mathcal H}>0$, and the
variational characterization gives a positive eigenvalue.

For any eigenfunction with eigenvalue $\lambda$, the distributional
identity is
\[
\left(\frac{h'}U\right)'
=\left(\frac{\lambda}{U}-1\right)\frac{h}{4\xi}.
\]
The same argument gives regularity at zero, and bootstrapping gives a
smooth regular eigenfunction. The associated density mode is
$f(r)\coloneq h'(r^2)/\pi$. The endpoint conditions give zero mass, while the
eigenvalue equation at $\xi=1$ gives $h''(1)=0$, hence the Neumann
condition for $f$. The potential is normalized to have zero mean,
so its additive constant is not a kernel direction.
\end{proof}

\begin{proof}[Proof of \cref{thm:decreasing-uniqueness}]
Every strictly decreasing radial equilibrium belongs to the first
family \eqref{eq:shooting-family} with $\beta>0$, as proved in the
shooting construction. By \cref{prop:dec-monotonicity}, each mass
in $(8\pi,\Lambda\pi)$ determines exactly one such $\beta$, and
no decreasing equilibrium has a mass outside this interval.
The formulas \eqref{eq:shooting-family} and the smooth inverse of
$\beta\mapsto M_\beta$ give smooth dependence on $M$, in particular
in $C^2(\overline{B_1})$.
The nondegeneracy and positive-eigenvalue assertions follow from
\cref{prop:dec-spectral}.
\end{proof}

\begin{theorem}[The endpoints of the decreasing family]\label{thm:branch}
For $8\pi<M<\Lambda\pi$, let $(u_M^{\mathrm d},v_M^{\mathrm d})$ be the
unique strictly decreasing radial equilibrium of \cref{thm:equilibria}.
These equilibria form a smooth family parametrized by their mass $M$,
and every strictly decreasing radial equilibrium belongs to this family.
The endpoint limits are
\begin{align}
u_M^{\mathrm d}&\to\Lambda
&&\text{as }M\uparrow\Lambda\pi,\label{eq:small-end}\\
u_M^{\mathrm d}(x)\dd x&\rightharpoonup8\pi\delta_0
&&\text{as }M\downarrow8\pi.\label{eq:large-end}
\end{align}
The density convergence in \eqref{eq:small-end} holds in
$C^m(\overline\Omega)$ for every integer $m\ge0$;
the convergence in \eqref{eq:large-end} is weak convergence
of finite measures on $\overline\Omega$.
\end{theorem}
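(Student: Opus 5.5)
The proof transfers the shooting-parameter results to the mass parametrization; no new analysis is needed. By \cref{prop:dec-monotonicity}, $\beta\mapsto M_\beta$ is a smooth, strictly decreasing bijection from $(0,\infty)$ onto $(8\pi,\Lambda\pi)$ with smooth inverse $M\mapsto\beta(M)$. The shooting construction shows that every strictly decreasing radial equilibrium arises from \eqref{eq:shooting-family} with some $\beta>0$. \cref{thm:decreasing-uniqueness} identifies $(u_M^{\mathrm d},v_M^{\mathrm d})=(u_{\beta(M)},v_{\beta(M)})$. Since $Z_\beta$ and $R_\beta$ depend smoothly on $\beta$ (\cref{lem:first-extremum}), composition with the smooth inverse gives the smooth family in $M$, and also shows that every strictly decreasing equilibrium belongs to it.

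For the endpoint limits I will use the monotonicity of $\beta(M)$: $M\uparrow\Lambda\pi$ corresponds to $\beta(M)\downarrow0$, and $M\downarrow8\pi$ corresponds to $\beta(M)\to\infty$. These correspondences follow from strict monotonicity together with the limits $M_\beta\to\Lambda\pi$ as $\beta\downarrow0$ (from $R_\beta\to j_{1,1}$ in \cref{prop:small}) and $M_\beta\to8\pi$ as $\beta\to\infty$ (\cref{prop:bubble}). Then \eqref{eq:small-end} in $C^m(\overline\Omega)$ is the statement $u_\beta\to\Lambda$ in $C^m$ of \cref{prop:small}, restricted to $\beta>0$. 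Likewise \eqref{eq:large-end} is the weak convergence $u_\beta\dd x\rightharpoonup8\pi\delta_0$ on $\overline\Omega$ of \cref{prop:bubble}, composed with $\beta(M)\to\infty$. Since a limit along a parameter is preserved under composition with a map tending to the right endpoint, both statements transfer directly.

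The only point needing care, and the main potential obstacle, is that \cref{prop:small} is stated for $\beta\to0$ through both signs. I must make sure the $C^m$ convergence argument there, namely uniform $C^m_{\mathrm{loc}}$ convergence of $Z_\beta/\beta$ to $J_0$ and $R_\beta\to j_{1,1}$, applies to the one-sided limit $\beta\downarrow0$. It does, because that proof never uses the sign of $\beta$. Also, $u_\beta=R_\beta^2e^{Z_\beta(R_\beta r)}$ converges to $j_{1,1}^2=\Lambda$ in $C^m$, since $Z_\beta\to0$ in $C^m_{\mathrm{loc}}$ and $R_\beta$ stays bounded. With these points checked, the theorem follows.
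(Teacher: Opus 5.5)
Your proposal is correct and follows the paper's proof: you pass to the smooth inverse $\beta(M)$ of the strictly decreasing mass bijection in \cref{prop:dec-monotonicity}, note that $M\uparrow\Lambda\pi$ and $M\downarrow8\pi$ correspond to $\beta(M)\downarrow0$ and $\beta(M)\to\infty$, and read off the two endpoint limits from \cref{prop:small,prop:bubble}. Your extra check on the one-sided limit $\beta\downarrow0$ is harmless but unnecessary, since \cref{prop:small} is already stated for $\beta\to0$ from either side.
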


\begin{proof}[Proof of \cref{thm:branch}]
The shooting construction parametrizes every strictly decreasing
radial equilibrium by \eqref{eq:shooting-family} with $\beta>0$.
By \cref{prop:dec-monotonicity}, the mass map $\beta\mapsto M_\beta$
is a smooth strictly decreasing bijection from $(0,\infty)$ onto
$(8\pi,\Lambda\pi)$, with smooth inverse $\beta=\beta(M)$.
Thus
\[
(u_M^{\mathrm d},v_M^{\mathrm d})
=(u_{\beta(M)},v_{\beta(M)})
\]
is a smooth family parametrized by mass and contains all strictly
decreasing radial equilibria.
As $M\uparrow\Lambda\pi$, we have $\beta(M)\downarrow0$, whereas
as $M\downarrow8\pi$, we have $\beta(M)\to\infty$.
The endpoint limits now follow from \cref{prop:small,prop:bubble},
respectively.
\end{proof}

\subsection{Higher shooting families}

\begin{theorem}[Complete shooting parametrization]\label{thm:higher-equilibria}
For each $\beta\ne0$, let $Z_\beta$ solve \eqref{eq:shooting}.
Its positive critical radii form an infinite sequence of simple extrema
\[
0<R_{\beta,1}<R_{\beta,2}<\cdots\to\infty.
\]
For $k\ge1$ and $r\in[0,1]$, define
\begin{equation}\label{eq:all-shooting-family}
\begin{split}
\mu_{\beta,k}&\coloneq R_{\beta,k}^2,\qquad M_{\beta,k}\coloneq\pi R_{\beta,k}^2,\\
u_{\beta,k}(r)&\coloneq R_{\beta,k}^2e^{Z_\beta(R_{\beta,k}r)},\\
v_{\beta,k}(r)&\coloneq Z_\beta(R_{\beta,k}r)
-\av{Z_\beta(R_{\beta,k}|\cdot|)}.
\end{split}
\end{equation}
These are positive radial equilibria, and every nonconstant classical
radial equilibrium occurs for exactly one pair $(\beta,k)$.
For $k=1$, these coincide with \eqref{eq:shooting-family}:
\[
(R_{\beta,1},M_{\beta,1},u_{\beta,1},v_{\beta,1})
=(R_\beta,M_\beta,u_\beta,v_\beta).
\]
Thus the first family consists of the nonconstant monotone equilibria.
The families $k\ge2$ are exactly all non-monotone equilibria;
$u_{\beta,k}$ has precisely $k-1$ interior critical radii, all simple,
and $M_{\beta,k}>\Lambda\pi$.

For each fixed $k$, the family is real analytic on each side of
$\beta=0$ and extends real analytically through that point, with
$R_{0,k}\coloneq j_{1,k}$ and $M_{0,k}\coloneq\pi j_{1,k}^2$.
At $\beta=0$, this notation refers to the analytic extension, with
$(u_{0,k},v_{0,k})=(j_{1,k}^2,0)$, since $Z_0\equiv0$.
In particular,
\begin{equation}\label{eq:higher-small-end}
R_{\beta,k}\to j_{1,k},\qquad
M_{\beta,k}\to\pi j_{1,k}^2,\qquad
u_{\beta,k}\to j_{1,k}^2
\quad\text{as }\beta\to0.
\end{equation}
The density convergence holds in $C^m(\overline\Omega)$ for every
integer $m\ge0$.
For each $k\ge2$, a member with $\beta<0$ exists at every mass
$M>\pi j_{1,k}^2$.
At a prescribed mass $M$, all non-monotone equilibria are obtained by
solving the exact shooting conditions
\[
R_{\beta,k}=\sqrt{M/\pi},\qquad\text{for }\beta\in\R\setminus\{0\},\ k\in\{2,3,\ldots\}.
\]
For $k=1$, the mass map is a smooth strictly decreasing bijection
from $\beta<0$ onto $(\Lambda\pi,\infty)$ and from $\beta>0$ onto
$(8\pi,\Lambda\pi)$. No injectivity is asserted for the higher families $k\ge2$.
\end{theorem}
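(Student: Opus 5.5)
We prove \cref{thm:higher-equilibria} by extending the energy and oscillation arguments of \cref{lem:first-extremum} to all critical points of $Z_\beta$, and then reading off the equilibria through the rescaling used for \eqref{eq:shooting-family}.

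\emph{Step 1: infinitely many simple extrema.} By \cref{lem:first-extremum}, $Z_\beta$ is global and bounded. The energy $E_\beta$ is nonincreasing. At any critical point, $Z_\beta''=1-e^{Z_\beta}$; this is nonzero unless $Z_\beta=0$, which together with $Z_\beta'=0$ would force $Z_\beta\equiv0$ by uniqueness. So every positive critical point is a simple extremum.

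To get infinitely many extrema, we repeat the two arguments from \cref{lem:first-extremum}. After an extremum with $Z_\beta\neq0$, the identity $(sZ_\beta')'=s(1-e^{Z_\beta})$ forces $Z_\beta$ to cross zero. For this, the Sturm comparison is run on intervals $[a,a+L]$ against a shifted Bessel-type solution, or more simply against $\sin$. Here we use that $c(s)=(e^{Z}-1)/Z$ is bounded below by a positive constant depending only on the bound on $|Z_\beta|$. The $1/s$ damping decays, so a Liouville transformation $w=\sqrt s\,Z$ gives $w''+(c-\tfrac{1}{4s^2}... )w=0$ with a coefficient bounded below for large $s$. Hence the zeros of $Z_\beta$ are separated by bounded gaps, and between consecutive zeros there is exactly one critical point, since $Z_\beta''$ has a fixed sign where $Z_\beta$ does and $sZ_\beta'$ is monotone there. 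This gives critical radii $R_{\beta,k}\to\infty$. Smoothness in $\beta$ follows from the implicit function theorem at each simple extremum, and real analyticity comes from analytic dependence of the ODE on $\beta$.

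\emph{Step 2: equilibria and parametrization.} For each $k$, the rescaling $s=R_{\beta,k}r$ turns \eqref{eq:shooting} into \eqref{eq:z} with $\mu=R_{\beta,k}^2$. The Neumann condition holds because $Z_\beta'(R_{\beta,k})=0$. Integrating gives mass $\pi R_{\beta,k}^2$, exactly as in \eqref{eq:shooting-mass}.

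Conversely, take a nonconstant equilibrium, rescale it by $\sqrt\mu$, and set $\beta=z(0)\neq0$; nonconstancy together with uniqueness gives $\beta\neq0$. Then $\sqrt\mu$ is a critical radius of $Z_\beta$, say the $k$th. The pair $(\beta,k)$ is unique because $\beta$ equals the central value of $\log(u/\mu)$ and $k$ counts the critical radii in $(0,1]$. The profile $u_{\beta,k}$ has exactly the $k-1$ interior critical radii $R_{\beta,j}/R_{\beta,k}$ for $j<k$, so $k=1$ is exactly the monotone case and $k\ge2$ the non-monotone case. \cref{cor:radial-shape} then gives $M_{\beta,k}>\Lambda\pi$ for $k\ge2$.

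\emph{Step 3: behaviour near $\beta=0$ and large masses.} Write $Z_\beta=\beta W(\cdot,\beta)$. Then $W$ solves an analytic ODE in $\beta$, with $W(\cdot,0)=J_0$. The critical points of $W$ at $\beta=0$ are the simple zeros $j_{1,k}$ of $J_1$. The implicit function theorem applied to $W'(s,\beta)=0$ gives an analytic extension through $\beta=0$ with $R_{0,k}=j_{1,k}$. As in \cref{prop:small}, uniform local convergence on compact sets prevents extra critical points near the origin or between consecutive radii. This yields \eqref{eq:higher-small-end}.

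For $\beta\to-\infty$, the bound $s_\beta^0\ge2\sqrt{-\beta}$ from \cref{prop:small} gives $R_{\beta,k}\ge R_{\beta,1}\to\infty$. Continuity and the intermediate value theorem then give every mass $M>\pi j_{1,k}^2$. The $k=1$ monotonicity statements are \cref{prop:increasing-mass,prop:dec-monotonicity}.

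\emph{Main obstacle.} The hard part is Step 1's uniform oscillation, together with the claim that the labeling by $k$ is stable under continuation in $\beta$, especially through $\beta=0$. Near the origin, the degenerate normalization means no critical point can be created or annihilated. We would handle this using the explicit expansion $Z_\beta'(s)=(1-e^\beta)s/2+O(s^3)$ near $s=0$, and simplicity of extrema elsewhere.
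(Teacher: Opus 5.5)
Your proposal is correct and follows essentially the same route as the paper. The shared steps are:
\begin{itemize}
\item the energy bound;
\item the substitution $w=\sqrt s\,Z_\beta$ with Sturm-type oscillation (the paper phrases this through a Pr\"ufer angle);
\item exactly one critical radius between consecutive zeros, by strict monotonicity of $sZ_\beta'$;
\item the expansion $Z_\beta/\beta\to J_0$, the simple zeros $j_{1,k}$ of $J_1$, and the implicit function theorem;
\item the bound $R_{\beta,k}\ge R_{\beta,1}>2\sqrt{-\beta}$ combined with the intermediate value theorem.
\end{itemize}
One small slip: the transformed equation is $w''+\bigl(c+\frac1{4s^2}\bigr)w=0$, not with a minus sign. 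So the coefficient is bounded below by a positive constant on all of $(0,\infty)$, not only for large $s$; your argument goes through unchanged.
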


\begin{lemma}[Infinitely many simple extrema]\label{lem:all-extrema}
For every $\beta\ne0$, the positive zeros of $Z_\beta'$ form the
sequence $R_{\beta,k}$ in \cref{thm:higher-equilibria}, with
$R_{\beta,1}=R_\beta$. Moreover,
\begin{equation}\label{eq:extremum-signs}
\operatorname{sgn} Z_\beta(R_{\beta,k})
=(-1)^k\operatorname{sgn}\beta,\qquad
Z_\beta''(R_{\beta,k})=1-e^{Z_\beta(R_{\beta,k})}\ne0.
\end{equation}
Each map $\beta\mapsto R_{\beta,k}$ is real analytic on
$(0,\infty)$ and $(-\infty,0)$.
\end{lemma}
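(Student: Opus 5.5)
The plan is to extend the first-extremum argument of \cref{lem:first-extremum} inductively along the orbit, using the energy $E_\beta$ and the identity $(sZ_\beta')'=s(1-e^{Z_\beta})$.

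\emph{Induction hypothesis.} Suppose $R_{\beta,k}$ is a simple critical point with $\operatorname{sgn}Z_\beta(R_{\beta,k})=(-1)^k\operatorname{sgn}\beta$. Say $Z_\beta(R_{\beta,k})<0$; the other case is symmetric. Then $Z_\beta''(R_{\beta,k})=1-e^{Z_\beta(R_{\beta,k})}>0$, so $Z_\beta'>0$ just after $R_{\beta,k}$, and $Z_\beta'$ stays positive while $Z_\beta<0$.

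\emph{Reaching the next zero of $Z_\beta$.} I will show that $Z_\beta$ reaches zero in finite time. If it did not, then $Z_\beta$ would increase to a limit $\ell\le0$. If $\ell<0$, then $(sZ_\beta')'\ge cs$, so $Z_\beta'$ grows like $s$, which contradicts boundedness of $Z_\beta$. If $\ell=0$, I use a Sturm comparison. The equation reads $Z''+Z'/s+c(s)Z=0$ with $c\to1$. On intervals $[s_1,s_1+\pi+\epsilon]$ far out, I compare with $\sin$ in the variable $\sqrt{s}\,Z$, after removing the first-order term: $w=\sqrt s\,Z$ satisfies $w''+(c+1/(4s^2))w=0$. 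Sturm comparison then forces a zero of $w$, a contradiction.

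\emph{Reaching the next critical point.} After $Z_\beta$ crosses zero, $Z_\beta'>0$ and $Z_\beta>0$. The identity gives $(sZ_\beta')'<0$. If $Z_\beta'$ remained positive forever, $Z_\beta$ would increase to $\ell>0$ and $(sZ_\beta')'\le -cs$, which is again a contradiction. The same Sturm argument as above handles this case.

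\emph{Conclusion of the induction.} Hence $Z_\beta'$ has a next zero $R_{\beta,k+1}$ with $Z_\beta(R_{\beta,k+1})>0$. There $Z_\beta''=1-e^{Z_\beta}<0$, so the extremum is simple and the sign alternates as claimed. No critical point can occur between $R_{\beta,k}$ and the zero crossing, nor between the crossing and $R_{\beta,k+1}$, by the monotonicity just described. Critical points cannot accumulate, since each is simple and $Z_\beta\not\equiv0$ for $\beta\ne0$. Therefore the sequence is infinite and tends to infinity, and for $k=1$ it reproduces $R_\beta$.

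\emph{Analyticity.} The equation has an analytic nonlinearity and a regular singular point at $0$. The solution $Z_\beta(s)$ is therefore real analytic in $(\beta,s)$; I would establish this through the Volterra integral form, whose Picard iterates are analytic in $\beta$. Since $Z_\beta''(R_{\beta,k})\ne0$, the analytic implicit function theorem applied to $Z_\beta'(s)=0$ gives analyticity of $\beta\mapsto R_{\beta,k}$ locally. The ordering index $k$ is preserved under continuation, because critical points cannot be created or merge while all of them are simple. So each $R_{\beta,k}$ is analytic on each half-line.

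\emph{Main obstacle.} I expect the hardest step to be the case where the limit $\ell$ equals $0$, that is, ruling out a monotone approach to the zero level without crossing it. The Sturm comparison with the transformed $w$ handles this case. Alternatively, one can use the energy: $E_\beta$ decreases, and $\ell=0$ would force $E_\beta\to0$ monotonically. The Sturm comparison is the cleaner route.
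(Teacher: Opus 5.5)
Your argument is correct, but it is organized differently from the paper's. You continue inductively from each extremum. First $Z_\beta$ must cross zero: a monotone approach to a limit $\ell$ is excluded by the growth of $sZ_\beta'$ when $\ell\ne0$, and by Sturm comparison for $w=\sqrt s\,Z_\beta$ when $\ell=0$. Then $Z_\beta'$ must vanish again.

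The paper reverses the order and first proves global oscillation of $Z_\beta$. The energy bound $|Z_\beta|\le K_\beta$ gives $A_\beta(s)=\int_0^1e^{tZ_\beta(s)}\dd t\ge e^{-K_\beta}=c_->0$, so $w=\sqrt s\,Z_\beta$ solves $w''+qw=0$ with $q\ge c_-$. A Pr\"ufer angle with $\theta'\ge\sqrt{c_-}$ then yields infinitely many simple zeros of $Z_\beta$, with gaps at most $\pi/\sqrt{c_-}$. Only after that does the paper place the critical radii. Between consecutive zeros of $Z_\beta$, the sign of $(sZ_\beta')'=s(1-e^{Z_\beta})$ is fixed, so Rolle's theorem and strict monotonicity of $sZ_\beta'$ give exactly one critical radius. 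None lies before the first zero.

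What each route buys: the paper avoids your case split on $\ell$, and $R_{\beta,k}\to\infty$ follows at once from the spacing bound. Your $c(s)$ is the paper's $A_\beta(s)$, so the uniform bound $c(s)\ge c_-$ would let you drop the case split as well. Your route stays closer to \cref{lem:first-extremum} and needs Sturm comparison only in the asymptotic regime $c(s)\to1$. You deduce divergence of the radii from non-accumulation of simple critical points instead.

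Two small remarks:
\begin{itemize}
\item The energy alternative you mention for $\ell=0$ would not work by itself, since $E_\beta\downarrow0$ is no contradiction. You do not rely on it, so this does no harm.
\item For index preservation in the analyticity step, simplicity alone does not stop a new critical radius from entering at $s=0$. You also need the locally uniform expansion $Z_\beta'(s)=(1-e^\beta)s/2+O(s^3)$ there, as in \cref{lem:first-extremum}.
\end{itemize}
Otherwise your analyticity argument coincides with the paper's.
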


\begin{proof}
The energy estimate in \cref{lem:first-extremum} bounds
$|Z_\beta(s)|$ by a constant $K_\beta$ for all $s\ge0$. Set
\[
A_\beta(s)\coloneq\int_0^1e^{tZ_\beta(s)}\dd t
\ge e^{-K_\beta}\eqcolon c_->0.
\]
Thus $Z_\beta''+s^{-1}Z_\beta'+A_\beta(s)Z_\beta=0$.
For $s>0$, the function $w(s)\coloneq\sqrt{s}Z_\beta(s)$ satisfies
\[
w''+q(s)w=0,\qquad q(s)\coloneq A_\beta(s)+\frac1{4s^2}\ge c_-.
\]
Neither $Z_\beta$ nor $w$ can vanish together with its derivative
at a positive radius, by ODE uniqueness. To verify oscillation directly,
put $a\coloneq\sqrt{c_-}$ and choose a continuous angle on $s\ge1$ such that
\[
w=A\sin\theta,\qquad w'=aA\cos\theta,\qquad A>0.
\]
Differentiation gives
\[
\theta'=a\cos^2\theta+\frac{q(s)}a\sin^2\theta\ge a.
\]
Hence $\theta\to\infty$, and $Z_\beta$ has infinitely many
simple positive zeros tending to infinity. This is also the usual
Sturm oscillation argument; see \cite{Simon2005}.

Between two successive zeros of $Z_\beta$, its sign is constant, so
\[
(sZ_\beta')'=s(1-e^{Z_\beta})
\]
has a fixed strict sign. Consequently $sZ_\beta'$ is strictly monotone
on that interval. Rolle's theorem supplies one critical radius there,
and strict monotonicity makes it unique. There is no critical radius
before the first zero of $Z_\beta$, by its integral identity at zero.
All critical radii are therefore accounted for in this way.
Their values alternate in sign, giving \eqref{eq:extremum-signs}.
At a critical radius, $Z_\beta$ cannot also be zero, so the second
derivative in that equation is nonzero. The radii cannot accumulate
in a compact interval, and thus $R_{\beta,k}\to\infty$.

The integral formulation at zero and its regular even expansion give
real analytic dependence of $Z_\beta$ on $\beta$ near the origin;
away from zero this is the usual analytic dependence for an ODE.
The analytic implicit function theorem at each simple critical radius
then gives analyticity of $R_{\beta,k}$.
Simplicity prevents the ordered radii from exchanging their indices.
\end{proof}

\begin{proof}[Proof of \cref{thm:higher-equilibria}]
\cref{lem:all-extrema} supplies all the radii in the statement.
Integrating \eqref{eq:shooting} to any $R_{\beta,k}$ gives
\[
\int_0^{R_{\beta,k}}se^{Z_\beta(s)}\dd s
=\frac{R_{\beta,k}^2}{2}.
\]
Rescaling $s=R_{\beta,k}r$ proves that
\eqref{eq:all-shooting-family} solves \eqref{eq:system}, has the stated
mass, and satisfies both Neumann conditions. Its cumulative profile is
\begin{equation*}
F_{\beta,k}(\xi)\coloneq M_{\beta,k}\xi
-2\pi R_{\beta,k}\sqrt\xi\,
Z_\beta'(R_{\beta,k}\sqrt\xi),\qquad\text{for }\xi\in[0,1].
\end{equation*}
The density derivative has precisely the same zeros and signs as
$Z_\beta'(R_{\beta,k}r)$. Its interior critical radii are exactly
\[
\frac{R_{\beta,1}}{R_{\beta,k}},\ldots,
\frac{R_{\beta,k-1}}{R_{\beta,k}},
\]
and each is simple by \eqref{eq:extremum-signs}. Thus $k\ge2$ gives
non-monotone densities, and their masses exceed $\Lambda\pi$ by
\cref{cor:radial-shape}.

Conversely, let $(u_*,v_*)$ be any nonconstant radial equilibrium,
and set $\mu=M/\pi$ and $z\coloneq\log(u_*/\mu)$.
Then \eqref{eq:z} holds. The parameter $\beta=z(0)$ is nonzero,
since the regular initial-value problem with $\beta=0$ gives
$z\equiv0$. Uniqueness gives $z(r)=Z_\beta(\sqrt\mu\,r)$.
The boundary condition says that $\sqrt\mu$ is one of the critical
radii $R_{\beta,k}$. Both $\beta$ and its index $k$ are determined
by the equilibrium, which proves exhaustion and uniqueness of the
parametrization. \cref{prop:increasing-mass,prop:dec-monotonicity}
give the asserted smooth mass bijections on the two first-family
branches, including uniqueness at fixed mass. No injectivity is
asserted for the higher families.

For each fixed $k$, analytic dependence at $\beta=0$ gives
\[
Z_\beta=\beta J_0+O(\beta^2)\quad\text{in }C^m([0,S])
\]
for every $S>0$ and integer $m\ge0$.
The function $Z_\beta'(s)/\beta$ extends analytically to $\beta=0$
with value $-J_1(s)$. Its positive zeros $j_{1,k}$ are simple, so
the implicit function theorem gives
\[
R_{\beta,k}=j_{1,k}+O(\beta).
\]
The uniform expansion at the origin, together with convergence near
the first $k$ simple zeros of $J_1$, preserves their indexing.
Substitution into \eqref{eq:all-shooting-family} proves
\eqref{eq:higher-small-end} and the more precise normalized expansion
\begin{equation*}
\frac{u_{\beta,k}(r)}{\mu_{\beta,k}}
=1+\beta J_0(j_{1,k}r)+O(\beta^2)
\quad\text{in }C^m(\overline\Omega),
\end{equation*}
for every integer $m\ge0$.

For $\beta<0$, \cref{prop:small} gives
\[
R_{\beta,k}\ge R_{\beta,1}>2\sqrt{-\beta}\to\infty
\qquad\text{as }\beta\to-\infty.
\]
Continuity and the limit $M_{\beta,k}\to\pi j_{1,k}^2$ as
$\beta\uparrow0$ prove the stated existence range by the intermediate
value theorem. This range is sufficient; no sharper lower endpoint
for an individual family with $k\ge3$ is claimed.
\end{proof}

\subsection{The exact threshold for non-monotone equilibria}

\begin{theorem}[Exact mass threshold for non-monotone equilibria]
\label{thm:non-monotone-threshold}
The second shooting mass $\beta\mapsto M_{\beta,2}$ in
\cref{thm:higher-equilibria}, extended by $M_{0,2}=\pi j_{1,2}^2$,
is continuous on $\R$ and satisfies
\begin{equation}\label{eq:second-mass-coercivity}
M_{\beta,2}\to\infty\qquad\text{as }\beta\to\pm\infty.
\end{equation}
Its minimum
\begin{equation}\label{eq:non-monotone-threshold}
M_{\mathrm{nm}}\coloneq\min_{\beta\in\R}M_{\beta,2}
\end{equation}
satisfies the strict bounds
\begin{equation*}
\Lambda\pi<M_{\mathrm{nm}}<\pi j_{1,2}^2.
\end{equation*}
A non-monotone classical radial equilibrium of mass $M$ exists if and
only if $M\ge M_{\mathrm{nm}}$. For every such mass it can be chosen
from the second family $(u_{\beta,2},v_{\beta,2})$, with exactly one
interior radial extremum. In particular, the minimum is attained at
some $\beta_*\ne0$, so the threshold mass itself admits a nonconstant
equilibrium.
\end{theorem}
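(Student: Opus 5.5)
Continuity of $\beta\mapsto M_{\beta,2}=\pi R_{\beta,2}^2$ on $\R$ comes from \cref{thm:higher-equilibria}. There, each $R_{\beta,k}$ is real analytic on each side of $\beta=0$ and extends analytically through $\beta=0$ with $R_{0,2}=j_{1,2}$.

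For coercivity \eqref{eq:second-mass-coercivity}, I would split by the sign of $\beta$.
\begin{itemize}
\item As $\beta\to-\infty$, \cref{prop:small} already gives $R_{\beta,2}>R_{\beta,1}>2\sqrt{-\beta}\to\infty$.
\item As $\beta\to+\infty$, I would use the bubble analysis of \cref{prop:bubble}. The outer mass satisfies $Q_\beta\to4$ uniformly on $[\delta,S]$ for any fixed $S$; the tail estimate \eqref{eq:tail-bound} works on any bounded interval, with $j_{1,1}$ replaced by $S$. Hence on $[\delta,S]$,
\[
sZ_\beta'=\frac{s^2}2-Q_\beta(s)\to\frac{s^2}2-4 .
\]
This limit is positive beyond $\sqrt8$. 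Moreover $Z_\beta\to-\infty$ locally uniformly on $(0,S]$: by integration, $Z_\beta(s)\approx\beta-4\log(e^{\beta/2}s)+O(1)=-\beta-4\log s+O(1)$.
\item Once $Z_\beta$ is very negative, the equation is approximately $(sZ')'=s$, so $sZ'-s^2/2$ is nearly constant. Then $Z'$ cannot vanish again on $[\sqrt8+\varepsilon,S]$ while $Z\ll0$.
\item Since $Z_\beta$ must return to $0$ before the second extremum, and its growth is at most quadratic, $R_{\beta,2}\ge$ the first return radius, and this radius is $\gtrsim\sqrt{\beta}$. Concretely, $Z_\beta(s)\le -\beta-4\log s+s^2/4+C$, which stays negative for $s\lesssim 2\sqrt\beta$.
\end{itemize}
Either way $R_{\beta,2}\to\infty$.

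Coercivity and continuity give that the minimum $M_{\mathrm{nm}}$ is attained at some $\beta_*$. For the bounds:
\begin{itemize}
\item The lower bound $M_{\mathrm{nm}}>\Lambda\pi$ follows from \cref{cor:radial-shape} for $\beta\ne0$, and from $\pi j_{1,2}^2>\Lambda\pi$ at $\beta=0$.
\item The strict upper bound $M_{\mathrm{nm}}<\pi j_{1,2}^2$ requires showing that $\beta=0$ is not a local minimum. I would compute $\partial_\beta R_{\beta,2}|_{\beta=0}$ by second-order expansion, writing $Z_\beta=\beta J_0+\beta^2 W+O(\beta^3)$, where
\[
W''+\frac{W'}s+W=-\tfrac12J_0^2,\qquad W(0)=W'(0)=0 .
\]
The implicit function theorem on $Z_\beta'(s)/\beta=0$ near $j_{1,2}$ gives
\[
\dot R=-\frac{W'(j_{1,2})}{-J_1'(j_{1,2})}=\frac{W'(j_{1,2})}{J_0(j_{1,2})},
\]
using $J_1'(j_{1,2})=J_0(j_{1,2})$ at zeros of $J_1$.
\item $W'(j_{1,2})$ follows from variation of parameters, i.e.\ a Wronskian identity with $J_0$:
\[
j\,W'(j)J_0(j)=-\tfrac12\int_0^j sJ_0^3\,ds\qquad\text{at }j=j_{1,2}.
\]
Hence $\dot R\propto-\int_0^{j_{1,2}}sJ_0^3\,ds$.
\end{itemize}

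The main obstacle is showing $\int_0^{j_{1,2}}sJ_0(s)^3\,ds\ne0$, which then makes $M_{\beta,2}$ strictly monotone through $0$. I would first try a sign argument: $J_0>0$ on $[0,j_{0,1})$, and the positive lobe near the origin dominates the cube integral because $|J_0|$ decays and the negative lobe on $(j_{0,1},j_{0,2})$ has amplitude about $0.40$. A rigorous bound would use the explicit lobe values and monotone envelope estimates ($sJ_0^2$ bounded by $2/\pi$-type bounds), or else a numerical verification with error control.

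If this derivative vanished, I would fall back to a third-order expansion. Granting it is nonzero, points $\beta$ near $0$ on one side have $M_{\beta,2}<\pi j_{1,2}^2$, so $\beta_*\ne0$.

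Finally, the characterization:
\begin{itemize}
\item Existence for every $M\ge M_{\mathrm{nm}}$: by the intermediate value theorem applied to the continuous function $M_{\beta,2}$ between $\beta_*$ and $\beta\to\infty$, each such mass is attained by some $\beta\ne0$. If that $\beta$ happened to be $0$, meaning $M=\pi j_{1,2}^2$, I would take the other side instead, since the mass tends to infinity in both directions.
\item Nonexistence below $M_{\mathrm{nm}}$: I would show $M_{\beta,k}\ge M_{\beta,2}$ for $k\ge2$, which holds because $R_{\beta,k}>R_{\beta,2}$. Then every non-monotone equilibrium, which belongs to some family $k\ge2$ by \cref{thm:higher-equilibria}, has mass at least $M_{\mathrm{nm}}$.
\end{itemize}
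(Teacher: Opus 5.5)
\textbf{Main gap: the sign of the cubic Bessel integral.} Your computation of the slope at $\beta=0$ is right. With $j=j_{1,2}$, your expansion and Wronskian identity give
\[
\left.\frac{\dd M_{\beta,2}}{\dd\beta}\right|_{\beta=0}=2\pi j\dot R=-\frac{\pi}{J_0(j)^2}\int_0^j sJ_0(s)^3\dd s,
\]
which is exactly \eqref{eq:second-mass-slope} once you use $\int_0^1rJ_0(jr)^2\dd r=\tfrac12J_0(j)^2$.

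The genuine gap is the step you flag yourself: you never prove $\int_0^{j_{1,2}}sJ_0(s)^3\dd s\ne0$. Three conclusions depend on it: $M_{\mathrm{nm}}<\pi j_{1,2}^2$, $\beta_*\ne0$, and hence that the threshold mass itself carries a nonconstant equilibrium.

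Your heuristic that the first lobe dominates because $|J_0|$ decays does not survive the growing weight $s$. The first lobe contributes only about $0.56$. The envelope bound $sJ_0(s)^2\le2/\pi$ that you suggest controls the negative lobe only by $(2/\pi)^{3/2}\cdot2(\sqrt{j_{0,2}}-\sqrt{j_{0,1}})\approx0.81$. The actual values (negative lobe about $-0.33$, total about $0.35$) show the claim is true. Envelope estimates of this kind cannot certify it, though; you would need a validated computation with sharp local bounds, which you have not carried out.

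The paper closes this step with an external result, the Pan--Ye inequality \cite[Theorem~1.1]{Pan2022}: $\int_0^x tJ_0(t)^3\dd t>0$ for every $x>0$. The paper's own cumulative-weight argument for $\kappa_c>0$ does not transfer here. It relies on $\psi_0'<0$, whereas the analogous weight $J_1(j_{1,2}\sqrt\xi)/\sqrt\xi$ is not monotone on $[0,1]$.

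\textbf{Your fallback does not work.} If the first derivative at $\beta=0$ vanished and the second were positive, $\beta=0$ would be a strict local minimum of $M_{\beta,2}$. No higher-order expansion could then give the strict upper bound.

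\textbf{Coercivity as $\beta\to+\infty$.} Your route differs from the paper's but works. The estimate in the proof of \cref{prop:bubble},
\[
Z_\beta(s)\le Z_\beta(\rho_\beta)+\frac{s^2-\rho_\beta^2}{4}-a_\beta\log\frac{s}{\rho_\beta},
\]
holds for every $s\ge\rho_\beta$, not only up to $j_{1,1}$. It gives $Z_\beta(s)\le-\bigl(\frac{a_\beta}{2}-1\bigr)\beta+C_S+\frac{s^2}{4}$ for $s\ge1$. The coefficient of $\beta$ tends to $\frac{q_S}{2}-1\in(0,1)$, not to $1$ as in your stated bound $-\beta-4\log s+\frac{s^2}{4}+C$. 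Any positive coefficient suffices, since $R_{\beta,2}>R_\beta>1$ and $Z_\beta(R_{\beta,2})>0$.

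The paper's route is shorter. Monotonicity and $u_\beta\dd x\rightharpoonup8\pi\delta_0$ give $\frac{3\pi}{4}u_\beta(1)\le\int_{\Omega\setminus B_{1/2}}u_\beta\dd x\to0$, hence $Z_\beta(R_\beta)\to-\infty$. Integrating $(sZ_\beta')'\le s$ from $R_\beta$ then yields $R_{\beta,2}^2>R_\beta^2-4Z_\beta(R_\beta)$.

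The remaining steps coincide with the paper's: continuity, the lower bound via \cref{cor:radial-shape}, $M_{\beta,k}\ge M_{\beta,2}$, and the intermediate value theorem on a half-line from $\beta_*$ avoiding $0$.
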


\begin{lemma}[Escape of the second critical radius]
\label{lem:second-radius-escape}
The second shooting mass satisfies \eqref{eq:second-mass-coercivity}.
\end{lemma}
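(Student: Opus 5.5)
The plan is to use $R_{\beta,2}>R_{\beta,1}$ together with a lower bound on how far the shooting solution must travel after its first extremum. Since $M_{\beta,2}=\pi R_{\beta,2}^2$, it suffices to show $R_{\beta,2}\to\infty$ as $\beta\to\pm\infty$. Continuity of $\beta\mapsto M_{\beta,2}$, including through $\beta=0$, is already contained in \cref{thm:higher-equilibria}, so only the coercivity needs proof.

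The case $\beta\to-\infty$ is immediate. The proof of \cref{prop:small} gives $R_{\beta,2}>R_{\beta,1}>2\sqrt{-\beta}\to\infty$.

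For $\beta\to+\infty$ I would argue in two steps. Set $m_\beta\coloneq Z_\beta(R_{\beta,1})<0$; this is the first minimum, by \cref{lem:first-extremum}.

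\emph{Step 1 (growth after the minimum).} On the interval after $R_{\beta,1}$ where $Z_\beta<0$, the identity $(sZ_\beta')'=s(1-e^{Z_\beta})\le s$ holds, and $Z_\beta'(R_{\beta,1})=0$. Integrating gives
\[
Z_\beta'(s)\le\frac{s^2-R_{\beta,1}^2}{2s}\le\frac s2,
\qquad
Z_\beta(s)\le m_\beta+\frac{s^2}{4}.
\]
By \cref{lem:all-extrema}, the second critical radius lies beyond the next zero of $Z_\beta$. That zero is at least $2\sqrt{-m_\beta}$, so
\[
R_{\beta,2}\ge2\sqrt{-m_\beta}.
\]

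\emph{Step 2 (main step: $m_\beta\to-\infty$).} I would reuse the outer estimate from the proof of \cref{prop:bubble}. Fix $S>\sqrt8$ with $q_S>2$, for instance $q_S=3$. Put $\rho_\beta=e^{-\beta/2}S$ and $a_\beta=Q_\beta(\rho_\beta)\to q_S$. That proof gives, for $\rho_\beta\le s\le j_{1,1}$,
\[
Z_\beta(s)\le Z_\beta(\rho_\beta)+\frac{j_{1,1}^2}{4}-a_\beta\log\frac{s}{\rho_\beta}.
\]
By \eqref{eq:Liouville}, $Z_\beta(\rho_\beta)=\beta+Y_\beta(S)=\beta+Y_*(S)+o(1)$. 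Evaluate at $s=R_{\beta,1}\in(\sqrt8,j_{1,1})$, using \eqref{eq:R-bounds}. Then $\log(R_{\beta,1}/\rho_\beta)=\beta/2+\log(R_{\beta,1}/S)$, and this yields
\[
m_\beta\le\beta\Bigl(1-\frac{a_\beta}{2}\Bigr)+C_S
\]
with $C_S$ independent of large $\beta$. Since $a_\beta\to q_S>2$, the coefficient of $\beta$ is eventually at most $-(q_S-2)/4<0$, hence $m_\beta\to-\infty$. Combined with Step 1, this gives $R_{\beta,2}\to\infty$.

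I expect Step 2 to be the main obstacle. The point needing care is that the bound must be evaluated at the moving point $R_{\beta,1}$ while $\rho_\beta\to0$. The $\beta$-growth of $\log(1/\rho_\beta)$ multiplied by $a_\beta>2$ must beat the $+\beta$ coming from $Z_\beta(\rho_\beta)$. That works precisely because the core carries asymptotic mass $4>2$ in these variables, so any fixed $S$ with $q_S>2$ suffices.
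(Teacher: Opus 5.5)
Your proposal is correct and follows the paper's proof in structure. The case $\beta\to-\infty$ uses $R_{\beta,2}>R_\beta>2\sqrt{-\beta}$. For $\beta\to+\infty$ you integrate $(sZ_\beta')'\le s$ from the first minimum and use $Z_\beta(R_{\beta,2})>0$ to get $R_{\beta,2}^2>-4Z_\beta(R_\beta)$.

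The one difference is how you show $Z_\beta(R_\beta)\to-\infty$. The paper takes it from the conclusion of \cref{prop:bubble}: strict decrease gives $\frac{3\pi}{4}u_\beta(1)\le\int_{\Omega\setminus B_{1/2}}u_\beta\dd x\to0$, and $R_\beta^2\to8$. You instead reuse the internal tail inequality of that proof. This is slightly longer, but it yields the explicit bound $Z_\beta(R_\beta)\le-\frac{q_S-2}{4}\beta+C_S$, and hence a linear-in-$\beta$ lower bound for $M_{\beta,2}$, which the paper's soft argument does not give.
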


\begin{proof}
For $\beta<0$, \cref{prop:small} gives
\[
R_{\beta,2}>R_\beta>2\sqrt{-\beta},\qquad
M_{\beta,2}>-4\pi\beta\to\infty
\quad\text{as }\beta\to-\infty.
\]

For $\beta\to+\infty$, reuse the decreasing family
$u_\beta(r)=R_\beta^2e^{Z_\beta(R_\beta r)}$ and its concentrated
endpoint in \cref{prop:bubble}.
Its strict decrease and weak convergence to $8\pi\delta_0$ imply
\[
\frac{3\pi}{4}u_\beta(1)
\le\int_{\Omega\setminus B_{1/2}}u_\beta\dd x\to0.
\]
Since $R_\beta^2\to8$, it follows that
\begin{equation*}
Z_\beta(R_\beta)
=\log\frac{u_\beta(1)}{R_\beta^2}\to-\infty.
\end{equation*}
On the other hand, \eqref{eq:shooting} gives
$(sZ_\beta')'=s(1-e^{Z_\beta})\le s$ for all $s>0$.
Integrating from $R_\beta$, where $Z_\beta'=0$, yields
\[
Z_\beta'(s)\le\frac{s^2-R_\beta^2}{2s}\le\frac s2,
\qquad
Z_\beta(s)\le Z_\beta(R_\beta)+\frac{s^2-R_\beta^2}{4}
\quad\text{for }s\in[R_\beta,\infty).
\]
By \eqref{eq:extremum-signs}, $Z_\beta(R_{\beta,2})>0$.
Consequently,
\[
R_{\beta,2}^2>R_\beta^2-4Z_\beta(R_\beta)\to\infty,
\]
which proves the other limit.
\end{proof}

\begin{lemma}[The second mass decreases through the constant state]
\label{lem:second-mass-slope}
The extension of the second mass curve at $\beta=0$ satisfies
\begin{equation}\label{eq:second-mass-slope}
\left.\frac{\mathrm d M_{\beta,2}}{\mathrm d\beta}\right|_{\beta=0}
=-\frac{\pi j_{1,2}^2}{2}
\frac{\displaystyle\int_0^1 rJ_0(j_{1,2}r)^3\dd r}
{\displaystyle\int_0^1 rJ_0(j_{1,2}r)^2\dd r}<0.
\end{equation}
\end{lemma}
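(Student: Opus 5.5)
We compute the slope from a second-order expansion of the shooting solution at $\beta=0$, using the analytic dependence established in \cref{thm:higher-equilibria} and \cref{lem:all-extrema}. Write $j\coloneq j_{1,2}$ and expand
\[
Z_\beta=\beta J_0+\beta^2W+O(\beta^3)\quad\text{in }C^m([0,S]).
\]
Since $e^Z-1=Z+\tfrac12Z^2+O(Z^3)$, collecting the $\beta^2$ terms in \eqref{eq:shooting} shows that $W$ solves
\[
W''+\frac1sW'+W=-\frac12J_0^2,\qquad W(0)=W'(0)=0.
\]
Also write $R_{\beta,2}=j+\beta R^{(1)}+O(\beta^2)$. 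The critical-radius condition $Z_\beta'(R_{\beta,2})=0$, divided by $\beta$, reads
\[
-J_1(R_{\beta,2})+\beta W'(R_{\beta,2})+O(\beta^2)=0 .
\]
At first order this gives $-J_1'(j)R^{(1)}+W'(j)=0$. Because $J_1(j)=0$, we have $J_1'(j)=J_0(j)-J_1(j)/j=J_0(j)\ne0$. Hence $R^{(1)}=W'(j)/J_0(j)$ and
\[
\left.\frac{\mathrm dM_{\beta,2}}{\mathrm d\beta}\right|_{\beta=0}=2\pi jR^{(1)}=\frac{2\pi jW'(j)}{J_0(j)}.
\]

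Next we evaluate $W'(j)$ with the Wronskian trick already used in \cref{lem:first-extremum}. Multiplying the $W$-equation by $sJ_0$ and the Bessel equation by $sW$ and subtracting gives
\[
\bigl[s(W'J_0-WJ_0')\bigr]'=-\tfrac12sJ_0^3 .
\]
The bracket vanishes at $s=0$, and $J_0'(j)=-J_1(j)=0$. Therefore
\[
jW'(j)J_0(j)=-\tfrac12\int_0^j sJ_0^3\dd s,
\qquad\text{so}\qquad
\left.\frac{\mathrm dM_{\beta,2}}{\mathrm d\beta}\right|_{\beta=0}=-\frac{\pi}{J_0(j)^2}\int_0^j sJ_0(s)^3\dd s .
\]
Rescaling $s=jr$ turns the integral into $j^2\int_0^1rJ_0(jr)^3\dd r$. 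The standard Lommel identity
\[
\int_0^1rJ_0(jr)^2\dd r=\tfrac12\bigl(J_0(j)^2+J_1(j)^2\bigr)=\tfrac12J_0(j)^2
\]
then converts this exactly into the first expression in \eqref{eq:second-mass-slope}.

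It remains to prove that $\int_0^1rJ_0(j_{1,2}r)^3\dd r>0$, and this sign is the main obstacle. The integrand is positive on $[0,j_{0,1}/j_{1,2})$, negative on $(j_{0,1}/j_{1,2},j_{0,2}/j_{1,2})$, and positive again up to $r=1$. The central lobe, where $J_0$ is close to $1$, should dominate despite its small weight $r$. I would try to make this rigorous in two steps. First, bound the central-lobe contribution from below using the alternating Taylor series of $J_0$ with explicit remainder; for instance, $J_0(s)\ge1-s^2/4$ on $[0,2]$ can serve as a lower bound on part of the lobe. Second, bound the negative lobe from above using $|J_0(s)|\le|J_0(j_{1,1})|$ there, together with the asymptotic envelope $|J_0(s)|\le\sqrt{2/(\pi s)}$. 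The numerical values $j_{1,2}\approx7.0156$, $J_0(j_{1,1})\approx-0.4028$ and $J_0(j_{1,2})\approx0.3001$ suggest a comfortable margin. If these crude bounds are too weak, I would fall back on a validated-numerics evaluation of the integral by interval quadrature, with rigorous enclosures of $J_0$ coming from its series. The final outer positive lobe only helps and may simply be dropped.
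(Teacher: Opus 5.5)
Your derivation of the slope formula is correct. It is the same solvability condition as in the paper, but computed in the shooting variable: you use the corrector $W$, the weighted Wronskian with $J_0$, and Lommel's integral. The paper instead tests the rescaled equation for $z_\beta(r)=Z_\beta(R_{\beta,2}r)$ against $rJ_0(j_{1,2}r)$. This gives the exact identity $(\mu_{\beta,2}-j_{1,2}^2)\int_0^1 rz_\beta J_0(j_{1,2}r)\dd r=-\mu_{\beta,2}\int_0^1 r(e^{z_\beta}-1-z_\beta)J_0(j_{1,2}r)\dd r$. Dividing by $\beta^2$ then yields the normalized ratio directly, with no corrector and no Lommel identity.

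The gap is the strict inequality $\int_0^1 rJ_0(j_{1,2}r)^3\dd r>0$, which is half of the statement and which you leave as a plan. The specific elementary bounds you propose cannot close it, so this is not merely a risk of their being too weak.

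On the middle lobe $[j_{0,1},j_{0,2}]$:
\begin{itemize}
\item the bound $|J_0|\le|J_0(j_{1,1})|\approx0.403$ gives $\int_{j_{0,1}}^{j_{0,2}}s|J_0(s)|^3\dd s\le 0.403^3\,(j_{0,2}^2-j_{0,1}^2)/2\approx0.81$;
\item the envelope $|J_0(s)|\le\sqrt{2/(\pi s)}$ gives $(2/\pi)^{3/2}\cdot 2(\sqrt{j_{0,2}}-\sqrt{j_{0,1}})\approx0.81$;
\item even the pointwise minimum of the two bounds gives about $0.69$.
\end{itemize}
But the central lobe is only $\int_0^{j_{0,1}}sJ_0(s)^3\dd s\approx0.56$, and your bound $J_0(s)\ge 1-s^2/4$ certifies just $0.5$. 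The true lobe values are roughly $0.56$, $-0.34$ and $0.12$. So the inequality does hold, but any sup-type bound on the middle lobe overestimates it by at least a factor of two.

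Your fallback, a validated interval quadrature with fine piecewise enclosures of $J_0$, would work in principle. However, it is not carried out, so as written the sign is unproved.

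The paper closes this step by invoking the Bessel inequality of Pan--Ye \cite[Theorem~1.1]{Pan2022}: $\int_0^x tJ_0(t)^3\dd t>0$ for every $x>0$. Applying it at $x=j_{1,2}$ after the substitution $t=j_{1,2}r$ gives the sign.

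Note also that the cumulative-weight trick the paper uses for the analogous integral at $j_{1,1}$ (positivity of $\kappa_c$) does not transfer here. That trick needs the weight $J_1(j\sqrt\xi)/\sqrt\xi$ to be decreasing on $(0,1)$. For $j=j_{1,2}$ this weight vanishes at $\xi=(j_{1,1}/j_{1,2})^2$ and again at $\xi=1$, so it is not monotone.
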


\begin{proof}
Put $j\coloneq j_{1,2}$ and $\lambda\coloneq j^2$, and define
$\psi(r)\coloneq J_0(jr)$ and $z_\beta(r)\coloneq Z_\beta(R_{\beta,2}r)$
for $r\in[0,1]$.
The analytic extension in \cref{thm:higher-equilibria} gives
$z_\beta=\beta\psi+O(\beta^2)$ in $C^m([0,1])$ for every integer
$m\ge0$, and $\mu_{\beta,2}=\lambda+O(\beta)$.
The equation for $z_\beta$ is \eqref{eq:z} with
$\mu=\mu_{\beta,2}$, while
$-(r\psi')'=\lambda r\psi$ and $\psi'(0)=\psi'(1)=0$.
Multiplying by $r\psi$ and integrating by parts gives
\begin{equation*}
(\mu_{\beta,2}-\lambda)\int_0^1 rz_\beta\psi\dd r
=-\mu_{\beta,2}\int_0^1 r(e^{z_\beta}-1-z_\beta)\psi\dd r.
\end{equation*}
Division by $\beta^2$ and passage to the limit prove the equality in
\eqref{eq:second-mass-slope}, since $M_{\beta,2}=\pi\mu_{\beta,2}$.
For its strict sign, the Bessel integral inequality
\cite[Theorem~1.1]{Pan2022} states that
\[
\int_0^x tJ_0(t)^3\dd t>0\qquad\text{for }x\in(0,\infty).
\]
In particular,
$\int_0^1 r\psi^3\dd r=j^{-2}\int_0^j tJ_0(t)^3\dd t>0$.
\end{proof}

\begin{proof}[Proof of \cref{thm:non-monotone-threshold}]
By \cref{thm:higher-equilibria,lem:second-radius-escape}, the map
$\beta\mapsto M_{\beta,2}$ is continuous on $\R$ and tends to infinity
at both ends. It therefore attains its finite global minimum at some
$\beta_*\in\R$.
For every $\beta\ne0$, \cref{cor:radial-shape} gives
$M_{\beta,2}>\Lambda\pi$, and
$M_{0,2}=\pi j_{1,2}^2>\Lambda\pi$ as well.
Evaluation at a minimizer proves $M_{\mathrm{nm}}>\Lambda\pi$.
By \cref{lem:second-mass-slope}, small positive $\beta$ satisfy
$M_{\beta,2}<M_{0,2}=\pi j_{1,2}^2$.
This proves the strict upper bound and excludes $\beta_*=0$.

Every non-monotone equilibrium is represented by some $(\beta,k)$
with $\beta\ne0$ and $k\ge2$, by \cref{thm:higher-equilibria}.
Hence its mass obeys
\[
M=M_{\beta,k}\ge M_{\beta,2}\ge M_{\mathrm{nm}}.
\]
Conversely, take a minimizer $\beta_*\ne0$ and consider the half-line
from $\beta_*$ towards $+\infty$ if $\beta_*>0$, or towards $-\infty$
if $\beta_*<0$.
On this half-line, $M_{\beta,2}$ is continuous, starts at
$M_{\mathrm{nm}}$, and tends to infinity.
The intermediate value theorem therefore supplies, for every
$M\ge M_{\mathrm{nm}}$, a nonzero $\beta$ with $M_{\beta,2}=M$.
The equilibrium $(u_{\beta,2},v_{\beta,2})$ in
\eqref{eq:all-shooting-family} has exactly one interior radial extremum.
\end{proof}

\begin{remark}
The characterization \eqref{eq:non-monotone-threshold} requires neither
monotonicity of the second mass curve nor uniqueness of its minimizer.
\end{remark}

\begin{proof}[Proof of \cref{thm:equilibria}]
The constant equilibria are precisely $(\mu,0)$, one for each mass $M>0$.
Parts~(1) and~(3) follow from
low-mass stationary uniqueness \cite[Theorem~2.2]{Horstmann2011}
and \cref{cor:radial-shape}.
For part~(2), \cref{cor:radial-shape,thm:decreasing-uniqueness}
give the unique nonconstant equilibrium, which is strictly decreasing.
Part~(4) follows from \cref{cor:radial-shape,prop:increasing-mass},
which identify the unique nonconstant monotone equilibrium above $\Lambda\pi$.
Finally, \cref{thm:non-monotone-threshold} gives the exact existence
threshold for non-monotone equilibria, proving part~(5).
\end{proof}

\section{Concentration thresholds and qualitative dynamics}
\label{sec:qualitative-dynamics}

Throughout this section and \cref{sec:rates}, $\mathcal G$ denotes the
mean-zero Neumann inverse of $-\Delta$.

\subsection{Cumulative mass, comparison, and bounded orbits}

Write
\[
\U(\xi,t)\coloneq F_{u(\cdot,t)}(\xi)
=\int_{B_{\sqrt\xi}}u(x,t)\dd x,
\qquad \text{for }(\xi,t)\in[0,1]\times[0,T_{\max}).
\]
Set $\U_0(\xi)\coloneq F_{u_0}(\xi)$ for $\xi\in[0,1]$.
Radial integration of the elliptic equation gives
\begin{equation}\label{eq:radial-v}
\begin{aligned}
v_r(r,t)=\frac{\mu r}{2}
-\frac{\U(r^2,t)}{2\pi r},&\qquad \text{for }(r,t)\in(0,1]\times(0,T_{\max}),\\
\U_\xi(\xi,t)=\pi u(\sqrt\xi,t),&\qquad
\text{for }(\xi,t)\in[0,1]\times(0,T_{\max}).
\end{aligned}
\end{equation}
Integrating the first equation over $B_{\sqrt\xi}$ yields
\begin{equation}\label{eq:cumulative}
\begin{aligned}
\U_t=\N(\U)\coloneq
4\xi\U_{\xi\xi}+\frac{\U-M\xi}{\pi}\U_\xi,
&\qquad \text{for }(\xi,t)\in(0,1)\times(0,T_{\max}),\\
\U(0,t)=0,\quad\U(1,t)=M,
&\qquad \text{for }t\in[0,T_{\max}).
\end{aligned}
\end{equation}
For smooth solutions, $\partial_\nu u=0$ also gives
$\U_{\xi\xi}(1,t)=0$ for $t\in(0,T_{\max})$.

\begin{lemma}[Comparison and endpoint separation]\label{lem:comparison}
Cumulative ordering is preserved by \eqref{eq:cumulative}, also in comparison
with smooth stationary subsolutions or supersolutions having the same
endpoint values. If two distinct ordered classical solutions are bounded
on a positive-time interval, their difference $D$ at any later time $t_0$
satisfies
\begin{equation}\label{eq:gap}
D(\xi,t_0)\ge c(t_0)\xi(1-\xi),\qquad\text{for }\xi\in[0,1].
\end{equation}
The same assertion applies when one of the solutions is stationary.
\end{lemma}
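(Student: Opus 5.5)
We prove the lemma in two stages: first preservation of the cumulative order via a weighted maximum principle, then quantitative separation \eqref{eq:gap} via a strong maximum principle combined with Hopf-type endpoint bounds.

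\textbf{Preservation of order.} Let $\U_1,\U_2$ be classical solutions of \eqref{eq:cumulative}, or one of them a smooth stationary sub/supersolution, with the same endpoint values $0$ and $M$. Suppose $\U_1(\cdot,0)\ge\U_2(\cdot,0)$, and set $D\coloneq\U_1-\U_2$. Subtracting the equations gives the linear equation
\[
D_t=4\xi D_{\xi\xi}+b(\xi,t)D_\xi+c(\xi,t)D,\qquad
b\coloneq\frac{\U_1-M\xi}{\pi},\quad c\coloneq\frac{\partial_\xi\U_2}{\pi},
\]
with $D(0,t)=D(1,t)=0$. For a subsolution or supersolution, the equality is replaced by the appropriate inequality. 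On $[0,T]$ with $T<T_{\max}$, both coefficients are bounded, since $c=u_2(\sqrt\xi,t)$ is bounded there.

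Next put $\widetilde D\coloneq e^{-\lambda t}D$ with $\lambda>\sup|c|$. If $\widetilde D$ attained a negative minimum, the boundary conditions force it to be interior in $\xi\in(0,1)$. At such a point $\widetilde D_\xi=0$ and $\widetilde D_{\xi\xi}\ge0$. The zeroth-order term $(c-\lambda)\widetilde D$ is then strictly positive, which contradicts $\widetilde D_t\le0$ at a first time of minimum. The degeneracy of the coefficient $4\xi$ at $\xi=0$ causes no trouble, because the minimum is never attained at $\xi=0$, where $D=0$.

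\textbf{Separation \eqref{eq:gap}.} This is the main step. First we show $D>0$ in the interior for $t>0$. The equation is uniformly parabolic on each $[\delta,1]$, so the strong maximum principle applies there. Hence if $D(\xi_0,t_0)=0$ for some interior $\xi_0$, then $D\equiv0$ on $[\delta,1]\times[0,t_0]$ for every $\delta$. Backward uniqueness is not needed: $D\equiv0$ on $(0,1)\times(0,t_0]$ combined with continuity shows the two solutions coincide at positive times, and forward uniqueness for the Cauchy problem then gives $D\equiv 0$. This contradicts the assumption that the solutions are distinct. In the bounded-interval formulation, the strong maximum principle gives $D\equiv0$ on earlier times, which yields the contradiction directly.

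It remains to obtain the linear rate at both endpoints.
\begin{itemize}
\item At $\xi=1$ the operator is nondegenerate, so the Hopf lemma gives $D_\xi(1,t_0)<0$. Hence $D\ge c(1-\xi)$ near $1$.
\item At $\xi=0$ we need $D_\xi(0,t_0)>0$, i.e.\ $u_1(0,t_0)>u_2(0,t_0)$. I expect this to be the main obstacle, since Hopf fails for the degenerate coefficient $4\xi$.
\end{itemize}

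To handle $\xi=0$, I would construct a barrier $w(\xi,t)=\varepsilon e^{-Kt}\xi(1-\xi/\delta)$ on $[0,\delta]\times[t_0/2,t_0]$. A direct computation gives
\[
4\xi w_{\xi\xi}+bw_\xi+cw-w_t=\varepsilon e^{-Kt}\Bigl(-\frac{8\xi}{\delta}+b(1-2\xi/\delta)+c\,\xi(1-\xi/\delta)+K\xi(1-\xi/\delta)\Bigr).
\]
The difficulty is the first-order term, since $b=O(\xi)$ as $\xi\to0$. Because $b/\xi$ is bounded, choosing $K$ large makes the bracket nonnegative away from $\xi=\delta$.

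An alternative, possibly cleaner, route is to transform back to radial variables. The difference $D(r^2,t)/r^2$ behaves like a radial solution of a uniformly parabolic equation in $\R^4$. One checks that $4\xi\partial_\xi^2+8\partial_\xi$ is the four-dimensional radial Laplacian in $r=\sqrt\xi$. The strong maximum principle there then yields positivity at the origin, hence $D_\xi(0,t_0)>0$.

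In either route, the comparison $D\ge w$ uses $D\ge\varepsilon\,\xi(1-\xi/\delta)$ at $\xi=\delta$ and at $t=t_0/2$; the latter holds by interior positivity. Combining the two endpoint estimates with the interior positivity and compactness yields \eqref{eq:gap}. The stationary case is identical, with $\U_2$ independent of $t$.
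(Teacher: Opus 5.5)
\textbf{The gap is the endpoint $\xi=0$ in your barrier route, which fails for two independent reasons.}

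First, $w=\varepsilon e^{-Kt}\xi(1-\xi/\delta)$ is not a subsolution near $\xi=\delta$ for any $K$. There the $K$-term vanishes, and your bracket equals $-8-b(\delta,t)$. This is close to $-8$ because $b=O(\xi)$. So the bracket is negative on a whole neighborhood of $\xi=\delta$, not just at that point, and no choice of $K$ fixes it.

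Second, and more seriously, the argument is circular. The initial condition $D(\xi,t_0/2)\ge\varepsilon\xi(1-\xi/\delta)$ on $[0,\delta]$ requires $\liminf_{\xi\downarrow0}D(\xi,t_0/2)/\xi>0$, that is, $u_1(0,t_0/2)>u_2(0,t_0/2)$. This is exactly the endpoint statement you are trying to prove, only at an earlier time. Interior positivity gives $D>0$ on $(0,1)$ but no linear rate as $\xi\downarrow0$. Your closing sentence, that the comparison $D\ge w$ is used ``in either route'' and is justified by interior positivity, carries this circularity into the second route as well.

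\textbf{The fix is your second route used on its own, with no barrier; this is precisely the paper's proof.}

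Set $V\coloneq D/\xi$. It is regular up to $\xi=0$, since it is $\pi$ times the mean of $u_1-u_2$ over $B_{\sqrt\xi}$. It satisfies
$V_t=4\xi V''+(8+b)V'+(b/\xi+c)V$,
where $b/\xi$ and $c$ are bounded on the time interval.

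For $\widetilde V(r,t)\coloneq V(r^2,t)$, the principal part becomes the radial Laplacian on the four-dimensional unit ball. The drift $b/(2r)=O(r)$ is regular. So $\widetilde V$ solves a uniformly parabolic equation with bounded coefficients, and the former degenerate endpoint is now the \emph{interior} point $r=0$. Three consequences follow:
\begin{enumerate}
\item The strong maximum principle gives $\widetilde V(\cdot,t_0)>0$ on $[0,1)$; otherwise $\widetilde V\equiv0$ earlier, contradicting distinctness. This also subsumes your $[\delta,1]$ argument.
\item The boundary point lemma gives $\widetilde V_r(1,t_0)<0$.
\item Compactness then yields $V(\xi,t_0)\ge c(t_0)(1-\xi)$, which is \eqref{eq:gap}.
\end{enumerate}

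Your order-preservation argument is correct as written and slightly simpler than the paper's. Because $D=0$ at $\xi=0$, a negative minimum of $e^{-\lambda t}D$ always lies at an interior $\xi$, so the degeneracy never enters. The paper instead applies the maximum principle to the transformed $\widetilde V$ and cites \cite[Lemma~5.1]{Winkler2019}. Your Hopf argument at $\xi=1$ is likewise fine.
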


\begin{proof}
Let $F_1,F_2$ denote the two cumulative solutions and set $D\coloneq F_1-F_2$.
In this proof, $T$ denotes the common right endpoint of the time
interval under consideration, so these functions are defined on
$[0,1]\times[0,T)$.
Subtraction gives the linear equation
\[
D_t=4\xi D''+\frac{F_1-M\xi}{\pi}D'
+\frac{F_2'}{\pi}D,
\qquad \text{for }(\xi,t)\in(0,1)\times(0,T).
\]
Both endpoint values are zero. Its comparison principle is also
\cite[Lemma~5.1]{Winkler2019}.
To handle the degeneracy at $\xi=0$, set
$V(\xi,t)\coloneq D(\xi,t)/\xi$ on $(0,1]\times(0,T)$,
with its regular extension at $\xi=0$.
Then
\[
\begin{aligned}
V_t=4\xi V''+
\left(8+\frac{F_1-M\xi}{\pi}\right)V'
+\left(\frac{F_1-M\xi}{\pi\xi}+\frac{F_2'}{\pi}\right)V,\\
\qquad \text{for }(\xi,t)\in(0,1)\times(0,T).
\end{aligned}
\]
Set $\widetilde V(r,t)\coloneq V(r^2,t)$ for
$(r,t)\in[0,1]\times(0,T)$. Under $\xi=r^2$, the principal part
$4\xi V_{\xi\xi}+8V_\xi$ becomes the radial Laplacian of
$\widetilde V$ in four dimensions. The remaining drift is regular at $r=0$,
because $F_1-M\xi=O(\xi)$.
The maximum principle proves comparison.
For a nonzero ordered difference, the strong maximum principle gives
$\widetilde V(r,t_0)>0$ for $0\le r<1$, and the boundary point lemma gives
$\widetilde V_r(1,t_0)<0$.
Compactness at this fixed time implies $V(\xi,t_0)\ge c(t_0)(1-\xi)$,
which proves \eqref{eq:gap}.
\end{proof}

\begin{lemma}[A cumulative upper bound]\label{lem:boundedness}
If, throughout the maximal existence interval,
\begin{equation}\label{eq:cumulative-bound}
0\le\U(\xi,t)\le C_0\xi,
\qquad \text{for }(\xi,t)\in[0,1]\times[0,T_{\max}),
\end{equation}
then $T_{\max}=\infty$ and $\sup_{t\ge0}\norm{u(t)}_\infty<\infty$.
\end{lemma}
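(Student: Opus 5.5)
The bound \eqref{eq:cumulative-bound} says that $u$ carries at most mass $C_0\xi$ in $B_{\sqrt\xi}$, uniformly in time. In two dimensions this is exactly the kind of ``no point concentration'' information that turns into an $L^\infty$ bound. The plan therefore has three steps: bound the drift $\nabla v$, bootstrap to $L^p$ estimates for $u$, and then reach $L^\infty$. The extensibility criterion then gives $T_{\max}=\infty$.

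\textbf{Step 1: drift bound.} By \eqref{eq:radial-v},
\[
|v_r(r,t)|\le\frac{\mu r}{2}+\frac{\U(r^2,t)}{2\pi r}\le\frac{\mu}{2}+\frac{C_0 r}{2\pi}
\]
for all $r\in(0,1]$ and $t<T_{\max}$. Hence $\norm{\nabla v(t)}_\infty\le K$, with $K$ depending only on $M$ and $C_0$.

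\textbf{Step 2: an $L^p$ energy estimate.} Testing $u_t=\Delta u-\diver(u\nabla v)$ with $u^{p-1}$ gives
\[
\frac1p\frac{\dd}{\dd t}\int_\Omega u^p\dd x+\frac{4(p-1)}{p^2}\int_\Omega|\nabla u^{p/2}|^2\dd x=(p-1)\int_\Omega u^{p-1}\nabla u\cdot\nabla v\dd x .
\]
Using Step 1, the right-hand side is at most
\[
\frac{2(p-1)K}{p}\int_\Omega u^{p/2}|\nabla u^{p/2}|\dd x ,
\]
and Young's inequality absorbs half of the gradient term, leaving a remainder $C(p,K)\int_\Omega u^p\dd x$. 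Next apply the Gagliardo--Nirenberg inequality in two dimensions to $w\coloneq u^{p/2}$, whose $L^{2/p}$ norm is controlled by the conserved mass $M$ through $\norm{w}_{L^{2/p}}^{2/p}=M$. This yields
\[
\int_\Omega w^2\dd x\le\varepsilon\norm{\nabla w}_2^2+C_\varepsilon .
\]
Choosing $\varepsilon$ small, the resulting differential inequality has the form $y'+y\le C$ with $y(t)\coloneq\int_\Omega u^p\dd x$. It therefore bounds $\norm{u(t)}_p$ uniformly on $[0,T_{\max})$, for every finite $p$; this uses $u_0\in C(\overline\Omega)$ for the initial value.

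\textbf{Step 3: from $L^p$ to $L^\infty$.} With $\nabla v$ bounded and $u\in L^\infty_tL^p_x$ for some $p>2$, the Neumann heat-semigroup estimates in the variation-of-constants formula give the uniform $L^\infty$ bound. Concretely one uses
\[
\norm{\nabla e^{t\Delta}\cdot\,}_{L^p\to L^\infty}\le C\bigl(1+t^{-1/2-1/p}\bigr)e^{-\lambda t},
\]
which is integrable in $t$ for $p>2$; a Moser--Alikakos iteration is an alternative route. As a further alternative, Step 1 makes the drift term in the cumulative equation regular, and parabolic regularity for $\U$ after the four-dimensional change of variables used in \cref{lem:comparison} would control $\U_\xi=\pi u$ directly. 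The semigroup route is more standard, so I would use it.

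The main obstacle is Step 2, specifically the Gagliardo--Nirenberg interpolation using only the $L^1$ bound. Everything else is routine once Step 1 reduces the problem to a linear drift--diffusion equation with bounded drift, for which global $L^\infty$ bounds are classical (e.g., \cite[Lemma~2.1]{Winkler2019} type arguments). Step 3 is routine once Step 2 is in hand.
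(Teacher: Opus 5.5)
Your argument is correct. It differs from the primary reference the paper cites but essentially coincides with the alternative one. The paper does not prove the lemma itself. It invokes the Bernstein-type estimate of \cite[Lemma~4.1]{Winkler2019}, which works at the level of the cumulative equation, and it names the Neumann heat-semigroup proof of \cite[Lemma~3.5]{Mao2026a} as an alternative. What you wrote is a self-contained version of that semigroup route.

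The decisive point is your Step 1. Via \eqref{eq:radial-v}, the hypothesis \eqref{eq:cumulative-bound} gives $|v_r|\le\mu r/2+C_0r/(2\pi)$. So $u$ solves a linear drift--diffusion equation with uniformly bounded drift, and everything after that is classical.

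Two small corrections:
\begin{itemize}
\item The semigroup estimate you need in Step 3 is for $e^{t\Delta}\nabla\cdot$ applied to $u\nabla v$, not for $\nabla e^{t\Delta}$. Since $u\nabla v$ has zero normal trace, the exponential factor is legitimate.
\item Step 2, which you single out as the main obstacle, is unnecessary. Fix $p>2$ and interpolate $\norm{u\nabla v}_p\le K\norm{u}_\infty^{1-1/p}M^{1/p}$ inside the Duhamel formula. For every $T<T_{\max}$ this gives
\[
\sup_{0\le t\le T}\norm{u(t)}_\infty
\le\norm{u_0}_\infty
+CKM^{1/p}\Bigl(\sup_{0\le t\le T}\norm{u(t)}_\infty\Bigr)^{1-1/p}
\]
with $C$ independent of $T$. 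Since the left side is finite, this yields a uniform bound, and the extensibility criterion then gives $T_{\max}=\infty$. Your Gagliardo--Nirenberg step with the quasi-norm $\norm{w}_{L^{2/p}}$, $2/p<1$, is valid but can be bypassed.
\end{itemize}

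As for what each route buys: yours uses only standard two-dimensional parabolic tools, and after Step 1 it applies verbatim to any solution with bounded $\nabla v$. The cited Bernstein estimate stays inside the one-dimensional cumulative framework on which the paper's comparison arguments are built. Your closing remark about parabolic regularity for $\U$ in four-dimensional variables is closer in spirit to that route.
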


\begin{proof}
\eqref{eq:cumulative-bound} gives the conclusion directly
by the Bernstein estimate in \cite[Lemma~4.1]{Winkler2019}.
Alternatively, A short proof via Neumann heat semigroup estimate is given in \cite[Lemma~3.5]{Mao2026a}.
\end{proof}

\begin{lemma}[Stationary limits of bounded solutions]\label{lem:omega}
A global uniformly bounded radial solution has a precompact positive-time
orbit in $C(\overline\Omega)$. Every element of its $\omega$-limit set is
a positive radial equilibrium. This limit set is nonempty, compact, and
connected.
\end{lemma}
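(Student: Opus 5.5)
Uniform boundedness is where everything starts. A global solution with $\sup_{t\ge0}\norm{u(t)}_\infty\le K$ gets, through the radial formula \eqref{eq:radial-v} for $v_r$ and elliptic regularity for $v=\mathcal G(u-\mu)$, a bound on $\norm{\nabla v(t)}_\infty$ that is uniform in $t$. This bound should come with a uniform Hölder bound on $\nabla v$. The density equation then reads $u_t=\Delta u-\nabla\cdot(u\nabla v)$, with a bounded drift field and a bounded right-hand side in divergence form. Standard parabolic Hölder regularity, or the variation-of-constants formula with Neumann heat semigroup smoothing estimates, applied on unit time windows $[t-1,t+1]$ with $t\ge 2$, should give
\[
\sup_{t\ge1}\norm{u(t)}_{C^{\alpha}(\overline\Omega)}<\infty,
\]
and after bootstrapping also $C^{2+\alpha,1+\alpha/2}$ bounds on $\overline\Omega\times[t,t+1]$. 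Arzelà--Ascoli then makes the orbit $\{u(t):t\ge1\}$ precompact in $C(\overline\Omega)$, and indeed in $C^2$. It follows that the $\omega$-limit set is nonempty and compact. Connectedness is the usual argument for continuous curves with precompact range.

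The main step is showing that every limit point is stationary. I would use the Lyapunov functional of the Jäger--Luckhaus system,
\[
\mathcal F(u)=\int_\Omega u\log u\dd x-\frac12\int_\Omega |\nabla v|^2\dd x,
\]
which satisfies $\frac{\dd}{\dd t}\mathcal F=-\int_\Omega u|\nabla(\log u-v)|^2\dd x$. The uniform bounds make $\mathcal F$ bounded below along the orbit, so the dissipation is integrable on $(1,\infty)$. Given $t_n\to\infty$ with $u(t_n)\to u_\infty$, I would look at the shifted solutions $u(t_n+s)$ for $s\in[0,1]$. By the parabolic compactness above they converge in $C^{2,1}$ to a solution $\bar u$ of the system. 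Since the dissipation integrated over $[t_n,t_n+1]$ tends to zero, the limit satisfies $\bar u|\nabla(\log\bar u-\bar v)|^2\equiv0$.

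To conclude from this I need positivity. I expect $u(t)\ge c>0$ for $t\ge1$: because $\Delta v=\mu-u$ is bounded, the equation is linear in $u$ with bounded coefficients, and a Harnack inequality or a comparison with a positive subsolution should give the lower bound. This uses $M>0$ and the strong maximum principle. Granting positivity, $\nabla(\log\bar u-\bar v)=0$, so $\bar u=Ce^{\bar v}$, which is time independent up to the constant. Mass conservation fixes $C$, so $\bar u$ is stationary, and therefore $u_\infty=\bar u(0)$ is a positive radial equilibrium.

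As a more elementary alternative that stays within the paper's own tools, one could instead use the cumulative formulation \eqref{eq:cumulative} together with the radial structure. I would keep the Lyapunov route as the primary plan and expect the uniform positive lower bound to be the main technical obstacle. The route around it would be a strong maximum principle applied to the limit solution $\bar u$ itself, which is a nonnegative, nontrivial solution of a linear parabolic equation and hence positive for $s>0$.
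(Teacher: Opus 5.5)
Your proposal is correct and follows essentially the same route as the paper. Both arguments get precompactness from parabolic regularity, pass to a limiting trajectory along which the free-energy dissipation vanishes (a LaSalle-type argument), obtain positivity from the strong maximum principle on that limit trajectory (your fallback is exactly what the paper uses), and get connectedness from the nested compact connected tails. One small tightening: stationarity of $\bar u$ follows because the flux $\bar u\nabla(\log\bar u-\bar v)$ vanishes, so $\bar u_t=0$ directly from the equation; the step ``time independent up to the constant, fixed by mass'' is not what gives it.
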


\begin{proof}
Uniform elliptic estimates for $v$, followed by parabolic regularity and
bootstrapping on intervals of fixed length, give uniform smoothness for
$t\ge1$ and precompactness. The free energy
\begin{equation*}
\mathcal E(u(t))\coloneq\int_\Omega u(x,t)\log u(x,t)\dd x
-\frac12\int_\Omega|\nabla v(x,t)|^2\dd x,
\qquad \text{for }t\in[0,\infty),
\end{equation*}
is continuous on a uniformly bounded orbit and satisfies the dissipation
identity \cite[Lemma~2.4]{Mao2026a}:
\begin{equation*}
\frac{\dd}{\dd t}\mathcal E(u(t))
=-\int_\Omega u\left|\nabla(\log u-v)\right|^2\dd x,
\qquad \text{for }t\in(0,\infty).
\end{equation*}
The energy is bounded below along the bounded orbit, so it has a limit.
By continuous dependence, the limiting trajectory from any
$\omega$-limit point has constant energy. At positive times it is strictly
positive by the strong maximum principle. Therefore the dissipation vanishes,
$\log u-v$ is spatially constant, and the trajectory is stationary.
Continuity back to its initial time proves the assertion for the
$\omega$-limit point.

Precompactness gives nonemptiness and compactness of the limit set.
For each $T\ge1$, the closure in $C(\overline\Omega)$ of the trajectory
tail $\{u(t):t\ge T\}$ is compact and connected: the tail is the continuous
image of a connected interval. The $\omega$-limit set is the intersection
of these nested compact connected sets and is therefore connected.
\end{proof}

\subsection{Adjoint moments, blow-up, and nonlinear instability}

Fix any radial equilibrium $(u_*,v_*)$, and retain $F$ and $H=F-M\xi$.
Define
\[
P(\xi,t)\coloneq\U(\xi,t)-F(\xi),
\qquad \text{for }(\xi,t)\in[0,1]\times[0,T_{\max}).
\]
Equation \eqref{eq:cumulative} becomes
\begin{equation}\label{eq:perturbation}
\begin{aligned}
P_t&=L_FP+\frac1\pi PP',
&& \text{for }(\xi,t)\in(0,1)\times(0,T_{\max}),\\
L_F\phi&\coloneq4\xi\phi''+\frac H\pi\phi'+\frac{F'}{\pi}\phi,
&& \text{for }\xi\in(0,1),\\
P(0,t)&=P(1,t)=0,&&\text{for }t\in[0,T_{\max}).
\end{aligned}
\end{equation}
The adjoint with respect to $\dd\xi$ is
\begin{equation}\label{eq:adjoint}
L_F^*\psi\coloneq4\xi\psi''+\left(8-\frac H\pi\right)\psi'+\mu\psi,
\qquad \text{for }\xi\in(0,1).
\end{equation}
At $\xi=0$ we require regularity; at $\xi=1$ we impose $\psi(1)=0$.

\begin{lemma}[Positive principal growth rate]\label{lem:unstable}
If $H>0$ at some interior point, in particular if $u_*$ is decreasing
or non-monotone, then the following assertions hold.
There are $\sigma_F\in(0,\mu)$ and $\psi_F\in C^\infty([0,1])$ satisfying
\begin{equation*}
\begin{gathered}
L_F^*\psi_F=\sigma_F\psi_F,\qquad \text{for }\xi\in(0,1),\qquad \psi_F(1)=0,\\
\psi_F>0\text{ on }[0,1),\quad \psi_F'<0\text{ on }[0,1].
\end{gathered}
\end{equation*}
Moreover, $L_F$ has a positive eigenfunction $\phi_F$ for $\sigma_F$,
with
\begin{equation}\label{eq:phi-bound}
c\,\xi(1-\xi)\le\phi_F(\xi)\le C\,\xi(1-\xi),
\qquad \text{for }\xi\in[0,1].
\end{equation}
\end{lemma}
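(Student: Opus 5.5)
The plan is to reduce the adjoint problem to the self-adjoint form already used in \cref{prop:increasing-nondegeneracy,prop:dec-spectral}. Put $U(\xi)\coloneq F'(\xi)/\pi=u_*(\sqrt\xi)$ and $m\coloneq(4\xi U)^{-1}$. The stationary identity $U'/U=-H/(4\pi\xi)$ gives
\[
L_Fh=\frac1m\Bigl(\frac{h'}U\Bigr)'+Uh .
\]
Thus the Dirichlet realization of $L_F$ on $V=H^1_0((0,1))$ is self-adjoint in $\mathcal H=L^2(m\dd\xi)$ with form $-\mathfrak a(h,h)$, where $\mathfrak a(h,h)=\int_0^1 |h'|^2/U\dd\xi-\tfrac14\int_0^1 h^2/\xi\dd\xi$. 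It has compact resolvent by the same argument via \eqref{eq:weighted-endpoint-control}.

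Let $\sigma_F\coloneq\sup\{-\mathfrak a(h,h):\norm h_{\mathcal H}=1\}$. A maximizer exists by compactness, and $|h|$ is also a maximizer, so the usual Sturm--Liouville argument gives a simple principal eigenvalue with eigenfunction $\phi_F>0$ on $(0,1)$. Regularity at zero follows from the Volterra identity used in \cref{prop:dec-spectral}, which gives $\phi_F'(0)>0$, and the Hopf lemma gives $\phi_F'(1)<0$. Together these yield \eqref{eq:phi-bound}. For the adjoint, the adjoint of $L_F$ with respect to $\dd\xi$ is conjugate to the $\mathcal H$-adjoint by multiplication by $m$. So I set $\psi_F\coloneq m\phi_F=\phi_F/(4\xi U)$ and check directly that $L_F^*\psi_F=\sigma_F\psi_F$. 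This $\psi_F$ is smooth at $0$ because $\phi_F=O(\xi)$ with $\phi_F'(0)>0$, it is positive on $[0,1)$, and $\psi_F(1)=0$.

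Next come $\sigma_F<\mu$ and $\psi_F'<0$. Write the adjoint equation in divergence form with the integrating factor $w(\xi)\coloneq\xi^2\exp(-\int H/(4\pi\xi))$:
\[
(w\psi_F')'=-\frac{w}{4\xi}(\mu-\sigma_F)\psi_F .
\]
If $\sigma_F\ge\mu$, then $w\psi_F'$ is nondecreasing from $0$ at $\xi=0$. Hence $\psi_F'\ge0$, which is incompatible with $\psi_F(0)>0=\psi_F(1)$. So $\sigma_F<\mu$. Then $w\psi_F'$ is strictly decreasing from $0$, which gives $\psi_F'<0$ on $(0,1]$. At $\xi=0$, regularity gives $8\psi_F'(0)=-(\mu-\sigma_F)\psi_F(0)<0$.

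The main obstacle is $\sigma_F>0$, that is, finding $h\in V$ with $\mathfrak a(h,h)<0$ using only that $H>0$ somewhere. I would work in the shooting variable of \cref{thm:higher-equilibria}, $s=R_{\beta,k}\sqrt\xi$. Here $H$ is a positive multiple of $D=-sZ_\beta'$, and $\mathcal A$ corresponds to $R^2\mathcal L_{\mathrm d}$ as in \eqref{eq:dec-operator-correspondence}. Take a nodal interval $(a,b)$ of $Z_\beta'$ on which $D>0$, with $a,b$ consecutive critical radii (or $a=0$). On it, try $\Psi=e^{Z/2}D$ extended by zero. Redoing the computation in \cref{lem:dec-comparison} from $a$ instead of $0$ should give
\[
\mathcal L_{\mathrm d}\Psi=\frac{\Psi}{2s^2}\Bigl(c_a+\int_a^s tD\dd t\Bigr),
\]
where $c_a$ is the value at $a$ of the bracketed energy-type expression. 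Since $Z'(a)=0$, this value is $c_a=-a^2(e^{Z(a)}-1)$. On a lobe with $D>0$ starting at a minimum $a$, we have $Z(a)<0$ by \eqref{eq:extremum-signs}, so $c_a>0$ and the integral is positive. Green's identity \eqref{eq:dec-green} then gives $\langle\mathcal A\Psi,\Psi\rangle_{\mathcal H}>0$, with vanishing boundary terms since $\Psi(a)=\Psi(b)=0$. When $a=0$ this is exactly \cref{lem:dec-comparison}.

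If that sign bookkeeping fails, my fallback is the test function $g=H_+$. Here $L_FH=(U-\mu)H$ on its support, and I would combine this with the positive-part analogue of \eqref{eq:negative-part-balance} and the moment identity \eqref{eq:stationary-moment} to show $\mathfrak a(H_+,H_+)<0$. Finally, $\sigma_F>0$ and $\sigma_F<\mu$ together give $\sigma_F\in(0,\mu)$.
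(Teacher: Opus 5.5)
The gap is in the step you flagged, $\sigma_F>0$: the shooting-variable test function has a sign error, and the fallback does not close it. The rest of the plan is correct and matches the paper up to notation. That includes the self-adjoint realization in $L^2(m\dd\xi)$, and $\psi_F=m\phi_F$, which is a constant multiple of the paper's $\phi_F/a$ since $m=(4Ca)^{-1}$ when $u_*=Ce^{v_*}$. It also includes the integrating-factor proof of $\sigma_F<\mu$ and $\psi_F'<0$, and the endpoint bounds for $\phi_F$.

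On a lobe $(a,b)$ where $D=-sZ_\beta'>0$, $Z_\beta$ is decreasing, so a left endpoint $a>0$ is a local \emph{maximum}. By \eqref{eq:extremum-signs}, equivalently $Z_\beta''(a)=1-e^{Z_\beta(a)}<0$, we get $Z_\beta(a)>0$, hence $c_a=-a^2(e^{Z_\beta(a)}-1)<0$. In fact the bracket $-s^2(e^{Z}-1)-2sZ'-\tfrac{s^2}{2}(Z')^2$ reduces to $-s^2(e^{Z}-1)$ at every critical radius. It is therefore negative at $a$ and positive at $b$, so $\mathcal L_{\mathrm d}\Psi$ changes sign on every such lobe. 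A lobe ``with $D>0$ starting at a minimum'' with $a>0$ does not exist.

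Your test function therefore only works for $a=0$, i.e.\ $\beta>0$. That covers the decreasing equilibria and the non-monotone ones with $u_*(0)>\mu$. For $\beta<0$ and $k\ge2$, which exist at every mass $M>\pi j_{1,k}^2$, every component of $\{H>0\}$ begins at a positive maximum, and nothing is proved.

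The fallback does not close this. Carrying out the computation you describe gives
\[
-\mathfrak a(H_+,H_+)=\int_0^1m(U-\mu)H_+^2\dd\xi
=\frac1{3\pi}\int_0^1\frac m\xi\Bigl(1-\frac H{4\pi}\Bigr)H_+^3\dd\xi .
\]
This has no evident sign unless $H<4\pi$ on $\{H>0\}$, and nothing in your sketch provides that. The positive-part analogue of \eqref{eq:negative-part-balance} controls $\int|H_+'|^2\dd\xi$ rather than $\int|H_+'|^2U^{-1}\dd\xi$. Moreover, \eqref{eq:stationary-moment} is a $\psi_0$-weighted identity. Neither controls $\mathfrak a(H_+,H_+)$.

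The missing idea is to use the principal eigenfunction itself instead of a test function. Your integrating-factor step already gives $\psi_F>0$ on $[0,1)$ and $\psi_F'<0$ on $[0,1]$ without using the sign of $\sigma_F$. By \eqref{eq:H}, $L_FH=HH'/\pi$. Green's identity on any component $(\alpha,\eta)$ of $\{H>0\}$ then gives
\[
\sigma_F\int_\alpha^\eta\psi_FH\dd\xi
=-\frac1{2\pi}\int_\alpha^\eta\psi_F'H^2\dd\xi
+4\alpha\psi_F(\alpha)H'(\alpha)-4\eta\psi_F(\eta)H'(\eta)>0 .
\]
The endpoint terms are nonnegative because $H'(\alpha)\ge0\ge H'(\eta)$, and they vanish at $\alpha=0$ or at $\eta=1$ (where $\psi_F(1)=0$). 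Since the integral on the left is positive, $\sigma_F>0$. This is the paper's argument. It handles every equilibrium with $H>0$ somewhere, for either sign of $\beta$, and needs no shooting variables.
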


\begin{proof}
Write $\widehat v(\xi)\coloneq v_*(\sqrt\xi)$ for $\xi\in[0,1]$.
By \eqref{eq:radial-v}, $\widehat v'=-H/(4\pi\xi)$ on $(0,1)$.
With $a(\xi)\coloneq\xi e^{\widehat v(\xi)}$ for $\xi\in[0,1]$, we have
\begin{equation}\label{eq:self-adjoint}
L_F^*\psi=\frac1a(4\xi a\psi')'+\mu\psi,\qquad \text{for }\xi\in(0,1).
\end{equation}
The substitution $\xi=r^2$ identifies this with a regular radial
Dirichlet operator on the four-dimensional unit ball, with a smooth
positive weight. Its principal eigenfunction is positive, and its
largest eigenvalue has the Rayleigh characterization
\begin{equation}\label{eq:Rayleigh}
\sigma_F=\sup_{\psi\ne0,\ \psi(1)=0}
\frac{\mu\int_0^1a\psi^2\dd\xi
-4\int_0^1\xi a(\psi')^2\dd\xi}
{\int_0^1a\psi^2\dd\xi}.
\end{equation}
The admissible functions have finite radial energy and are regular at
the origin; the eigenfunction is smooth there.

The gradient term for the principal eigenfunction is nonzero, so
$\sigma_F<\mu$, for every radial equilibrium. Equation
\eqref{eq:self-adjoint} and regularity at zero give
\begin{equation*}
4\xi a(\xi)\psi_F'(\xi)
=(\sigma_F-\mu)\int_0^\xi a(s)\psi_F(s)\dd s<0,
\qquad\text{for }\xi\in(0,1].
\end{equation*}
At zero, $8\psi_F'(0)=(\sigma_F-\mu)\psi_F(0)<0$.
Thus $\psi_F'<0$ on $[0,1]$, independently of the sign of $H$.

By \eqref{eq:H}, the cumulative linearization satisfies
\begin{equation*}
L_FH=\frac1\pi HH',\qquad \text{for }\xi\in(0,1).
\end{equation*}
Let $(\alpha,\eta)$ be any connected component of $\{H>0\}$.
Its endpoints lie in $[0,1]$, with $H(\alpha)=H(\eta)=0$.
At an interior endpoint the zero is simple by \cref{lem:stationary-sign}, so
$H'(\alpha)>0$ if $\alpha>0$, and $H'(\eta)<0$ if $\eta<1$.
Green's identity on this interval gives
\begin{equation*}
\sigma_F\int_\alpha^\eta\psi_FH\dd\xi
 =-\frac1{2\pi}\int_\alpha^\eta\psi_F'H^2\dd\xi
+4\alpha\psi_F(\alpha)H'(\alpha)
-4\eta\psi_F(\eta)H'(\eta)>0.
\end{equation*}
Indeed, the first term is strictly positive and the endpoint terms
are nonnegative. At $\alpha=0$ its term vanishes by regularity;
at $\eta=1$ its term vanishes because $\psi_F(1)=0$.
Since the integral on the left is positive, $\sigma_F>0$.
This argument includes every non-monotone equilibrium because its
cumulative excess $H$ has both signs by
\eqref{eq:stationary-radial-derivative}.

Finally, $\phi_F\coloneq a\psi_F$ satisfies $L_F\phi_F=\sigma_F\phi_F$.
Its endpoint behavior is linear at both ends, which proves
\eqref{eq:phi-bound}. The eigenvalue equation also gives
$\phi_F''(1)=0$, the compatibility condition for the associated density
perturbation.
\end{proof}

\begin{proposition}[A Riccati moment and finite-time blow-up]\label{prop:above}
Let $(u_*,v_*)$ be a radial equilibrium of mass $M$. Assume that $u_*$
is strictly decreasing or non-monotone, or that $u_*\equiv\mu$ with
$M\ge\Lambda\pi$. Let $u$ be a nonnegative radial solution of
\eqref{eq:system} with the same mass.
For a nonconstant reference equilibrium, let $\psi_F$ and $\sigma_F>0$
be as in \cref{lem:unstable}; for $F=M\xi$, set
$\psi_F\coloneq\psi_0$ and $\sigma_F\coloneq\mu-\Lambda\ge0$.
Set
\[
A(t)\coloneq\int_0^1\psi_F(\xi)(\U(\xi,t)-F(\xi))\dd\xi,\qquad
\text{for }t\in[0,T_{\max}),
\]
and
\[
c_F\coloneq\left(2\pi\int_0^1\frac{\psi_F^2}{-\psi_F'}\dd\xi\right)^{-1}>0.
\]
Then
\begin{equation}\label{eq:Riccati}
\begin{aligned}
A'(t)&=\sigma_FA(t)-\frac1{2\pi}
\int_0^1\psi_F'(\xi)P(\xi,t)^2\dd\xi\\
&\ge\sigma_FA(t)+c_FA(t)^2,\qquad \text{for }t\in(0,T_{\max}).
\end{aligned}
\end{equation}
If $A(0)>0$, then
\begin{equation}\label{eq:blowup-time}
T_{\max}\le
\begin{cases}
\displaystyle\frac1{\sigma_F}
\log\left(1+\frac{\sigma_F}{c_FA(0)}\right),&\sigma_F>0,\\[2ex]
\displaystyle\frac1{c_FA(0)},&\sigma_F=0.
\end{cases}
\end{equation}
The case $\sigma_F=0$ occurs precisely for the constant equilibrium
at $M=\Lambda\pi$.
In particular, this applies to $u_0\succeq u_*$ with $u_0\ne u_*$.
No other radial equilibrium of mass $M$ can lie above $F$ in
cumulative order.
\end{proposition}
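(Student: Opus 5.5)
We test the perturbation equation \eqref{eq:perturbation} against the adjoint eigenfunction and integrate by parts exactly as in the proof of \eqref{eq:stationary-moment}. For fixed $t\in(0,T_{\max})$ the profile $P(\cdot,t)$ is smooth, with $P(0,t)=P(1,t)=0$, $P=O(\xi)$ and $P'$ bounded near $0$. Hence the two integrations by parts in \eqref{eq:weighted-linear-pairing}, now with general drift $H/\pi$ and zeroth-order coefficient $F'/\pi$, produce $\int_0^1\psi_F L_FP\dd\xi=\int_0^1 P\,L_F^*\psi_F\dd\xi$. The endpoint terms vanish because $\psi_F(1)=0$, because $P(1,t)=0$, and by regularity at zero. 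In carrying this out we use the identity $(H\psi_F)'$ and $H'=F'-M$, and we check that the zeroth-order adjoint coefficient collapses to $\mu$: indeed $F'/\pi-H'/\pi=\mu$. The nonlinear term gives $\int_0^1\psi_F PP'\dd\xi=-\tfrac12\int_0^1\psi_F'P^2\dd\xi$. Differentiating $A$ under the integral then yields the identity in \eqref{eq:Riccati}, with $\sigma_F$ supplied by \cref{lem:unstable} for nonconstant references and by \eqref{eq:psi0-ode} for the constant one, where $L_F^*\psi_0=(\mu-\Lambda)\psi_0$. The main technical point here is justifying the differentiation of $A$ and the vanishing of the boundary terms at $\xi=0$ uniformly for $t$ in compact subintervals. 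We would first do this on $[\delta,1]$ and then let $\delta\downarrow0$, using parabolic smoothness of $u$ at positive times.

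For the inequality we use $\psi_F'<0$ on $[0,1]$, which holds by \cref{lem:unstable} or by the stated properties of $\psi_0$. Cauchy--Schwarz gives
\[
A^2=\Bigl(\int_0^1\frac{\psi_F}{\sqrt{-\psi_F'}}\sqrt{-\psi_F'}\,P\dd\xi\Bigr)^2\le\int_0^1\frac{\psi_F^2}{-\psi_F'}\dd\xi\int_0^1(-\psi_F')P^2\dd\xi,
\]
and $\int_0^1\psi_F^2/(-\psi_F')\dd\xi<\infty$ because $-\psi_F'$ is bounded below on $[0,1]$. This gives $-\frac1{2\pi}\int\psi_F'P^2\ge c_FA^2$, which is \eqref{eq:Riccati}.

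For the blow-up bound, suppose $A(0)>0$. Continuity of $A$ at $t=0$ follows from continuity of $\U$ in $C([0,1])$, which in turn follows from $u\in C(\overline\Omega\times[0,T_{\max}))$ and mass conservation. Comparison with the Riccati ODE $y'=\sigma_Fy+c_Fy^2$, $y(0)=A(0)$, then keeps $A\ge y>0$. Its explicit solution is $y(t)=\sigma_F A(0)e^{\sigma_Ft}/(\sigma_F+c_FA(0)(1-e^{\sigma_Ft}))$, or $A(0)/(1-c_FA(0)t)$ when $\sigma_F=0$, and it explodes at the time displayed in \eqref{eq:blowup-time}. On the other hand, $|A(t)|\le 2M\int_0^1\psi_F\dd\xi$ for all $t$, since $0\le\U,F\le M$. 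So $A$ cannot follow $y$ up to that time, which forces $T_{\max}$ to be no larger. For $\sigma_F=0$: in the nonconstant case $\sigma_F>0$, while for the constant case $\sigma_F=\mu-\Lambda$ vanishes exactly when $M=\Lambda\pi$.

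For the final assertions, take $u_0\succeq u_*$ with $u_0\ne u_*$. Since $\psi_F\ge0$, we have $A(0)\ge0$. By the strong comparison of \cref{lem:comparison}, $P(\cdot,t_0)\ge c\,\xi(1-\xi)$ at any small $t_0>0$, so $A(t_0)>0$, and we restart the argument at $t_0$; alternatively, $A(0)>0$ directly because $F_{u_0}-F\ge0$ is continuous and not identically zero. Now let $\tilde u$ be another equilibrium of mass $M$ with $F_{\tilde u}\ge F$ and $F_{\tilde u}\ne F$. Then $A(0)>0$, so the stationary solution $\tilde u$ would blow up in finite time, which is absurd. This excludes any such equilibrium.
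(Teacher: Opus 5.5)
Your proposal is correct and follows the paper's proof essentially step for step. You pair the perturbation equation with the adjoint principal eigenfunction, integrate the quadratic term by parts, apply Cauchy--Schwarz with weight $-\psi_F'$, and compare with the Riccati ODE against the uniform bound on $|A|$. The only difference is cosmetic: for the last claim the paper notes that a stationary profile above $F$ gives a constant $A>0$ with $0=A'\ge\sigma_FA+c_FA^2>0$, which is equivalent to your contradiction with finite-time blow-up of the stationary solution.
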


\begin{proof}
For the constant reference equilibrium with $M>\Lambda\pi$, finite-time
blow-up of every nontrivial concentration-increasing perturbation is
exactly \cite[Theorem~1.1(ii)]{Wang2026}, specialized to $n=2$ and $R=1$.
The common moment estimate below also covers cumulative differences
that change sign, gives the quantitative bound \eqref{eq:blowup-time},
and supplies the missing case $M=\Lambda\pi$.

By \cref{lem:unstable} and \eqref{eq:psi0-ode},
$L_F^*\psi_F=\sigma_F\psi_F$, $\sigma_F\ge0$, $\psi_F>0$ on $[0,1)$,
and $-\psi_F'$ is continuous and strictly positive on $[0,1]$.
Consequently,
\[
I_F\coloneq\int_0^1\frac{\psi_F^2}{-\psi_F'}\dd\xi\in(0,\infty),
\qquad c_F=\frac1{2\pi I_F}>0.
\]
All differentiations below are made for $0<t<T_{\max}$, when the
solution is smooth; the initial value of $A$ is recovered by continuity
at $t=0$. Recall $P=\U-F$ and $P(0,t)=P(1,t)=0$.
Multiplying \eqref{eq:perturbation} by $\psi_F$ gives
\[
A'(t)=\int_0^1\psi_F L_FP\dd\xi
+\frac1\pi\int_0^1\psi_FPP'\dd\xi.
\]
For the linear term, two integrations by parts and
the adjoint eigenvalue equation yield
\begin{align*}
\int_0^1\psi_F L_FP\dd\xi
&=\int_0^1P L_F^*\psi_F\dd\xi\\
&\quad+\left[4\xi\psi_FP'-(4\xi\psi_F)'P
 +\frac H\pi\psi_FP\right]_0^1\\
&=\sigma_FA(t).
\end{align*}
At $\xi=1$, the boundary terms vanish because $P(1,t)=\psi_F(1)=0$.
At $\xi=0$, we have $P=O(\xi)$, bounded $P'$, and smooth $\psi_F,H$,
so the terms vanish there as well.
For the nonlinear term, a further integration by parts gives
\[
\frac1\pi\int_0^1\psi_FPP'\dd\xi
=\frac1{2\pi}[\psi_FP^2]_0^1
-\frac1{2\pi}\int_0^1\psi_F'P^2\dd\xi
=-\frac1{2\pi}\int_0^1\psi_F'P^2\dd\xi.
\]
Combining these identities proves the equality in \eqref{eq:Riccati}.

To estimate the quadratic term, write
\[
A(t)=\int_0^1
\frac{\psi_F}{\sqrt{-\psi_F'}}
\bigl(\sqrt{-\psi_F'}\,P\bigr)\dd\xi.
\]
The Cauchy--Schwarz inequality gives
\[
A(t)^2\le I_F\int_0^1(-\psi_F')P^2\dd\xi,
\qquad \text{for }t\in(0,T_{\max}),
\]
and hence
\[
A'(t)\ge\sigma_FA(t)+\frac1{2\pi I_F}A(t)^2
=\sigma_FA(t)+c_FA(t)^2,\qquad \text{for }t\in(0,T_{\max}).
\]
This argument does not require $P$ to have a fixed sign.

Now suppose $A(0)>0$. If $\sigma_F>0$, the solution of
$a'=\sigma_Fa+c_Fa^2$, $a(0)=A(0)$, is
\[
a(t)=\frac{\sigma_FA(0)e^{\sigma_Ft}}
{\sigma_F-c_FA(0)(e^{\sigma_Ft}-1)}
\qquad\text{for }t\in[0,T_*),
\]
where $T_*$ is the first expression on the right of
\eqref{eq:blowup-time}.
At the secondary critical mass, $F=M\xi$ and $\sigma_F=0$, so the
same Cauchy--Schwarz estimate gives $A'\ge c_FA^2$ and the comparison
solution is instead
\[
a(t)=\frac{A(0)}{1-c_FA(0)t},\qquad \text{for }t\in[0,T_*),\qquad
T_*\coloneq\frac1{c_FA(0)}.
\]
By \eqref{eq:Riccati} and the ODE comparison principle,
$A(t)\ge a(t)$ for $0\le t<\min\{T_{\max},T_*\}$, and
$a(t)\to\infty$ as $t\uparrow T_*$.
On the other hand, nonnegativity and mass conservation imply
$0\le\U(\xi,t),F(\xi)\le M$, so
\[
|A(t)|\le M\int_0^1\psi_F(\xi)\dd\xi
\qquad\text{for }t\in[0,T_{\max}).
\]
If $T_{\max}>T_*$, this uniform bound contradicts the preceding lower
bound as $t\uparrow T_*$. Thus $T_{\max}\le T_*$, proving
\eqref{eq:blowup-time}.

If $u_0\succeq u_*$ and $u_0\ne u_*$, then
$P(\xi,0)=F_{u_0}(\xi)-F(\xi)\ge0$ and is not identically zero.
Indeed, equality of the cumulative profiles would imply equality of
the densities by \eqref{eq:radial-v}. Continuity and the strict
positivity of $\psi_F$ on $(0,1)$ therefore give $A(0)>0$, so the
finite-time blow-up conclusion applies.
Finally, a distinct radial equilibrium of the same mass above $F$
would give a constant moment $A>0$. Then \eqref{eq:Riccati} would imply
$0=A'\ge\sigma_FA+c_FA^2>0$, a contradiction.
\end{proof}

\begin{corollary}[Nonlinear instability of the threshold equilibria]
Every equilibrium in \cref{prop:above} is unstable among smooth
nonnegative radial data with its mass fixed.
\end{corollary}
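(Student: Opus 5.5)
To prove instability, I will show that initial data arbitrarily close to $u_*$ in $C(\overline\Omega)$, with the same mass, lead to finite-time blow-up. Blow-up forces $\norm{u(t)-u_*}_\infty$ to leave every bounded neighbourhood, which is nonlinear instability in the strongest sense.

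\textbf{Construction of the perturbations.} I will use the positive eigenfunction $\phi_F$ of $L_F$ from \cref{lem:unstable} in the nonconstant case. In the constant case $F=M\xi$ with $M\ge\Lambda\pi$, I will use $\phi\coloneq\xi\psi_0$, which is $\sqrt\xi J_1(j_{1,1}\sqrt\xi)$. For $\varepsilon>0$ set
\[
\U_{0,\varepsilon}\coloneq F+\varepsilon\phi,\qquad u_{0,\varepsilon}(r)\coloneq \pi^{-1}\partial_\xi\U_{0,\varepsilon}(r^2).
\]
The perturbation vanishes at both endpoints, so the mass is exactly $M$. The density perturbation is $\pi^{-1}\phi'(r^2)$, which is smooth and radial. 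It satisfies the Neumann condition because $\phi''(1)=0$, as noted at the end of \cref{lem:unstable}; for $\psi_0$ this follows from $J_0'=-J_1$. Since $u_*$ is positive on $\overline\Omega$, the perturbed density stays nonnegative for small $\varepsilon$. Its distance from $u_*$ in $C(\overline\Omega)$, indeed in every $C^m$, is $O(\varepsilon)$.

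\textbf{Positivity of the moment and blow-up.} Since $\phi>0$ on $(0,1)$, we have $u_{0,\varepsilon}\succeq u_*$ with $u_{0,\varepsilon}\ne u_*$. Hence the moment satisfies
\[
A(0)=\varepsilon\int_0^1\psi_F\phi\dd\xi>0.
\]
\cref{prop:above} then gives finite-time blow-up, with $T_{\max}$ bounded by \eqref{eq:blowup-time}. It follows that $\sup_{t<T_{\max}}\norm{u(t)-u_*}_\infty=\infty$, while the initial distance tends to $0$ as $\varepsilon\to0$. So no neighbourhood of $u_*$ is stable.

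\textbf{Main obstacle.} The only delicate point is to check that the perturbed data are admissible: nonnegative, continuous, and (if smooth data are required) compatible with the Neumann condition. This is where the endpoint behaviour of $\phi$ from \eqref{eq:phi-bound} and the condition $\phi''(1)=0$ are used. A stronger statement is also available for free: any $C^0$-small perturbation with $A(0)>0$ blows up, so even the mass-preserving perturbations lying on the side of positive moment are unstable.
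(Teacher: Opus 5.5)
Your proposal is correct and follows the paper's proof. In the nonconstant case you perturb the cumulative profile by $\varepsilon\phi_F$, using $\phi_F''(1)=0$ for the Neumann condition. In the constant case your choice $\phi=\xi\psi_0$ gives the density $\mu+\frac{\varepsilon j_{1,1}}{2\pi}J_0(j_{1,1}r)$, which is the paper's perturbation $\mu+\varepsilon J_0(j_{1,1}r)$ after rescaling $\varepsilon$. In both cases the perturbed profile lies strictly above $F$, so \cref{prop:above} gives finite-time blow-up for data arbitrarily close to $u_*$ in every $C^m$ norm.
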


\begin{proof}
For a nonconstant equilibrium and small $\varepsilon>0$, the profile
$F+\varepsilon\phi_F$ is increasing,
has the same endpoint values as $F$, and defines a positive density
\[
u_{0,\varepsilon}(r)\coloneq
\frac{F'(r^2)+\varepsilon\phi_F'(r^2)}{\pi},
\qquad \text{for }r\in[0,1].
\]
It satisfies the Neumann compatibility condition because
$F''(1)=\phi_F''(1)=0$.
Moreover, $u_{0,\varepsilon}\to u_*$ in $C^m(\overline\Omega)$
for every integer $m\ge0$ as $\varepsilon\downarrow0$.
For the constant equilibrium with $M\ge\Lambda\pi$, take instead
$u_0(r)\coloneq\mu+\varepsilon J_0(j_{1,1}r)$ for $r\in[0,1]$,
with $\varepsilon>0$ sufficiently small. These are smooth nonnegative radial data of mass $M$, satisfy
the Neumann compatibility condition, and obey
\[
\U_0(\xi)-M\xi
=\frac{2\pi\varepsilon}{j_{1,1}}\,\xi\psi_0(\xi)>0
\qquad\text{for }\xi\in(0,1).
\]
As $\varepsilon\downarrow0$, these data approach $\mu$ in every $C^m$ norm.
In both cases, the perturbed cumulative profile lies above $F$, so
\cref{prop:above} gives finite-time blow-up. Such trajectories necessarily
leave any fixed sufficiently small uniform-norm neighborhood of $u_*$.
\end{proof}

\begin{proof}[Proof of \cref{thm:eigenfunction-blowup}]
The eigenfunction assertions follow from \cref{lem:unstable} for
nonconstant equilibria and from \eqref{eq:psi0}--\eqref{eq:psi0-ode}
for the constant equilibrium. Apply \cref{prop:above} with
$A(0)=\mathcal A_*(u_0)$ to obtain the criterion and both bounds in
\eqref{eq:eigenfunction-blowup-time}.
\end{proof}

\begin{proof}[Proof of \cref{thm:concentration-thresholds}]
Let $F_*\coloneq F_{u_*}$ and let $\psi_*$ be the positive weight in
\cref{thm:eigenfunction-blowup}.
If $u_0\succeq u_*$ and $u_0\ne u_*$, then $F_{u_0}-F_*\ge0$
and this difference is not identically zero.
Since $\psi_*>0$ on $(0,1)$, we have $\mathcal A_*(u_0)>0$.
Thus \cref{thm:eigenfunction-blowup} gives finite-time blow-up.

If $u_0\preceq u_*$, comparison gives
$0\le\U(\xi,t)\le F_*(\xi)\le C\xi$.
\cref{lem:boundedness} proves global boundedness.
Equality gives the stationary solution by uniqueness.
\end{proof}

\subsection{Global boundedness and convergence below equilibria}

\begin{lemma}[The lower member of an ordered stationary pair]
\label{lem:ordered-stationary}
Let $M>\Lambda\pi$, and let $S$ and $F$ be distinct radial stationary
cumulative profiles of mass $M$. If $S\le F$ on $[0,1]$, then the
density associated with $S$ is strictly increasing.
\end{lemma}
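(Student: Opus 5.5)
The lemma should follow from results already proved, namely \cref{thm:least-concentrated} and \cref{prop:above}. The plan is to identify which equilibrium $S$ can be.

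Let $u_S$ denote the density of $S$. By \cref{thm:equilibria}, for $M>\Lambda\pi$ every radial equilibrium of mass $M$ falls into one of three classes: the constant $(\mu,0)$, the increasing equilibrium $(u_M^{\mathrm i},v_M^{\mathrm i})$, or a non-monotone equilibrium. No decreasing equilibrium exists above $\Lambda\pi$ (\cref{cor:radial-shape}). So it suffices to show that $S$ is neither the constant profile $M\xi$ nor a non-monotone profile. Then $u_S=u_M^{\mathrm i}$, which is strictly increasing by \cref{lem:stationary-sign}.

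Both exclusions use the final assertion of \cref{prop:above}. It says that if the reference equilibrium is the constant with $M\ge\Lambda\pi$, or is non-monotone, then no other radial equilibrium of mass $M$ lies above it in cumulative order. The argument takes the reference to be $S$ and the competitor to be $F$. Since $F\ge S$ and $F\ne S$, the moment $A=\int_0^1\psi_S(F-S)\dd\xi$ is a positive constant in time. This is because $\psi_S>0$ on $(0,1)$ and $F-S$ is continuous, nonnegative and not identically zero. The Riccati inequality \eqref{eq:Riccati} would then give $0=A'\ge\sigma_SA+c_SA^2>0$, a contradiction.

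\textbf{Case $S=M\xi$.} Here $\sigma_S=\mu-\Lambda>0$ and $\psi_S=\psi_0$, so the contradiction is immediate.

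\textbf{Case $S$ non-monotone.} Its excess $H_S$ takes positive values somewhere, by \eqref{eq:stationary-radial-derivative}. Then \cref{lem:unstable} supplies $\sigma_S>0$ and the positive weight $\psi_S$, and the same contradiction follows.

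As a consistency check, \cref{thm:least-concentrated} already gives that if $S\le F$ with $S\ne F$, then $S$ cannot be one of the pairwise incomparable equilibria unless it is the least element. A non-monotone $S$ other than $u_M^{\mathrm i}$ is comparable with $F$ only if $F$ is $u_M^{\mathrm i}$. But $u_M^{\mathrm i}$ lies strictly below $S$, not above it, so this route also excludes the non-monotone case directly. Either route works; I would present the \cref{prop:above} route because it handles the constant case in the same way.

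The only delicate point is confirming that the constant-in-time moment argument applies to the stationary solution $F$. The solution is smooth and global, so $A'=0$ holds trivially, and the derivation of \eqref{eq:Riccati} is valid for any classical solution. No real obstacle is expected.
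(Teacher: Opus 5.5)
Your main argument is correct and is essentially the paper's: both rule out a non-monotone lower member $S$ through the last assertion of \cref{prop:above}. The one difference is the case $S=M\xi$, where the paper observes that $F-M\xi>0$ would make $F$ strictly decreasing (\cref{lem:stationary-sign}), which \cref{cor:radial-shape} forbids for $M>\Lambda\pi$, while you reuse \cref{prop:above} with $\sigma=\mu-\Lambda>0$; both work.

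However, drop the claim that the ``consistency check'' route also works. In the paper, the pairwise incomparability in \cref{thm:least-concentrated} is proved from this very lemma, so that route is circular. The independently proved strict minimality alone cannot exclude a non-monotone $S$ lying below another non-monotone $F$.
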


\begin{proof}
Put $D\coloneq F-S\ge0$. If $D$ vanished at an interior point, it would also
satisfy $D'=0$ there; uniqueness for the difference equation would
force $D\equiv0$. Thus $D>0$ on $(0,1)$.
If the density associated with $S$ were decreasing or non-monotone,
\cref{prop:above} would exclude a distinct stationary profile above
$S$, contradicting the existence of $F$.
If $S=M\xi$ were constant, then $F-M\xi=D>0$ would make the
density associated with $F$ strictly decreasing by
\cref{lem:stationary-sign}. The mass bound in
\cref{cor:radial-shape} would then give $M<\Lambda\pi$, again a
contradiction. The only remaining possibility is a weakly increasing
nonconstant density, which is strictly increasing by
\cref{lem:stationary-sign}.
\end{proof}

\begin{proof}[Completion of the proof of \cref{thm:least-concentrated}]
It remains to prove pairwise incomparability of the equilibria other
than $(u_M^{\mathrm i},v_M^{\mathrm i})$.
If two distinct radial equilibria of mass $M>\Lambda\pi$ satisfy
$u_1\preceq u_2$, then
\cref{lem:ordered-stationary} makes $u_1$ strictly increasing.
Uniqueness in \cref{thm:equilibria} gives $u_1\equiv u_M^{\mathrm i}$.
Thus two distinct equilibria other than $(u_M^{\mathrm i},v_M^{\mathrm i})$
cannot be ordered in either direction. Their cumulative difference
therefore has both signs in $(0,1)$, since it vanishes at both endpoints.
\end{proof}

\begin{proposition}[Convergence below the constant]\label{prop:below-constant}
For every mass $M>0$, if $\U_0\le M\xi$, then the solution is global
and uniformly bounded. If $M\le\Lambda\pi$, then $u(t)\to\mu$ uniformly.
If $M>\Lambda\pi$ and $u_0\ne\mu$, the solution converges to the unique
increasing equilibrium $(u_M^{\mathrm i},v_M^{\mathrm i})$ in the sense of
\eqref{eq:increasing-limit}.
\end{proposition}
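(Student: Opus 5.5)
Global boundedness follows from the comparison machinery already in place. The constant profile $M\xi$ is a stationary solution of \eqref{eq:cumulative}. Hence \cref{lem:comparison} applied to $\U_0\le M\xi$ gives $0\le\U(\xi,t)\le M\xi$ on the whole existence interval. \cref{lem:boundedness} with $C_0=M$ then yields $T_{\max}=\infty$ and a uniform $L^\infty$ bound. By \cref{lem:omega}, the $\omega$-limit set in $C(\overline\Omega)$ is nonempty, compact and connected, and it consists of positive radial equilibria of mass $M$. Each of these inherits the bound $F\le M\xi$, by pointwise convergence of cumulative profiles, which follows from uniform convergence of $u(t_n)$.

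Next we identify the possible limits. For $M\le\Lambda\pi$, any nonconstant equilibrium with $F\le M\xi$ would have $H\le0$, so it would be strictly increasing by \cref{lem:stationary-sign}. \cref{cor:radial-shape} then forces $M>\Lambda\pi$, a contradiction. Hence the $\omega$-limit set is $\{\mu\}$, and precompactness gives $u(t)\to\mu$ uniformly. For $M>\Lambda\pi$, \cref{prop:finite} shows that the candidate limits are exactly $\mu$ and $u_M^{\mathrm i}$. The $\omega$-limit set is connected and these two points are distinct, so it is a single point. It remains to exclude the constant.

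Excluding the constant is the main step. Since $u_0\ne\mu$, the cumulative profile $\U_0$ differs from $M\xi$. By \cref{lem:comparison}, with the constant as the stationary comparison solution, at $t_0=1$ we get $M\xi-\U(\xi,1)\ge c\,\xi(1-\xi)$ for some $c>0$. We now use the adjoint weight $\psi_0$ from \eqref{eq:psi0}. With $A(t)=\int_0^1\psi_0(\U-M\xi)\dd\xi$, \cref{prop:above} applied at the constant reference equilibrium gives
\[
A'=(\mu-\Lambda)A-\frac1{2\pi}\int_0^1\psi_0'P^2\dd\xi\ge(\mu-\Lambda)A .
\]
Here $A(1)<0$, so $A$ need not grow, and this route is inconclusive. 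Instead we argue by a sub-solution trap. By \cref{prop:increasing-nondegeneracy}, the first eigenvalue is not zero, and we can build a stationary supersolution slightly below $M\xi$. The natural candidate is $W_\varepsilon=M\xi-\varepsilon\xi\psi_0$, which is admissible by the unstable linearization at $\mu$ with growth rate $\mu-\Lambda>0$. We check that $\mathcal N(W_\varepsilon)\ge0$ for small $\varepsilon$, using $\mathcal N(M\xi-\varepsilon\Phi)=-\varepsilon(4\xi\Phi''+\mu\Phi)+O(\varepsilon^2)=\varepsilon(\Lambda-\mu)\Phi/\xi\cdot\xi+O(\varepsilon^2)$ with $\Phi=\xi\psi_0$. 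The sign must be arranged so that $W_\varepsilon$ is a \emph{subsolution}, namely $\N(W_\varepsilon)\ge0$ would not suffice. We want instead that the solution stays below a profile that stays away from $M\xi$. Concretely, $\N(M\xi-\varepsilon\Phi)=\varepsilon(\mu-\Lambda)\Phi\cdot(\text{positive})$ up to sign, to be checked using $-4\xi\Phi''=\Lambda\Phi$ and the $O(\varepsilon^2)$ term $\varepsilon^2\Phi\Phi'/\pi$, whose sign near the endpoints is controlled since $\Phi\sim\xi$.

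Choose $\varepsilon$ small enough that $\U(\cdot,1)\le W_\varepsilon$, which is possible by the gap estimate and $\Phi\le C\xi(1-\xi)$. If $W_\varepsilon$ is a stationary supersolution, that is $\N(W_\varepsilon)\le0$, then \cref{lem:comparison} gives $\U(\cdot,t)\le W_\varepsilon$ for all $t\ge1$. Every $\omega$-limit point then lies strictly below $M\xi$ somewhere, which excludes $\mu$. The expected main obstacle is exactly this sign computation. At first order, $\N(M\xi-\varepsilon\Phi)=-\varepsilon(4\xi\Phi''+\mu\Phi)+\varepsilon^2\Phi\Phi'/\pi=\varepsilon(\Lambda-\mu)\Phi+\varepsilon^2\Phi\Phi'/\pi$. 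For $\mu>\Lambda$ the first term is $\le -\varepsilon(\mu-\Lambda)\Phi$, and the quadratic term is absorbed because $|\Phi'|$ is bounded. So the first term dominates on all of $(0,1)$, giving a strict supersolution. Convergence in the sense of \eqref{eq:increasing-limit}, including the $W^{1,\infty}$ convergence of $v$, then follows from uniform convergence of $u$ and elliptic estimates for $v$ via \eqref{eq:radial-v}.
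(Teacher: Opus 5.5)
Your proof is correct. The global bound via comparison with $M\xi$ and \cref{lem:boundedness}, the $\omega$-limit structure from \cref{lem:omega}, and the identification of admissible limits via \cref{cor:radial-shape} and \cref{prop:finite} coincide with the paper's. You differ from the paper in how you exclude the constant limit when $M>\Lambda\pi$.

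\emph{The paper's route.} It argues by contradiction with the deficit moment $A(t)=\int_0^1\psi_0(M\xi-\U)\dd\xi$. The deficit $Q=M\xi-\U$ solves the linear equation $Q_t=4\xi Q_{\xi\xi}+u(\sqrt\xi,t)Q$, so $A'=-\Lambda A+\int_0^1\psi_0uQ\dd\xi$. If $u(t)\to\mu$, this gives $A'\ge\frac{\mu-\Lambda}{2}A$ eventually, contradicting the a priori bound on $A$.

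The Riccati moment you set aside is this same moment with the opposite sign. Its quadratic term equals $\frac1{2\pi}\int_0^1\psi_0'Q^2\dd\xi=\int_0^1\psi_0(u-\mu)Q\dd\xi$, which is $o(A)$ under the contradiction hypothesis $u(t)\to\mu$. You used the inequality in the unhelpful direction rather than exploiting that hypothesis.

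\emph{Your route.} You build the stationary profile $W_\varepsilon=M\xi-\varepsilon\Phi$ with $\Phi=\xi\psi_0$. Your final computation $\N(W_\varepsilon)=-\varepsilon\Phi\bigl(\mu-\Lambda-\varepsilon\Phi'/\pi\bigr)<0$ on $(0,1)$ is right. So is the trapping step: the positive-time gap from \cref{lem:comparison}, together with $\Phi\le C\xi(1-\xi)$, gives $\U(\cdot,t)\le W_\varepsilon$ for all $t\ge1$.

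\emph{What each buys.} Your argument is direct and non-perturbative. It yields a time-uniform separation $M\xi-\U\ge\varepsilon\xi\psi_0$. It also shows that the constant threshold fits the same template as \cref{prop:below,prop:below-non-monotone}, since $\xi\psi_0$ is exactly the positive eigenfunction $\phi_F$ of $L_{M\xi}$ with eigenvalue $\mu-\Lambda>0$. The paper's argument is shorter and needs no barrier or endpoint estimates.

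\emph{Clean-up.} Delete the middle paragraph's sign discussion (``subsolution'', ``$\N(W_\varepsilon)\ge0$''), which is confused and superseded by your final computation. Also drop the appeal to \cref{prop:increasing-nondegeneracy}, which plays no role here.
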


\begin{proof}
The profile $M\xi$ is stationary. Comparison gives
$0\le\U(\xi,t)\le M\xi$ for
$(\xi,t)\in[0,1]\times[0,T_{\max})$, so \cref{lem:boundedness} applies.
Every $\omega$-limit point is stationary by \cref{lem:omega}
and has cumulative profile at most $M\xi$.
For $M\le\Lambda\pi$, \cref{cor:radial-shape} makes this limit the
constant equilibrium. Precompactness then implies convergence of the
entire trajectory.

Now let $M>\Lambda\pi$. By \cref{prop:finite}, the only possible
stationary limits are $\mu$ and $u_M^{\mathrm i}$. The $\omega$-limit set
is nonempty and connected by \cref{lem:omega}, so it consists of
a single equilibrium $u_\infty$, with cumulative profile
$F_\infty\coloneq F_{u_\infty}$. Precompactness implies
$u(t)\to u_\infty$ uniformly along the entire trajectory.

To exclude $u_\infty\equiv\mu$ when $u_0\ne\mu$, define the
cumulative deficit and its positive weighted moment by
\begin{equation}\label{eq:deficit-moment}
\begin{aligned}
Q(\xi,t)&\coloneq M\xi-\U(\xi,t),
&& \text{for }(\xi,t)\in[0,1]\times[0,\infty),\\
A(t)&\coloneq\int_0^1\psi_0(\xi)Q(\xi,t)\dd\xi,
&& \text{for }t\in[0,\infty).
\end{aligned}
\end{equation}
Comparison gives $0\le Q\le M\xi$, and the strict separation in
\cref{lem:comparison} gives $A(t)>0$ for every $t>0$.
Equation~\eqref{eq:cumulative} becomes
\begin{equation}\label{eq:deficit-evolution}
\begin{aligned}
Q_t&=4\xi Q_{\xi\xi}+u(\sqrt\xi,t)Q,
&& \text{for }(\xi,t)\in(0,1)\times(0,\infty),\\
Q(0,t)&=Q(1,t)=0,&&\text{for }t\in[0,\infty).
\end{aligned}
\end{equation}
Applying \eqref{eq:weighted-linear-pairing} with $q=Q(\cdot,t)$ yields
\begin{equation}\label{eq:deficit-moment-evolution}
A'(t)=-\Lambda A(t)+
\int_0^1\psi_0(\xi)u(\sqrt\xi,t)Q(\xi,t)\dd\xi,
\qquad \text{for }t\in(0,\infty).
\end{equation}
If $u(t)\to\mu$, then $\mu>\Lambda$ gives a time $t_1>0$ such that
\[
A'(t)\ge\frac{\mu-\Lambda}{2}A(t),\qquad \text{for }t\in[t_1,\infty).
\]
This forces exponential growth of a positive moment, contradicting the
uniform bound
\[
0<A(t)\le M\int_0^1\xi\psi_0(\xi)\dd\xi,
\qquad \text{for }t\in(0,\infty).
\]
Hence $u_\infty$ is nonconstant. Since $F_\infty\le M\xi$,
\cref{prop:finite} identifies $u_\infty=u_M^{\mathrm i}$.
The strict inequalities $F_{u_M^{\mathrm i}}<M\xi$ and
$(u_M^{\mathrm i})'>0$
follow from \cref{lem:stationary-sign}, and
\eqref{eq:radial-v} gives $(v_M^{\mathrm i})'>0$.

Finally, $v(t)-v_M^{\mathrm i}$ is the zero-mean Neumann solution of
$-\Delta(v(t)-v_M^{\mathrm i})=u(t)-u_M^{\mathrm i}$.
For any $p>2$, elliptic regularity and Sobolev embedding give
\[
\norm{v(t)-v_M^{\mathrm i}}_{W^{1,\infty}}
\le C_p\norm{u(t)-u_M^{\mathrm i}}_{L^p}\to0
\qquad\text{as }t\to\infty,
\]
which proves \eqref{eq:increasing-limit}.
\end{proof}

\begin{proposition}[Convergence below the decreasing equilibrium]\label{prop:below}
Suppose $8\pi<M<\Lambda\pi$ and $F\coloneq F_{u_M^{\mathrm d}}$ is the cumulative
profile of the unique decreasing equilibrium. If $\U_0\le F$ and
$\U_0\ne F$, then the solution is global
and uniformly bounded, and $u(t)\to\mu$ uniformly.
\end{proposition}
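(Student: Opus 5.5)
The plan is to follow the pattern of \cref{prop:below-constant}, with the decreasing equilibrium as the upper barrier and the constant as the only admissible limit.

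\emph{Global boundedness.} Since $F=F_{u_M^{\mathrm d}}$ is stationary and $\U_0\le F$, \cref{lem:comparison} gives
\[
0\le\U(\xi,t)\le F(\xi)\le C\xi
\qquad\text{on }[0,1]\times[0,T_{\max}),
\]
because $F$ is smooth with $F(0)=0$. Then \cref{lem:boundedness} yields $T_{\max}=\infty$ and a uniform $L^\infty$ bound.

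\emph{Identification of the $\omega$-limit set.} By \cref{lem:omega}, the $\omega$-limit set is nonempty, compact, connected, and consists of radial equilibria. Each limit equilibrium has cumulative profile $\le F$. By \cref{thm:equilibria}, for $8\pi<M<\Lambda\pi$ the only radial equilibria are $\mu$ and $u_M^{\mathrm d}$, and both satisfy $F_\mu=M\xi\le F$ (see the remark after \cref{thm:least-concentrated}). The task is therefore to rule out $u_M^{\mathrm d}$ as a limit point. Since the set of equilibria is discrete and the limit set is connected, excluding $u_M^{\mathrm d}$ makes the limit set $\{\mu\}$. Precompactness then gives $u(t)\to\mu$ uniformly.

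\emph{Excluding $u_M^{\mathrm d}$.} This is the main step. I would use the adjoint moment of \cref{prop:above} with $\psi_F$ and $\sigma_F>0$ from \cref{lem:unstable}, this time applied to the deficit. Set $P=\U-F\le0$ and
\[
A(t)=\int_0^1\psi_F P\dd\xi\le0.
\]
The exact identity in \eqref{eq:Riccati} reads
\[
A'=\sigma_F A-\frac1{2\pi}\int_0^1\psi_F'P^2\dd\xi .
\]
Since $\psi_F'<0$, the quadratic term is nonnegative, so this inequality alone does not show that $A$ stays away from zero. A cleaner route avoids the sign issue. By \cref{lem:comparison} at time $t_0=1$, using $\U_0\ne F$, we get $F-\U(\cdot,1)\ge c\,\xi(1-\xi)$. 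By \eqref{eq:phi-bound}, this implies
\[
\U(\cdot,1)\le F-\varepsilon\phi_F
\]
for some small $\varepsilon>0$.

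I then show that $W_\varepsilon\coloneq F-\varepsilon\phi_F$ is a strict stationary supersolution of \eqref{eq:cumulative} for small $\varepsilon$. Indeed,
\[
\N(F-\varepsilon\phi_F)=-\varepsilon L_F\phi_F+\frac{\varepsilon^2}{\pi}\phi_F\phi_F'
=-\varepsilon\sigma_F\phi_F+O(\varepsilon^2)\phi_F\phi_F'.
\]
The first term dominates because $\phi_F\asymp\xi(1-\xi)$ and $\phi_F'$ is bounded, so $\N(W_\varepsilon)\le0$.

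Comparison of $\U$ with this supersolution gives $\U(\cdot,t)\le F-\varepsilon\phi_F$ for all $t\ge1$. Passing to the limit, every $\omega$-limit profile is $\le F-\varepsilon\phi_F$, and in particular differs from $F$. Hence $u_M^{\mathrm d}$ is excluded.

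\emph{Convergence of $v$.} The convergence $v(t)\to0$ in $W^{1,\infty}$ follows by elliptic regularity, exactly as at the end of \cref{prop:below-constant}.

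The point that needs the most care is that the supersolution comparison is valid with the degenerate endpoint at $\xi=0$. \cref{lem:comparison} explicitly allows smooth stationary supersolutions with the same endpoint values, and $W_\varepsilon(0)=0$, $W_\varepsilon(1)=M$. The other check is that $W_\varepsilon$ remains increasing for small $\varepsilon$, which holds since $F'\ge\pi\min u_M^{\mathrm d}>0$.
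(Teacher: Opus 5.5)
Your proof is correct, and it takes a more direct route than the paper's. Both arguments use the same barrier $G=F-\varepsilon\phi_F$. It is placed above $\U(\cdot,t_0)$ by \cref{lem:comparison} and \eqref{eq:phi-bound}, and it satisfies $\N(G)=-\varepsilon\phi_F(\sigma_F-\varepsilon\phi_F'/\pi)<0$.

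The paper does not apply this barrier to the $\omega$-limit set of $\U$ directly. Instead it:
\begin{enumerate}
\item solves \eqref{eq:cumulative} starting from $G$;
\item shows that this auxiliary solution $B$ is nonincreasing in time and converges to a stationary $S$ with $M\xi\le S\le G<F$;
\item rules out a nonconstant $S$ by the final assertion of \cref{prop:above}, which says no distinct equilibrium lies above a decreasing one;
\item squeezes $\U(\cdot,t_0+t)\le B(\cdot,t)\to M\xi$, so that \cref{cor:radial-shape} identifies every limit as the constant.
\end{enumerate}

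You skip $B$ altogether. Comparison with the stationary supersolution gives $K\le F-\varepsilon\phi_F$ for every $\omega$-limit profile $K$, so $K\ne F$. The classification in \cref{thm:equilibria}(2) then leaves only $K=M\xi$.

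The cost is that you rely on uniqueness of the decreasing equilibrium at fixed mass, which rests on the shooting-mass monotonicity of \cref{prop:dec-monotonicity}. The paper's argument uses only the Riccati exclusion, so it would work for any decreasing reference equilibrium. You can recover that independence without constructing $B$. Since $M<\Lambda\pi$, \cref{cor:radial-shape} makes any nonconstant $K$ strictly decreasing. Then $F$ is a distinct equilibrium lying above $K$, which contradicts \cref{prop:above}.

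Connectedness of the $\omega$-limit set is not needed in your argument: nonemptiness suffices once the only remaining candidate is $\mu$.
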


\begin{proof}
Comparison gives $\U\le F\le C\xi$. Global boundedness follows from
\cref{lem:boundedness}.
Fix $t_0>0$. Since the initial profiles are distinct and ordered,
\cref{lem:comparison} gives
\[
F(\xi)-\U(\xi,t_0)\ge c(t_0)\xi(1-\xi),\qquad \text{for }\xi\in[0,1].
\]
Let $\phi_F$ be as in \cref{lem:unstable}.
For small $\varepsilon>0$, set
\[
G(\xi)\coloneq F(\xi)-\varepsilon\phi_F(\xi),\qquad \text{for }\xi\in[0,1].
\]
Since $H=F-M\xi>0$ has simple zeros at the endpoints, $H$ and $\phi_F$
are comparable to $\xi(1-\xi)$. Choose $\varepsilon$ so that
\[
M\xi<G<F\quad\text{for }\xi\in(0,1),\qquad G'>0,
\qquad \U(\cdot,t_0)\le G.
\]
The last condition follows from \eqref{eq:phi-bound} and the separation
estimate at $t_0$.
To compute $\N(G)$, recall from \eqref{eq:cumulative} that
$\N(V)=4\xi V''+(V-M\xi)V'/\pi$. Since $F$ is stationary,
$\N(F)=0$, and direct expansion gives
\begin{align*}
\N(G)
&=4\xi(F''-\varepsilon\phi_F'')
  +\frac{F-M\xi-\varepsilon\phi_F}{\pi}
   (F'-\varepsilon\phi_F')\\
&=\underbrace{4\xi F''+\frac{F-M\xi}{\pi}F'}_{\N(F)=0}
  -\varepsilon\left[4\xi\phi_F''
  +\frac{F-M\xi}{\pi}\phi_F'
  +\frac{F'}{\pi}\phi_F\right]
  +\frac{\varepsilon^2}{\pi}\phi_F\phi_F'\\
&=-\varepsilon L_F\phi_F
  +\frac{\varepsilon^2}{\pi}\phi_F\phi_F',
\end{align*}
where the bracket is $L_F\phi_F$ by \eqref{eq:perturbation}.
Using $L_F\phi_F=\sigma_F\phi_F$ therefore gives
\begin{equation}\label{eq:strict-super}
\N(G)=-\varepsilon\sigma_F\phi_F+
\frac{\varepsilon^2}{\pi}\phi_F\phi_F'<0,
\qquad\text{for }\xi\in(0,1),
\end{equation}
after reducing $\varepsilon$ if necessary. More explicitly, since
$\sigma_F>0$ and $\phi_F>0$ on $(0,1)$, we may also impose
$\varepsilon\norm{\phi_F'}_\infty/\pi\le\sigma_F/2$. Then
\[
\N(G)=-\varepsilon\phi_F
\left(\sigma_F-\frac{\varepsilon}{\pi}\phi_F'\right)
\le-\frac{\varepsilon\sigma_F}{2}\phi_F<0
\qquad\text{for }\xi\in(0,1).
\]

Let $B(\xi,t)$ be the solution of \eqref{eq:cumulative} with
$B(\xi,0)=G(\xi)$ for $\xi\in[0,1]$, initially defined on
$[0,1]\times[0,T_B)$, where $T_B$ is its maximal existence time.
Comparison gives $M\xi\le B(\xi,t)\le G(\xi)$ on this domain;
\cref{lem:boundedness} then gives $T_B=\infty$ and uniform boundedness.
Because $G$ is a stationary supersolution, $B(\cdot,t)\le G$;
the order-preserving semigroup then gives
$B(\cdot,t+s)\le B(\cdot,t)$ for $s,t\ge0$.
Uniform boundedness and positive-time compactness give
$B(\cdot,t)\to S$ in $C([0,1])$ as $t\to\infty$, with
$M\xi\le S\le G<F$ in the interior.
The semigroup property implies that $S$ is stationary.

If $S\ne M\xi$, \cref{lem:stationary-sign} makes $S$ the
cumulative profile of a strictly decreasing equilibrium.
But $F$ is a distinct equilibrium above $S$, contrary to
\cref{prop:above}. Therefore $S=M\xi$.

Comparison starting at $t_0$ gives
\[
\U(\xi,t_0+t)\le B(\xi,t)\to M\xi
\qquad\text{for }\xi\in[0,1]\text{ as }t\to\infty.
\]
Every stationary $\omega$-limit profile of $\U$ is thus at most $M\xi$.
\cref{cor:radial-shape}, since $M<\Lambda\pi$, makes it the constant profile.
\cref{lem:omega} and precompactness prove uniform convergence.
\end{proof}

\begin{proposition}[Convergence below any non-monotone equilibrium]
\label{prop:below-non-monotone}
Let $(u_*,v_*)$ be any non-monotone radial equilibrium of mass $M$,
with cumulative profile $F$. If $u_0\preceq u_*$ and $u_0\ne u_*$,
then the solution is global and uniformly bounded and converges to the
unique increasing equilibrium $(u_M^{\mathrm i},v_M^{\mathrm i})$
in the sense of \eqref{eq:increasing-limit}.
In particular, $(u_M^{\mathrm i})'(r)>0$ and $(v_M^{\mathrm i})'(r)>0$
for $0<r<1$.
\end{proposition}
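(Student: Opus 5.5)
The plan follows the proof of \cref{prop:below}, with the constant profile $M\xi$ replaced by the increasing profile $I\coloneq F_{u_M^{\mathrm i}}$ as the lower barrier. Non-monotone equilibria exist only for $M\ge M_{\mathrm{nm}}>\Lambda\pi$, so $u_M^{\mathrm i}$ exists.

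\textbf{Boundedness.} Comparison (\cref{lem:comparison}) gives $\U(\cdot,t)\le F\le C\xi$. \cref{lem:boundedness} then yields a global, uniformly bounded solution. By \cref{lem:omega} the $\omega$-limit set is nonempty, compact, connected, and consists of radial equilibria.

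\textbf{A strict supersolution below $F$.} Fix $t_0>0$. The separation estimate \eqref{eq:gap} gives $F-\U(\cdot,t_0)\ge c(t_0)\xi(1-\xi)$. Since $H=F-M\xi$ is positive somewhere, \cref{lem:unstable} supplies $\sigma_F>0$ and an eigenfunction $\phi_F\asymp\xi(1-\xi)$. For small $\varepsilon>0$ set $G\coloneq F-\varepsilon\phi_F$. The computation leading to \eqref{eq:strict-super} gives $\N(G)\le-\tfrac{\varepsilon\sigma_F}2\phi_F<0$, together with $G'>0$ and $\U(\cdot,t_0)\le G$.

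Let $B$ solve \eqref{eq:cumulative} with $B(0)=G$. Then $B\le G\le C\xi$, so $B$ is global and bounded. Since $G$ is a supersolution, $B$ is nonincreasing in time. Hence $B(t)\to S$, a stationary profile with $S\le G$, and $S\ne F$ because $G<F$ in the interior. In particular $S\le F$ with $S\ne F$. \cref{lem:ordered-stationary} then forces the density of $S$ to be strictly increasing, so $S=I$ by \cref{thm:equilibria}(4).

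Comparison from time $t_0$ gives $\U(\cdot,t_0+t)\le B(\cdot,t)\to I$. Hence every $\omega$-limit profile $F_\infty$ satisfies $F_\infty\le I$. \cref{thm:least-concentrated} gives $F_\infty\ge I$, so $F_\infty=I$. The limit set is therefore $\{u_M^{\mathrm i}\}$, and precompactness gives uniform convergence of the whole trajectory. The $W^{1,\infty}$ convergence of $v$ follows from the elliptic estimate used at the end of \cref{prop:below-constant}.

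\textbf{Positivity of the derivatives.} The assertions $(u_M^{\mathrm i})'>0$ and $(v_M^{\mathrm i})'>0$ come from \cref{lem:stationary-sign}, since $I<M\xi$ on $(0,1)$, combined with \eqref{eq:radial-v}.

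\textbf{Main obstacle.} The delicate step is identifying $S$. One needs $S\ne F$, which holds because $S\le G<F$ in the interior, before \cref{lem:ordered-stationary} can be applied. One also needs the choice of $\varepsilon$ to keep $G$ increasing and compatible at both endpoints, which relies on the two-sided bound \eqref{eq:phi-bound}. This matches what was done in \cref{prop:below}, so I expect no new difficulty beyond checking these details.
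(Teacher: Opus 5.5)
Your proposal is correct and follows essentially the same route as the paper's proof. It uses comparison with $F$ plus \cref{lem:boundedness} for boundedness, the strict supersolution $G=F-\varepsilon\phi_F$ from \cref{lem:unstable}, and the monotone barrier solution converging to a stationary $S\le G<F$, which \cref{lem:ordered-stationary} identifies as $F_{u_M^{\mathrm i}}$; finally, the squeeze $F_{u_M^{\mathrm i}}\le K\le S$ via \cref{thm:least-concentrated} shows the $\omega$-limit set is the single point $\{u_M^{\mathrm i}\}$.
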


\begin{proof}
By \cref{cor:radial-shape}, $M>\Lambda\pi$.
Comparison gives $0\le\U(\xi,t)\le F(\xi)\le C\xi$, so
\cref{lem:boundedness} proves global boundedness.
Fix $t_0>0$. Since the initial profiles are distinct and ordered,
\cref{lem:comparison} gives
\[
F(\xi)-\U(\xi,t_0)\ge c(t_0)\xi(1-\xi),\qquad \text{for }\xi\in[0,1].
\]
Take the positive eigenfunction $\phi_F$ and the positive growth rate
$\sigma_F$ supplied by \cref{lem:unstable}. For sufficiently
small $\varepsilon>0$, the profile
\[
G(\xi)\coloneq F(\xi)-\varepsilon\phi_F(\xi),\qquad \text{for }\xi\in[0,1],
\]
has $G'>0$, the same endpoint values as $F$, and satisfies
$\U(\cdot,t_0)\le G$. The last condition follows from
\eqref{eq:phi-bound}. The eigenvalue equation at $\xi=1$ gives
$\phi_F''(1)=0$, so $G$ also satisfies the Neumann compatibility
condition. The calculation \eqref{eq:strict-super} applies to
this equilibrium as well, giving $\N(G)<0$ on $(0,1)$ once
$\varepsilon\norm{\phi_F'}_\infty/\pi<\sigma_F$.

Let $B(\xi,t)$ be the cumulative solution starting at $G$, with
maximal existence time $T_B$ and domain $[0,1]\times[0,T_B)$.
Comparison with this stationary supersolution gives
\[
0\le B(\xi,t)\le G(\xi)\le C\xi,
\qquad \text{for }(\xi,t)\in[0,1]\times[0,T_B).
\]
It is therefore global and uniformly bounded. The order-preserving
semigroup and $B(\cdot,s)\le G$ imply
$B(\cdot,t+s)\le B(\cdot,t)$ for $s,t\ge0$.
Monotonicity and positive-time compactness give
$B(\cdot,t)\to S$ in $C([0,1])$ as $t\to\infty$;
the semigroup property makes $S$ stationary.
Since
\[
S\le G=F-\varepsilon\phi_F<F\quad\text{for }\xi\in(0,1),
\]
\cref{lem:ordered-stationary} shows that $S$ is the cumulative
profile of a strictly increasing equilibrium. By
\cref{prop:increasing-mass}, $S=F_{u_M^{\mathrm i}}<M\xi$ on $(0,1)$.

Comparison starting at $t_0$ yields
\[
\U(\xi,t_0+t)\le B(\xi,t)\to S(\xi)
\qquad\text{for }\xi\in[0,1]\text{ as }t\to\infty.
\]
Every stationary $\omega$-limit profile $K$ of the original solution
thus satisfies $K\le S$. \cref{thm:least-concentrated} also gives
$K\ge F_{u_M^{\mathrm i}}=S$, so every such profile equals $F_{u_M^{\mathrm i}}$.
The $\omega$-limit set is therefore the singleton $\{u_M^{\mathrm i}\}$.
Precompactness proves uniform convergence of the entire density
trajectory to $u_M^{\mathrm i}$. The zero-mean Neumann
elliptic estimate then proves convergence of the potentials in
$W^{1,\infty}$, as in \cref{prop:below-constant}.
Finally, $K-M\xi<0$ and \eqref{eq:radial-v} give the asserted
strict increase of both limiting profiles.
\end{proof}

\begin{remark}
The cumulative-order hypotheses allow non-monotone initial densities.
Data whose cumulative profiles cross the comparison equilibrium fall
outside these ordered classes.
\end{remark}

\begin{proof}[Proof of \cref{thm:ordered-convergence}]
Part~(1) is \cite[Theorem~1.1]{Mao2026a}.
Part~(2) is \cref{prop:below}.
Part~(3) follows from \cref{prop:below-constant} and the
rigidity at $M=\Lambda\pi$ in \cref{cor:radial-shape}.
For part~(4), \cref{prop:below-constant} treats the constant
threshold, and \cref{prop:below-non-monotone} treats every
non-monotone threshold. Uniqueness of the increasing equilibrium
identifies both limits with $(u_M^{\mathrm i},v_M^{\mathrm i})$.
In all four cases the zero-mean Neumann equation gives, for any $p>2$,
\[
\norm{v(t)-\underline v_M}_{W^{1,\infty}}
\le C_p\norm{u(t)-\underline u_M}_{L^p}\to0
\qquad\text{as }t\to\infty.
\]
The strict increase of the high-mass limit follows from
\cref{prop:below-constant,prop:below-non-monotone}.
\end{proof}

\subsection{Local comparison and asymptotic stability}

\begin{lemma}[Strict radial spectral stability of increasing equilibria]
\label{lem:increasing-gap}
Fix $M>\Lambda\pi$, and let $(u_M^{\mathrm i},v_M^{\mathrm i})$ be the
unique strictly increasing radial equilibrium of mass $M$.
Write $F_M^{\mathrm i}\coloneq F_{u_M^{\mathrm i}}$ and put
$H_M^{\mathrm i}\coloneq F_M^{\mathrm i}-M\xi<0$ on $(0,1)$.
The largest eigenvalue of its cumulative linearization $L_{F_M^{\mathrm i}}$
is $-\gamma_M<0$, where, with
$a_M(\xi)\coloneq\xi e^{v_M^{\mathrm i}(\sqrt\xi)}$ for $\xi\in[0,1]$,
\begin{equation}\label{eq:increasing-gap}
\gamma_M=
\inf_{\psi\ne0,\ \psi(1)=0}
\frac{4\int_0^1\xi a_M(\psi')^2\dd\xi
-\mu\int_0^1a_M\psi^2\dd\xi}
{\int_0^1a_M\psi^2\dd\xi}>0.
\end{equation}
The admissible functions have finite radial energy and are regular at zero,
as in \eqref{eq:Rayleigh}.
All radial fixed-mass eigenvalues are real and strictly negative.
\end{lemma}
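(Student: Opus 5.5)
The plan is to relate the two available spectral pictures for the increasing equilibrium: the self-adjoint realization of $L_{F_M^{\mathrm i}}$ in $\mathcal H=L^2((0,1),m\dd\xi)$ from \cref{prop:increasing-nondegeneracy}, and the adjoint form \eqref{eq:self-adjoint} with weight $a_M=\xi e^{v_M^{\mathrm i}(\sqrt\xi)}$ used in \cref{lem:unstable}. The point is that the largest eigenvalue is the same number in both pictures, so the strict gap already proved in \cref{prop:increasing-nondegeneracy} transfers to the Rayleigh quotient \eqref{eq:increasing-gap}.

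\textbf{Step 1: eigenvalues are real and negative.} \cref{prop:increasing-nondegeneracy} shows that $L_{F_M^{\mathrm i}}$ is self-adjoint in $\mathcal H$ with compact resolvent and $\spec\subset(-\infty,-\gamma]$ for some $\gamma>0$. Hence every radial fixed-mass eigenvalue is real and at most $-\gamma<0$. One should also check that classical eigenfunctions of the cumulative problem belong to the form domain $V=H^1_0$. This follows because they are smooth, vanish at both endpoints, and have bounded derivative. Consequently no eigenvalues are lost or spurious in passing to the form realization.

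\textbf{Step 2: conjugation to the adjoint.} Since $u_M^{\mathrm i}=Ce^{v_M^{\mathrm i}}$, we have $U(\xi)=Ce^{\widehat v(\xi)}$, so $a_M=\xi U/C$ and $m=1/(4\xi U)=1/(4Ca_M)$. This shows that $L_F$ is symmetric with respect to $m\dd\xi\propto a_M^{-1}\dd\xi$. The identity $\phi_F=a\psi_F$ from the proof of \cref{lem:unstable} gives the intertwining $L_F(a\psi)=a\,L_F^*\psi$, which I would verify by direct substitution using $a'/a=1/\xi-H/(4\pi\xi)$. Under $\phi=a_M\psi$, the Dirichlet problem for $L_F$ with $\phi(0)=\phi(1)=0$ corresponds to the problem for $L_F^*$ that is regular at $0$ with $\psi(1)=0$. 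Moreover,
\[
\langle -L_F\phi,\phi\rangle_{\mathcal H}\propto 4\int_0^1\xi a_M(\psi')^2\dd\xi-\mu\int_0^1 a_M\psi^2\dd\xi,
\qquad
\norm{\phi}_{\mathcal H}^2\propto\int_0^1 a_M\psi^2\dd\xi,
\]
with the same constant in both. This comes from \eqref{eq:self-adjoint} after integrating by parts; the boundary terms vanish because $\xi a_M\psi\psi'\to0$ at both ends. The two quotients therefore coincide, and the infimum in \eqref{eq:increasing-gap} equals $\inf\spec(-L_F)=\gamma_M$.

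\textbf{Step 3: attainment.} By the Rayleigh characterization \eqref{eq:Rayleigh}, transplanted to the four-dimensional radial Dirichlet problem via $\xi=r^2$ and using compactness, the infimum is attained by a principal eigenfunction. Thus $-\gamma_M$ is the largest eigenvalue, and $\gamma_M\ge\gamma>0$.

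The main obstacle is the precise correspondence of function classes in Step 2. The map $\phi=a_M\psi$ must send admissible $\psi$ (finite radial energy, regular at $0$) onto $V$, and in particular $\psi\in$ four-dimensional $H^1$ must give $\xi\psi\in H^1_0$ with the weighted norms equivalent. I would handle this with the endpoint estimates \eqref{eq:weighted-endpoint-control} combined with density of smooth functions. As an alternative, one can argue directly on the $a_M$-form: by Step 1 and the Rayleigh argument of \cref{lem:unstable}, any near-zero or positive value of the supremum would produce an eigenfunction $\psi$, hence an eigenfunction $a_M\psi$ of $L_F$ with eigenvalue $>-\gamma$, contradicting Step 1.
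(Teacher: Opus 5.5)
Your proposal is correct and follows essentially the paper's own argument. From $u_M^{\mathrm i}=Ce^{v_M^{\mathrm i}}$ you get $m=(4Ca_M)^{-1}$, so multiplication by $a_M$ intertwines $L_{F_M^{\mathrm i}}^*$ with $L_{F_M^{\mathrm i}}$ and identifies the two weighted realizations up to a constant factor; the strict negativity from \cref{prop:increasing-nondegeneracy} together with the Rayleigh characterization \eqref{eq:Rayleigh} then gives \eqref{eq:increasing-gap}. Your explicit check that the forms and norms agree with the same constant, and your alternative of placing the smooth principal eigenfunction $a_M\psi_M$ in $V$, spell out what the paper states in one line. The paper additionally records simplicity of the principal eigenvalue and the correspondence $f(r)=P'(r^2)/\pi$ with Neumann zero-mass density modes, which you only touch on in Step 1.
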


\begin{proof}
The linearization formulas \eqref{eq:perturbation}--\eqref{eq:adjoint}
and the weighted self-adjoint identity \eqref{eq:self-adjoint} hold
for either sign of $H$, with $F=F_M^{\mathrm i}$ and $H=H_M^{\mathrm i}$.
Let $\sigma_M$ be the largest eigenvalue of $L_{F_M^{\mathrm i}}^*$,
and let $\psi_M$ be its positive principal eigenfunction.
The weighted operator is self-adjoint with compact resolvent, and
$L_{F_M^{\mathrm i}}(a_M\psi)=a_M L_{F_M^{\mathrm i}}^*\psi$.
Since $u_M^{\mathrm i}(\sqrt\xi)=C_M^*e^{v_M^{\mathrm i}(\sqrt\xi)}$
for some $C_M^*>0$, the cumulative weight in
\cref{prop:increasing-nondegeneracy} is
$m=(4\xi u_M^{\mathrm i}(\sqrt\xi))^{-1}=(4C_M^*a_M)^{-1}$.
Multiplication by $a_M$
therefore identifies the two weighted realizations up to a constant
factor in their norms. They have the same real spectrum, and
\cref{prop:increasing-nondegeneracy} makes it strictly negative.
Thus $\sigma_M<0$, and \eqref{eq:Rayleigh} gives
\eqref{eq:increasing-gap} with $\gamma_M=-\sigma_M$.
The principal eigenvalue is simple.
Its smooth eigenfunctions correspond to radial zero-mass density modes
by $P(\xi)=\pi\int_0^\xi f(\sqrt s)\dd s$ and
$f(r)=P'(r^2)/\pi$. The eigenvalue equation at $\xi=1$ gives
$P''(1)=0$, which is exactly the Neumann condition for $f$.
\end{proof}

\begin{lemma}[Uniform local comparison and decay estimates]
\label{lem:increasing-local-control}
Fix $M>\Lambda\pi$ and consider the unique increasing equilibrium
$(u_M^{\mathrm i},v_M^{\mathrm i})$.
Let $\gamma_M>0$ be the spectral gap in \eqref{eq:increasing-gap}.
There are $\varepsilon_M,C_M>0$, depending only on $M$,
such that every nonnegative radial datum of mass $M$ with
$\varepsilon\coloneq\norm{u_0-u_M^{\mathrm i}}_\infty\le\varepsilon_M$ generates a
global uniformly bounded solution satisfying
\begin{equation}\label{eq:increasing-local-rate}
\norm{u(t)-u_M^{\mathrm i}}_\infty
+\norm{v(t)-v_M^{\mathrm i}}_{W^{1,\infty}}
\le C_M\varepsilon e^{-\gamma_M t},\qquad\text{for }t\in[0,\infty).
\end{equation}
The datum need not be monotone or concentration ordered with $u_M^{\mathrm i}$.
\end{lemma}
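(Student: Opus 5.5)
The plan is to work entirely in cumulative variables and trap the solution between two ordered cumulative sub/supersolutions built from the principal eigenfunction $\phi_M\coloneq a_M\psi_M$ of $L_{F_M^{\mathrm i}}$. The decay rate is then read off from the linearization. Write $F\coloneq F_M^{\mathrm i}$, $L\coloneq L_F$ and $P\coloneq\U-F$, so that $P_t=LP+\pi^{-1}PP'$ by \eqref{eq:perturbation}.

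\textbf{Step 1: initial trapping.} By \cref{lem:increasing-gap}, $L\phi_M=-\gamma_M\phi_M$. The argument of \cref{lem:unstable} (linear endpoint behavior) gives $c\,\xi(1-\xi)\le\phi_M\le C\xi(1-\xi)$; the sign argument there does not depend on the sign of $H$. Since $|F_{u_0}(\xi)-F(\xi)|\le\pi\varepsilon\min\{\xi,1-\xi\}$, we obtain
\[
|P(\cdot,0)|\le K\varepsilon\phi_M,\qquad K\coloneq\pi/c .
\]
The factor $1-\xi$ comes from equality of the masses.

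\textbf{Step 2: time-dependent barriers.} Set $G_\pm(\xi,t)\coloneq F(\xi)\pm\delta(t)\phi_M(\xi)$ with $\delta(t)=K\varepsilon e^{-\gamma' t}$ for some $\gamma'<\gamma_M$. As in the expansion leading to \eqref{eq:strict-super},
\[
\partial_tG_\pm-\N(G_\pm)=\pm\delta\phi_M\bigl(\gamma_M-\gamma'\bigr)-\frac{\delta^2}{\pi}\phi_M\phi_M'.
\]
Hence $G_+$ is a supersolution and $G_-$ a subsolution as soon as $\delta\norm{\phi_M'}_\infty/\pi\le\gamma_M-\gamma'$. This holds for all $t$ if $\varepsilon\le\varepsilon_M$. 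The comparison principle of \cref{lem:comparison} applies to time-dependent barriers with the same endpoint values by the same maximum-principle argument. It yields $F-\delta\phi_M\le\U\le F+\delta\phi_M$, in particular $\U\le C\xi$. \cref{lem:boundedness} then gives global existence and a uniform bound.

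To obtain the sharp rate $\gamma_M$ rather than $\gamma'$, I would refine the barrier to $\delta(t)=K\varepsilon e^{-\gamma_M t}\exp\bigl(C\int_0^t\delta\bigr)$. This is the solution of $\dot\delta=-\gamma_M\delta+C\delta^2$, whose quadratic correction is integrable. So $\delta(t)\le 2K\varepsilon e^{-\gamma_M t}$ for small $\varepsilon$, which gives $|P|\le C\varepsilon e^{-\gamma_M t}\xi(1-\xi)$.

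\textbf{Step 3: from cumulative to density.} The bound on $P$ only controls $\U-F$, not $u-u_M^{\mathrm i}=P_\xi/\pi$. To upgrade, I would use parabolic smoothing on unit time intervals for the linear equation satisfied by $P$, written as the four-dimensional radial equation for $V=P/\xi$ in \cref{lem:comparison}. The coefficients are uniformly bounded by Step 2. Schauder or $L^p$ interior-to-boundary estimates on $[t-1,t]$ give
\[
\norm{V(t)}_{C^{1}}\le C\sup_{[t-1,t]}\norm{V}_\infty\le C\varepsilon e^{-\gamma_M t}
\]
for $t\ge1$, hence $\norm{u(t)-u_M^{\mathrm i}}_\infty\le C\varepsilon e^{-\gamma_M t}$. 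For $t\in[0,1]$, I would use local well-posedness Lipschitz dependence in $L^\infty$ near a smooth equilibrium, which gives $\norm{u(t)-u_M^{\mathrm i}}_\infty\le C\varepsilon$. The potential estimate follows from the zero-mean Neumann elliptic estimate, as in \cref{prop:below-constant}.

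\textbf{Main obstacle.} The hardest part is Step 3, together with the non-smooth start. Continuous data give only a continuous $P(\cdot,0)$, and the derivative loss at $\xi=0$ from the degenerate coefficient $4\xi$ must be handled through the four-dimensional lifting. I would try first the local Lipschitz continuous-dependence estimate on $[0,1]$, and then the lifted regularity estimates for $t\ge1$ with constants uniform in $t$.
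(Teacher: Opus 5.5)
Your proposal is correct. Steps 1 and 2 are the paper's own argument: the paper also traps $\U$ between $F_M^{\mathrm i}\pm a(t)p_1$, where $p_1$ is the principal eigenfunction and $a'=-\gamma_Ma+Ba^2$ with $B\ge\norm{p_1'}_\infty/\pi$. Your preliminary barrier with $\gamma'<\gamma_M$ is superfluous. The constant in Step 1 should be $2\pi/c$, since $\min\{\xi,1-\xi\}\le2\xi(1-\xi)$.

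The real difference is Step 3.

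\textbf{The paper's route.} It stays on the disk. It reads off $\norm{\nabla z(t)}_\infty\le C\varepsilon e^{-\gamma_Mt}$ from $z_r=-P(r^2,t)/(2\pi r)$. Testing the equation for $q=u-u_M^{\mathrm i}$ against $z=v-v_M^{\mathrm i}$ gives $E'\le-\frac12\norm{q}_2^2+CE$ for $E=\frac12\norm{\nabla z}_2^2$, hence time-averaged $L^2$ decay of $q$. It then passes to $L^\infty$ through Gaussian bounds for the Neumann evolution operator of $q_t=\Delta q-\nabla v\cdot\nabla q+(u+u_M^{\mathrm i}-\mu)q-\nabla u_M^{\mathrm i}\cdot\nabla z$. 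On $[0,1]$ it uses the maximum principle for that same equation.

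\textbf{Your route.} You write $P_t=4\xi P''+\pi^{-1}(\U-M\xi)P'+u_M^{\mathrm i}(\sqrt\xi)P$ and lift $V=P/\xi$ to the four-dimensional ball with zero Dirichlet data. You then bootstrap $\norm{V}_\infty$ to $\norm{\nabla\widetilde V}_\infty$, which controls $u-u_M^{\mathrm i}=(V+\xi V_\xi)/\pi$. This is sound if you use the $W^{2,1}_p$ estimates with $p>6$, which need only bounded lower-order coefficients. Schauder estimates would require Hölder control of the coefficients that you have not established.

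\textbf{What each buys.} Your route has a real advantage. The lifted drift, of size $|\U-M\xi|/(2\pi\sqrt\xi)$, and the potential $(\U-M\xi)/(\pi\xi)+u_M^{\mathrm i}(\sqrt\xi)$ are bounded by the cumulative barrier alone. So for $t\ge1$ you need no energy identity, no heat-kernel estimate, and no separately quantified density bound. The paper's route instead exploits the Keller--Segel structure and reuses the $L^2\to L^\infty$ machinery of \cref{prop:rate}. On $[0,1]$, your Lipschitz-dependence estimate and the paper's maximum principle both rest on the uniform density bound from \cref{lem:boundedness}, which is available.

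Finally, continuous data are not a real obstacle in the comparison step. $V=P/\xi$ is ($\pi$ times) an average of $u-u_M^{\mathrm i}$, so it is continuous up to $t=0$ and $\xi=0$, and the maximum principle applies directly.
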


\begin{proof}
Retain $F_M^{\mathrm i}$ from \cref{lem:increasing-gap}.
\cref{lem:increasing-gap} gives a positive principal eigenfunction
$p_1$ of $L_{F_M^{\mathrm i}}$ with
\[
L_{F_M^{\mathrm i}}p_1=-\gamma_M p_1,\qquad
c\xi(1-\xi)\le p_1(\xi)\le C\xi(1-\xi),\qquad \text{for }\xi\in[0,1].
\]
The endpoint bounds follow from regularity at zero and the boundary
point lemma at one, just as in \eqref{eq:phi-bound}.
Moreover, $p_1''(1)=0$ by its eigenvalue equation.

Put $P_0(\xi)\coloneq F_{u_0}(\xi)-F_M^{\mathrm i}(\xi)$ for $\xi\in[0,1]$ and
$\varepsilon\coloneq\norm{u_0-u_M^{\mathrm i}}_\infty$.
Equal mass and the uniform norm give
\[
|P_0(\xi)|\le\pi\varepsilon\min\{\xi,1-\xi\}
\le A\varepsilon p_1(\xi),\qquad \text{for }\xi\in[0,1],
\]
with $A$ depending only on the equilibrium and the normalization of $p_1$.
Choose $B\ge\norm{p_1'}_\infty/\pi$, $B>0$, and let
\[
a'=-\gamma_M a+Ba^2,\qquad a(0)=A\varepsilon.
\]
If $Ba(0)\le\gamma_M/2$, this solution is global, decreases, and
\begin{equation*}
0\le a(t)=\frac{a(0)e^{-\gamma_M t}}
{1-\frac{Ba(0)}{\gamma_M}(1-e^{-\gamma_M t})}
\le2A\varepsilon e^{-\gamma_M t},
\qquad \text{for }t\in[0,\infty).
\end{equation*}
Take $\varepsilon_M$ small enough that also
$(F_M^{\mathrm i})'\pm a(t)p_1'>0$ for all $t\ge0$.
Define the smooth time-dependent profiles
\[
F_\pm(\xi,t)\coloneq F_M^{\mathrm i}(\xi)\pm a(t)p_1(\xi),
\qquad \text{for }(\xi,t)\in[0,1]\times[0,\infty).
\]
They have endpoint values $0$ and $M$, and satisfy
\[
\begin{gathered}
\partial_tF_+-\N(F_+)=a^2p_1(B-p_1'/\pi)\ge0,\\
\partial_tF_--\N(F_-)=-a^2p_1(B+p_1'/\pi)\le0,
\qquad \text{for }(\xi,t)\in(0,1)\times(0,\infty).
\end{gathered}
\]
The comparison argument of \cref{lem:comparison} also applies to
these time-dependent barriers and gives
\begin{equation}\label{eq:increasing-local-cumulative}
|\U(\xi,t)-F_M^{\mathrm i}(\xi)|\le a(t)p_1(\xi)
\qquad\text{for }(\xi,t)\in[0,1]\times[0,T_{\max}).
\end{equation}
In particular, $\U(\xi,t)\le C_0\xi$ with $C_0$ uniform in this
neighborhood. \cref{lem:boundedness} and the uniform bound on
$\norm{u_0}_\infty$ give global existence and a uniform density bound.
The same comparison holds for continuous initial data by approximation
with smooth radial data of the same mass.

To recover a density estimate, write
$u_*\coloneq u_M^{\mathrm i}$, $v_*\coloneq v_M^{\mathrm i}$, and set
$q(x,t)\coloneq u(x,t)-u_*(x)$ and
$z(x,t)\coloneq v(x,t)-v_*(x)$ for
$(x,t)\in\overline\Omega\times[0,\infty)$, and
$P(\xi,t)\coloneq\U(\xi,t)-F_M^{\mathrm i}(\xi)$ for
$(\xi,t)\in[0,1]\times[0,\infty)$.
The radial Poisson equation and
\eqref{eq:increasing-local-cumulative} give
\[
\begin{gathered}
z_r(r,t)=-\frac{P(r^2,t)}{2\pi r},\qquad
|z_r(r,t)|\le Ca(t)r(1-r^2),
\quad \text{for }(r,t)\in(0,1]\times(0,\infty),\\
\norm{\nabla z(t)}_\infty\le C\varepsilon e^{-\gamma_M t},
\qquad \text{for }t\in[0,\infty).
\end{gathered}
\]
In particular, the potential energy
$E(t)\coloneq\frac12\norm{\nabla z(t)}_2^2$, defined for $t\in[0,\infty)$,
satisfies $E(t)\le C\varepsilon^2e^{-2\gamma_M t}$ on the same interval.
Since $-\Delta z=q$, testing
\[
q_t=\Delta q-\diver(q\nabla v_*+u\nabla z),
\qquad \text{for }(x,t)\in\Omega\times(0,\infty),
\]
by $z$ and using the Neumann boundary conditions yields
\begin{align*}
E'(t)
&=-\norm{q(t)}_2^2
  +\int_\Omega q\nabla v_*\cdot\nabla z\dd x
  +\int_\Omega u|\nabla z|^2\dd x\\
&\le-\frac12\norm{q(t)}_2^2+CE(t),\qquad \text{for }t\in(0,\infty).
\end{align*}
Thus, for $t\ge1$,
\[
\int_{t-1}^t\norm{q(s)}_2^2\dd s
\le2E(t-1)+C\int_{t-1}^tE(s)\dd s
\le C\varepsilon^2e^{-2\gamma_M t},\qquad \text{for }t\in[1,\infty).
\]

On the original two-dimensional disk, $q$ also satisfies
\begin{equation}\label{eq:increasing-local-smoothing}
\begin{aligned}
q_t&=\Delta q-\nabla v\cdot\nabla q+(u+u_*-\mu)q+f,
&& \text{for }(x,t)\in\Omega\times(0,\infty),\\
f(x,t)&\coloneq-\nabla u_*(x)\cdot\nabla z(x,t),
&& \text{for }(x,t)\in\overline\Omega\times[0,\infty),\\
\partial_\nu q&=0,&& \text{for }(x,t)\in\partial\Omega\times(0,\infty).
\end{aligned}
\end{equation}
The uniform density bound and elliptic regularity bound the drift and
zeroth-order coefficient independently of $u_0$ in the chosen neighborhood,
while $\norm{f(t)}_\infty\le C\varepsilon e^{-\gamma_M t}$.
By the Gaussian kernel estimate in \cite{Daners2000}, the Neumann
evolution operator $\mathcal T(t,s)$ for the homogeneous equation satisfies
\[
\norm{\mathcal T(t,s)h}_\infty\le C(t-s)^{-1/2}\norm h_2,
\qquad
\norm{\mathcal T(t,s)h}_\infty\le C\norm h_\infty,
\qquad\text{for }s\in[0,\infty),\ t\in(s,s+1].
\]
For each $t\ge1$, the preceding time-integrated estimate permits a choice
of $s\in[t-1,t-\frac12]$ with
$\norm{q(s)}_2\le C\varepsilon e^{-\gamma_M t}$.
Variation of constants therefore gives
\[
\norm{q(t)}_\infty
\le C\norm{q(s)}_2+C\int_s^t\norm{f(\tau)}_\infty\dd\tau
\le C\varepsilon e^{-\gamma_M t},\qquad\text{for }t\in[1,\infty).
\]
For $0\le t\le1$, the maximum principle applied to
\eqref{eq:increasing-local-smoothing} gives
\[
\norm{q(t)}_\infty
\le e^{Ct}\varepsilon+
\int_0^t e^{C(t-s)}\norm{f(s)}_\infty\dd s
\le C\varepsilon,\qquad \text{for }t\in[0,1].
\]
Finally, $\int_\Omega z(t)=0$ and the bound on $\nabla z(t)$ give
$\norm{z(t)}_{W^{1,\infty}}\le C\varepsilon e^{-\gamma_M t}$
for $t\in[0,\infty)$.
Increasing the uniform constant on $[0,1]$ proves
\eqref{eq:increasing-local-rate}.
\end{proof}

\begin{proof}[Proof of \cref{cor:stable-equilibria}]
The convergence theorem gives attraction from a full neighborhood of
$(\underline u_M,\underline v_M)$.
For $8\pi<M<\Lambda\pi$, take $S\coloneq F_{u_M^{\mathrm d}}$;
for $M>\Lambda\pi$, take $S(\xi)\coloneq M\xi$.
In both cases, with $I\coloneq\underline F_M$, there is $c_M>0$ such that
\[
S(\xi)-I(\xi)\ge c_M\xi(1-\xi),\qquad \text{for }\xi\in[0,1].
\]
For the decreasing threshold this follows from the simple endpoint
zeros noted after \eqref{eq:stationary-radial-derivative}.
For the increasing reference state, its cumulative deficit is
positive in the interior, and its endpoint slopes have the signs
of $\mu-u_M^{\mathrm i}(0)>0$ and $\mu-u_M^{\mathrm i}(1)<0$; its endpoint zeros are also simple.
Equal mass gives, for
$\varepsilon\coloneq\norm{u_0-\underline u_M}_\infty$,
\[
|F_{u_0}(\xi)-I(\xi)|
\le\pi\varepsilon\min\{\xi,1-\xi\}
\le2\pi\varepsilon\xi(1-\xi),\qquad \text{for }\xi\in[0,1].
\]
Hence every sufficiently small such perturbation lies strictly below
the unstable threshold $S$ and converges to
$(\underline u_M,\underline v_M)$ by
\cref{thm:ordered-convergence}.
For $M\le8\pi$, attraction holds for all radial data by part~(1).

Uniform local stability of the constant for $0<M<\Lambda\pi$,
in particular throughout $8\pi<M<\Lambda\pi$, follows from
\cite[Theorem~1.1(i)]{Wang2026}.
For $M>\Lambda\pi$, \cref{lem:increasing-local-control} supplies the
uniform estimate \eqref{eq:increasing-local-rate}, which gives local stability.
These local estimates, together with the preceding attraction, prove
local asymptotic stability without an order restriction.
Part~(1) of the convergence theorem also gives global asymptotic
stability for $M\le8\pi$.
Every other radial equilibrium belongs to one of the threshold classes
in \cref{thm:concentration-thresholds}. Arbitrarily small
concentration-increasing perturbations therefore lead to finite-time
blow-up, proving nonlinear instability and excluding any other stable
equilibrium.
At $M=\Lambda\pi$, the constant is the only radial equilibrium and is
nonlinearly unstable by the same blow-up argument, despite the one-sided
convergence in \cref{thm:ordered-convergence}(3).
\end{proof}

\appendix
\crefalias{section}{appendix}
\section{Sharp convergence rates}
\label{sec:rates}

\begin{theorem}[Convergence rates]\label{thm:rates}
Let $u$ be a radial solution of \eqref{eq:system} of mass $M$.
\begin{enumerate}
\item Suppose $M<\Lambda\pi$. Every global uniformly bounded solution
converging uniformly to the constant equilibrium satisfies
\begin{equation}\label{eq:exact-decay}
\norm{u(t)-\mu}_\infty+\norm{v(t)}_{W^{1,\infty}}
\le C e^{-(\Lambda-\mu)t},\qquad\text{for }t\in[0,\infty).
\end{equation}
In particular, this applies to the solutions converging to the constant
in \cref{thm:ordered-convergence}(1) and (2), as well as the
small perturbations in \cref{cor:stable-equilibria} with $M<\Lambda\pi$.
The exponent $\Lambda-\mu$ is optimal among small radial fixed-mass
perturbations of the constant equilibrium.
\item Suppose $M=\Lambda\pi$ and $u_0\preceq\mu$. Then
\begin{equation}\label{eq:critical-decay}
\norm{u(t)-\mu}_\infty+\norm{v(t)}_{W^{1,\infty}}
\le\frac{C}{1+t},\qquad\text{for }t\in[0,\infty).
\end{equation}
If $u_0\ne\mu$, then
\begin{equation}\label{eq:critical-profile}
\norm{t(u(t)-\mu)+\varphi_1/\kappa_c}_\infty
+\norm{t v(t)+\varphi_1/(\Lambda\kappa_c)}_{W^{1,\infty}}
\to0\qquad\text{as }t\to\infty,
\end{equation}
where $\varphi_1(r)\coloneq c_1J_0(j_{1,1}r)$ for $r\in[0,1]$, $c_1>0$ is chosen so that
$\norm{\varphi_1}_2=1$, and
$\kappa_c\coloneq\frac12\int_\Omega\varphi_1^3\dd x>0$.
Thus the rate $t^{-1}$ is sharp for every nonconstant datum in this class.
\item Suppose $M>\Lambda\pi$.
Every global uniformly bounded radial solution converging uniformly to
the unique increasing equilibrium $(u_M^{\mathrm i},v_M^{\mathrm i})$ satisfies
\begin{equation}\label{eq:increasing-rate}
\norm{u(t)-u_M^{\mathrm i}}_\infty
+\norm{v(t)-v_M^{\mathrm i}}_{W^{1,\infty}}
\le C e^{-\gamma_M t},\qquad\text{for }t\in[0,\infty),
\end{equation}
where $\gamma_M>0$ is the radial fixed-mass spectral gap of the
equilibrium, characterized by \eqref{eq:increasing-gap}.
By uniqueness of the increasing equilibrium, $\gamma_M$ depends only on $M$.
This applies to the trajectories in \cref{thm:ordered-convergence}(4)
and to the small perturbations in \cref{cor:stable-equilibria}
with $M>\Lambda\pi$.
There are $\varepsilon_M,C_M>0$
such that every nonnegative radial datum of mass $M$ with
$\varepsilon\coloneq\norm{u_0-u_M^{\mathrm i}}_\infty\le\varepsilon_M$ satisfies
\eqref{eq:increasing-local-rate}.
The exponent $\gamma_M$ is optimal among small radial fixed-mass
perturbations of $u_M^{\mathrm i}$.
\end{enumerate}
\end{theorem}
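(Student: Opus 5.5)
Part~(3) is already essentially covered by \cref{lem:increasing-local-control}: a solution converging uniformly to $u_M^{\mathrm i}$ enters the $\varepsilon_M$-ball at some finite time $t_1$. Restarting there gives $\norm{u(t)-u_M^{\mathrm i}}_\infty\le C_M\varepsilon_Me^{-\gamma_M(t-t_1)}$ for $t\ge t_1$. On $[0,t_1]$ we use uniform boundedness and enlarge the constant. For optimality we take the data $u_0=u_M^{\mathrm i}+\varepsilon f_1$, where $f_1(r)=p_1'(r^2)/\pi$ is the principal density mode from \cref{lem:increasing-gap}. The barriers $F_\pm$ in that proof sandwich $\U-F_M^{\mathrm i}$. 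Conversely, a lower barrier $F_M^{\mathrm i}+b(t)p_1$ with $b'=-\gamma_Mb-Bb^2$, $b(0)=\varepsilon$, shows $\U-F_M^{\mathrm i}\ge b(t)p_1\ge c\varepsilon e^{-\gamma_Mt}p_1$. Since $|P(\xi,t)|\le\pi\norm{u(t)-u_M^{\mathrm i}}_\infty\min\{\xi,1-\xi\}$, no faster exponential rate is possible.

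Part~(1) runs on the same scheme with $F=M\xi$, $p_1=\xi\psi_0(\xi)$ (note $L_{M\xi}=4\xi\partial_\xi^2+\mu$, and $4\xi(\xi\psi_0)''=-\Lambda\xi\psi_0$ by \eqref{eq:psi0-ode}), and gap $\Lambda-\mu$. The barrier ODE $a'=-(\Lambda-\mu)a+Ba^2$ gives cumulative bounds $|\U-M\xi|\le a(t)\xi\psi_0$ once the solution is near $\mu$. Uniform convergence supplies this after a finite time. From there, the energy-plus-smoothing argument of \cref{lem:increasing-local-control} (testing by $z=v$ and using Gaussian bounds of \cite{Daners2000}) upgrades this to $L^\infty$ with rate $e^{-(\Lambda-\mu)t}$. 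Optimality follows from the lower barrier started at $\mu+\varepsilon J_0(j_{1,1}r)$.

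Part~(2) is the main obstacle, since the linear gap vanishes. Set $Q=M\xi-\U\ge0$ (by comparison) and $A(t)=\int_0^1\psi_0Q$. By \eqref{eq:deficit-moment-evolution} with $\mu=\Lambda$ and $u(\sqrt\xi,t)=\mu-Q_\xi/\pi$,
\[
A'=-\frac1\pi\int_0^1\psi_0QQ_\xi\dd\xi=\frac1{2\pi}\int_0^1\psi_0'Q^2\dd\xi\le -c_FA^2
\]
by the Cauchy--Schwarz step of \cref{prop:above}. Hence $A(t)\le C/(1+t)$.

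To pass to pointwise bounds, I would decompose $Q=\alpha(t)\xi\psi_0+Q^\perp$ in the weighted space of \cref{lem:weighted-poincare}. The linear operator $4\xi\partial_\xi^2+\Lambda$ has gap $j_{1,2}^2-\Lambda$ on the orthogonal complement, so $Q^\perp$ is slaved, $Q^\perp=O(\alpha^2)$. The projection then obeys $\alpha'=-\kappa\alpha^2+O(\alpha^3)$, where $\kappa$ is the cubic Bessel coefficient, positive by \cite{Pan2022} and matching $\kappa_c$ after normalization. This yields $t\alpha(t)\to1/\kappa$ whenever $\alpha\not\equiv0$, and $u_0\ne\mu$ guarantees $\alpha>0$ by \cref{lem:comparison}. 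Making the slaving rigorous is the delicate step. I would first try a weighted $H^1$ energy estimate for $Q^\perp$, using parabolic smoothing from \cite{Daners2000} to convert cumulative $O(t^{-1})$ and $O(t^{-2})$ bounds into density bounds. This would give \eqref{eq:critical-decay} and the profile \eqref{eq:critical-profile}, with $v$ recovered through $v=\mathcal G(u-\mu)$ and $-\Delta\varphi_1=\Lambda\varphi_1$.
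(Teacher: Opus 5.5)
Your parts (1) and (3) are sound. In (1) you argue differently from the paper. You run the barriers $M\xi\pm a(t)\xi\psi_0$, which is legitimate since $L_{M\xi}(\xi\psi_0)=(\mu-\Lambda)\xi\psi_0$, and then the energy/Gaussian smoothing step of \cref{lem:increasing-local-control}. The paper instead uses $\frac{\dd}{\dd t}\norm q_2^2\le(-2\gamma+\norm q_\infty)\norm q_2^2$, seeded by the preliminary decay from \cite{Wang2026}. Your version is self-contained and needs only uniform closeness. Your subsolution $F+b(t)p_1$ with $b'=-\gamma b-Bb^2$ is also a valid substitute for the paper's projection arguments in both optimality claims.

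The genuine gap is in part (2). Beyond $A(t)\le C/(1+t)$, which bounds only one weighted moment of $Q$, everything in your plan rests on the slaving $Q^\perp=O(\alpha^2)$: the $L^\infty$ bound \eqref{eq:critical-decay}, the profile \eqref{eq:critical-profile}, and sharpness. You leave that slaving open, and it does not follow from the spectral gap plus upper bounds on $\alpha$. Duhamel's formula for $Q^\perp$ leaves a transient $e^{-\eta t}\norm{Q^\perp(0)}$ and a convolution of past forcing against $e^{-\eta(t-s)}$. Both are $O(\alpha(t)^2)$ only if $\alpha$ is known to decay more slowly than every exponential, and an upper bound on $A$ cannot exclude exponential decay of the neutral mode. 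Comparison cannot replace this step either: at $\mu=\Lambda$, the subsolution $M\xi-a(t)\xi\psi_0$ needed to bound $Q$ from above requires $a'\ge\pi^{-1}a^2\max_{[0,1]}\bigl(-(\xi\psi_0)'\bigr)>0$, because $(\xi\psi_0)'=\frac{j_{1,1}}2J_0(j_{1,1}\sqrt\xi)$ changes sign.

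The missing idea is two-sided subexponential control of the neutral amplitude, which comes from positivity.
\begin{itemize}
\item Since $Q\ge0$ and $\psi_0>0$, the identity $A'=\int_0^1\psi_0\,q(\sqrt\xi,t)\,Q\dd\xi$ gives $|A'|\le\norm{q(t)}_\infty A$. This is \eqref{eq:deficit-moment-evolution} at $\mu=\Lambda$, before you substitute $u=\mu-Q_\xi/\pi$.
\item Uniform convergence $q\to0$ from \cref{prop:below-constant} then yields, for each $\varepsilon>0$ and $T_\varepsilon\le s\le t$, both $A(s)\le A(t)e^{\varepsilon(t-s)}$ and $A(t)\ge A(T_\varepsilon)e^{-\varepsilon(t-T_\varepsilon)}$. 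This is \eqref{eq:critical-ratio}.
\end{itemize}

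The paper then bootstraps $\sup_{T\le t\le R}\norm{z(t)}_{X_\theta}/b(t)$. It uses the decay $e^{-\eta r}$ on the stable subspace, smallness of $\norm q_\infty$, and $\norm{\mathcal B(q)}_p\le C\norm q_\infty\norm q_{X_\theta}$. This gives first $\norm{q(t)}_{C^1}\le Cb(t)$, and then, by a second Duhamel estimate, $\norm{z(t)}_{X_\theta}=O(b(t)^2)$. The embedding $X_\theta\hookrightarrow C^1(\overline\Omega)$ gives density bounds directly.

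Your cumulative route can be closed with the same ingredient, since $\alpha$ is a fixed multiple of $A$. Using $|Q^\perp(\xi)|\le\sqrt\xi\,\norm{Q^\perp_\xi}_2$, the cubic term in the energy identity for $\int_0^1(Q^\perp)^2\xi^{-1}\dd\xi$ is bounded by a constant times $\norm{Q^\perp_\xi}_2\bigl(\int_0^1Q^2\xi^{-1}\dd\xi\bigr)^{1/2}\norm{Q_\xi}_2$. For large $t$ this gives a differential inequality with decay rate $c>0$ and forcing $C\alpha^4$. The subexponential bounds then turn it into $Q^\perp=O(\alpha^2)$. You would still have to supply the smoothing step that converts the resulting cumulative expansion into the density statements.
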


The constant $C$ may depend on the initial datum.
In part~(3), $\varepsilon_M$ and $C_M$ are the constants from
\cref{lem:increasing-local-control}; they depend only on $M$ and are
uniform on the stated neighborhood of $u_M^{\mathrm i}$.
The optimality claims concern the best exponents valid for all small
perturbations; a particular solution may decay faster.

\subsection{The constant equilibrium below the secondary threshold}

Let $q(x,t)\coloneq u(x,t)-\mu$ for
$(x,t)\in\overline\Omega\times[0,T_{\max})$.
With $\mathcal G$ as defined in
\cref{sec:qualitative-dynamics}, one has $v(t)=\mathcal Gq(t)$ for
$t\in[0,T_{\max})$ and
\begin{equation}\label{eq:q}
\begin{aligned}
q_t&=(\Delta_N+\mu)q-\diver(q\nabla\mathcal Gq),
&&\text{for }(x,t)\in\Omega\times(0,T_{\max}),\\
\int_\Omega q(t)\dd x&=0,&&\text{for }t\in[0,T_{\max}).
\end{aligned}
\end{equation}
The radial linear eigenvalues are $\mu-j_{1,k}^2$, $k\ge1$.
The largest is $-\gamma$, where
\begin{equation*}
\gamma\coloneq\Lambda-\mu>0\qquad\text{for }M\in(0,\Lambda\pi).
\end{equation*}

\begin{proposition}[Exact spectral decay at the constant]\label{prop:rate}
Fix $0<M<\Lambda\pi$ and $0<\alpha<1$.
There are $\varepsilon_0,C>0$ such that every smooth nonnegative radial
datum of mass $M$, with $\partial_\nu u_0=0$ and
$\varepsilon\coloneq\norm{u_0-\mu}_{C^{2+\alpha}}\le\varepsilon_0$,
generates a global solution satisfying
\begin{equation}\label{eq:local-exact}
\norm{u(t)-\mu}_\infty+\norm{v(t)}_{W^{1,\infty}}
\le C\varepsilon e^{-\gamma t},\qquad\text{for }t\in[0,\infty).
\end{equation}
More generally, every global uniformly bounded radial solution
converging uniformly to $\mu$ satisfies \eqref{eq:exact-decay}.
\end{proposition}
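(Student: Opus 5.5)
The plan has two parts. First we prove the local statement with the sharp rate $e^{-\gamma t}$ for $C^{2+\alpha}$-small data. Then we reduce the general statement to the local one.

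\emph{Local part.} We write \eqref{eq:q} in Duhamel form,
\[
q(t)=e^{t(\Delta_N+\mu)}q_0-\int_0^t e^{(t-s)(\Delta_N+\mu)}\diver\bigl(q\nabla\mathcal Gq\bigr)(s)\dd s .
\]
The semigroup acts on radial mean-zero functions, and there its spectral bound is exactly $-\gamma$.

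We work in the weighted space $X\coloneq\{q:\sup_t e^{\gamma t}\norm{q(t)}_\infty<\infty\}$. The linear estimates we need come from the Neumann heat semigroup restricted to mean-zero radial functions:
\[
\norm{e^{t(\Delta_N+\mu)}f}_\infty\le Ce^{-\gamma t}\norm f_\infty,
\qquad
\norm{e^{t(\Delta_N+\mu)}\diver g}_\infty\le C(1+t^{-1/2-1/p})e^{-\gamma t}\norm g_p .
\]
We split the solution into its $J_0(j_{1,1}r)$ component and the complementary part. The complementary part decays at rate $j_{1,2}^2-\mu>\gamma$.

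The nonlinearity $q\nabla\mathcal Gq$ is quadratic, so in $X$ it is bounded by $C\varepsilon^2e^{-2\gamma s}$. The Duhamel integral is then at most
\[
\int_0^t(1+(t-s)^{-1/2-1/p})\,e^{-\gamma(t-s)}e^{-2\gamma s}\dd s\lesssim e^{-\gamma t}.
\]
The exponent comes out exactly $\gamma$, with no loss, because the source decays strictly faster, at rate $2\gamma$. A contraction-mapping or continuity argument in $X$ then closes for $\varepsilon\le\varepsilon_0$. The $C^{2+\alpha}$ smallness, together with the compatibility condition, only guarantees that the classical solution is smooth up to $t=0$. Uniqueness identifies the mild solution with the classical one. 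The bound on $v$ follows from $\norm{\mathcal Gq}_{W^{1,\infty}}\le C\norm q_p$.

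\emph{Global part.} Let $u$ be globally bounded with $u(t)\to\mu$ uniformly. We use the positive-time compactness from \cref{lem:omega}: uniform $C^{2+\alpha}$ bounds hold for $t\ge1$. The smoothing estimates of parabolic Schauder theory upgrade uniform convergence to $\norm{u(t)-\mu}_{C^{2+\alpha}}\to0$. This step uses interpolation between the uniform $C^{2+\alpha'}$ bounds for some $\alpha'>\alpha$ and the $L^\infty$ convergence.

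We then pick $t_1$ with $\norm{u(t_1)-\mu}_{C^{2+\alpha}}\le\varepsilon_0$ and apply the local result from time $t_1$. This gives decay $Ce^{-\gamma(t-t_1)}$ for $t\ge t_1$. On $[0,t_1]$ we use boundedness and enlarge $C$ to absorb the factor $e^{\gamma t_1}$; the constant $C$ is allowed to depend on the datum.

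\emph{Main obstacle.} The delicate step is obtaining the exact exponent $\gamma$ rather than $\gamma-\delta$. A naive Gronwall argument in $L^2$ would lose $\delta$ through the nonlinear term. The weighted-norm argument above avoids this, because the nonlinear forcing decays at rate $2\gamma>\gamma$. We still need to check that the convolution of $e^{-\gamma(t-s)}$ with $e^{-2\gamma s}$ produces no polynomial factor in $t$; it does not, since the two rates differ. We also need to check that the $t^{-1/2-1/p}$ singularity is integrable, which holds for $p>2$.
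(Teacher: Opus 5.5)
Your proposal is correct. The reduction of the general statement to the local one is the same as in the paper; the local part takes a different route.

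\emph{The paper's local argument.} The paper does not run a fixed-point argument. It imports global existence and a preliminary bound $\norm{q(t)}_\infty\le C_*e^{-\kappa t}$, with some unspecified $\kappa>0$, from Wang--Winkler (their Theorem~1.1(i) and Lemmas~3.2--3.3). This import is why the neighborhood there is measured in $C^{2+\alpha}$. It then tests \eqref{eq:q} by $q$ and uses the radial Poincar\'e inequality to get
\[
\frac{\dd}{\dd t}\norm{q}_2^2\le\bigl(-2\gamma+\norm{q}_\infty\bigr)\norm{q}_2^2 .
\]
Gronwall gives $\norm{q(t)}_2\le C\varepsilon e^{-\gamma t}$ with no loss, because $\int_0^\infty\norm{q(t)}_\infty\dd t<\infty$. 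Gaussian bounds for the Neumann evolution operator with bounded drift then pass from $L^2$ to $L^\infty$.

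\emph{A correction to your remark.} Your claim that an $L^2$ Gronwall argument necessarily loses a $\delta$ is therefore inaccurate. It loses nothing once the coefficient $\norm{q(t)}_\infty$ is time-integrable. The real issue is that some a priori decay is needed: the paper imports it, while your weighted norm produces it internally.

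\emph{What your route requires.} Your route works because $e^{t(\Delta_N+\mu)}=e^{\mu t}e^{t\Delta_N}$ and the radial mean-zero subspace is invariant, so the exact exponent comes straight from the radial gap $\Lambda$. You must check that both semigroup estimates hold with rate $\Lambda$ on that subspace. The standard divergence estimate carries the first nonzero Neumann eigenvalue of the disk, which belongs to a nonradial mode and is smaller than $\Lambda$. The radial version follows by splitting at $t=1$ and using the spectral gap on the radial mean-zero part of $L^2$. This applies to your forcing because $q\nabla\mathcal Gq$ is radial with vanishing normal component.

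\emph{Comparison.} Your approach is self-contained and obtains existence and the sharp rate in one stroke. It needs smallness only in $L^\infty$, so the $C^{2+\alpha}$ hypothesis and the interpolation step in your global part become superfluous. The paper's approach avoids divergence-form semigroup estimates on a restricted subspace and uses only an energy identity plus kernel bounds. Its cost is reliance on an external preliminary decay result.
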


\begin{proof}
Global existence and preliminary exponential convergence for small
radial data are supplied by
\cite[Theorem~1.1(i)]{Wang2026}.
The comparison estimates in \cite[Lemmas~3.2 and 3.3]{Wang2026},
followed by positive-time smoothing, give constants $C_*,\kappa>0$
uniform on a sufficiently small $C^{2+\alpha}$ neighborhood such that
\begin{equation}\label{eq:preliminary}
\norm{q(t)}_\infty\le C_*e^{-\kappa t},\qquad\text{for }t\in[0,\infty).
\end{equation}
On $0\le t\le1$, local continuous dependence also gives
$\norm{q(t)}_\infty\le C\varepsilon$.
We use these established preliminary estimates only to obtain
integrability of $\norm{q(t)}_\infty$; the remaining argument recovers
the full spectral exponent $\gamma$.

Testing \eqref{eq:q} by $q$ gives
\begin{equation*}
\frac12\frac{\dd}{\dd t}\norm q_2^2
=-\norm{\nabla q}_2^2+\mu\norm q_2^2
+\frac12\int_\Omega q^3\dd x,\qquad \text{for }t\in(0,\infty).
\end{equation*}
Here the nonlinear term equals
$\int q\nabla q\cdot\nabla v=-\frac12\int q^2\Delta v
=\frac12\int q^3$.
By \eqref{eq:poincare},
\[
\frac{\dd}{\dd t}\norm q_2^2
\le\bigl(-2\gamma+\norm q_\infty\bigr)\norm q_2^2,
\qquad \text{for }t\in(0,\infty).
\]
Equation \eqref{eq:preliminary} makes $\norm{q(t)}_\infty$ integrable.
Gronwall's inequality yields
\begin{equation}\label{eq:q-exact-L2}
\norm{q(t)}_2\le C\varepsilon e^{-\gamma t},\qquad \text{for }t\in[0,\infty).
\end{equation}

To obtain the uniform norm, rewrite \eqref{eq:q} as
\[
\begin{aligned}
q_t&=\Delta q-\nabla v\cdot\nabla q+(\mu+q)q,
&&\text{for }(x,t)\in\Omega\times(0,\infty),\\
\partial_\nu q&=0,&&\text{for }(x,t)\in\partial\Omega\times(0,\infty).
\end{aligned}
\]
Its drift and zeroth-order coefficient are uniformly bounded.
The Neumann evolution operator $P(t,s)$ therefore satisfies
\[
\norm{P(t,s)f}_\infty\le C(t-s)^{-1/2}\norm f_2,
\qquad\text{for }s\in[0,\infty),\ t\in(s,s+1]\text{ and }f\in L^2(\Omega),
\]
by the Gaussian kernel estimate \cite{Daners2000}.
Indeed, in dimension two the $L^2$ norm in the second variable of
$C(t-s)^{-1}\exp(-|x-y|^2/[C(t-s)])$ is at most
$C(t-s)^{-1/2}$.
For $t\ge1$, apply this with $s=t-1$ and
$q(t)=P(t,t-1)q(t-1)$, using \eqref{eq:q-exact-L2}.
For $0\le t\le1$, use the local continuous-dependence estimate.
Finally, elliptic regularity gives
$\norm{v(t)}_{W^{1,\infty}}\le C\norm{q(t)}_\infty$ for $t\ge0$.
This proves \eqref{eq:local-exact}.

For a bounded solution already converging uniformly, positive-time
regularity and interpolation imply $q(t)\to0$ in
$C^{2+\alpha}(\overline\Omega)$.
Apply the preceding argument after a sufficiently large time.
Changing the constant controls the preceding finite interval and proves
\eqref{eq:exact-decay}.
\end{proof}

\begin{proposition}[Optimality of the constant-state decay exponent]\label{prop:optimal}
For every $0<M<\Lambda\pi$, the decay exponent $\gamma=\Lambda-\mu$
in \eqref{eq:exact-decay} is optimal among small radial fixed-mass
perturbations of the constant equilibrium.
\end{proposition}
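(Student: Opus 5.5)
To prove optimality we will exhibit, for every small $\delta>0$, a smooth radial fixed-mass datum within $\delta$ of $\mu$ (in $C^{2+\alpha}$, hence in $L^\infty$) whose solution does \emph{not} decay faster than $e^{-\gamma t}$. Concretely, take
\[
u_0^\delta\coloneq\mu+\delta\varphi_1,\qquad \varphi_1(r)=c_1J_0(j_{1,1}r),
\]
the principal radial Neumann mode of eigenvalue $\Lambda$. It has zero mean, it satisfies the Neumann condition, and it is nonnegative for small $\delta$. By \cref{prop:rate}, $\norm{q(t)}_\infty\le C\delta e^{-\gamma t}$, uniformly in $\delta\le\varepsilon_0$. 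The goal is a matching lower bound on the projection
\[
a(t)\coloneq\int_\Omega q(t)\varphi_1\dd x .
\]

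Testing \eqref{eq:q} by $\varphi_1$ and using self-adjointness of $\Delta_N$ gives
\[
a'(t)=-\gamma a(t)+\int_\Omega q\nabla\mathcal Gq\cdot\nabla\varphi_1\dd x .
\]
The nonlinear remainder is bounded by $C\norm{q}_\infty\norm{q}_2$, because $\norm{\nabla\mathcal Gq}_2\le C\norm q_2$. By \eqref{eq:local-exact} it is therefore at most $C\delta^2e^{-2\gamma t}$. Variation of constants then yields
\[
e^{\gamma t}a(t)\ge a(0)-\int_0^\infty e^{\gamma s}C\delta^2e^{-2\gamma s}\dd s=\delta-\frac{C\delta^2}{\gamma}\ge\frac\delta2
\]
for $\delta$ small. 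Since $|a(t)|\le\norm{q(t)}_\infty\norm{\varphi_1}_1$, this gives $\norm{u(t)-\mu}_\infty\ge c\delta e^{-\gamma t}$ for all $t\ge0$. Consequently no exponent $\gamma'>\gamma$ can serve in an estimate of the form \eqref{eq:exact-decay}, even with constants depending on the datum, for these arbitrarily small radial fixed-mass perturbations.

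The main point to check is that the $\delta$-uniform estimate of \cref{prop:rate} really applies to $u_0^\delta$. This holds because $\norm{u_0^\delta-\mu}_{C^{2+\alpha}}=\delta\norm{\varphi_1}_{C^{2+\alpha}}$ and the datum is smooth, radial and Neumann-compatible. The remaining detail is the bound on the nonlinear term, which uses only the $W^{1,\infty}$ control of $v$ already contained in \eqref{eq:local-exact}.
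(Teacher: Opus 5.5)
Your proposal is correct and follows essentially the same route as the paper. Both use the first-mode datum $\mu+\delta\varphi_1$, the projection $a(t)=\int_\Omega q\varphi_1\dd x$ satisfying $a'=-\gamma a+R$, the bound $|R|\le C\delta^2e^{-2\gamma t}$ from \cref{prop:rate}, and $|a|\le\norm{q}_\infty\norm{\varphi_1}_1$; the only difference is cosmetic, in that you extract the uniform lower bound $e^{\gamma t}a(t)\ge\delta/2$ where the paper records the limit $e^{\gamma t}a(t)\to\delta+O(\delta^2)\ne0$.
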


\begin{proof}
Let $\varphi_1(x)=c_1J_0(j_{1,1}|x|)$ for $x\in\overline\Omega$,
with $c_1>0$ chosen so that $\norm{\varphi_1}_2=1$.
Take $u_0\coloneq\mu+\varepsilon\varphi_1$, $\varepsilon>0$ small.
The datum is positive, has mass $M$, and satisfies the Neumann condition.
For $a(t)\coloneq\int_\Omega q(t)\varphi_1\dd x$, $t\in[0,\infty)$,
equation \eqref{eq:q} gives
\[
a'=-\gamma a+R(t),\qquad
R(t)\coloneq\int_\Omega q\nabla v\cdot\nabla\varphi_1\dd x,
\qquad \text{for }t\in(0,\infty).
\]
\cref{prop:rate} gives
$|R(t)|\le C\varepsilon^2e^{-2\gamma t}$ for $t>0$.
Hence
\[
e^{\gamma t}a(t)\to
\varepsilon+\int_0^\infty e^{\gamma s}R(s)\dd s
=\varepsilon+O(\varepsilon^2)=A_\infty\ne0\qquad\text{as }t\to\infty.
\]
Since $|a(t)|\le\norm{q(t)}_\infty\norm{\varphi_1}_1$,
a faster exponential estimate is impossible for this solution.
\end{proof}

\subsection{Critical algebraic decay and the leading profile}

\begin{proposition}[Sharp critical decay by spectral decomposition]
\label{prop:critical-rate}
Let $M=\Lambda\pi$, so $\mu=\Lambda$, and suppose $u_0\preceq\mu$.
Then \eqref{eq:critical-decay} holds. If $u_0\ne\mu$, the normalized
first radial Neumann eigenfunction $\varphi_1$ and
the positive coefficient $\kappa_c$ in \cref{thm:rates} satisfy
\begin{equation}\label{eq:critical-expansion}
\norm{u(t)-\mu+\varphi_1/(\kappa_c t)}_\infty
+\norm{v(t)+\varphi_1/(\Lambda\kappa_c t)}_{W^{1,\infty}}
=O\left(\frac{\log t}{t^2}\right)\qquad\text{as }t\to\infty.
\end{equation}
In particular, the convergence is of exact order $t^{-1}$.
\end{proposition}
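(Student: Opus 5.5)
The plan is to combine a one-sided Riccati inequality for the cumulative moment, which gives the $t^{-1}$ upper bound, with a spectral decomposition of $q\coloneq u-\mu$ along $\varphi_1$, which gives the leading profile. By \cref{prop:below-constant} the solution is global, uniformly bounded and converges uniformly to $\mu=\Lambda$. Positive-time regularity then gives $q(t)\to0$ in $C^{2+\alpha}(\overline\Omega)$.

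\textbf{Step 1: algebraic upper bound.} Use the deficit $Q=M\xi-\U\ge0$ and the moment $A(t)=\int_0^1\psi_0Q\dd\xi$ from \eqref{eq:deficit-moment}. Since $\mu=\Lambda$ and $u(\sqrt\xi,t)-\mu=-Q_\xi/\pi$, the identity \eqref{eq:deficit-moment-evolution} becomes
\[
A'=-\frac1\pi\int_0^1\psi_0QQ_\xi\dd\xi=\frac1{2\pi}\int_0^1\psi_0'Q^2\dd\xi .
\]
The Cauchy--Schwarz step of \cref{prop:above} then gives $A'\le-c_0A^2$, hence $A(t)\le C/(1+t)$.

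To pass from the moment to norms, I would not work with $Q$ directly. Instead I would test \eqref{eq:q} against $\varphi_1$ and against the orthogonal complement.

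\textbf{Step 2: spectral decomposition.} Write $q=a(t)\varphi_1+w$ with $\int_\Omega w\varphi_1\dd x=0$.
\begin{itemize}
\item Since $(\Delta_N+\Lambda)\varphi_1=0$, the mode equation is $a'=\int_\Omega q\nabla\mathcal Gq\cdot\nabla\varphi_1\dd x$.
\item Inserting $q=a\varphi_1$ and $\mathcal G\varphi_1=\varphi_1/\Lambda$, and using $\int_\Omega\varphi_1|\nabla\varphi_1|^2\dd x=\frac\Lambda2\int_\Omega\varphi_1^3\dd x$, gives $a'=\kappa_ca^2+O(|a|\norm w+\norm w^2)$.
\item For $w$, the linear part has gap $j_{1,2}^2-\Lambda>0$ on higher radial modes. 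The quadratic forcing is $O(\norm q_\infty\norm q_2)$. An $L^2$ energy estimate plus the Gaussian smoothing of \cref{prop:rate} should give $\norm{w(t)}_\infty\lesssim e^{-ct}+\int_0^te^{-c(t-s)}\norm{q(s)}^2\dd s$.
\item The relation $Q_\xi=-\pi q$ makes $A$ a fixed linear functional of $q$, namely $A=c_\psi a+\ell(w)$ with $c_\psi$ of fixed sign. Hence Step~1 bounds $|a|\lesssim(1+t)^{-1}+\norm w$. Closing this loop yields $\norm q_\infty\le C/(1+t)$ and $\norm w=O(t^{-2})$.
\item Elliptic regularity, as in \cref{prop:rate}, transfers the bound to $v$. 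This proves \eqref{eq:critical-decay}.
\end{itemize}

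\textbf{Step 3: exact rate and profile.} The main obstacle is to exclude a faster decay when $u_0\ne\mu$. By \cref{lem:comparison}, $Q(\cdot,t)>0$, so $A(t)>0$ for $t>0$. Also
\[
A'=\int_0^1\psi_0(u-\mu)Q\dd\xi\ge-\norm{q(t)}_\infty A .
\]
If $\norm{q}_\infty$ were integrable on $(1,\infty)$, then $A$ would stay bounded below by a positive constant. This contradicts $Q\to0$ uniformly. So $\norm q_\infty\notin L^1$, which rules out any decay faster than $t^{-1}$ and in particular exponential decay. Then $\norm w=O(t^{-2})$ forces $|a|\not\lesssim t^{-1-\delta}$.

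The sign of $a$ is negative: Step~2 gives $a\approx A/c_\psi$, the constant $c_\psi$ has the sign that makes this negative, consistent with $u_0\preceq\mu$, and $A>0$. Dividing the Riccati equation by $a^2$ gives
\[
\Bigl(\frac1a\Bigr)'=-\kappa_c+O(t^{-1}),\qquad \frac1a=-\kappa_ct+O(\log t).
\]
Hence $a=-1/(\kappa_ct)+O(\log t/t^2)$. Combined with $\norm w=O(t^{-2})$, the smoothing step, and the identity $v=\mathcal Gq$ with $\mathcal G\varphi_1=\varphi_1/\Lambda$, this gives \eqref{eq:critical-expansion}.

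The delicate bookkeeping is in Step~2. There the coupling between $a$, $w$ and the $\log t$ remainder must be closed without circularity. I would first run a bootstrap on $\sup_{s\le t}(1+s)\norm{q(s)}_\infty$, and only then refine $a$ through the Riccati equation.
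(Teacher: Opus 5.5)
Your Steps~1--2 are a workable and more direct route to the upper bound \eqref{eq:critical-decay} than the paper's. The Riccati inequality $A'\le-c_0A^2$ bounds the first mode by $C(1+t)^{-1}$ from the start, so the stable part can be bootstrapped against powers of $t$.

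The gap is the division step in Step~3. Your non-integrability argument only shows $a\notin L^1(1,\infty)$. That gives no pointwise lower bound and is compatible with $o(t^{-1})$ decay such as $1/(t\log t)$, so it does not ``rule out any decay faster than $t^{-1}$''. Dividing $a'=\kappa_ca^2+O(|a|\norm w+\norm w^2)$ by $a^2$ leaves the error $O(\norm w/|a|+\norm w^2/a^2)$. With only $\norm w=O(t^{-2})$, this is $O(t^{-1})$ precisely when $|a(t)|\gtrsim t^{-1}$ for all large $t$, which is the exact-order claim itself. Your sign argument is circular in the same way: $a\approx A/c_\psi$ needs $|\ell(w)|\ll A$, i.e.\ a lower bound on $A$ relative to $\norm w$, which you have not proved.

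The missing idea is to measure $w$ against the amplitude rather than against $t$. First, $\ell\equiv0$. Since $\Phi_1(\xi)=\varphi_1(\sqrt\xi)$ satisfies $\Phi_1'=-\frac{c_1j_{1,1}}2\psi_0$, integrating by parts in $a=\int_0^1(-Q_\xi)\Phi_1\dd\xi$ gives exactly $a=-\frac{c_1j_{1,1}}2A$. So $b=-a>0$ is decreasing, and your own inequality $|A'|\le\norm{q}_\infty A$ becomes $|b'/b|\le\norm{q(t)}_\infty\to0$. Hence $b(s)\le b(t)e^{\varepsilon(t-s)}$ and $b(t)\ge c\,e^{-\varepsilon t}$ for $T_\varepsilon\le s\le t$.

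This slow variation lets the variation-of-constants estimate for $w$ close first as $\norm w\le Cb$ and then as $\norm w=O(b^2)$. Exponential transients are absorbed because $b$ decays subexponentially. Then $b'=-\kappa_cb^2+O(b^3)$, so $(1/b)'=\kappa_c+O(b)$. This yields $b\asymp t^{-1}$ and $1/b=\kappa_ct+O(\log t)$ with no prior lower bound; it is the paper's route.

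Alternatively, you can keep $\norm w=O(t^{-2})$ and work with $y=1/b$, which satisfies $y'\le\kappa_c+Ct^{-2}y+Ct^{-4}y^2$. The supersolution $Kt$ gives $b\ge1/(Kt)$ once it is started at a large $T$ with $b(T)\ge C'T^{-3}$, and such $T$ exist because $b\notin L^1$. One of these arguments must come before the division.

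Finally, you use $\kappa_c>0$ without justification. The paper obtains it by evaluating $A'=\frac1{2\pi}\int_0^1\psi_0'Q^2\dd\xi$ on $q=a\varphi_1$ and comparing the quadratic terms.
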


\begin{proof}
The constant datum gives the stationary solution, so assume $u_0\ne\mu$.
\cref{prop:below-constant} gives global boundedness and
$q=u-\Lambda\to0$ uniformly. Positive-time regularity provides uniform
$C^m(\overline\Omega)$ bounds for $t\ge1$ and every integer $m\ge0$.
At this mass the operator $L\coloneq\Delta_N+\Lambda$ on radial mean-zero
functions has a simple eigenvalue zero with eigenfunction $\varphi_1$;
all its other eigenvalues are $\Lambda-j_{1,k}^2<0$, $k\ge2$.
Decompose the solution into these eigenspaces:
\begin{equation}\label{eq:critical-splitting}
\begin{aligned}
q(t)&=a(t)\varphi_1+z(t),&
a(t)&\coloneq\int_\Omega q(t)\varphi_1\dd x,\\
\int_\Omega z(t)\varphi_1\dd x&=0,&&\text{for }t\in[0,\infty).
\end{aligned}
\end{equation}

We first control the full perturbation by its first-mode amplitude.
Retain $Q$ and $A$ from \eqref{eq:deficit-moment}, and set
$\Phi_1(\xi)\coloneq\varphi_1(\sqrt\xi)$ for $\xi\in[0,1]$.
Since $\Phi_1'=-c_1j_{1,1}\psi_0/2$, integration by parts gives
\begin{equation}\label{eq:critical-amplitude}
\begin{aligned}
a(t)=\int_0^1(-Q_\xi)\Phi_1\dd\xi
=-\frac{c_1j_{1,1}}2 A(t),\qquad
b(t)\coloneq -a(t)>0,\\
\text{for }t\in[0,\infty).
\end{aligned}
\end{equation}
The positivity follows from \cref{lem:comparison}.
At $\mu=\Lambda$, equations~\eqref{eq:deficit-evolution} and
\eqref{eq:deficit-moment-evolution} give the exact identities
\begin{equation}\label{eq:critical-moment}
A'=\int_0^1\psi_0q(\sqrt\xi,t)Q\dd\xi
=\frac1{2\pi}\int_0^1\psi_0'Q^2\dd\xi<0,
\qquad \text{for }t\in(0,\infty).
\end{equation}
Here $q(\sqrt\xi,t)=-Q_\xi(\xi,t)/\pi$ for
$(\xi,t)\in[0,1]\times(0,\infty)$.
Consequently $b$ decreases to zero and
\begin{equation*}
\begin{aligned}
\left|\frac{b'(t)}{b(t)}\right|\le\norm{q(t)}_\infty,
\qquad \text{for }t\in(0,\infty),\\
\norm{q(t)}_\infty\to0\qquad\text{as }t\to\infty.
\end{aligned}
\end{equation*}
For every $\varepsilon>0$ there is $T_\varepsilon$ such that
\begin{equation}\label{eq:critical-ratio}
b(s)\le b(t)e^{\varepsilon(t-s)},\qquad
b(t)\ge b(T_\varepsilon)e^{-\varepsilon(t-T_\varepsilon)}
\qquad\text{for }T_\varepsilon\le s\le t.
\end{equation}

To turn this amplitude into a norm bound, we use the stable spectral
subspace of \eqref{eq:q}.
Fix $p>2$ and $1/2+1/p<\theta<1$, and define
$X_\theta\coloneq D((1-\Delta_N)^\theta)\cap L^p_{\mathrm{rad},0}(\Omega)$,
where $\Delta_N$ is the Neumann Laplacian and
$L^p_{\mathrm{rad},0}(\Omega)$ denotes the radial mean-zero subspace.
Then $X_\theta\hookrightarrow C^1(\overline\Omega)$.
Let $P_1f\coloneq(\int_\Omega f\varphi_1\dd x)\varphi_1$ for
$f\in L^p_{\mathrm{rad},0}(\Omega)$, and set
$\mathcal B(q)\coloneq-\diver(q\nabla\mathcal Gq)$ for $q\in X_\theta$.
Projection of \eqref{eq:q} gives
\begin{equation}\label{eq:critical-modal-equations}
a'=\int_\Omega q\nabla\mathcal Gq\cdot\nabla\varphi_1\dd x,
\qquad z_t=Lz+(I-P_1)\mathcal B(q),\qquad \text{for }t\in(0,\infty).
\end{equation}
Choose $0<\eta<j_{1,2}^2-\Lambda$ and $0<\varepsilon<\eta/2$.
On the stable subspace, the analytic semigroup satisfies
\begin{align*}
\norm{e^{rL}(I-P_1)f}_{X_\theta}
&\le C(1+r^{-\theta})e^{-\eta r}\norm f_p,\\
\norm{e^{rL}(I-P_1)g}_{X_\theta}
&\le Ce^{-\eta r}\norm g_{X_\theta},\qquad\text{for }r\in(0,\infty),
\end{align*}
for radial mean-zero $f\in L^p(\Omega)$ and $g\in X_\theta$.
Since $\mathcal B(q)=q^2-\nabla q\cdot\nabla\mathcal Gq$ and
$\norm{\nabla\mathcal Gq}_\infty\le C\norm q_\infty$, we have
\[
\norm{\mathcal B(q)}_p
\le C\norm q_\infty\norm q_{X_\theta},\qquad \text{for }q\in X_\theta.
\]
For any $\delta>0$, uniform convergence of $q$ to zero allows us to
choose $T\ge\max\{1,T_\varepsilon\}$ such that
$\norm{q(t)}_\infty\le\delta$ for $t\ge T$.
Writing $Z(t)\coloneq\norm{z(t)}_{X_\theta}$ for $t\in(0,\infty)$ and using
$q=-b\varphi_1+z$, we obtain
\[
\norm{\mathcal B(q(t))}_p\le C\delta\bigl(b(t)+Z(t)\bigr),
\qquad\text{for }t\in[T,\infty).
\]
Variation of constants in \eqref{eq:critical-modal-equations} gives
\[
Z(t)\le Ce^{-\eta(t-T)}Z(T)
+C\delta\int_T^t(1+(t-s)^{-\theta})e^{-\eta(t-s)}
\bigl(b(s)+Z(s)\bigr)\dd s,\qquad\text{for }t\in[T,\infty).
\]
For each finite $R>T$, the quantity
\[
H_R\coloneq\sup_{T\le t\le R}\frac{Z(t)}{b(t)},\qquad \text{for }R\in(T,\infty),
\]
is finite by positive-time regularity and positivity of $b$.
Divide the preceding estimate by $b(t)$ and use
\eqref{eq:critical-ratio}. Taking the supremum on $[T,R]$ yields
\[
H_R\le C\frac{Z(T)}{b(T)}+C\delta J(1+H_R),\qquad
\text{for }R\in(T,\infty),
\]
where
\[
J\coloneq\int_0^\infty(1+r^{-\theta})e^{-(\eta-\varepsilon)r}\dd r<\infty.
\]
Here $J$ is finite because $\theta<1$ and $\varepsilon<\eta$.
Choose $\delta$ so small that $C\delta J\le1/2$, and then choose $T$
as above. Absorbing the $H_R$ term gives a bound independent of $R$.
Consequently $Z(t)\le Cb(t)$ for $t\ge T$, and hence
\begin{equation}\label{eq:critical-norm-amplitude}
\norm{q(t)}_{C^1}\le C\norm{q(t)}_{X_\theta}\le Cb(t),
\qquad\text{for }t\in[T,\infty).
\end{equation}
This argument uses only uniform convergence of $q$, positivity of $b$,
and \eqref{eq:critical-ratio}; no decay rate for the stable component
has been assumed.

We can now sharpen the estimate for that component. By
\eqref{eq:critical-norm-amplitude},
$\norm{\mathcal B(q(t))}_p\le C\norm{q(t)}_{C^1}^2\le Cb(t)^2$ for $t\ge T$.
Variation of constants gives
\begin{align}
\norm{z(t)}_{X_\theta}
&\le C e^{-\eta(t-T)}\norm{z(T)}_{X_\theta}
+C\int_T^t(1+(t-s)^{-\theta})e^{-\eta(t-s)}b(s)^2\dd s\notag\\
&\le C e^{-\eta(t-T)}
+C b(t)^2\int_0^\infty(1+r^{-\theta})e^{-(\eta-2\varepsilon)r}\dd r
=O(b(t)^2).\label{eq:critical-stable-remainder}
\end{align}
These estimates hold for $t\ge T$, with the remainder understood as
$t\to\infty$.
The integral is finite because $\theta<1$.
The second inequality in \eqref{eq:critical-ratio} also absorbs the
exponential transient into $O(b(t)^2)$.

We now compute the leading quadratic coefficient, including its sign.
Since $\mathcal G\varphi_1=\varphi_1/\Lambda$, the first equation in
\eqref{eq:critical-modal-equations} and
\eqref{eq:critical-stable-remainder} imply
\begin{equation}\label{eq:critical-scalar}
\begin{aligned}
a'=\kappa_c a^2+O(b^3),\qquad
b'=-\kappa_c b^2+O(b^3),\qquad\text{as }t\to\infty,\\
\kappa_c=\frac1\Lambda\int_\Omega
\varphi_1|\nabla\varphi_1|^2\dd x
=\frac12\int_\Omega\varphi_1^3\dd x.
\end{aligned}
\end{equation}
The last equality is the Neumann integration-by-parts identity for
$-\Delta\varphi_1=\Lambda\varphi_1$.
Its strict positivity follows directly from the cumulative weight.
Indeed, put
\[
h_1(\xi)\coloneq\int_{B_{\sqrt\xi}}\varphi_1\dd x
=\frac{2\pi c_1}{j_{1,1}}\sqrt\xi J_1(j_{1,1}\sqrt\xi),
\qquad \text{for }\xi\in[0,1].
\]
Then $h_1(\xi)>0$ for $\xi\in(0,1)$.
Comparing the quadratic terms in \eqref{eq:critical-moment} when
$q=a\varphi_1$ and $Q=-a h_1$, and using
\eqref{eq:critical-amplitude}, gives
\begin{equation*}
\kappa_c=-\frac{c_1j_{1,1}}{4\pi}
\int_0^1\psi_0'(\xi)h_1(\xi)^2\dd\xi>0.
\end{equation*}
Thus no separate positivity result for the cubic Bessel integral is needed.

Since $b>0$ and $b\to0$, \eqref{eq:critical-scalar} gives
$(1/b)'=\kappa_c+O(b)$ as $t\to\infty$ and initially yields
$b(t)\asymp t^{-1}$ as $t\to\infty$.
Integrating once more gives
\[
\frac1{b(t)}=\kappa_c t+O(\log t),\qquad
b(t)=\frac1{\kappa_c t}+O\left(\frac{\log t}{t^2}\right),\qquad
\norm{z(t)}_{X_\theta}=O(t^{-2})\qquad\text{as }t\to\infty.
\]
Because $X_\theta\hookrightarrow C^1$,
\eqref{eq:critical-splitting} gives the asserted expansion for $u$.
Applying $\mathcal G$, together with its elliptic estimate into
$W^{1,\infty}$ and $\mathcal G\varphi_1=\varphi_1/\Lambda$, gives the
expansion for $v$.
Changing the constant on the preceding finite time interval proves
\eqref{eq:critical-decay}. The nonzero leading profile proves sharpness.
All reductions above are direct projections and semigroup estimates along
the given trajectory.
\end{proof}

\subsection{Exponential decay and optimality at the increasing equilibrium}

\begin{proposition}[Exact exponential rate at the increasing limit]
\label{prop:increasing-rate}
Fix $M>\Lambda\pi$. Every global bounded radial solution of mass $M$
converging uniformly to $(u_M^{\mathrm i},v_M^{\mathrm i})$ satisfies
\eqref{eq:increasing-rate}, with the positive spectral gap
\eqref{eq:increasing-gap} of this equilibrium.
In particular, this applies when $M>\Lambda\pi$, $u_0\preceq\mu$, and
$u_0\ne\mu$, and to every distinct datum less concentrated than a
non-monotone equilibrium of the same mass.
The exponent is optimal among small radial fixed-mass perturbations of
the equilibrium, including perturbations in the class $u_0\preceq\mu$.
\end{proposition}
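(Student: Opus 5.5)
The plan has three parts: the decay estimate for every solution converging to $u_M^{\mathrm i}$, the list of trajectories it covers, and the optimality of $\gamma_M$.

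\emph{Decay.} The idea is to reduce to the uniform local estimate of \cref{lem:increasing-local-control}. Suppose a global bounded radial solution of mass $M$ converges uniformly to $u_M^{\mathrm i}$. Then there is a time $t_1\ge0$ with $\norm{u(t_1)-u_M^{\mathrm i}}_\infty\le\varepsilon_M$. By uniqueness of classical solutions, the trajectory from $t_1$ onward is the solution with datum $u(t_1)$. That datum is nonnegative, radial and of mass $M$, so \cref{lem:increasing-local-control} gives
\[
\norm{u(t)-u_M^{\mathrm i}}_\infty+\norm{v(t)-v_M^{\mathrm i}}_{W^{1,\infty}}\le C_M\varepsilon_M e^{-\gamma_M(t-t_1)}
\]
for $t\ge t_1$. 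On $[0,t_1]$ the left-hand side is bounded by uniform boundedness and elliptic regularity. Enlarging the constant by the factor $e^{\gamma_M t_1}$ therefore gives \eqref{eq:increasing-rate} on all of $[0,\infty)$. The only point to check is that the lemma is stated for $C(\overline\Omega)$ data, which holds by its final approximation remark.

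\emph{Applicable trajectories.} For data with $u_0\preceq\mu$ and $u_0\ne\mu$, convergence is supplied by \cref{prop:below-constant}. For distinct data less concentrated than a non-monotone equilibrium, it is supplied by \cref{prop:below-non-monotone}. In both cases the decay estimate above applies directly.

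\emph{Optimality.} This is the main step. I will build small perturbations in the class $u_0\preceq\mu$ whose decay is exactly $e^{-\gamma_M t}$.

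Let $p_1$ be the positive principal eigenfunction of $L_{F_M^{\mathrm i}}$ from \cref{lem:increasing-gap}, so $L_{F_M^{\mathrm i}}p_1=-\gamma_M p_1$. Let $\psi_M=p_1/a_M$ be the positive adjoint eigenfunction with $\psi_M(1)=0$. Take the datum with cumulative profile $F_M^{\mathrm i}+\varepsilon p_1$. For small $\varepsilon$ this profile is increasing, so it defines a positive density. It satisfies the Neumann compatibility condition because $p_1''(1)=0$. Its distance from $F_M^{\mathrm i}$ is of order $\varepsilon\xi(1-\xi)$. Since $M\xi-F_M^{\mathrm i}$ has simple endpoint zeros, the profile stays below $M\xi$, so the datum lies in the class $u_0\preceq\mu$.

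Set $A(t)\coloneq\int_0^1\psi_M P\dd\xi$ with $P=\U-F_M^{\mathrm i}$. As in the proof of \cref{prop:above},
\[
A'=-\gamma_M A-\frac1{2\pi}\int_0^1\psi_M'P^2\dd\xi.
\]
By \eqref{eq:increasing-local-cumulative}, $|P|\le a(t)p_1\le 2A\varepsilon e^{-\gamma_M t}\,p_1$, so the remainder is $O(\varepsilon^2e^{-2\gamma_M t})$. Hence $e^{\gamma_M t}A(t)\to\varepsilon\int_0^1\psi_Mp_1\dd\xi+O(\varepsilon^2)$, which is nonzero for small $\varepsilon$.

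Because $|A(t)|\le C\norm{u(t)-u_M^{\mathrm i}}_\infty$, no exponent larger than $\gamma_M$ can hold uniformly for these perturbations, which proves optimality.

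The one delicate point is the boundary terms in the integration by parts at $\xi=0$ and $\xi=1$. They vanish because $\psi_M$ is regular at $0$, $\psi_M(1)=0$, and $P=O(\xi)$, exactly as in \cref{prop:above}.
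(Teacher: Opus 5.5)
Your proposal is correct and follows the paper's proof. The decay part is the same reduction to \cref{lem:increasing-local-control} after a large time. For optimality you use the same datum $F_M^{\mathrm i}+\varepsilon p_1$ and, up to a constant factor, the same first-mode moment: since $m=(4C_M^*a_M)^{-1}$, the paper's $\langle P,p_1\rangle_{\mathcal H}$ is a multiple of your $\int_0^1\psi_MP\dd\xi$.

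The only difference is the quadratic term. The paper bounds it as $\langle Pq,p_1\rangle_{\mathcal H}\ge-\norm{q(t)}_\infty\langle P,p_1\rangle_{\mathcal H}$, using $P\ge0$ and integrability of $\norm{q(t)}_\infty$. You instead bound $\int_0^1\psi_M'P^2\dd\xi$ with the barrier estimate. In fact $\psi_M'<0$ on $[0,1]$, by the argument of \cref{lem:unstable} since $-\gamma_M<\mu$. So your quadratic term is nonnegative, and $e^{\gamma_Mt}A(t)\ge A(0)>0$ follows directly without the perturbative limit.
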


\begin{proof}
Retain $F_M^{\mathrm i}$ and $H_M^{\mathrm i}$ from \cref{lem:increasing-gap}.
By uniform convergence, choose $T>0$ so that $u(T)$ lies in the
neighborhood of \cref{lem:increasing-local-control}.
Applying that lemma with initial time $T$ gives
\eqref{eq:increasing-rate}; increasing the constant covers $[0,T]$.
For optimality, write $u_*\coloneq u_M^{\mathrm i}$ and
$L\coloneq L_{F_M^{\mathrm i}}$. By
\cref{prop:increasing-nondegeneracy,lem:increasing-gap}, $L$ is
self-adjoint in
\[
\mathcal H=L^2((0,1),m(\xi)\dd\xi),\qquad
m(\xi)=\frac1{4\xi u_*(\sqrt\xi)},\quad\text{for }\xi\in(0,1],
\]
and has a positive principal eigenfunction $p_1$ with
$Lp_1=-\gamma_Mp_1$. Normalize $\norm{p_1}_{\mathcal H}=1$ and take
\[
\begin{aligned}
\U_0(\xi)&\coloneq F_M^{\mathrm i}(\xi)+\varepsilon p_1(\xi),
&&\text{for }\xi\in[0,1],\\
u_0(r)&\coloneq u_*(r)+\frac{\varepsilon}{\pi}p_1'(r^2),
&&\text{for }r\in[0,1].
\end{aligned}
\]
For sufficiently small $\varepsilon>0$, these are smooth positive data
of mass $M$ approaching $u_*$ in every $C^k$ norm.
The eigenvalue equation gives $p_1''(1)=0$, hence the Neumann condition.
Moreover, $-H_M^{\mathrm i}$ and $p_1$ are comparable to $\xi(1-\xi)$,
as in the proof of \cref{lem:increasing-local-control}, so
$\U_0\le M\xi$ for small $\varepsilon$.

For the corresponding solution, set $q=u-u_*$ and
$P=\U-F_M^{\mathrm i}$. By \cref{lem:comparison}, $P\ge0$, while
\cref{lem:increasing-local-control} gives
$\norm{q(t)}_\infty\le C\varepsilon e^{-\gamma_Mt}$ for $t\ge0$.
Equation~\eqref{eq:perturbation} and $P'=\pi q(\sqrt\xi,t)$ yield
\begin{equation*}
P_t=LP+Pq(\sqrt\xi,t),\qquad P(0,t)=P(1,t)=0.
\end{equation*}
Thus $\alpha(t)\coloneq\langle P(t),p_1\rangle_{\mathcal H}$ satisfies
\[
\alpha'=-\gamma_M\alpha+
\langle Pq,p_1\rangle_{\mathcal H}
\ge-(\gamma_M+\norm{q(t)}_\infty)\alpha,\qquad
\alpha(0)=\varepsilon,
\]
by self-adjointness and positivity of $P,p_1$. Integration gives
\[
e^{\gamma_Mt}\alpha(t)
\ge\varepsilon\exp\!\left(-\int_0^t\norm{q(s)}_\infty\dd s\right)
\ge\varepsilon e^{-C\varepsilon/\gamma_M},\qquad\text{for }t\in[0,\infty).
\]
Finally, equal mass gives
$|P(\xi,t)|\le\pi\norm{q(t)}_\infty\min\{\xi,1-\xi\}$.
Since $m(\xi)$ is comparable to $1/\xi$, Cauchy--Schwarz implies
$\alpha(t)\le\norm{P(t)}_{\mathcal H}\le C\norm{q(t)}_\infty$.
Consequently,
\begin{equation*}
\liminf_{t\to\infty}e^{\gamma_Mt}
\norm{u(t)-u_M^{\mathrm i}}_\infty
\ge c\varepsilon e^{-C\varepsilon/\gamma_M}>0.
\end{equation*}
This excludes every faster exponent $\gamma_M+\delta$, $\delta>0$,
even with a constant depending on the initial datum.
Since these perturbations also satisfy $\U_0\le M\xi$, optimality
holds within the stated ordered class as well.
\end{proof}

\begin{proof}[Proof of \cref{thm:rates}]
\cref{prop:rate} gives \eqref{eq:exact-decay}, and
\cref{prop:optimal} proves optimality of the constant-state
exponent. At $M=\Lambda\pi$, \cref{prop:critical-rate} gives
\eqref{eq:critical-decay} and the sharper expansion
\eqref{eq:critical-expansion}, which implies \eqref{eq:critical-profile}.
\cref{lem:increasing-local-control} gives the uniform local
estimate \eqref{eq:increasing-local-rate}.
\cref{prop:increasing-rate} proves \eqref{eq:increasing-rate}
for every bounded trajectory converging to an increasing equilibrium
and establishes optimality of its spectral exponent.
\end{proof}

\providecommand{\bysame}{\leavevmode\hbox to3em{\hrulefill}\thinspace}
\providecommand{\MR}{\relax\ifhmode\unskip\space\fi MR }
\providecommand{\MRhref}[2]{%
  \href{http://www.ams.org/mathscinet-getitem?mr=#1}{#2}
}
\providecommand{\href}[2]{#2}


\begin{thebibliography}{10}

\bibitem{Ahn2026}
Jaewook Ahn, Jae-Myoung Kim, and Junha Kim, \emph{Stability and instability for
  the fully parabolic {K}eller-{S}egel system around constant equilibrium},
  \href{https://arxiv.org/abs/2607.10384}{arXiv:2607.10384 [math.AP]}, 2026.

\bibitem{Bonheure2021}
Denis Bonheure, Jean-Baptiste Casteras, and Carlos Rom\'{a}n, \emph{Unbounded
  mass radial solutions for the {K}eller-{S}egel equation in the disk}, Calc.
  Var. Partial Differential Equations \textbf{60} (2021), no.~5, Paper No. 198,
  30. \MR{4299895}

\bibitem{Cao2015}
Xinru Cao, \emph{Global bounded solutions of the higher-dimensional
  {K}eller-{S}egel system under smallness conditions in optimal spaces},
  Discrete Contin. Dyn. Syst. \textbf{35} (2015), no.~5, 1891--1904.
  \MR{3294230}

\bibitem{Childress1981}
S.~Childress and J.~K. Percus, \emph{Nonlinear aspects of chemotaxis}, Math.
  Biosci. \textbf{56} (1981), no.~3-4, 217--237. \MR{632161}

\bibitem{Cieslak2009}
Tomasz Cie\'{s}lak and Philippe Lauren\c{c}ot, \emph{Finite time blow-up for
  radially symmetric solutions to a critical quasilinear
  {S}moluchowski-{P}oisson system}, C. R. Math. Acad. Sci. Paris \textbf{347}
  (2009), no.~5-6, 237--242. \MR{2537529}

\bibitem{Cygan2021}
Szymon Cygan, Grzegorz Karch, Krzysztof Krawczyk, and Hiroshi Wakui,
  \emph{Stability of constant steady states of a chemotaxis model}, J. Evol.
  Equ. \textbf{21} (2021), no.~4, 4873--4896. \MR{4350589}

\bibitem{Daners2000}
Daniel Daners, \emph{Heat kernel estimates for operators with boundary
  conditions}, Math. Nachr. \textbf{217} (2000), 13--41. \MR{1780769}

\bibitem{Pino2016}
Manuel del Pino, Angela Pistoia, and Giusi Vaira, \emph{Large mass boundary
  condensation patterns in the stationary {K}eller-{S}egel system}, J.
  Differential Equations \textbf{261} (2016), no.~6, 3414--3462. \MR{3527634}

\bibitem{Pino2006}
Manuel del Pino and Juncheng Wei, \emph{Collapsing steady states of the
  {K}eller-{S}egel system}, Nonlinearity \textbf{19} (2006), no.~3, 661--684.
  \MR{2209293}

\bibitem{Horstmann2011}
Dirk Horstmann and Marcello Lucia, \emph{Uniqueness and symmetry of equilibria
  in a chemotaxis model}, J. Reine Angew. Math. \textbf{654} (2011), 83--124.
  \MR{2795753}

\bibitem{Jaeger1992}
W.~J\"{a}ger and S.~Luckhaus, \emph{On explosions of solutions to a system of
  partial differential equations modelling chemotaxis}, Trans. Amer. Math. Soc.
  \textbf{329} (1992), no.~2, 819--824. \MR{1046835}

\bibitem{Jiang2020}
Jie Jiang, \emph{Global stability of {K}eller-{S}egel systems in critical
  {L}ebesgue spaces}, Discrete Contin. Dyn. Syst. \textbf{40} (2020), no.~1,
  609--634 (English). \MR{4026974}

\bibitem{Kim2012}
Inwon Kim and Yao Yao, \emph{The {P}atlak-{K}eller-{S}egel model and its
  variations: properties of solutions via maximum principle}, SIAM Journal on
  Mathematical Analysis \textbf{44} (2012), no.~2, 568--602. \MR{2914242}

\bibitem{Lin1988}
C.-S. Lin, W.-M. Ni, and I.~Takagi, \emph{Large amplitude stationary solutions
  to a chemotaxis system}, J. Differential Equations \textbf{72} (1988), no.~1,
  1--27. \MR{929196}

\bibitem{Mao2025}
Xuan Mao, Meng Liu, and Yuxiang Li, \emph{Critical mass for finite-time
  chemotactic collapse in the critical dimension via comparison}, Nonlinearity
  \textbf{38} (2025), no.~9, Paper No. 095026. \MR{4961762}

\bibitem{Mao2026a}
\bysame, \emph{A note on the {$8\pi$} problem of {J}\"ager-{L}uckhaus system},
  Proc. Amer. Math. Soc. \textbf{154} (2026), no.~12, 5213--5231. \MR{5136227}

\bibitem{Nagai1995}
Toshitaka Nagai, \emph{Blow-up of radially symmetric solutions to a chemotaxis
  system}, Adv. Math. Sci. Appl. \textbf{5} (1995), no.~2, 581--601.
  \MR{1361006}

\bibitem{Nagai2001}
\bysame, \emph{Blowup of nonradial solutions to parabolic-elliptic systems
  modeling chemotaxis in two-dimensional domains}, J. Inequal. Appl. \textbf{6}
  (2001), no.~1, 37--55. \MR{1887324}

\bibitem{Painleve1900}
Paul Painlev{\'e}, \emph{Analyse des travaux scientifiques jusqu'en 1900},
  Albert Blanchard, Paris, 1967 (French), Reprint of the 1900 text.

\bibitem{Pan2022}
Hongjing Pan and Shiyi Ye, \emph{An inequality of {B}essel functions and
  applications to transcritical bifurcation problems of nonlinear elliptic
  equations}, J. Differential Equations \textbf{341} (2022), 657--674.
  \MR{4493141}

\bibitem{Pistoia2015}
Angela Pistoia and Giusi Vaira, \emph{Steady states with unbounded mass of the
  {K}eller-{S}egel system}, Proc. Roy. Soc. Edinburgh Sect. A \textbf{145}
  (2015), no.~1, 203--222. \MR{3304582}

\bibitem{Schaaf1985}
Renate Schaaf, \emph{Stationary solutions of chemotaxis systems}, Trans. Amer.
  Math. Soc. \textbf{292} (1985), no.~2, 531--556. \MR{808736}

\bibitem{Senba2000}
Takasi Senba and Takashi Suzuki, \emph{Some structures of the solution set for
  a stationary system of chemotaxis}, Adv. Math. Sci. Appl. \textbf{10} (2000),
  no.~1, 191--224. \MR{1769174}

\bibitem{Simon2005}
Barry Simon, \emph{Sturm oscillation and comparison theorems},
  Sturm-{L}iouville theory, Birkh\"auser, Basel, 2005, pp.~29--43. \MR{2145076}

\bibitem{Souplet2019}
Philippe Souplet and Michael Winkler, \emph{Blow-up profiles for the
  parabolic-elliptic {K}eller-{S}egel system in dimensions {$n\geq 3$}}, Comm.
  Math. Phys. \textbf{367} (2019), no.~2, 665--681. \MR{3936129}

\bibitem{Teschl2012}
Gerald Teschl, \emph{Ordinary differential equations and dynamical systems},
  Graduate Studies in Mathematics, vol. 140, American Mathematical Society,
  Providence, RI, 2012. \MR{2961944}

\bibitem{Wang2002}
G.~Wang and J.~Wei, \emph{Steady state solutions of a reaction-diffusion system
  modeling chemotaxis}, Math. Nachr. \textbf{233/234} (2002), 221--236
  (English). \MR{1879873}

\bibitem{Wang2019}
Jun Wang, Zhian Wang, and Wen Yang, \emph{Uniqueness and convergence on
  equilibria of the {K}eller-{S}egel system with subcritical mass}, Comm.
  Partial Differential Equations \textbf{44} (2019), no.~7, 545--572.
  \MR{3949126}

\bibitem{Wang2026}
Yulan Wang and Michael Winkler, \emph{Identifying secondary critical mass
  levels in two- and higher-dimensional {K}eller-{S}egel systems},
  \href{https://doi.org/10.48550/arXiv.2610.13559}{arXiv:2610.13559 [math.AP]},
  2026.

\bibitem{Watson1944}
G.~N. Watson, \emph{A {T}reatise on the {T}heory of {B}essel {F}unctions},
  Cambridge University Press, Cambridge; The Macmillan Company, New York, 1944.
  \MR{10746}

\bibitem{Winkler2010}
Michael Winkler, \emph{Aggregation vs. global diffusive behavior in the
  higher-dimensional {K}eller-{S}egel model}, J. Differential Equations
  \textbf{248} (2010), no.~12, 2889--2905. \MR{2644137}

\bibitem{Winkler2019}
\bysame, \emph{How unstable is spatial homogeneity in {K}eller-{S}egel systems?
  {A} new critical mass phenomenon in two- and higher-dimensional
  parabolic-elliptic cases}, Math. Ann. \textbf{373} (2019), no.~3-4,
  1237--1282. \MR{3953127}

\end{thebibliography}
\end{document}